\theoremstyle{plain}
\newtheorem{theorem}{Theorem}[section]
\newtheorem{proposition}[theorem]{Proposition}
\newtheorem{lemma}[theorem]{Lemma}
\newtheorem{corollary}[theorem]{Corollary}
\newtheorem{question}[theorem]{Question}
\theoremstyle{definition}
\newtheorem{definition}[theorem]{Definition}
\newtheorem{example}[theorem]{Example}
\newtheorem{claim}{Claim}
\newtheorem*{acknowledgements}{Acknowledgements}
\newtheorem*{outline}{Outline of the paper}
\theoremstyle{remark}
\newtheorem{remark}[theorem]{Remark}
\def\Z{\mathbb{Z}}
\def\C{\mathbb{C}}
\def\Q{\mathbb{Q}}
\def\P{\mathbb{P}}
\def\Z2{\mathbb{Z}_{2}}
\def\m2#1{\ ({\rm mod} \ 2^{#1})}
\newcommand{\hmn}[1]{\textcolor{blue}{\sf #1}}
\begin{document}

\title[$p$-adic sub-hyperbolic rational maps]{Julia sets and dynamics of $p$-adic sub-hyperbolic rational maps} %$\mathbb{Q}_p$  the field of $p$-adic numbers

\author{Shilei Fan}
\address{School of Mathematics and Statistics, and Key Laboratory of Nonlinear Analysis \& Applications (Ministry of Education), Central China Normal University, Wuhan 430079, P. R. China}  \email{slfan@mail.ccnu.edu.cn}

\author{Lingmin Liao}
\address{School of Mathematics and Statistics, Wuhan University, Wuhan 430072, Hubei, China}
%\address{Univ Paris Est Creteil, CNRS, LAMA, F-94010 Creteil, France  \& Univ Gustave Eiffel, LAMA, F-77447 Marne-la-Vall\'ee, France}
\email{lmliao@whu.edu.cn}

\author{Hongming Nie}
\address{Institute for Mathematical Sciences, Stony Brook University, Stony Brook, NY 11794-3660, USA}
\email{hongming.nie@stonybrook.edu}

\author{Yuefei Wang}
\address{College of Mathematics and Statistics, Shenzhen University, Shenzhen 518060, Guangdong, China \& Academy of Mathematics and System Sciences, CAS, Beijing 100190, China}
\email{wangyf@math.ac.cn}

\thanks{S. L. FAN was partially supported by NSFC (grants No. 12331004  and No. 11971190) and Fok Ying-Tong Education Foundation, China (grant No.171001).  Y. F. WANG was partially supported by NSFC (grants No. 12231013) and NCAMS}

%\begin{abstract}
% Let $K$ be a finite extension of the field $\mathbb{Q}_p$ of $p$-adic numbers, and  $\phi\in K(z)$ be a rational map of degree $d\ge 2$.  We prove that if $\phi$ is contained in certain conjecturally dense subset, properly containing semi-hyperbolic rational maps, in $K(z)$, then the $K$-Julia set of $\phi$ is the natural restriction of $\mathbb{C}_p$-Julia set. Moreover, if $\phi$ is sub-hyperbolic, we prove that the dynamics of $\phi$ restricted on the $K$-Julia set is a countable state Markov shift.
%\end{abstract}

\begin{abstract}
Let  $K$ be a finite extension of the field $\mathbb{Q}_p$ of $p$-adic numbers, and  $\phi\in K(z)$ be a rational map of degree $d\ge 2$. We prove that  if $\phi$ is sub-hyperbolic then the dynamics of $\phi$ restricted on its $K$-Julia set is a countable state Markov shift.  We give a complete characterization of the minimal decomposition for the $K$-Fatou set of $\phi$. Moreover, we prove that if $\phi$ is contained in certain conjecturally dense subset, properly containing semi-hyperbolic rational maps, in $K(z)$, then the $K$-Julia set of $\phi$ is the natural restriction of $\mathbb{C}_p$-Julia set.
\end{abstract}

\subjclass[2020]{Primary 37P05; Secondary 11S82, 37B10}
\keywords{$p$-adic dynamics, Julia set, sub-hyperbolic maps, countable state Markov shift}

\maketitle

%\begin{frabstract}
%\begin{center}
%\begin{minipage}{0.85\linewidth}
% \noindent{\bf Résumé}
%{\footnotesize \textsc{Résumé\ }
%Soit $K$ une extension finie du corps $\mathbb{Q}_p$ des nombres $p$-adiques, et $\phi\in K(z)$ une application rationnelle de degré $d\ge 2$. Nous prouvons que si $\phi$ est sous-hyperbolique, alors la dynamique de $\phi$ restreinte sur l'ensemble $K$-Julia est un décalage de Markov d'état dénombrable, et donnons une caractérisation complète de la décomposition minimale de son ensemble $K$-Fatou.  De plus, si $\phi$ est contenue dans un certain sous-ensemble conjecturalement dense, contenant proprement des applications rationnelles semi-hyperboliques, dans $K(z)$, alors l'ensemble $K$-Julia de $\phi$ est la restriction naturelle de l'ensemble $\mathbb{C}_p$-Julia.}
%\end{minipage}
%\end{center}
%\end{frabstract}

\medskip

\tableofcontents

\section{Introduction}\label{sec:intro}

\subsection{Motivation and background}

In recent decades, $p$-adic and non-archimedean dynamics have been attracting much attention. On the one hand, many $p$-adic analogues of classical complex dynamics results already appear in the literature. A Siegel's linearization theorem for non-Archimedean fields has been presented in a paper of Herman and Yoccoz \cite{Herman-Yoccoz}, and a non-Archimedean Montel's theorem appears in a work of Hsia \cite{Hsia00} (see also a paper of Favre, Kiwi, and Trucco \cite{Favre12}).
Some other significant contributions in this direction arise in the Ph.D. theses of Benedetto \cite{Benedetto98}  and Rivera-Letelier \cite{RL2003Ast} and in Kiwi's descriptions \cite{Kiwi06, Kiwi14} of the dynamics for lower degree rational maps.  On the other hand, the ergodic aspect of the aforementioned dynamics develops intensively. The ergodic decomposition starts with an early result of Oselies and Zieschang \cite{Oselies75}, and then the $p$-adic entropy shows up in a study of Lind and Ward \cite{Lind88}. Later, some ergodic criteria for uniform continuous functions gather in a paper of Anashin \cite{Anashin94}. The equilibrium measure on the related Berkovich projective line emerges in works of  Baker and Rumely \cite{Baker06}, Chambert-Loir \cite{Chambert06}, and Favre and Rivera-Letelier \cite{Favre10}.

%\smallskip
For a fixed prime number $p\ge 2$, denote by $\mathbb{Q}_p$ the field of $p$-adic numbers and by $\mathbb{C}_p$ the completion of an algebraic closure of $\mathbb{Q}_p$. Let $K$ be a finite extension of $\mathbb{Q}_p$.  Write $\mathbb{P}^1_K$ and $\mathbb{P}^1_{\mathbb{C}_p}$ the projective lines over $K$ and $\mathbb{C}_p$, respectively. In this paper, we study rational maps $\phi\in K(z)$ as dynamical systems on  $\mathbb{P}^1_K$ and on $\mathbb{P}^1_{\C_p}$.

The rational map $\phi$ partitions naturally the space $\mathbb{P}^1_{\mathbb{C}_p}$ into two parts: the Fatou set $F_{\C_p}(\phi)$ on which the iterations $\phi^n$ act equicontinuously and the Julia set $J_{\C_p}(\phi)$ (the complement of $F_{\C_p}(\phi)$) on which the iterations $\phi^n$ act chaotically. One main task in non-archimedean dynamics is to depict these two sets and further understand the relevant dynamics. For $F_{\C_p}(\phi)$, Benedetto \cite{Benedetto00} proved that there are no wandering domains  under a no wild recurrent Julia critical points assumption,  and  Rivera-Letelier \cite{RL2003Ast} gave a classification of the Fatou components.
For $J_{\C_p}(\phi)$, applying  aforementioned Montel's theorem, Hsia \cite{Hsia00} obtained that $J_{\C_p}(\phi)$ is contained in the closure of the periodic points. Then assuming $\phi$ has a repelling periodic point, B\'ezivin \cite{Bezivin01} proved that $J_{\C_p}(\phi)$ is the closure of the repelling periodic points.

%\smallskip

In $\P^1_K$, the Julia set $J_K(\phi)$ is the region on which the sequence $\{\phi^n\}_{n\geq 1}$ is not equicontinuous with respect to the spherical metric. Although $J_K(\phi)$ is a subset of $J_{\C_p}(\phi)\cap\mathbb{P}^1_K$ by definition, the equality of these two sets is elusive.
Nevertheless, all periodic points in $J_{\C_p}(\phi)$ are repelling (e.g., see \cite[Proposition 1.1]{Benedetto01}), which rules out a  severe obstruction to preserve the consistency of the Julia points occurring in complex/real dynamics.  (A precise example in the latter is the polynomial $P(z)=z^3-z\in\mathbb{R}[z]$ acting on $\mathbb{P}^1_\mathbb{C}$ and $\mathbb{P}^1_\mathbb{R}$, respectively, for which the point $0$ is a parabolic fixed point in $\mathbb{P}^1_\mathbb{C}$ attracting all the nearby points in $\mathbb{P}^1_\mathbb{R}$ and hence $0$ is contained in the $\mathbb{C}$-Julia set but not in the $\mathbb{R}$-Julia set of $P$.) %It is reasonable to expect an affirmative answer of the following conjecture.
It is interesting to ask the following question.
%\begin{conjecture}\label{conjecture}
%Let $\phi\in K(z)$ be a rational map.
%Then
%$$J_{\mathbb{C}_p}(\phi)\cap\mathbb{P}^1_K=J_K(\phi).$$
%\end{conjecture}
\begin{question}\label{question}
For a rational map  $\phi\in K(z)$, is $J_{\mathbb{C}_p}(\phi)\cap\mathbb{P}^1_K=J_K(\phi)$?
\end{question}

If $\phi$ has degree strictly less than $2$, it follows immediately from the simple behavior of the iterates $\phi^n$ (see \cite[Section 1.5]{Benedetto19}) that $J_{\mathbb{C}_p}(\phi)\cap\mathbb{P}^1_K=J_K(\phi)$. In the case that $\phi$ is a tame polynomial of degree at least $2$, assuming that all (Berkovich) Julia points possess finite algebraic degree, Trucco \cite[Theorem E and Remark 1.2]{Trucco14} proved that $J_{\mathbb{C}_p}(\phi)$ has no recurrent critical points and is contained in a finite extension of $K$.
 In this paper, we will first provide an affirmative answer of Question \ref{question} for a large and conjecturally dense family of rational maps, see Theorems \ref{Thm:main1} and \ref{Thm:main}.
%We will show in Theorem \ref{Thm:main} that Question \ref{question} holds under certain conditions on the critical orbits.

%\smallskip

Once the Fatou and Julia sets are at our disposal, we can attempt to investigate the dynamical behaviors on such sets. In general, we may understand the dynamics by showing some fundamental properties such as the ergodicity and the linearization, but a detailed description of the behaviors for orbits seems improbable. However, this is not the case when we study the dynamics of $\phi\in K(z)$ on $\mathbb{P}^1_K$, for which, in many cases, we can employ well-known dynamical models to characterize the Fatou and Julia dynamics of $\phi$.

For the dynamics on Fatou set, Fan and the second author  \cite{Fan11} proved a minimal decomposition result for the polynomials defined over $\mathbb{Z}_p$, and showed that the dynamics on each minimal component is topologically conjugate to an odometer.  Such a decomposition also appears for other maps such as convergent power series on the integral ring of a finite extension of $\mathbb{Q}_p$  \cite{Fan15}, rational maps over $\mathbb{Q}_p$ of degree one  \cite{Fan14}, and rational maps over $\mathbb{Q}_p$ having good reduction  \cite{Fan17}. The first and second authors provided concrete examples of the minimal decompositions for the Chebyshev polynomials on $\mathbb{Z}_2$ and the square maps on $\mathbb{Z}_p$ in \cite{Fan16J} and \cite{Fan16I}, respectively.

For the dynamics on Julia set, a classical example is the polynomial $(z^p-z)/p$ on $\mathbb{Z}_p$, which turns out to be topologically conjugate to the full shift on the symbolic space with $p$ symbols, see \cite{Woodcock98}. This property also holds for quadratic polynomials over $\mathbb{Q}_p$, see \cite{Thiran89}.  In the paper \cite{Fan07}, the authors proved that any $p$-adic transitive weak repeller is topologically conjugate to a subshift of finite type on an alphabet of finitely many symbols. Such $p$-adic transitive weak repellers include expanding polynomials and rational maps over $\mathbb{Q}_p$. We refer to \cite{Fan18, Fan16} for the  concrete example $az+1/z$, and to \cite{Albeverio13,Khakimov21} for some other examples.

%\smallskip
In this paper, we will also study the dynamics on the Julia set for more general rational maps which are out of range of the previous work in \cite{Fan07}. We will prove in Theorem \ref{Thm:GF} that such a dynamical system can be modeled by a countable state Markov shift.

%\smallskip
\subsection{Main results}
Recall that $K$ is a finite extension of $\mathbb{Q}_p$. Let $\phi\in K(z)$ be a rational map of degree at least $2$. A point $x\in\mathbb{P}^1_{\mathbb{C}_p}$ is {\it eventually periodic} under $\phi$ if there exist $m\ge 0$ and $n\ge 1$ such that $\phi^{m+n}(x)=\phi^m(x)$. A point $x\in\mathbb{P}^1_{\mathbb{C}_p}$ is {\it non-recurrent} under $\phi$ if $x$ is not accumulated by its forward orbit under $\phi$; otherwise, we say $x$ is {\it recurrent}.

Following Benedetto \cite[Main Theorem]{Benedetto01},  we say that $\phi$ is {\it hyperbolic} if the $\mathbb{C}_p$-Julia set $J_{\mathbb{C}_p}(\phi)$ contains no critical points of $\phi$. As in complex dynamics \cite[Section 19]{Milnor06}, we can generalize the above hyperbolicity to sub-hyperbolicity and semi-hyperbolicity as following.

\begin{definition}
Let $\phi\in K(z)$ be a rational map of degree at least $2$.
We say $\phi$ is {\it sub-hyperbolic} if every critical point of $\phi$ in $J_{\mathbb{C}_p}(\phi)$ is eventually periodic; and we say $\phi$ is {\it semi-hyperbolic} if every critical point of $\phi$ in $J_{\mathbb{C}_p}(\phi)$ is non-recurrent.
\end{definition}
Observe that if $\phi$ is hyperbolic, then it is sub-hyperbolic; and if $\phi$ is sub-hyperbolic, then it is semi-hyperbolic.  Definitely there exist rational maps in $K(z)$ which are not semi-hyperbolic, but it is worth mentioning that the hyperbolic maps and hence the sub-hyperbolic/semi-hyperbolic maps are conjectured to be dense, see \cite{Benedetto01}. We will study the (singular) metric properties, such as expansion near Julia sets (away singularities),  of sub-hyperbolic/semi-hyperbolic rational maps in a companion paper.

\smallskip

In this present paper,  our first result provides an affirmative answer of Question \ref{question} for the semi-hyperbolic rational maps in $K(z)$. 

\begin{theorem}\label{Thm:main1}
Let $\phi\in K(z)$ be a semi-hyperbolic rational map of degree at least $2$. Then
$$J_{\mathbb{C}_p}(\phi)\cap\mathbb{P}^1_K=J_K(\phi).$$
\end{theorem}

Although we are not able to completely answer Question \ref{question} here, we in fact show an affirmative answer for a large family of rational maps with possibly certain recurrent critical points which properly contains the semi-hyperbolic rational maps. To avoid the technical assumptions, for now we just state it for the semi-hyperbolic rational maps as in the above result; and we will present the full version of the result in Section \ref{subsec:relation}.

\smallskip
Notice that for a semi-hyperbolic rational map, any Julia critical orbit may have infinitely many accumulated points; while  for a sub-hyperbolic rational map, any Julia critical orbit is finite. Such finiteness provides a way to encode the Julia dynamics of sub-hyperbolic rational maps with countably many symbols, which will be stated in our next result. 

 For a sub-hyperbolic rational map $\phi\in K(z)$, write $\mathrm{Crit}_K^\ast(\phi)$ the set of the critical points in $J_{\mathbb{C}_p}(\phi)\cap\mathbb{P}^1_K$ and denote by $\mathrm{GO}_K(\mathrm{Crit}_K^\ast(\phi))$ the grand orbit of $\mathrm{Crit}_K^\ast(\phi)$ in $\mathbb{P}^1_K$, that is, the union of grand orbits of critical points in $\mathrm{Crit}_K^\ast(\phi)$. By Theorem \ref{Thm:main1} and the invariance of $J_{\mathbb{C}_p}(\phi)$, we conclude that $\mathrm{GO}_K(\mathrm{Crit}_K^\ast(\phi))\subset J_K(\phi)$. Set
$$I_K(\phi):=J_K(\phi) \setminus \mathrm{GO}_K(\mathrm{Crit}_K^\ast(\phi)).$$

\begin{theorem}\label{Thm:GF}
Let $\phi\in K(z)$ be a sub-hyperbolic rational map of degree at least $2$. Then there exist a countable state Markov shift $(\Sigma_A,\sigma)$ and a bijection $h: J_K(\phi)\to\Sigma_A$ such that $(I_K(\phi),\phi)$ is topologically conjugate to $(h(I_K(\phi)), \sigma)$ via $h$.
\end{theorem}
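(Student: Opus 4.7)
The plan is to combine Theorem \ref{Thm:main} with a concrete countable Markov partition of $J_K(\phi)$ and then obtain $h$ as the associated symbolic coding. Since $\phi$ is geometrically finite, every critical point in $\mathrm{Crit}_K^\ast(\phi)$ has a finite forward orbit, so the hypotheses of Theorem \ref{Thm:main} hold vacuously and
$$J_K(\phi)=J_{\mathbb{C}_p}(\phi)\cap\mathbb{P}^1_K.$$
Moreover the postcritical set $P:=\bigcup_{n\ge 1}\phi^n(\mathrm{Crit}_K^\ast(\phi))$ is a finite subset of $J_K(\phi)$, and it is entirely preperiodic for $\phi$ into a finite collection of repelling periodic cycles (periodic critical points cannot lie in $J_{\mathbb{C}_p}(\phi)$ since they would be super-attracting, and periodic points in $J_{\mathbb{C}_p}(\phi)$ are repelling).

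I would next build a finite coarse partition $D_1,\ldots,D_N$ of $J_K(\phi)$ by disjoint clopen disks such that (i) each $D_i$ contains at most one point of $P$; (ii) whenever $D_i$ is disjoint from $P$, $\phi|_{D_i}$ acts by a scaling and $\phi(D_i)\cap J_K(\phi)$ is a union of $D_j\cap J_K(\phi)$; (iii) whenever $D_i$ meets $P$ at $c$, $\phi(D_i)$ contains the partition disk that contains $\phi(c)$. The existence of such a partition follows from the total disconnectedness of $\mathbb{P}^1_K$, the uniform expansion of $\phi$ on $J_K(\phi)$ away from $P$, and the finiteness of $P$. For each $D_i$ containing a point $c\in P$, I would refine $D_i$ into a nested sequence of clopen annuli $A_{i,n}$, $n\ge 1$, shrinking to $c$, together with the singleton $\{c\}$; the radii are chosen inductively so that each $\phi(A_{i,n})$ is either a union of outer $D_j$'s or a single annulus $A_{j,m}$ at another postcritical point. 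This produces a countable Markov partition $\mathcal{P}$ with transition matrix $A$.

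With $\mathcal{P}$ and $A$ in place, define $h:J_K(\phi)\to\Sigma_A$ by setting $h(x)_n$ to be the label of the element of $\mathcal{P}$ containing $\phi^n(x)$ (with the singleton state assigned when $\phi^n(x)\in\mathrm{GO}_K(\mathrm{Crit}_K^\ast(\phi))$). Surjectivity follows from a nested intersection argument using compactness of $J_K(\phi)$ and the Markov property, while injectivity follows because the diameters of pieces along any admissible sequence tend to zero---uniformly away from $P$ by expansion, and because $A_{i,n}$ shrinks to $c$ near $P$. On $I_K(\phi)$ the orbit of $x$ avoids $P$ and all its preimages, so $h(x)$ lies in the shift-invariant subspace $h(I_K(\phi))$ that uses only the clopen annular and outer states; there $h$ is continuous, has continuous inverse, and intertwines $\phi$ with $\sigma$ by construction, which gives the required topological conjugacy.

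The hardest step will be step two: choosing the annuli $A_{i,n}$ so that the Markov relations hold exactly, with image annuli coinciding with annuli in the collection rather than with finite unions of them. This demands harmonizing the scale at each postcritical point with the scales at all other postcritical points along its forward orbit, so that after a full traversal of the eventual cycle the radii close up into a geometric progression whose ratio is a prescribed power of the cycle multiplier. The combinatorial bookkeeping for the preperiodic tails, where the local degree at each intermediate critical point induces a ``shift'' of the scale, is where most of the technical work will concentrate; the separate but secondary point of excluding wild critical points lying outside $J_{\mathbb{C}_p}(\phi)\cap\mathbb{P}^1_K$ from the partition construction will also need to be handled at the outset.
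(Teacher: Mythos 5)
Your high-level strategy matches the paper's: exploit Theorem \ref{Thm:main} to identify $J_K(\phi)$ with $J_{\mathbb{C}_p}(\phi)\cap\mathbb{P}^1_K$, build a countable Markov partition from clopen disks that refine near the postcritical set, code orbits, and check bijection and conjugacy on $I_K(\phi)$. But there is a genuine gap precisely at the point you flag as ``the hardest step,'' and the paper's two reduction moves that close this gap are both missing from your plan.

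First, the paper does not try to construct annuli directly around periodic cycles. Instead it passes to an iterate $\phi^n$ chosen so that every Julia critical point of $\phi^n$ in $\mathbb{P}^1_K$ is \emph{iteratedly prefixed}, i.e.\ eventually mapped to a \emph{fixed} point of $\phi^n$ (Theorem \ref{Thm:prefixed} together with Proposition \ref{prefixed implies conjugate}). This makes the postcritical bookkeeping linear: each critical orbit terminates at a repelling fixed point, and the scaling covering $\mathcal{P}_0$ is decomposed along these orbits by a finite refinement procedure (Lemma \ref{lem:compatible-cover}), with compatibility guaranteed by the fact that the decomposition stops after finitely many steps because no nonsingleton piece can properly contain its own iterate. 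Your proposal instead insists that the radii of the annuli ``close up into a geometric progression whose ratio is a prescribed power of the cycle multiplier'' after a full traversal of the cycle, and that the local degree at each intermediate critical point ``shifts'' the scale. You acknowledge this is where the work concentrates, but you give no argument that the scales can be made to mesh; over $K$ the scaling ratios of $\phi$ at postcritical points, the local degrees at critical points, and the multipliers of the repelling cycles are independent data, and it is not automatic that one can choose a single family of annuli so that every image of an annulus is exactly another annulus in the family rather than merely a union. The paper circumvents this entirely.

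Second, the paper works in a suitable finite extension $\widetilde{K}$ of $K$ so that $J_{\widetilde{K}}(\phi)\subset\mathcal{O}_{\widetilde{K}}$; this guarantees that all the scaling disks lie in a bounded region with a uniformly discrete value group, which is used both for the decomposition argument and for the nested-intersection/shrinking-diameter steps in Proposition \ref{bijection-prefixed}. Your proposal never addresses this normalization, and your surjectivity claim ``follows from a nested intersection argument using compactness of $J_K(\phi)$'' elides the real issue: one must show the diameters of the cylinder preimages tend to zero. The paper does this via Lemma \ref{lem:P-cover-two} (any nonsingleton element eventually has its iterated image split into two pieces) plus discreteness of $|\widetilde{K}^\times|$; you assert it from ``expansion away from $P$'' and ``annuli shrinking,'' but that is not a proof, since the worry is precisely admissible sequences that dwell near $P$ forever and whose cylinder diameters might stabilize.

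Finally, one minor inaccuracy: the hypotheses of Theorem \ref{Thm:main} do not hold \emph{vacuously} for geometrically finite $\phi$. You need the short argument that a critical point with finite forward orbit in the Julia set cannot be recurrent (else it would be periodic, hence superattracting, hence Fatou), and that two distinct such critical points cannot be mutually in each other's orbit closures for the same reason; only then do conditions (1) and (2) of Theorem \ref{Thm:main} follow. This is easy but should be said.
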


If $\mathrm{Crit}_K^\ast(\phi)=\emptyset$, then the shift space $\Sigma_A$ in Theorem \ref{Thm:GF} can be chosen with finite states, which is a special case of a result in \cite{Fan07} for $p$-adic weak repellers. If $\mathrm{Crit}_K^\ast(\phi)\not=\emptyset$, the presence of a critical point in $J_K(\phi)$ implies that $\phi$ is not uniformly expanding on $J_K(\phi)$. To construct the states of the Markov shift, we track the orbits of countably many open disks near critical points. To illustrate Theorem \ref{Thm:GF}, in Section \ref{sec:example} we provide a concrete example whose Gurevich entropy on Julia set is the logarithm of an algebraic number.

\smallskip
Let us make the following remark on potential applications of Theorem \ref{Thm:GF}.
\begin{remark}
The symbolic dynamics in Theorem \ref{Thm:GF}  provides a way to study the ergodic properties for sub-hyperbolic rational maps in $K(z)$ acting on $\mathbb{P}^1_K$.  The associated matrix $A$ in Theorem \ref{Thm:GF} allows us to find all transitive subsystems of the given sub-hyperbolic rational map. If the matrix $A$ is explicit, then it is possible to compute the entropy of such subsystems, see Section \ref{sec:Gurevich} for the computations of a concrete example. Moreover, with this associated matrix, it is also possible to determine if on such a subsystem there exists a unique measure of maximal entropy and further if this maximal entropy measure is exponentially mixing. For more details, see Section \ref{sec:applications}. We refer to \cite{Favre12} for some ergodic properties of rational maps acting on $\mathbb{P}^1_{\mathbb{C}_p}$ and on the corresponding Berkovich space.
\end{remark}

%\smallskip

With the aid of Theorem \ref{Thm:GF}, we can further characterize the global dynamics for sub-hyperbolic rational maps. We say that a system is {\it minimal} if every orbit is dense. 

\begin{theorem}\label{globaldynamics}
Let $\phi\in K(z)$ be  a sub-hyperbolic rational map of degree at least $2$.  Then $\P^{1}_K$  has the following decomposition 
\begin{align}\label{decomposition}
\P^{1}_K= P\sqcup M\sqcup B \sqcup J 
\end{align}
where 
\begin{enumerate} 
\item $P$ is a finite set consisting of all non-repelling periodic points of
$\phi$, 
\item $ M = \bigsqcup_i M _i$ is the union of all (at most countably
many) clopen invariant sets such that each $M_i$ is a finite union
of $\P^1_K$-disk  and each subsystem $\phi: M_i \to M_i$ can be decomposed to minimal subsystems, 
\item  $B$ is the set of points in the basin of a periodic orbit or of a subsystem $\phi: M_i \to M_i$, and 
 \item $J=J_K(\phi)$ is the Julia set of $\phi$ on which the dynamical behavior can be characterized by a countable state Markov shift. 
\end{enumerate} 
\end{theorem}

In the above result, if $K=\mathbb{Q}_p$, the each subsystems $\phi: M_i \to M_i$ is minimal. If $K\not=\mathbb{Q}_p$, then there are uncountably many minimal subsystems in each system $\phi: M_i \to M_i$, see  \cite[Theorem 1.1]{Fan15}. As we will see in the proof, the statements (1)-(3) in fact hold for general rational maps in $K(z)$ possessing  no wondering domains and satisfying the conclusion of Theorem \ref{Thm:main1}, but the statement (4) relies on Theorem \ref{Thm:GF}. 

\begin{remark}
In our arguments of Theorems \ref{Thm:GF} and \ref{globaldynamics}, we in fact work on the rational maps $\phi\in K(z)$ whose all critical points  in $J_{\mathbb{C}_p}(\phi)\cap \mathbb{P}^1_K$ are  eventually periodic. Such rational maps may be slightly more general than the sub-hyperbolic ones, since a $\mathbb{C}_p$-Julia critical point may not contain in $\mathbb{P}^1_K$. Definitely, for a given rational map $\psi\in K(z)$, considering some finite extension  $L$ of $K$ so that  all critical points of $\psi$ are contained in $\mathbb{P}^1_L$, we have that $\psi$ is sub-hyperbolic if all critical points  in $J_{\mathbb{C}_p}(\psi)\cap \mathbb{P}^1_L$ are  eventually periodic.
\end{remark}

%\smallskip
To end this subsection,  we  remark here that our results can be extended to some $\mathbb{C}_p$-Julia sets.
\begin{remark}
As aforementioned, there exist polynomials in $\mathbb{C}_p[z]$ whose $\mathbb{C}_p$-Julia sets are contained in some finite extensions of $\mathbb{Q}_p$, see \cite[Theorem E and Remark 1.2]{Trucco14}. Thus for such polynomials, under the sub-hyperbolicity assumption, the conclusions in Theorems \ref{Thm:GF} and \ref{globaldynamics} also hold for the corresponding $\mathbb{C}_p$-Julia sets.
\end{remark}

%\hmn{In the following two subsections, we state the main results in this paper. Recall that $K$ is a finite extension of $\mathbb{Q}_p$.}
\subsection{Relation of the Julia sets}\label{subsec:relation}
We now state our general result in the process of answering Question \ref{question}, which immediately implies Theorem \ref{Thm:main1}. 
 
 A critical point $c\in\mathbb{P}^1_{\mathbb{C}_p}$ of a rational map $\phi\in K(z)$ is  {\it wild} if $p$ divides the local degree of $\phi$ at $c$. Otherwise, we say that $c$ is {\it tame}.  
As we will see in Section \ref{sec:equivalence}, the inclusion of the closures of forward orbits induces an equivalence relation on the critical points in $J_{\mathbb{C}_p}(\phi)$ together with a partial order $\preceq$ on the resulting quotient set $\mathcal{M}(\phi)$. Each least element in $(\mathcal{M}(\phi),\preceq)$ is called a {\it minimal class}.

%Our first result asserts that under certain assumptions on the critical orbits, the set $J_K(\phi)$ equals to the restriction of $J_{\mathbb{C}_p}(\phi)$ on $\mathbb{P}^1_K$.

\begin{theorem}\label{Thm:main}
Let $\phi\in K(z)$ be a rational map of degree at least $2$. Suppose that
\begin{enumerate}
\item $\phi$ has no wild recurrent critical points in $J_{\mathbb{C}_p}(\phi)\cap\mathbb{P}^1_K$, and
\item each minimal class in $(\mathcal{M}(\phi),\preceq)$ has at most two representatives in $J_{\mathbb{C}_p}(\phi)\cap\mathbb{P}^1_K$.
\end{enumerate}

Then
$$J_{\mathbb{C}_p}(\phi)\cap\mathbb{P}^1_K=J_K(\phi).$$
\end{theorem}

As aforementioned, one inclusion $J_K(\phi)\subset J_{\mathbb{C}_p}(\phi)\cap\mathbb{P}^1_K$ in Theorem \ref{Thm:main} follows unconditionlly from the definitions. Our contribution is to show  the reverse inclusion. The difficulty arises from the complexity of critical orbits in $J_{\mathbb{C}_p}(\phi)\cap\mathbb{P}^1_K$. Even under the assumptions in Theorem \ref{Thm:main}, the argument is far from straightforward. We provide an elaborate analysis on the orbits of disks near the critical points, and deduce the conclusion, in most cases, by an argument on the existence of repelling periodic points.

Let us emphasis on the assumptions in Theorem \ref{Thm:main}. For the first assumption, the absence of wild recurrent critical points in $J_{\C_p}(\phi)\cap\mathbb{P}^1_K$ guarantees a no wandering domains result and controls the ratios of diameters of certain disks.  We should mention here that there are rational maps possessing wild recurrent critical points in Julia set, see \cite{Rivera05}.  The second assumption is a technical one. Though most of analysis works for general case, the argument concerning the existence of repelling periodic points requires the restriction on the number of representatives for the minimal classes. Loosely speaking, for each disk in a fixed sphere close to a critical point, such a restriction bounds uniformly the ``location" (the first critical point covered by the orbit of the disk) and the ``time" (the number of iteration to cover the {first} critical point). Without this restriction, the ``location" and the ``time" may vary simultaneously, which is excluded from our arguments.

\begin{proof}[Proof of Theorem \ref{Thm:main1} assuming Theorem \ref{Thm:main}]
It suffices to check that the semi-hyperbolic rational maps satisfy the assumptions in Theorem \ref{Thm:main}. The semi-hyperbolicity implies that $\phi$ has no recurrent critical points in $J_{\mathbb{C}_p}(\phi)$; so $\phi$ satisfies the assumption (1). Moreover,  if $J_{\mathbb{C}_p}(\phi)\cap\mathbb{P}^1_K$ is nonempty, then each minimal class in $(\mathcal{M}(\phi),\preceq)$ has only one representative in $J_{\mathbb{C}_p}(\phi)\cap\mathbb{P}^1_K$. Indeed, suppose otherwise. By the definition of minimal class in Section \ref{sec:equivalence}, there exist critical points $c_0,c_1$ in $J_{\mathbb{C}_p}(\phi)\cap\mathbb{P}^1_K$ such that for any $i\in\{0,1\}$, the critical point $c_i$ is contained in the closure of the forward orbit of $c_j$, where $j=1-i\in\{0,1\}$. It follows that $c_i$ is recurrent for each $i\in\{0,1\}$, which contradicts the semi-hyperbolicity assumption on $\phi$. Thus $\phi$ also satisfies the assumption (2).
 \end{proof}

%The hyperbolic rational maps in $K(z)$ in the sense of Benedetto \cite[Definition 3.1]{Benedetto01} provide a paradigm for maps satisfying the assumptions in Theorem \ref{Thm:main} since such maps have no critical points in Julia sets, see  \cite[Main Theorem]{Benedetto01}.  More general, we say a rational map $\phi\in K(z)$ of degree at least $2$ is  \textit{geometrically finite}  if  every critical point in $J_{\mathbb{C}_p}(\phi)\cap\mathbb{P}^1_K$ has a finite forward orbit. The semi-hyperbolic rational maps also satisfy the assumptions in Theorem \ref{Thm:main}. Indeed, for any geometrically finite rational map $\phi\in K(z)$, provided that  $J_{\mathbb{C}_p}(\phi)\cap\mathbb{P}^1_K$ is nonempty,  any critical point in $J_{\mathbb{C}_p}(\phi)\cap\mathbb{P}^1_K$ is eventually periodic, which implies that such a critical point is not recurrent; moreover, by the definition of equivalence class of critical points in Section  \ref{sec:equivalence}, each minimal class in $(\mathcal{M}(\phi),\preceq)$ has one representative in $J_{\mathbb{C}_p}(\phi)\cap\mathbb{P}^1_K$.
%\tiny a special class of rational maps satisfying the assumptions in Theorem \ref{Thm:main} is the \textit{geometrically finite} maps $\phi\in K(z)$ of which every critical point in $J_{\mathbb{C}_p}(\phi)\cap\mathbb{P}^1_K$ has a finite forward orbit. 

\subsection{Strategy of the proofs}
The proof of Theorems \ref{Thm:GF} is a standard argument on the decomprosition of the Julia set. Let us make it more precise. Applying the locally scaling properties of $\phi$, we obtain a cover $\mathcal{P}_0$ of $J_K(\phi)$, each of whose elements is either an open disk on which $\phi$ is bijective, or a critical point. Decompose $\mathcal{P}_0$ along the critical orbits and  divide some ``larger" disks into ``smaller" ones so that in the resulting cover $\mathcal{P}$, an iterated  image of any open set is a union of elements in $\mathcal{P}$. Considering the images of elements in $\mathcal{P}$, we obtain a natural (infinite) matrix, and then assign each point in $J_K(\phi)$ a code sequence to obtain a coding map $h$. By the inverse of $\phi$ in each open set in $\mathcal{P}$, we deduce the desired conjugacy via $h$. In particular, in the argument showing the bijectivity of $h$, we apply Theorem \ref{Thm:main1} to treat the grand orbit of critical points.
Once Theorem \ref{Thm:GF} is in our disposal, Theorem \ref{globaldynamics} is a consequence of \cite[Theorem 1.1]{Fan15}.

\smallskip
Now we briefly state the sketch of proof for the inclusion $J_{\mathbb{C}_p}(\phi)\cap\mathbb{P}^1_K\subset J_K(\phi)$ in Theorem \ref{Thm:main}. We establish this inclusion in the following three main steps, in which the last step is the most technical part in the paper.

First, we work on the points on the boundary of $J_{\C_p}(\phi)\cap\P^1_K$ (see Lemma \ref{boundary-julia-lem}). The no wandering domains result (Theorem \ref{no-Wandering-thm}) implies that any point in $\P^1_K$ close to such a boundary point is eventually mapped into a periodic component of $F_{\C_p}(\phi)$, which breaks the equicontinunity.

Second, for the interior of $J_{\mathbb{C}_p}(\phi)\cap\mathbb{P}^1_K$, we reduce to discuss the critical points in $J_{\mathbb{C}_p}(\phi)\cap\mathbb{P}^1_K$ (see Proposition \ref{lem:critical-Julia}). In this process, we apply a technical lemma (Lemma \ref{lem:uniform-scaling}) on the orbit of a disk in $\mathbb{P}^1_K$ intersecting $J_{\mathbb{C}_p}(\phi)\cap\mathbb{P}^1_K$, where the absence of wild recurrent critical points controls the ratios of diameters of certain disks.

Last, we show that the critical points in $J_{\mathbb{C}_p}(\phi)\cap\mathbb{P}^1_K$ are indeed contained in $J_K(\phi)$. To achieve this, we consider natural equivalence classes of critical points in $J_{\mathbb{C}_p}(\phi)$ and further reduce our investigation to that on certain minimal classes (see Proposition \ref{prop:minimal}).
For each such minimal class, under the assumption on the number of representatives, specially in the recurrent case, we prove two crucial results (Propositions \ref{prop:repelling} and \ref{prop:repelling-2}) concerning the existence of repelling (pre)periodic points in desired disks.
We then proceed the proof by contradiction. If there is a point contained in $J_{\mathbb{C}_p}(\phi)\cap\mathbb{P}^1_K$ but not in $J_K(\phi)$, we verify the assumptions in the aforementioned results and obtain contradictions (see Corollaries \ref{coro:1} and \ref{coro:2}).

%\smallskip
%To prove Theorem \ref{Thm:GF}, applying the locally scaling properties of $\phi$, we obtain a cover $\mathcal{P}_0$ of $J_K(\phi)$, each of whose elements is either an open disk on which $\phi$ is bijective, or a critical point. Decompose $\mathcal{P}_0$ along the critical orbits and  divide some ``larger" disks into ``smaller" ones so that in the resulting cover $\mathcal{P}$, an iterated  image of any open set is a union of elements in $\mathcal{P}$. Considering the images of elements in $\mathcal{P}$, we obtain a natural (infinite) matrix, and then assign each point in $J_K(\phi)$ a code sequence to obtain a coding map $h$. By the inverse of $\phi$ in each open set in $\mathcal{P}$, we deduce the desired conjugacy via $h$. In particular, in the argument showing the bijectivity of $h$, we apply Theorem \ref{Thm:main1} to treat the grand orbit of critical points.

%Once Theorem \ref{Thm:GF} is in our disposal, Theorem \ref{globaldynamics} is a consequence of \cite[Theorem 1.1]{Fan15}.

\begin{outline}
In Section \ref{sec:pre}, we give some relevant preliminaries. In particular, we describe the local dynamics near critical points (see Proposition \ref{prop:tame-wild}) in Section \ref{sec:analytic}, and we state an invariance property of the Fatou set (see Lemma \ref{regular-point-lemma}) and a no wandering domains result (see Theorem \ref{no-Wandering-thm}) in Sections \ref{sec:Julia-Fatou} and \ref{sec:Fatou-components}.

In Section \ref{restriction}, we prove Theorem \ref{Thm:main}. We first deal with the boundary of  $J_{\mathbb{C}_p}(\phi)\cap\mathbb{P}^{1}_{K}$ in Section \ref{sec:boundary}. Then we consider the interior of  $J_{\mathbb{C}_p}(\phi)\cap\mathbb{P}^{1}_{K}$ in Section \ref{sec:reduce}. After that, we focus on some minimal equivalence classes in Section \ref{sec:equivalence}. Finally in Section \ref{sec:minimal}, we establish Propositions \ref{prop:repelling} and \ref{prop:repelling-2} on the existence of repelling periodic points. In particular, Section \ref{sec:recurrent} is the most technical part, which concerns the recurrent critical points.

Sections \ref{sec:example} and \ref{sec:GF-maps} are devoted to describing the dynamics for sub-hyperbolic rational maps. Section \ref{sec:example} provides a motivation example and contains numerous computations. In particular, Section \ref{sec:Gurevich} gives the Gurevich entropy for this example. Section \ref{sec:GF-maps} covers the proof of Theorems \ref{Thm:GF} and \ref{globaldynamics}. Section \ref{sec:reduced} states a reduced version of Theorem \ref{Thm:GF} and then Section \ref{sec:general} finishes the full general version. Section \ref{sec:applications} lists (potential) applications of Theorem \ref{Thm:GF}. Finally, in Section \ref{sec:pf-global}, we prove Theorem \ref{globaldynamics}.
\end{outline}

\begin{acknowledgements}
The authors are grateful to the Department of Mathematics, Zhejiang University, and Shanghai Center for Mathematical Sciences, where part of this paper was written, for their hospitality. They thank Robert Benedetto for discussing the no wandering domains result and also thank Liang-Chung Hsia for useful comments.
\end{acknowledgements}

%\medskip
\section{Preliminaries}\label{sec:pre}

In this section, we introduce some preliminaries that we will use in the rest of the paper.

\subsection{Notation}
We set the following notation.

\begin{tabular}{cp{\textwidth}}
$K$ &  a finite extension of $\Q_p$\\
$\pi$ & a uniformizer  of $K$\\
$L$  &the field $K$ or $\C_p$\\
$\mathrm{Crit}_{\mathbb{C}_p}(\phi)$ &the set of the critical points of $\phi$ in $\mathbb{P}^{1}_{\mathbb{C}_p}$\\
$\mathrm{Crit}_K(\phi)$ &the set of the critical points of $\phi$ in $\P^1_K$\\%the intersection $\mathrm{Crit}_{\C_p}(\phi)\cap\P^1_K$\\
 $\mathrm{Crit}^\ast_{\C_p}(\phi)$ &the intersection $J_{\C_p}(\phi)\cap\mathrm{Crit}_{\C_p}(\phi)$\\
 $\mathrm{Crit}^\ast_K(\phi)$ &the intersection $J_{\C_p}(\phi)\cap\mathrm{Crit}_K(\phi)$\\
$J_{\mathbb{C}_p}^{K}(\phi)$ &the intersection $J_{\mathbb{C}_p}(\phi)\cap\mathbb{P}^{1}_{K}$\\
$F_{\mathbb{C}_p}^{K}(\phi)$ &the intersection $F_{\mathbb{C}_p}(\phi)\cap\mathbb{P}^{1}_{K}$\\
$\mathcal{O}_\phi(x)$ &the forward orbit $\{\phi^n(x)\}_{n\ge 0}$ of $x\in\P^1_{\C_p}$ under a rational map $\phi\in\C_p(z)$\\
$\overline{\mathcal{O}_\phi(x)}$ &the closure of $\mathcal{O}_\phi(x)$ for a point $x\in\P^1_{\C_p}$ and a rational map $\phi\in\C_p(z)$.\\

\end{tabular}

\subsection{$p$-adic disks}\label{sec:fields}
Let $|\cdot|_p$ be the natural and non-trivial $p$-adic absolute value associated to $L$.
For $x\in L$ and $r>0$, define
$$D_L(x,r):=\{z\in L: |z-x|_p<r\} \ \text{and}\ \overline{D}_L(x,r):=\{z\in L: |z-x|_p\le r\}.$$
We call $D_L(x,r)$ (resp. $\overline{D}_L(x,r)$) an open (resp. closed) $L$-disk.
In the projective space $\mathbb{P}^{1}_L$, an \textit{open $\mathbb{P}^1_L$-disk} is a set of form either $D_L(z,r)$ or $\mathbb{P}^1_L \setminus \overline{D}_L(z,r)$, and a \textit{closed $\mathbb{P}^1_L$-disk} is a set of form either $\overline{D}_L(z,r)$ or $\mathbb{P}^1_L \setminus D_L(z,r)$.

 Regarding $\mathbb{P}^{1}_{\C_p}$ as $\C_p\cup\{\infty\}$, we introduce the spherical metric on  $\mathbb{P}^{1}_{\C_p}$ as follows: for $x,y \in \C_p$,

 $$\rho(x,y)=\frac{|x-y|_{p}}{\max\{|x|_{p},1\}\max\{|y|_{p},1\}},$$
 and
 $$
 \rho(x,\infty)=\left\{
                    \begin{array}{ll}
                      1, & \mbox{if $|x|_{p}\leq 1$;} \\
                      1/|x|_{p}, & \mbox{if $|x|_{p}> 1$.}
                    \end{array}
                  \right.
 $$
 Note that for any $x,y\in\mathbb{P}^1_{\C_p}$, the distance $\rho(x,y)\le 1$.
 Since $\P^1_L\subset\P^1_{\C_p}$, for $0<r\le 1$, set the disks in metric $\rho$ as follows:
$$D_{\mathbb{P}^1_L}(x,r):=\{y\in\mathbb{P}^1_L:\rho(x,y)<r\} \ \text{and} \ \overline{D}_{\mathbb{P}^1_L}(x,r):=\{y\in\mathbb{P}^1_L:\rho(x,y)\le r\}.$$
Then $D_{\mathbb{P}^1_L}(x,r)$ and $\overline{D}_{\mathbb{P}^1_L}(x,r)$ are $\mathbb{P}^1_L$-disks. For a disk $D\subset\mathbb{P}^1_L$, denote its diameter by
$$\mathrm{diam}(D):=\sup_{x_1,x_2\in D}\rho(x_1,x_2).$$
For more details about the spherical metric, we refer to \cite[Section 5.1]{Benedetto19}.

\subsection{Analytic functions}\label{sec:analytic}
To simplify notations, in this subsection we write $|\cdot|$ for $|\cdot|_p$, and write $D(x,r)$ (resp. $\overline{D}(x,r)$) for  $D_{\mathbb{C}_p}(x,r)$ (resp. $\overline{D}_{\mathbb{C}_p}(x,r)$).

Let $D\subset\mathbb{C}_p$ be a disk and pick $x_0\in D$, a map $f: D\to L$ is {\it analytic} if it can be written as a power series
\begin{align*}\label{taylor-equ}
f(z)=\sum_{n=0}^\infty a_n(z-x_0)^n\in\C_p[[z-x_0]]
\end{align*}
converging on $D$.
 A point $c\in D$ is {\it critical} if the derivative of $f$ at $c$ equals $0$, i.e., $f'(c)=0$.
 If $D$ contains a critical point $c$, we can rewrite
 $$f(z)=b_0+\sum_{n=m}^\infty b_n(z-c)^n\in\C_p[[z-c]]$$
 with $b_m\not=0$ for some $m\ge 2$. We call $m$ the {\it local degree} of $f$ at $c$ and denote it by $\deg_cf$.

%\smallskip
The following lemma shows that analytic maps are well behaved in a neighborhood of a non-critical point.
\begin{lemma}[\!\!{\cite[Chapter 3 Lemma 1.6]{Khrennikov04}}]\label{lem:local scaling poly}
Let $D\subset \mathbb{C}_p$ be a disk and let $f:D\to\mathbb{C}_p$ be an analytic map. Suppose $x_0\in D$ is not a critical point of $f$. Then there exists $r>0$ such that for all $2\le k<\infty$,
$$\left|\frac{f^{(k)}(x_0)}{k!}\right| r^{k-1}<|f'(x_0)|$$
and for all $x,y\in D(x_0,r)$,
$$|f(x)-f(y)|=|f'(x_0)||x-y|.$$
\end{lemma}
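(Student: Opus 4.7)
The plan is to expand $f$ in a Taylor series at $x_0$ and then exploit the ultrametric (strong triangle) inequality by arranging, via the choice of $r$, that the linear term dominates every higher-order term on the disk $D(x_0,r)$. Write $f(z)=\sum_{n\ge 0}a_n(z-x_0)^n$ with $a_n=f^{(n)}(x_0)/n!$, and note that the hypothesis that $x_0$ is not critical is precisely $a_1=f'(x_0)\ne 0$.

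For the first inequality, I would fix some $r_0>0$ with $D(x_0,r_0)\subseteq D$. Convergence of the power series on $D(x_0,r_0)$ forces $|a_n|r_0^n\to 0$, so $M:=\sup_{n\ge 0}|a_n|r_0^n<\infty$. For any $0<r\le r_0$ and $k\ge 2$ one then has
$$|a_k|r^{k-1}=\frac{|a_k|r_0^k}{r_0}\Bigl(\frac{r}{r_0}\Bigr)^{k-1}\le \frac{M}{r_0}\cdot\frac{r}{r_0}=\frac{Mr}{r_0^2},$$
a bound uniform in $k$. Now choose $r>0$ small enough that $Mr/r_0^2<|a_1|$; this yields $|a_k|r^{k-1}<|f'(x_0)|$ simultaneously for every $k\ge 2$.

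For the equality $|f(x)-f(y)|=|f'(x_0)||x-y|$ on $D(x_0,r)$, I would use the standard factorization
$$f(x)-f(y)=(x-y)\Bigl[a_1+\sum_{n\ge 2}a_n\sum_{j=0}^{n-1}(x-x_0)^j(y-x_0)^{n-1-j}\Bigr].$$
Since $|x-x_0|,|y-x_0|<r$, each inner summand has norm at most $|a_n|r^{n-1}$, which by the previous step is strictly less than $|a_1|$. Because $a_1$ strictly dominates every other term inside the brackets in absolute value, the strong triangle inequality forces the bracketed sum to have norm exactly $|a_1|$, from which the conclusion follows.

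The only mildly delicate point is producing the uniform estimate on $|a_k|r^{k-1}$ over all $k\ge 2$ at once: one must convert pointwise convergence of the series into a uniform bound on the coefficients, which is why I first fix a working radius $r_0$ to extract the bound $M$ and only afterwards shrink $r$ to enforce strict domination by $|a_1|$. Everything else is a formal manipulation that is standard in the ultrametric setting.
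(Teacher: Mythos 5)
Your proof is correct, and it is the standard non-Archimedean Taylor-domination argument; the paper itself only cites this lemma from Khrennikov--Nilson and does not reproduce a proof, so there is no in-paper argument to compare against. One small slip worth tidying: convergence of the series on the \emph{open} disk $D(x_0,r_0)$ does not by itself force $|a_n|r_0^n\to 0$ (take $D$ to be exactly the open disk of convergence, where the boundary behaviour is uncontrolled), so to secure $M=\sup_n|a_n|r_0^n<\infty$ you should instead pick $r_0>0$ with the \emph{closed} disk $\overline D(x_0,r_0)$ contained in the domain of convergence, which is always possible since $x_0$ is interior to $D$. With that adjustment, the uniform estimate $|a_k|r^{k-1}\le Mr/r_0^2$, the shrinking of $r$ so that $|a_1|$ strictly dominates every higher coefficient term, the factorization of $f(x)-f(y)$, and the final application of the strong triangle inequality to read off $|f(x)-f(y)|=|a_1|\,|x-y|$ all go through exactly as you wrote them.
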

Inspired by the above result and following\cite[Definition 4.1]{Kingsbery09}, we define the following terminology of ``scaling" for convenience.
\begin{definition}\label{def:scaling}
An analytic map $f$ defined on a disk $D\subset\mathbb{C}_p$ is  \textit{(locally) scaling (in distances)} on $D(x_0,r)\subset D$ if there exists $\alpha\in|\mathbb{C}_p^\times|$ such that for all $x,y\in D(x_0,r)$,
$$|f(x)-f(y)|=\alpha |x-y|.$$
\end{definition}
We mention here that $f$ is scaling on $D(x_0,r)$ if and only if $f$ has Weierstrass degree $1$ in the common sense.
In the above definition, we call $\alpha>0$ the {\it scaling ratio} of $f$ at $x_0$. Moreover, we say the largest disk $D(x_0,r)$ on which $f$ is scaling is the \textit{maximal scaling disk} of $f$ at $x_0$, and denote it by $\mathcal{S}_{x_0}(f)$.  The following lemma asserts that $\mathcal{S}_{x_0}(f)$ is the maximal scaling disk of $f$ at any point $x\in\mathcal{S}_{x_0}(f)$.

\begin{lemma}\label{cor:same domain}
Under the assumptions in Lemma \ref{lem:local scaling poly}, if $x\in\mathcal{S}_{x_0}(f)$, then $\mathcal{S}_x(f)=\mathcal{S}_{x_0}(f)$.
\end{lemma}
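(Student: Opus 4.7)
The plan is to exploit the non-Archimedean ``every point is a center'' property of disks: for any $x\in D(x_0,r)\subset\mathbb{C}_p$, one has $D(x,r)=D(x_0,r)$, because the ultrametric inequality $|y-x_0|\le\max\{|y-x|,|x-x_0|\}$ forces both disks to contain exactly the same points once $|x-x_0|<r$.

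First I would record that the maximal scaling disk has the form $\mathcal{S}_{x_0}(f)=D(x_0,R)$ with
\[
R:=\sup\{r>0:f\text{ is scaling on }D(x_0,r)\},
\]
and note that $f$ is genuinely scaling on this open disk $D(x_0,R)$, because any two points in $D(x_0,R)$ lie in some $D(x_0,r)$ with $r<R$, where scaling already holds, and all these scaling relations share the common ratio $|f'(x_0)|$ by Lemma~\ref{lem:local scaling poly}.

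Next, take any $x\in\mathcal{S}_{x_0}(f)=D(x_0,R)$. By the centering property, $D(x,R)=D(x_0,R)$, so $f$ scales on $D(x,R)$ and thus $\mathcal{S}_x(f)\supseteq D(x,R)=\mathcal{S}_{x_0}(f)$. For the reverse inclusion, suppose for contradiction that $\mathcal{S}_x(f)=D(x,R')$ with $R'>R$. Since $|x-x_0|<R<R'$, we have $x_0\in D(x,R')$, and the centering property gives $D(x,R')=D(x_0,R')$. Hence $f$ is scaling on $D(x_0,R')$ with $R'>R$, contradicting the definition of $R$. Therefore $\mathcal{S}_x(f)=\mathcal{S}_{x_0}(f)$.

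No serious obstacle is expected: the only subtlety is making sure the ``maximal scaling disk'' is an honest open disk on which scaling holds (handled by the supremum argument above) and that the scaling ratios at $x$ and at $x_0$ agree so that we really are comparing the same object; both points are non-critical since $f$ scales with nonzero ratio on $\mathcal{S}_{x_0}(f)$, so Lemma~\ref{lem:local scaling poly} applies at $x$ as well.
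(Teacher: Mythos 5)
Your proof is correct and is essentially the paper's argument, just spelled out: the paper states the inclusion $\mathcal{S}_{x_0}(f)\subset\mathcal{S}_x(f)$ as obvious (your centering-property argument) and then obtains the reverse inclusion ``by symmetry'' (since $x_0\in\mathcal{S}_{x_0}(f)\subset\mathcal{S}_x(f)$, swap the roles of $x_0$ and $x$), whereas you get the reverse inclusion by a short contradiction argument that amounts to the same thing.
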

\begin{proof}
It is obvious that $\mathcal{S}_{x_0}(f)\subset\mathcal{S}_x(f)$. By symmetry, $\mathcal{S}_{x_0}(f)\supset\mathcal{S}_x(f)$.
\end{proof}

At a critical point, we have the following property on distance of images.
\begin{lemma}\label{cor:local critical}
Let $D\subset\mathbb{C}_p$ be a disk and let $f:D\to\mathbb{C}_p$ be an analytic map. Assume  $c\in D$ is a critical point with $\deg_{c}f=m\ge 2$. Then there exists $r>0$ such that for all $x\in D(c,r)\subset D$,
$$|f(x)-f(c)|=\left|\frac{f^{(m)}(c)}{m!}\right||x-c|^{m}.$$
\end{lemma}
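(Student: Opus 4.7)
The plan is to expand $f$ as a Taylor series centered at the critical point $c$ and show that, for $x$ sufficiently close to $c$, the leading term of degree $m$ strictly dominates every higher-order term in $|\cdot|_p$. Once this strict domination is in place, the ultrametric inequality upgrades automatically to an equality, which is exactly what the lemma asserts. This is the critical-point analogue of Lemma \ref{lem:local scaling poly}, and the strategy runs in close parallel.

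First I would write
$$f(z) = f(c) + \sum_{n=m}^{\infty} b_n (z-c)^n,\qquad b_n := \frac{f^{(n)}(c)}{n!},$$
where $b_m \neq 0$ by the definition of the local degree $m = \deg_c f$, and the series converges throughout $D$. For $x \in D$ with $x\neq c$, factor out the leading term:
$$f(x) - f(c) = b_m(x-c)^m\Bigl(1 + \sum_{n>m} \tfrac{b_n}{b_m}(x-c)^{n-m}\Bigr).$$
By the ultrametric inequality it suffices to find $r>0$ such that, for every $x \in D(c,r)$ and every $n>m$, one has $|b_n/b_m||x-c|^{n-m} < 1$; then the factor in parentheses has $p$-adic absolute value exactly $1$, yielding
$$|f(x)-f(c)| = |b_m||x-c|^m = \left|\frac{f^{(m)}(c)}{m!}\right| |x-c|^m.$$

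To choose $r$, I would fix any $r_0 > 0$ strictly less than the radius of convergence of the series at $c$ (which is at least as large as the radius of the disk $D$ containing $c$). Convergence on $\overline{D}(c,r_0)$ forces $|b_n|r_0^n \to 0$, so the inequality $|b_n|r_0^{n-m} < |b_m|$ holds for all sufficiently large $n$. Only finitely many indices $n>m$ can fail this, and for each such failing index the condition $|b_n|r^{n-m} < |b_m|$ is satisfied as soon as $r < (|b_m|/|b_n|)^{1/(n-m)}$; taking the minimum of these finitely many thresholds together with $r_0$ gives a single $r > 0$ that works uniformly for all $x \in D(c,r)$. The case $x = c$ is trivial.

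There is no real obstacle; the argument is a standard ultrametric leading-term computation. The only subtlety is ensuring that $r$ can be chosen independently of $x$ and of $n$, which is handled by exploiting the convergence of the power series on $D$ to reduce the infinite family of inequalities to a finitely many explicitly solvable ones.
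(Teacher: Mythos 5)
Your proof is correct, but it takes a genuinely different route from the paper. You argue directly from the Taylor expansion: factor out the leading term $b_m(x-c)^m$, then use convergence of $\sum |b_n| r_0^n$ to show that for all but finitely many $n>m$ the tail terms are automatically dominated, shrink $r$ to handle the remaining finitely many indices, and conclude by the ultrametric equality for a sum dominated by a single term. The paper instead makes a slick reduction to Lemma~\ref{lem:local scaling poly}: it sets $g(x) = (f(x)-f(c))/(x-c)^{m-1}$, observes that $g$ is analytic on $D$ with $g(c)=0$ and $g'(c)=f^{(m)}(c)/m! \neq 0$ (so $c$ is \emph{not} a critical point of $g$), applies Lemma~\ref{lem:local scaling poly} to $g$ to get $|g(x)-g(c)| = |g'(c)||x-c|$ near $c$, and then multiplies back by $|x-c|^{m-1}$. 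The paper's approach is a two-line reduction that reuses the already-established scaling lemma; yours is self-contained and rederives the tail-domination estimate from scratch. Both are elementary and correct; the paper's buys brevity and avoids repeating the convergence argument, while yours makes the ultrametric mechanism fully explicit without invoking a prior lemma.
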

\begin{proof}
Set
$$g(x):=\frac{f(x)-f(c)}{(x-c)^{m-1}}.$$
Then $g(x)$ is analytic on $D$ and $g(c)=0$. By Lemma \ref{lem:local scaling poly}, there exists $r>0$ such that for all $x\in D(c,r)$,
\[
|g(x)-g(c)|=|g'(c)| |x-c|.
\]
Noting that $g'(c)=f^{(m)}(c)/m!$, we have
$$\left|\frac{f(x)-f(c)}{(x-c)^{m-1}}\right|=\left|\frac{f^{(m)}(c)}{m!}\right| |x-c|.$$
Hence the conclusion follows.
\end{proof}

Recall that a critical point $c$ of $f$ is called {\it tame} if $p\nmid \deg_{c}f$ and called {\it wild} if $p\mid \deg_{c}f$. Lemma \ref{cor:local critical} asserts that $f$ is not scaling in a neighborhood of any critical point. However, the next result claims that in a small neighborhood of any tame critical point $c$, the map $f$ is scaling on the disks not containing $c$. This property fails for wild critical points, in which case, $f$ is scaling on further small disks. This distinction is an obstruction that forces us to assume the absence of wild recurrent Julia critical points in our Theorem \ref{Thm:main}.

\begin{proposition}\label{prop:tame-wild}
Let $D\subset \mathbb{C}_p$ be a disk and let $f:D\to\mathbb{C}_p$ be an analytic map. Then the following hold:
\begin{enumerate}
\item Let $c\in D$ be a critical point of $f$. Then there exists  $r=r_c>0$ such that for any $x\in  D(c,r)$ with $x\not=c$, we have $\mathcal{S}_{x}(f)=D(x,\delta_f|x-c|)$, where
$$\delta_f=
\begin{cases}
1& \text{if $c$ is tame,}\\
p^{-\frac{1}{p-1}}<1 & \text{if $c$ is wild.}
\end{cases}$$

\item If in addition, $f$ has finitely many critical points in $D$, then for any finite extension $K$ of $\mathbb{Q}_p$ and for any small $r>0$, there exists $\epsilon_r>0$ such that $f$ is scaling on $D(x,\epsilon_r)\subset D$ for any $x\in(D\cap K)\setminus\bigcup\limits_{f'(c)=0}D(c,r)$.
\end{enumerate}
\end{proposition}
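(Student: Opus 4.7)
The plan is to read off the maximal scaling radius from the $p$-adic arithmetic of the Taylor coefficients of $f$ at points near the critical point $c$. For part (1), write $f(z)=f(c)+\sum_{n\geq m}b_n(z-c)^n$ on $D$ with $m:=\deg_c f\geq 2$ and $b_m\neq 0$. Using convergence of the series, I would shrink $r_c$ so that for each $x\in D(c,r_c)\setminus\{c\}$ and each $k\geq 2$, the Taylor coefficient
$$\frac{f^{(k)}(x)}{k!}=\sum_{n\geq\max(k,m)}\binom{n}{k}b_n(x-c)^{n-k}$$
is dominated by its leading term (index $n=m$ when $k\leq m$, index $n=k$ otherwise). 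This gives $|f^{(k)}(x)/k!|=|\binom{m}{k}||b_m||x-c|^{m-k}$ for $2\leq k\leq m$ and $|f'(x)|=|m||b_m||x-c|^{m-1}$, while the $k>m$ contributions become negligible after further shrinking $r_c$.

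Plugging into the scaling criterion of Lemma \ref{lem:local scaling poly}, the condition on $\rho$ reduces, after cancelling $|b_m|$, to $\rho<(|m|/|\binom{m}{k}|)^{1/(k-1)}|x-c|$ for each $2\leq k\leq m$. Hence the maximal scaling radius at $x$ equals $\delta_f|x-c|$ with $\delta_f=\min_{2\leq k\leq m}(|m|/|\binom{m}{k}|)^{1/(k-1)}$, independent of $x$. In the tame case $p\nmid m$, the identity $k\binom{m}{k}=m\binom{m-1}{k-1}$ yields $|m|/|\binom{m}{k}|=|k|/|\binom{m-1}{k-1}|\leq 1$ with equality at, e.g., $k=m$, so $\delta_f=1$. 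In the wild case $p\mid m$, evaluating at $k=p$ and using that $m-1,\dots,m-p+1$ are coprime to $p$ gives $v_p(\binom{m}{p})=v_p(m)-1$, hence $|m|/|\binom{m}{p}|=1/p$, yielding the candidate $p^{-1/(p-1)}$; a Kummer-theorem check rules out any smaller value from other $k$. Lemma \ref{cor:same domain} then promotes this to the identification $\mathcal{S}_x(f)=D(x,\delta_f|x-c|)$.

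For part (2), I would first restrict $x$ to a closed sub-disk $D'\subset D$, using that the scaling disks $D(x,\epsilon_r)$ will land inside $D$ once $\epsilon_r$ is smaller than the distance from $D'$ to the boundary of $D$. The set $S_r:=(D'\cap K)\setminus\bigcup_{c}D(c,r)$ is compact by local compactness of $K$, and since $f'$ is continuous and nonvanishing on $S_r$, the modulus $|f'|$ is bounded below on $S_r$ by some $\eta_r>0$. Meanwhile, re-expanding $f$ at a fixed centre in $D$ and exploiting convergence of the series yields uniform upper bounds $|f^{(k)}(x)/k!|\leq M_k$ for $x\in D'$, with $M_k\epsilon^{k-1}\to 0$ as $\epsilon\to 0$ uniformly in $k\geq 2$. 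A single $\epsilon_r>0$ then satisfies $M_k\epsilon_r^{k-1}<\eta_r$ for every $k\geq 2$, giving the desired uniform scaling via Lemma \ref{lem:local scaling poly}. The main obstacle is the wild case of (1): identifying the exact minimum $p^{-1/(p-1)}$ uniformly in $m$ requires careful bookkeeping with Kummer's theorem on carries in base $p$, in order to exclude that some $k$ equal to a higher power of $p$ (or to $m$ itself) could produce a strictly smaller value.
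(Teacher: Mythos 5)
Your direct Taylor-coefficient computation is a genuine alternative to the paper's treatment of part (1), which is simply a citation of \cite[Lemma 11.5]{Benedetto19}. Expanding $f^{(k)}(x)/k!$ at a nearby point $x$, isolating its leading term, invoking the scaling criterion of Lemma \ref{lem:local scaling poly}, and minimizing over $k$ is a legitimate way to reprove that lemma, and the reduction, after shrinking $r_c$, to $\delta_f=\min_{2\le k\le m}\bigl(|m|_p/\bigl|\binom{m}{k}\bigr|_p\bigr)^{1/(k-1)}$ is correct; the paper's route is shorter but opaque, while yours makes the constant explicit. Your treatment of part (2) coincides in substance with the paper's brief compactness remark.

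Two issues remain in your analysis of part (1). In the tame case you reverse an inequality: since $|m|_p=1$ and $\bigl|\binom{m}{k}\bigr|_p\le 1$, one has $|m|_p/\bigl|\binom{m}{k}\bigr|_p\ge 1$, not $\le 1$ as you write; ``terms $\le 1$ with equality at $k=m$'' would only bound the minimum from above and does not pin it down, so the appeal to the identity $k\binom{m}{k}=m\binom{m-1}{k-1}$ is a detour that does not yield what you claim. With the corrected direction every candidate ratio is $\ge 1$ while the $k=m$ term equals $1$, and since the maximal scaling disk at $x$ cannot contain $c$ one already has $\delta_f\le 1$ a priori, giving $\delta_f=1$. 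In the wild case you correctly reduce to the estimate $|m|_p/\bigl|\binom{m}{k}\bigr|_p\ge p^{-(k-1)/(p-1)}$ for all $2\le k\le m$ (with equality at $k=p$), but, as you yourself flag, the Legendre/Kummer verification of this bound is not carried out. That bookkeeping is exactly the content of the cited lemma, so as it stands your proof of the wild case is incomplete.
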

\begin{proof}
The statement (1) follows from \cite[Lemma 11.5]{Benedetto19}.
The statement (2) follows immediately from the local compactness of $K$ and the fact that all the critical points in $D\setminus K$ are uniformly away from the points in $K$.
\end{proof}

Note that for any point $x\in\mathbb{P}^{1}_{\mathbb{C}_p}$, in local coordinates, a rational map $\phi\in\C_p(z)$ is analytic in a neighborhood of $x$. Then Proposition \ref{prop:tame-wild} immediately implies the following result.

\begin{corollary}\label{coro:tame-wild}
Let $\phi\in\mathbb{C}_p(z)$ be a rational map of degree at least $2$. Then the following hold:
\begin{enumerate}
\item Let $c\in\mathbb{P}^1_{\mathbb{C}_p}$ be a critical point of $\phi$. Then there exists  $0<r=r_c<1$ such that for any $x\in  D_{\mathbb{P}^1_{\mathbb{C}_p}}(c,r)$ with $x\not=c$, in local coordinates, the map $\phi$ is scaling on $D_{\mathbb{P}^1_{\mathbb{C}_p}}(x,\delta_\phi\rho(x,c))$, where $\delta_\phi=1$ if $c$ is tame and $\delta_\phi=p^{-1/(p-1)}$ if $c$ is wild.
\item For any small $0<r<1$, there exists $0<\epsilon_r<1$ such that, in local coordinates, $\phi$ is scaling on $D_{\mathbb{P}^1_{\mathbb{C}_p}}(x,\epsilon_r)$ for any $x\in\mathbb{P}^1_ K\setminus\bigcup\limits_{c\in\mathrm{Crit}_{\mathbb{C}_p}(\phi)}D_{\mathbb{P}^1_{\mathbb{C}_p}}(c,r)$.
\end{enumerate}
\end{corollary}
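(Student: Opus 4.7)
The plan is to reduce the corollary to its affine counterpart, Proposition \ref{prop:tame-wild}, by passing to local coordinates on $\mathbb{P}^1_{\mathbb{C}_p}$. The only content of the corollary beyond the proposition is (a) the translation from Euclidean $p$-adic disks to spherical disks and (b) the uniformity statement in part (2), which uses that $\mathbb{P}^1_K$ is compact and that $\phi$ has only finitely many critical points.

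First I would set up the chart-change. Cover $\mathbb{P}^1_{\mathbb{C}_p}$ by the two standard affine charts $U_0=\mathbb{P}^1_{\mathbb{C}_p}\setminus\{\infty\}$ and $U_\infty=\mathbb{P}^1_{\mathbb{C}_p}\setminus\{0\}$ (with coordinate $w=1/z$), and recall that the inversion $z\mapsto 1/z$ is a $\rho$-isometry and that on $\{|z|_p\le 1\}$ one has $\rho(x,y)=|x-y|_p$. Thus every $c\in\mathbb{P}^1_{\mathbb{C}_p}$ admits a coordinate in which $c$ is finite, sits in the unit disk, and sufficiently small $\rho$-disks coincide with Euclidean $|\cdot|_p$-disks. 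In that coordinate $\phi$ becomes a rational function with no pole on a neighborhood of $c$, hence it is represented by a convergent power series, i.e.\ an analytic map in the sense of Section \ref{sec:analytic}.

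For part (1), fix a critical point $c$ of $\phi$, choose such a local coordinate, and apply Proposition \ref{prop:tame-wild}(1) to the analytic representative $f$ at $c$. This gives $r_c>0$ such that for every $x$ with $0<|x-c|_p<r_c$, the maximal scaling disk of $f$ at $x$ is $D(x,\delta_\phi|x-c|_p)$ with $\delta_\phi=1$ if $c$ is tame and $\delta_\phi=p^{-1/(p-1)}$ if $c$ is wild; note that $c$ is tame/wild for $\phi$ iff it is tame/wild for $f$ because the local degree is intrinsic. Shrinking $r_c$ so that $D(c,r_c)$ lies inside the chart's unit disk, the Euclidean disks $D(x,\delta_\phi|x-c|_p)$ coincide with the spherical disks $D_{\mathbb{P}^1_{\mathbb{C}_p}}(x,\delta_\phi\rho(x,c))$, yielding the statement.

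For part (2), recall that a rational map of degree $d\ge 2$ has at most $2d-2$ critical points in $\mathbb{P}^1_{\mathbb{C}_p}$, so $\mathrm{Crit}_{\mathbb{C}_p}(\phi)$ is finite. Since $K$ is locally compact, $\mathbb{P}^1_K$ is $\rho$-compact, and we may cover it by finitely many $\rho$-balls, each contained in one of the two standard charts. On each such chart $\phi$ is analytic on the relevant disk and has only finitely many critical points there, so Proposition \ref{prop:tame-wild}(2) yields a scaling radius valid outside the Euclidean $r$-neighborhoods of those critical points. Taking $\epsilon_r$ to be the minimum of the finitely many radii so obtained (and restricting to the range where $\rho$ equals $|\cdot|_p$ on each chart) produces the desired uniform constant. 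The only subtlety worth checking carefully is this metric comparison on overlaps of the two charts, but since we may shrink the covering so that each ball lives in a single chart, this is purely bookkeeping rather than a genuine obstacle.
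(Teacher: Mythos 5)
Your proposal is correct and follows the same route as the paper: the paper's own proof of this corollary is just the remark that in local coordinates a rational map is analytic, so Proposition \ref{prop:tame-wild} applies directly. You have simply spelled out the chart-change and metric-comparison bookkeeping that the paper leaves implicit.
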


For the images of disks near a wild critical point, we have the following estimates for the ratios of diameters of disks, which will play a crucial role in the proof of Lemma \ref{lem:uniform-scaling}.
\begin{corollary}\label{coro:image-ratio}
Let $\phi\in\mathbb{C}_p(z)$ be a rational function of degree at least $2$. Suppose that $c\in\mathrm{Crit}_{\mathbb{C}_p}(\phi)$ is a wild critical point and let $r_c>0$ be as in Corollary \ref{coro:tame-wild}. For $0<r<r_c$ sufficiently small, consider disks $D_1\subset D_2\subset D_{\mathbb{P}^1_{\mathbb{C}_p}}(c,r)$. Assume $c\not\in D_2$. Then
$$\frac{\mathrm{diam}(\phi(D_1))}{\mathrm{diam}(\phi(D_2))}
\begin{cases}
\medskip
\ge p^{-1/(p-1)}\cdot |\deg_c\phi|&\text{if}\ \phi \ \text{is not scaling on}\ D_1,\\

\medskip
\ge\frac{\mathrm{diam}(D_1)}{\rho(c,D_2)}|\deg_c\phi|&\text{if}\ \phi\ \text{is not scaling on}\ D_2\ \text{but scaling on}\ D_1,\\
=\frac{\mathrm{diam}(D_1)}{\mathrm{diam}(D_2)}&\text{if}\ \phi \ \text{is scaling on}\ D_2.\\
\end{cases}$$
Moreover, if in addition, $c\in\mathbb{P}^1_K$ and $D_1\cap\mathbb{P}^1_K\not=\emptyset$, then for any $\beta\in |K^\times|$ with $\beta\le p^{-\frac{1}{p-1}}$,
$$\frac{\mathrm{diam}(\phi(D_1\cap\mathbb{P}^1_K))}{\mathrm{diam}(\phi(D_2))}\
\begin{cases}
\medskip
\ge \beta|\deg_c\phi|&\text{if}\ \phi \ \text{is not scaling on}\ D_1,\\
\medskip
\ge\frac{\mathrm{diam}(D_1)}{\rho(c,D_2)}|\deg_c\phi|&\text{if}\ \phi\ \text{is not scaling on}\ D_2\ \text{but scaling on}\ D_1,\\
=\frac{\mathrm{diam}(D_1)}{\mathrm{diam}(D_2)}&\text{if}\ \phi \ \text{is scaling on}\ D_2.\\
\end{cases}$$
\end{corollary}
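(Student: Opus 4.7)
The plan is to reduce everything to a single Taylor expansion of $\phi$ at the wild critical point $c$ and then read off each of the three diameter ratios by combining two basic inputs. Choose local coordinates in which $c$ is finite, and set $m := \deg_c \phi$, so that $p \mid m$ and $\phi(z) - \phi(c) = b_m (z - c)^m + (\text{higher order})$ with $b_m \neq 0$. By Lemma \ref{cor:local critical}, after shrinking $r_c$ if necessary, one has $|\phi(z) - \phi(c)| = |b_m|\,|z - c|^m$ on $D(c, r_c)$. Because $c \notin D_2$, the ultrametric inequality forces $|z - c| = R$ uniformly for $z \in D_2$, where $R := \rho(c, D_2)$, which yields the crucial upper bound
$$\mathrm{diam}(\phi(D_2)) \leq |b_m|\,R^m.$$
A parallel Taylor expansion of $\phi$ at any $y \in D_2$ gives $|\phi'(y)| = |m b_m|\,R^{m-1}$ modulo negligible corrections, and by Corollary \ref{coro:tame-wild}(1) the maximal scaling disk at $y$ is $D(y, p^{-1/(p-1)} R)$, on which $\phi$ is exactly scaling with this ratio.

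With these two ingredients the three cases separate cleanly. In Case 3, $\phi$ is scaling on $D_2$, so scaling on $D_1$ with the same ratio by Lemma \ref{cor:same domain}, and the equality $\mathrm{diam}(D_1)/\mathrm{diam}(D_2)$ is immediate. In Case 2, $\phi$ is still scaling on $D_1$ with ratio $|m b_m| R^{m-1}$, giving $\mathrm{diam}(\phi(D_1)) = |m b_m| R^{m-1}\,\mathrm{diam}(D_1)$; dividing by the upper bound on $\mathrm{diam}(\phi(D_2))$ yields the claimed $|m|\,\mathrm{diam}(D_1)/\rho(c, D_2)$. In Case 1, the failure of scaling forces $\mathrm{diam}(D_1) > p^{-1/(p-1)} R$, so the sub-disk $D(y, p^{-1/(p-1)} R)$ for any $y \in D_1$ lies strictly inside $D_1$; computing its image diameter gives $\mathrm{diam}(\phi(D_1)) \geq p^{-1/(p-1)} |m b_m| R^m$, and dividing produces the bound $p^{-1/(p-1)} |\deg_c \phi|$.

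The ``moreover'' part proceeds identically after selecting a $K$-rational base point $y \in D_1 \cap \mathbb{P}^1_K$: since $c \in K$, both $R = |y - c|$ and $b_m$ lie in $K$, so $|m b_m|$ and $R$ lie in $|K^\times|$. In Case 1 one replaces the sub-scaling-disk $D(y, p^{-1/(p-1)} R)$ by the $K$-disk $D_K(y, \beta R)$, which is available because $\beta \in |K^\times|$ with $\beta \leq p^{-1/(p-1)}$ and which is contained in both $D_1 \cap \mathbb{P}^1_K$ and the maximal scaling disk; since $\phi \in K(z)$ sends it bijectively to a $K$-disk of diameter $|m b_m|\,\beta R^m$ via its scaling action, the bound $\beta |\deg_c \phi|$ drops out. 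Cases 2 and 3 of the moreover follow by the same $K$-rationality observation. The main technical obstacle, routine but requiring care, is ensuring that the leading-order identities $|\phi(z) - \phi(c)| = |b_m||z - c|^m$ and $|\phi'(y)| = |m b_m| R^{m-1}$ hold uniformly on $D(c, r_c)$ with no contribution from the higher-order Taylor tail; this is precisely what Lemma \ref{cor:local critical} and its derivative analogue provide once $r_c$ has been chosen small enough.
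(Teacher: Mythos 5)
Your proposal is correct and follows essentially the same strategy as the paper's proof: work in local coordinates at $c$ so that $\phi$ is a power-like map with leading coefficient $b_m$ (the paper calls it $a$), use the ultrametric to get the uniform distance $R = \rho(c,D_2) = |x-c|$ for all $x\in D_2$, bound $\mathrm{diam}(\phi(D_2))$ from above by $|b_m|R^m$, and bound $\mathrm{diam}(\phi(D_1))$ from below via the scaling ratio $|\phi'(x)| = |m b_m|R^{m-1}$ on the maximal scaling disk $D(x, p^{-1/(p-1)}R)$, or the full disk $D_1$ when $\phi$ already scales there. The paper phrases it by normalizing $c=0$ and conjugating at the end, and writes the lower bound uniformly via $\mathrm{diam}(\phi(\mathcal{S}_x(\phi)\cap D_1))$, but the computations and case structure are the same, and the handling of the ``moreover'' part — replacing the $\mathbb{C}_p$-scaling disk by the $K$-disk $D_K(x,\beta R)$ with $\beta\in|K^\times|$ in the non-scaling case, and using $\mathrm{diam}(\phi(D_1\cap\mathbb{P}^1_K))=\mathrm{diam}(\phi(D_1))$ when $\phi$ scales on $D_1$ — also matches the paper.
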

\begin{proof}
We first consider the case that $c=0$.
Pick $x\in D_1$ and consider the maximal scaling disk $\mathcal{S}_x(\phi)=D_{\mathbb{P}^1_{\mathbb{C}_p}}(x,p^{-1/(p-1)}\rho(x,c))$. It follows that $\phi(\mathcal{S}_x(\phi)\cap D_1)\subset \phi(D_1)$, and hence
$$\mathrm{diam}(\phi(\mathcal{S}_x(\phi)\cap D_1))\le\mathrm{diam}(\phi(D_1)).$$
Since $r>0$ is sufficiently small,
$$\mathrm{diam}(\phi(\mathcal{S}_x(\phi)\cap D_1))=|\phi'(x)|\mathrm{diam}(\mathcal{S}_x(\phi)\cap D_1).$$
Moreover, letting $a\in\mathbb{C}_p$ be the coefficient of the term $(x-c)^{\deg_c\phi}$ in the Taylor series of $\phi$ near $c$, we have
$$|\phi'(x)|=|a\deg_c\phi| |x-c|^{\deg_c\phi-1}=|a\deg_c\phi|\rho(x,c)^{\deg_c\phi-1},$$
 and
$$\mathrm{diam}(\phi(D_2))
\begin{cases}
\medskip
\le |a|\rho(x,c)^{\deg_c\phi}&\text{if}\ \phi \ \text{is not scaling on}\ D_2,\\
= |\phi'(x)|\mathrm{diam}(D_2)&\text{if}\ \phi \ \text{is scaling on}\ D_2.
\end{cases}$$
Then we conclude that
\begin{align*}
\medskip
\frac{\mathrm{diam}(\phi(D_1))}{\mathrm{diam}(\phi(D_2))}&\ge\frac{|\phi'(x)|\mathrm{diam}(\mathcal{S}_x(\phi)\cap D_1)}{\mathrm{diam}(\phi(D_2))}\\
&\begin{cases}
\medskip
\ge \frac{|\deg_c\phi|\mathrm{diam}(\mathcal{S}_x(\phi)\cap D_1)}{\rho(x,c)}&\text{if}\ \phi \ \text{is not scaling on}\ D_2,\\
= \frac{\mathrm{diam}(D_1)}{\mathrm{diam}(D_2)}&\text{if}\ \phi \ \text{is scaling on}\ D_2.
\end{cases}
\end{align*}
Note that $\mathrm{diam}(\mathcal{S}_x(\phi))=p^{-\frac{1}{p-1}}\rho(x,c)$. Then the first assertion follows immediately for $c=0$.

In the case that $D_1\cap\mathbb{P}^1_K\not=\emptyset$, we pick $x\in D_1\cap\mathbb{P}^1_K$. If $\phi$ is scaling on $D_1$ in local coordinates, then $\mathrm{diam}(\phi(D_1\cap\mathbb{P}^1_K))=\mathrm{diam}(\phi(D_1))$, which implies that, in this case, the second assertion holds for $c=0$. Now we consider the case that $\phi$ is not scaling on $D_1$. Then for any $\beta\in |K^\times|$ with $\beta\le p^{-\frac{1}{p-1}}$, we have
$$D_{\mathbb{P}^1_{\mathbb{C}_p}}(x,\beta\rho(x,c))\subset\mathcal{S}_x(\phi)\subset D_1.$$
Note that $\mathrm{diam}(\mathcal{S}_x\cap\P^1_K)\le\mathrm{diam}(\mathcal{S}_x)$ and
$$\mathrm{diam}(D_{\mathbb{P}^1_{\mathbb{C}_p}}(x,\beta\rho(x,c))\cap\mathbb{P}^1_K)=\mathrm{diam}(D_{\mathbb{P}^1_{\mathbb{C}_p}}(x,\beta\rho(x,c))).$$
It follows that
$$\mathrm{diam}(\phi(D_1)\cap\mathbb{P}^1_K)\ge \mathrm{diam}(\phi(\mathcal{S}_x\cap\mathbb{P}^1_K))\ge\mathrm{diam}(\phi(D_{\mathbb{P}^1_{\mathbb{C}_p}}(x,\beta\rho(x,c))))=|\phi'(x)|\beta\rho(x,c).$$
Hence
$$\frac{\mathrm{diam}(\phi(D_1)\cap\mathbb{P}^1_K)}{\mathrm{diam}(\phi(D_2))}\ge\frac{\mathrm{diam}(\phi(\mathcal{S}_x\cap\mathbb{P}^1_K))}{\mathrm{diam}(\phi(D_2))}\ge\frac{|\phi'(x)|\beta\rho(x,c)}{|a|\rho(x,c)^{\deg_c\phi}}.$$
Thus in this case, the second assertion also holds for $c=0$.

In general, set $\psi(z)=z+c$ and consider the map $\phi\circ\psi$. Let $\widetilde{D}_1=\{x-c: x\in D_1\}$ and  $\widetilde{D}_2=\{x-c: x\in D_2\}$. Note that
$$\frac{\mathrm{diam}(\phi(D_1))}{\mathrm{diam}(\phi(D_2))}=\frac{\mathrm{diam}(\phi\circ\psi(\widetilde{D}_1))}{\mathrm{diam}(\phi\circ\psi(\widetilde{D}_2))}.$$
Applying the previous argument for  $\phi\circ\psi$, we obtain the conclusion.
\end{proof}

\subsection{Julia sets and Fatou sets}\label{sec:Julia-Fatou}
Let $\phi\in L(z)$ be a rational map of degree at least $2$. The \textit{Fatou set} $F_L(\phi)$ is defined to be the subset of points in $\mathbb{P}^1_L$ having a neighborhood on which the family $\{\phi^n\}_{n\ge 0}$  of iterations is equicontinuous with respect to the spherical metric $\rho$. The \textit{Julia set} $J_L(\phi)$ is the complement $\mathbb{P}^1_L\setminus F_L(\phi)$. Hence the set $F_L(\phi)$ is open while the set $J_L(\phi)$ is closed. If $L$ is algebraically closed (i.e. $L=\C_p$), the set $F_L(\phi)$ contains all the nonrepelling periodic points, while the set $J_L(\phi)$ contains all the repelling periodic points, see \cite[Proposition 1.1]{Benedetto01}. Moreover, the map $\phi$ always has a nonrepelling fixed point and hence $F_L(\phi)\not=\emptyset$, see \cite[Corollary 5.19]{Silverman07}. In contrast, $J_L(\phi)$ could be empty. For example, if $\phi$ has good reduction, then $J_L(\phi)=\emptyset$, see \cite[Theorem 2. 17]{Silverman07}. In this case, both sets $F_L(\phi)$ and $J_L(\phi)$ are totally invariant under $\phi$ \cite[Proposition 5.10]{Benedetto19}.
However, if $L$ is not algebraically closed (i.e. $L=K$), the set $F_L(\phi)$ could be empty, see \cite{Fan20}. In this case, even though the set  $J_L(\phi)$ is forward invariant and contains all the repelling periodic points in $\P^1_L$, the total invariance of $J_L(\phi)$ is unclear. However, we have the following invariance of $J_K(\phi)$ away from critical points.

\begin{lemma}\label{regular-point-lemma}
Let $\phi\in K(z)$ be a rational map of degree at least $2$.  Assume that $x\in\mathbb{P}^{1}_{K}$ is not a critical point of $\phi$.  Then $x\in J_K(\phi)$ if and only if $\phi(x)\in J_K(\phi)$.
\end{lemma}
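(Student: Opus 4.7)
The plan is to transfer equicontinuity between $x$ and $\phi(x)$ using the local bijectivity of $\phi$ at the non-critical point $x$. Since $x\notin\mathrm{Crit}_{\mathbb{C}_p}(\phi)$, Corollary \ref{coro:tame-wild}(2) furnishes a small $\mathbb{P}^1_{\mathbb{C}_p}$-disk around $x$ on which, in local coordinates, $\phi$ is scaling and hence a bi-Lipschitz bijection onto its image. Intersecting with $\mathbb{P}^1_K$ and invoking that $\phi\in K(z)$ (together with a brief Galois argument on the unique preimage in the scaling disk), I obtain an open $\mathbb{P}^1_K$-neighborhood $U_0$ of $x$ which $\phi$ maps bijectively onto an open $\mathbb{P}^1_K$-neighborhood $V_0$ of $\phi(x)$, with a continuous local inverse $\phi^{-1}:V_0\to U_0$.

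For the forward direction, given $x\in F_K(\phi)$ and $\epsilon>0$, I would apply equicontinuity of $\{\phi^n\}_{n\ge 0}$ at $x$ to produce a neighborhood $U\subset U_0$ with $\rho(\phi^n(x),\phi^n(y))<\epsilon$ for all $n\ge 0$ and $y\in U$, shrinking $U$ further if necessary so that $V:=\phi(U)\subset D_{\mathbb{P}^1_K}(\phi(x),\epsilon)$. The set $V$ is open by the local bijectivity. For each $z=\phi(y)\in V$ with $y\in U$, the identities $\phi^n(\phi(x))=\phi^{n+1}(x)$ and $\phi^n(z)=\phi^{n+1}(y)$ convert the equicontinuity estimate at $x$ (with index $n+1\ge 1$) into the desired estimate at $\phi(x)$ (with index $n\ge 1$); the remaining case $n=0$ is handled by the arrangement $V\subset D_{\mathbb{P}^1_K}(\phi(x),\epsilon)$.

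For the reverse direction, given $\phi(x)\in F_K(\phi)$ and $\epsilon>0$, equicontinuity at $\phi(x)$ yields a neighborhood $V\subset V_0$ of $\phi(x)$ with $\rho(\phi^m(\phi(x)),\phi^m(z))<\epsilon$ for all $m\ge 0$ and $z\in V$. I would then take $U:=\phi^{-1}(V)\cap D_{\mathbb{P}^1_K}(x,\epsilon)$, which is a neighborhood of $x$ by the continuity of $\phi$. The case $n=0$ is immediate from $U\subset D_{\mathbb{P}^1_K}(x,\epsilon)$, while for $n\ge 1$ the identity $\phi^n(y)=\phi^{n-1}(\phi(y))$ combined with $\phi(y)\in V$ gives $\rho(\phi^n(x),\phi^n(y))<\epsilon$.

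The only non-formal ingredient is the openness of $\phi$ at $x$ used in the forward direction, which is precisely what fails at a critical point: there $\phi(U)$ need not contain any open $\mathbb{P}^1_K$-neighborhood of $\phi(x)$, so the equicontinuity estimate cannot be transferred. This is where the non-criticality hypothesis is essential; the reverse direction uses only the continuity of $\phi$. I do not anticipate any serious obstacle beyond setting up the local inverse carefully and keeping track of the index shift in the iteration.
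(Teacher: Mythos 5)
Your proof is correct, but it allocates the non-criticality hypothesis to the opposite direction from the paper, and I think your allocation is actually the more careful one. The paper asserts the forward implication ($x\in F_K(\phi)\Rightarrow\phi(x)\in F_K(\phi)$, i.e.\ $\phi^{-1}(J_K)\subset J_K$) as ``immediate from the definition'' and spends the non-criticality on the reverse implication, producing a scaling neighborhood $U$ of $x$ with $\phi(U)\subset V$ so that $\{\phi^{n+1}\}$ is equicontinuous on $U$. You do the opposite: you handle the reverse implication with only continuity (which is indeed all it needs — forward invariance of $J_K$ requires no hypothesis, as the paper itself notes in Section~2.3), and you spend the non-criticality on the forward implication, using the scaling disk and a Galois argument to show that $\phi$ is an \emph{open} map of $\mathbb{P}^1_K$ near $x$, so that equicontinuity can be pushed forward through $\phi$. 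This is precisely where the hypothesis is essential: at a critical point $c$ with $\deg_c\phi=m\ge 2$, the local model $z\mapsto\phi(c)+a(z-c)^m$ need not be open on $\mathbb{P}^1_K$ when $K^\times\ne(K^\times)^m$, so $\phi(U)$ can fail to be a neighborhood of $\phi(c)$ and equicontinuity at $c$ does not transfer. In short, your proof is a valid and arguably tighter argument than the paper's, correctly pinning the role of non-criticality on the openness needed in the forward direction rather than on the Lipschitz control in the reverse direction (where any local Lipschitz constant would do and scaling is overkill).
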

\begin{proof}
Instead of $J_K(\phi)$, let us work on $F_K(\phi)$. By the definition of $F_K(\phi)$, we immediately have that if $x\in F_K(\phi)$, then $\phi(x)\in F_K(\phi)$. Now we assume $\phi(x)\in F_K(\phi)$. Then there exists a neighborhood $V\subset\mathbb{P}^1_K$ of $\phi(x)$ such that the sequence  $\{\phi^n\}_{n\ge 1}$ is equicontinuous on $V$. Since $x$ is not a critical point, by Lemma \ref{lem:local scaling poly}, there exists a neighborhood $U\subset\mathbb{P}^1_K$ of $x$ such that $\phi$ is  scaling on $U$ and $\phi(U)\subset V$. Hence $\{\phi^{n+1}\}_{n\ge 1}$ is equicontinuous on $U$. Thus $\{\phi^{n}\}_{n\ge 1}$ is equicontinuous on $U$. We conclude $x\in F_K(\phi)$.
\end{proof}

The following result is an immediate consequence of Lemma \ref{regular-point-lemma} that we will use repeatedly in our later argument.
\begin{corollary}\label{coro:scaling-julia}
Let $\phi\in K(z)$ be a rational map of degree at least $2$, and let $D\subset\mathbb{P}^1_{\mathbb{C}_p}$ be a disk such that $\phi^n$ is scaling on $D$ for some $n\ge 1$. Suppose $\phi^n(D\cap\mathbb{P}^1_K)\cap J_K(\phi)\not=\emptyset$. Then $(D\cap\mathbb{P}^1_K)\cap J_K(\phi)\not=\emptyset$.
\end{corollary}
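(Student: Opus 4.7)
The plan is straightforward: choose a preimage $x\in D\cap\mathbb{P}^{1}_{K}$ of a given Julia point $y\in\phi^{n}(D\cap\mathbb{P}^{1}_{K})\cap J_{K}(\phi)$, and transfer the Julia/Fatou status from $y$ back to $x$ along the finite orbit $x,\phi(x),\ldots,\phi^{n-1}(x)$ by iterating Lemma \ref{regular-point-lemma}. Since $y\in\phi^{n}(D\cap\mathbb{P}^{1}_{K})$, at least one such $x$ exists (and is in fact unique because scaling implies injectivity on $D$, though uniqueness is not needed).

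The scaling hypothesis on $\phi^{n}$ is used exactly to verify the non-criticality assumption of Lemma \ref{regular-point-lemma} at every step. Indeed, scaling of $\phi^{n}$ on $D$ forces $(\phi^{n})'\ne 0$ throughout $D$, so in particular $(\phi^{n})'(x)\ne 0$, and the chain rule
\[
(\phi^{n})'(x)=\prod_{i=0}^{n-1}\phi'(\phi^{i}(x))
\]
then shows that $\phi'(\phi^{i}(x))\ne 0$ for every $0\le i\le n-1$; equivalently, none of the iterates $x,\phi(x),\ldots,\phi^{n-1}(x)$ is a critical point of $\phi$.

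Consequently $n$ successive applications of Lemma \ref{regular-point-lemma} give $\phi^{i}(x)\in F_{K}(\phi)$ if and only if $\phi^{i+1}(x)\in F_{K}(\phi)$ for every $0\le i\le n-1$, which composes into
\[
x\in J_{K}(\phi)\iff \phi^{n}(x)\in J_{K}(\phi).
\]
Since $\phi^{n}(x)=y\in J_{K}(\phi)$, we conclude $x\in J_{K}(\phi)\cap(D\cap\mathbb{P}^{1}_{K})$, proving the intersection is non-empty. I do not anticipate a real obstacle here: the only substantive input beyond the chain-rule bookkeeping is Lemma \ref{regular-point-lemma}, which is already in hand, and the scaling hypothesis feeds directly into its non-criticality assumption.
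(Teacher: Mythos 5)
Your proof is correct and matches the paper's intent exactly: the paper states that the corollary is an immediate consequence of Lemma \ref{regular-point-lemma}, and your argument supplies precisely the expected bookkeeping (scaling of $\phi^n$ on $D$ forces the local degree of $\phi^n$ to be $1$ at every point of $D$, hence none of the intermediate iterates $\phi^i(x)$, $0\le i\le n-1$, is critical, so Lemma \ref{regular-point-lemma} can be applied $n$ times). No gap or deviation to report.
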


For a rational map $\phi\in\mathbb{C}_p(z)$, the following result asserts that any disk intersecting the Julia set has an iteration with large diameter.

\begin{lemma}\label{lem:dimater-large}
Let $\phi\in \mathbb{C}_p(z)$ be a rational map of degree at least $2$, and let $D\subset\mathbb{P}^1_{\mathbb{C}_p}$ be a disk such that $D\cap J_{\mathbb{C}_p}(\phi)\not=\emptyset$. Then there exists $n_0\ge 0$ such that
$$\mathrm{diam}(\phi^{n_0}(D))=1.$$
\end{lemma}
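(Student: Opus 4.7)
The plan is to argue by contradiction, leveraging the density of repelling periodic points in the Julia set together with local scaling at such points. Assume toward contradiction that $\mathrm{diam}(\phi^n(D)) < 1$ for every $n \geq 0$. Since $D$ is an open disk meeting $J_{\mathbb{C}_p}(\phi)$ and repelling periodic points are dense in $J_{\mathbb{C}_p}(\phi)$ (B\'ezivin's theorem \cite{Bezivin01}, cited in the introduction; its hypothesis is satisfied because a nonempty $J_{\mathbb{C}_p}(\phi)$ forces the existence of at least one repelling periodic point by Hsia's theorem together with \cite[Proposition 1.1]{Benedetto01}), there exists a repelling periodic point $x_0 \in D$ of some period $k \geq 1$. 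Let $\lambda := (\phi^k)'(x_0)$, so $|\lambda|_p > 1$; in particular $x_0$ is not a critical point of $\phi^k$.

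Applying Lemma \ref{lem:local scaling poly} to $\phi^k$ at $x_0$, there exists a closed disk $E_0 := \overline{D}_{\mathbb{P}^1_{\mathbb{C}_p}}(x_0, r_0) \subset D$ on which, in local coordinates, $\phi^k$ is scaling with ratio $|\lambda|_p$. Set $E_n := \phi^{nk}(E_0)$; each $E_n$ is a disk containing the fixed point $x_0$ of $\phi^k$. As long as $\phi^k$ remains scaling on $E_n$, the identity $\mathrm{diam}(E_{n+1}) = |\lambda|_p \cdot \mathrm{diam}(E_n)$ yields strict geometric growth. When some $E_n$ first contains a critical point $c$ of $\phi^k$, I would invoke Corollary \ref{coro:image-ratio} to bound $\mathrm{diam}(E_{n+1}) / \mathrm{diam}(E_n)$ from below: this ratio remains controlled (essentially by $|\deg_c \phi^k|_p$ in the tame case and by $p^{-1/(p-1)}|\deg_c \phi^k|_p$ in the wild case), and more importantly, the image remains a proper disk only if it has diameter strictly less than $1$, a constraint we are assuming holds for all iterates.

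Because $\phi^k$ has only finitely many critical points in $\mathbb{P}^1_{\mathbb{C}_p}$, the potentially adverse ratio factors intervene at most finitely many times along the orbit, while the cumulative expansion $|\lambda|_p^n$ from the scaling regime grows without bound. Hence $\mathrm{diam}(E_n)$ must reach $1$ at some finite step $n_0$. Since $E_0 \subset D$, this yields $\mathrm{diam}(\phi^{n_0 k}(D)) \geq \mathrm{diam}(E_{n_0}) = 1$, and as spherical diameters are bounded by $1$, equality holds, contradicting the hypothesis. The principal obstacle is bookkeeping at wild critical points, where $|\deg_c \phi^k|_p < 1$ introduces a genuine shrinking factor in Corollary \ref{coro:image-ratio}; I would overcome this by pairing the finiteness of the critical set of $\phi^k$ with the strict expansion $|\lambda|_p > 1$, arguing that the net multiplicative effect over sufficiently many iterations still drives the diameter to $1$. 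An alternative route, sidestepping the critical-point analysis, would be to invoke Hsia's non-archimedean Montel theorem \cite{Hsia00}: the contradictory assumption would make $\{\phi^n|_D\}$ omit ``enough'' of $\mathbb{P}^1_{\mathbb{C}_p}$ uniformly to force equicontinuity on $D$, which is incompatible with $D \cap J_{\mathbb{C}_p}(\phi) \neq \emptyset$.
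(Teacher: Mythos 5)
Your argument takes a genuinely different route from the paper's, but as written it has a gap. The paper argues by equicontinuity: assuming $\mathrm{diam}(\phi^n(D))<1$ for all $n$, it chooses isometries $\theta_n\in\mathrm{PGL}(2,\mathbb{C}_p)$ with $\theta_n(\phi^n(D))\subset D_{\mathbb{C}_p}(0,1)$, cites \cite[Corollary 5.18]{Benedetto19} for equicontinuity of $\{\theta_n\circ\phi^n\}$ on $D$, and then \cite[Lemma 5.6]{Benedetto19} to transfer this to $\{\phi^n\}$, contradicting $D\cap J_{\mathbb{C}_p}(\phi)\neq\emptyset$. This is essentially the ``alternative route'' you sketch at the end, except that the renormalization by isometries is the key step your phrasing (``omit enough of $\mathbb{P}^1_{\mathbb{C}_p}$ uniformly'') skips over: the iterated images $\phi^n(D)$ may wander and need not omit any fixed set.

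The gap in your primary route is twofold. First, Corollary~\ref{coro:image-ratio} bounds the ratio $\mathrm{diam}(\phi(D_1))/\mathrm{diam}(\phi(D_2))$ of images of two nested disks $D_1\subset D_2$ both avoiding the critical point; it does not bound the image-to-preimage ratio $\mathrm{diam}(E_{n+1})/\mathrm{diam}(E_n)$ that you actually need, so it is the wrong tool. Second, the assertion that the adverse factors ``intervene at most finitely many times'' is unjustified. The disks $E_n=\phi^{nk}(E_0)$ all contain the fixed point $x_0$ and in fact form an increasing chain (if $E_{n+1}\subsetneq E_n$ one would get $|\lambda|_p\le\mathrm{diam}(E_{n+1})/\mathrm{diam}(E_n)\le 1$ by the non-archimedean Schwarz estimate, a contradiction), so once some $E_n$ swallows a critical point $c$ of $\phi^k$, every later $E_m$ contains $c$ too and $\phi^k$ fails to be scaling on $E_m$ for all $m$ past that threshold. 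Finiteness of $\mathrm{Crit}_{\mathbb{C}_p}(\phi^k)$ bounds how many critical points there are, not how often the orbit of $E_0$ encounters them.

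The strategy is nonetheless salvageable with a cruder estimate that avoids all critical-point bookkeeping. Under the contradiction hypothesis each $\phi^{nk}(D)$ has spherical diameter $<1$, so after an isometry taking $x_0$ to $0$ it is a bounded affine disk, $\phi^{nk}$ has no pole on $D$, and on $D=D(0,r)$ it is a power series $\phi^{nk}(z)=\sum_{i\ge 1}a_iz^i$ with $a_1=\lambda^n$. The image of such a disk under a nonconstant power series is a disk of radius $\sup_{i\ge1}|a_i|r^i\ge|a_1|r$, whence
$$\mathrm{diam}\left(\phi^{nk}(D)\right)\ge|\lambda|_p^{\,n}\,\mathrm{diam}(D)\longrightarrow\infty,$$
contradicting $\mathrm{diam}(\phi^{nk}(D))<1$ for $n$ large. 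With this fix your proof is comparable in length to the paper's. Its cost is reliance on B\'ezivin's density theorem, whose hypothesis (existence of a repelling periodic point) does not quite follow from Hsia's theorem together with \cite[Proposition 1.1]{Benedetto01} as you suggest---Hsia places periodic points \emph{near} $J_{\mathbb{C}_p}(\phi)$, not necessarily inside it---though the existence result is standard. The paper's equicontinuity argument needs no such seed.
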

\begin{proof}
Suppose to the contrary that $\mathrm{diam}(\phi^n(D))<1$ for all $n\ge 0$. Let $\theta_n\in\mathrm{PGL}(2,\mathbb{C}_p)$ be an isometry such that $\theta_n(\phi^n(D))\subset D_{\mathbb{C}_p}(0,1)$. It follows from \cite[Corollary 5.18]{Benedetto19} that the sequence $\{\theta_n\circ \phi^n\}_{n\ge 0}$ is equicontinuous on $D$ with respect to the spherical metric.  Applying \cite[Lemma 5.6]{Benedetto19}, we obtain that $\{\phi^n\}_{n\ge 0}$ is equicontinuous on $D$, and hence $D\subset F_{\mathbb{C}_p}(\phi)$, which contradicts the assumption $D\cap J_{\mathbb{C}_p}(\phi)\not=\emptyset$.
\end{proof}

\subsection{Fatou components}\label{sec:Fatou-components} For a rational map $\phi\in\mathbb{C}_p(z)$ of degree at least $2$, it is possible to define the Fatou components in terms of D-components or analytic components, see \cite{Benedetto02}. However, it is more convenient for our purposes to inherit the terminologies from the Berkovich dynamics.

Let $\mathbb{P}^{1,an}_{\mathbb{C}_p}$ be the Berkovich space over $\mathbb{C}_p$ which is a compact, uniquely path connected, Hausdroff topological space, see \cite{BRbook2010, Berkovich90, RL2003Ast, Rivera03a, Rivera05a}. Since $\mathbb{P}^1_{\mathbb{C}_p}$ can be identified with a dense subset in $\mathbb{P}^{1,an}_{\mathbb{C}_p}$, the rational map $\phi$ extends to a self map on $\mathbb{P}^{1,an}_{\mathbb{C}_p}$, see \cite{BRbook2010}. To abuse notation, we also call this extension $\phi$. The {\it Berkovich Julia set} $J_{Ber}(\phi)$ consists of the points $\xi\in\mathbb{P}^{1,an}_{\mathbb{C}_p}$ such that for all (weak) neighborhood $U$ of $\xi$, the set $\cup_{n=1}^\infty\phi^n(U)$ omits at most two points in $\mathbb{P}^{1,an}_{\mathbb{C}_p}$. The complement of $J_{Ber}(\phi)$ is the {\it Berkovich Fatou set} $F_{Ber}(\phi)$. For the equivalent definitions for  $J_{Ber}(\phi)$ and  $F_{Ber}(\phi)$ in terms of appropriate notions of equicontinuity or normality, we refer to \cite{Favre12}.

Following \cite[Theorem 8.3]{Benedetto19}, we have that $F_{\mathbb{C}_p}(\phi)$ is a subset of $F_{Ber}(\phi)$, more precisely, $F_{Ber}(\phi)\cap\mathbb{P}^{1}_{\mathbb{C}_p}=F_{\mathbb{C}_p}(\phi)$.
Then we say a subset $\Omega\subset F_{\mathbb{C}_p}(\phi)$ is a {\it (Fatou) component} of $F_{\mathbb{C}_p}(\phi)$ if there exists a component $U$ of $F_{Ber}(\phi)$ such that $\Omega=U\cap F_{\mathbb{C}_p}(\phi)$. In our term, a component of $F_{\mathbb{C}_p}(\phi)$ is a union of possibly infinitely many D-components in \cite{Benedetto02}.

The following result is an immediate consequence of the fact that each component of $F_{Ber}(\phi)$ is mapped onto a component of $F_{Ber}(\phi)$ by $\phi$, see \cite[Proposition 8.2]{Benedetto19}.

\begin{proposition}
Let $\phi \in  \C_p(z)$ be a rational function of degree at least $2$, and let $\Omega$ be a component of $F_{\mathbb{C}_p}(\phi)$. Pick $x\in\Omega$. Then $\phi(\Omega)$ is the component of $F_{\mathbb{C}_p}(\phi)$ containing $\phi(x)$. Moreover, $\phi^{-1}(\Omega)$ is the union of components of  $F_{\mathbb{C}_p}(\phi)$ containing the preimages $\phi^{-1}(x)$.
\end{proposition}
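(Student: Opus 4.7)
The plan is to lift everything to the Berkovich projective line $\P^{1,an}_{\C_p}$ and then restrict back to $\P^1_{\C_p}$, leveraging the stated Berkovich fact that $\phi$ maps a component of $F_{Ber}(\phi)$ onto a component of $F_{Ber}(\phi)$. First I would write $\Omega = U \cap F_{\C_p}(\phi)$ for some component $U$ of $F_{Ber}(\phi)$, using the definition of a Fatou component recalled just before the statement. By the Berkovich fact, $V := \phi(U)$ is a component of $F_{Ber}(\phi)$, and since $F_{Ber}(\phi)\cap\P^1_{\C_p} = F_{\C_p}(\phi)$ (from \cite[Theorem 8.3]{Benedetto19}), the intersection $V\cap F_{\C_p}(\phi)$ is the Fatou component containing $\phi(x)$.

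The forward inclusion $\phi(\Omega)\subseteq V\cap F_{\C_p}(\phi)$ is immediate: $\phi(\Omega)\subseteq\phi(U)=V$ and $\phi(\Omega)\subseteq\phi(\P^1_{\C_p})\subseteq\P^1_{\C_p}$, while $F_{\C_p}(\phi)$ is forward invariant under $\phi$ on the algebraically closed field $\C_p$. For the reverse inclusion, I would use that $\phi$ as a map between Berkovich components $U\to V$ is surjective, together with the standard fact that $\phi$ preserves the type of Berkovich points — in particular, the only preimages of a type I (classical) point are classical. Thus any $y\in V\cap F_{\C_p}(\phi)$ has at least one preimage in $U$, and that preimage is automatically classical, hence lies in $U\cap F_{\C_p}(\phi)=\Omega$. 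This gives $\phi(\Omega)=V\cap F_{\C_p}(\phi)$, proving the first assertion.

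For the inverse image statement, I would again begin in the Berkovich setting. Write $\phi^{-1}(U)=U_1\sqcup\cdots\sqcup U_k$ as a disjoint union of components of $F_{Ber}(\phi)$ (finitely many by the finite degree of $\phi$, with each $\phi(U_i)=U$ by the Berkovich surjectivity result). Then
\[
\phi^{-1}(\Omega) = \phi^{-1}(U)\cap\P^1_{\C_p} \cap F_{\C_p}(\phi) = \bigsqcup_{i=1}^{k}\bigl(U_i\cap F_{\C_p}(\phi)\bigr),
\]
using total invariance of $F_{\C_p}(\phi)$ under $\phi$ and the identification $F_{Ber}(\phi)\cap\P^1_{\C_p}=F_{\C_p}(\phi)$. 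Each $U_i\cap F_{\C_p}(\phi)$ is a Fatou component of $\phi$ by definition, and by the type-preservation argument above, the classical preimages $\phi^{-1}(x)$ are distributed among these $U_i\cap F_{\C_p}(\phi)$, with every such component containing at least one preimage of $x$ (since $\phi(U_i)=U\ni x$ and only classical preimages of the classical point $x$ exist).

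The only genuine subtlety — and the place where one must be careful — is the passage from the Berkovich surjectivity $\phi(U)=V$ to surjectivity at the level of classical points. This is precisely where the type-preservation of $\phi$ on $\P^{1,an}_{\C_p}$ is needed: without it, one could imagine classical points in $V$ having only non-classical preimages in $U$, which would make $\phi(\Omega)$ a proper subset of $V\cap F_{\C_p}(\phi)$. Once this point is settled (and it is a standard property of rational maps on the Berkovich line), the rest of the argument is a direct restriction from $\P^{1,an}_{\C_p}$ to $\P^1_{\C_p}$.
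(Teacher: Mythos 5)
Your proof is correct and follows the same route the paper intends: the paper simply cites the Berkovich-level fact (components of $F_{Ber}(\phi)$ map onto components) and calls the statement ``an immediate consequence,'' whereas you spell out the descent to classical points, correctly flagging the one nontrivial step — that type-preservation of $\phi$ on $\P^{1,an}_{\C_p}$ is what upgrades Berkovich surjectivity $\phi(U)=V$ to surjectivity of $\phi\colon U\cap\P^1_{\C_p}\to V\cap\P^1_{\C_p}$. This is exactly the right detail to add; the rest is the straightforward restriction argument the paper has in mind.
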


A periodic component of $F_{Ber}(\phi)$ is either attracting which contains an attracting periodic point, or indifferent which is bijective to itself under certain iteration, see \cite[Section 4.4]{RL2003Ast}. The components of $F_{\C_p}(\phi)$ inherit immediately classification of periodic components of $F_{Ber}(\phi)$.

\begin{corollary}\label{Classification-theorem}
Let $\phi \in  \C_p(z)$ be a rational function of degree at least $2$, and let $\Omega$ be a periodic connected component of $F_{\mathbb{C}_p}(\phi)$. Then $\Omega$ is either an attracting component containing an attracting periodic point, or an indifferent component mapping bijectively to itself under certain iteration.
\end{corollary}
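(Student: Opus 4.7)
The plan is to transfer the classification of periodic Berkovich Fatou components directly to the $\C_p$-setting via the bijective correspondence $U \mapsto U \cap \mathbb{P}^1_{\C_p}$ between components of $F_{Ber}(\phi)$ and components of $F_{\C_p}(\phi)$ built into the definition. The statement is truly a corollary, so the real content is verifying that each ingredient of the Berkovich classification (due to Rivera-Letelier, as cited just above the statement) descends to a statement about the classical component $\Omega$.

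First, I would fix a periodic component $\Omega$ of period $n$ and let $U$ denote the unique component of $F_{Ber}(\phi)$ with $\Omega = U \cap \mathbb{P}^1_{\C_p}$. Using the preceding proposition (which implies $\phi$ permutes the components of $F_{\C_p}(\phi)$) together with the fact that $\phi$ permutes the components of $F_{Ber}(\phi)$, I would check that the correspondence $U \mapsto U \cap \mathbb{P}^1_{\C_p}$ intertwines the two $\phi$-actions. Concretely, for any classical $x \in U$, the image $\phi(x) \in \mathbb{P}^1_{\C_p}$ lies simultaneously in $\phi(\Omega)$ and in the Berkovich component $\phi(U)$, forcing $\phi(U) \cap \mathbb{P}^1_{\C_p} = \phi(\Omega)$. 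Iterating, $\phi^n(U) = U$, so $U$ is itself periodic of period (dividing) $n$; equality follows by the same intertwining argument applied to any proper divisor.

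Next, I would invoke the Rivera-Letelier classification (\cite[Section 4.4]{RL2003Ast}) for the periodic Berkovich component $U$: either (a) $U$ is attracting and contains an attracting periodic point of $\phi$, or (b) $U$ is indifferent and $\phi^n: U \to U$ is a bijection. In case (a), an attracting periodic point of $\phi$ is by definition a classical point of $\mathbb{P}^1_{\C_p}$, hence lies in $U \cap \mathbb{P}^1_{\C_p} = \Omega$, giving the first alternative of the corollary. In case (b), the bijection $\phi^n: U \to U$ preserves the subset $\mathbb{P}^1_{\C_p}$ (since $\phi^n$ is defined on $\mathbb{P}^1_{\C_p}$ and takes classical points to classical points), and therefore its restriction to $\Omega = U \cap \mathbb{P}^1_{\C_p}$ is a bijection of $\Omega$ onto itself, giving the second alternative.

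The main obstacle is really a bookkeeping one rather than a hard technical step: one must be careful that the word ``component'' in the classical setting is taken in the sense fixed just above the corollary (the intersection of a Berkovich component with $\mathbb{P}^1_{\C_p}$, possibly a union of D-components), so that the correspondence with Berkovich components is bijective and the above argument goes through without subtleties. Once that convention is in force, the proof is just a translation from the Berkovich classification, and no further dynamical work is needed.
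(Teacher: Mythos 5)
Your proposal is correct and takes essentially the same route the paper has in mind: the paper itself gives no proof, simply asserting that the classical Fatou components "inherit immediately" the Berkovich classification via the bijective correspondence $U \mapsto U \cap \mathbb{P}^1_{\C_p}$, and your argument fills in the bookkeeping (intertwining of $\phi$-actions, periodicity transfer, type-I preservation) that makes the inheritance precise. The one step you might spell out a bit more is surjectivity of $\phi^n|_\Omega$ in the indifferent case: the unique Berkovich preimage in $U$ of a classical point of $\Omega$ must itself be classical because $\phi$ preserves the type of a Berkovich point.
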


A wandering domain is a component of $F_{\C_p}(\phi)$ that is not eventually periodic. In contrast to the complex setting (see \cite{Sullivan85}), there exist polynomials in $\mathbb{C}_p[z]$ possessing wandering domains, see \cite[Theorem 1.1]{Benedetto02I}.
However, for rational maps of degree at least $2$ with algebraic coefficients, it is conjectured that such maps have no wandering domains in $\mathbb{P}^{1}_{\C_p}$, see \cite[Conjecture 5.50]{Silverman07}.
This conjecture holds under the no  wild recurrent Julia critical points assumption, see \cite[Theorem 1.2]{Benedetto00}. For our purposes, we state the following no wandering domains result that is slightly stronger than Benedetto's original version. The proof is a minor modification of Benedetto's proof in \cite[Theorems 1.2 and 1.3]{Benedetto00}, see also \cite[Section 11.3]{Benedetto19}. We omit the proof here.

\begin{theorem}\label{no-Wandering-thm}
Let $\phi\in K(z)$ be a rational map of degree at least $2$. Suppose $\phi$ has no wild recurrent  critical  points in $J_{\mathbb{C}_p}(\phi)\cap\mathbb{P}^1_K$. Then every component of $F_{\mathbb{C}_p}(\phi)$ that intersects with $\mathbb{P}^1_K$ is non-wandering. Moreover, there are only finitely many periodic components of $F_{\mathbb{C}_p}(\phi)$ intersecting $\mathbb{P}^1_K$.  
\end{theorem}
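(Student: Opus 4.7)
The plan is to adapt Benedetto's wandering domains argument from \cite[Theorems 1.2, 1.3]{Benedetto00} (see also \cite[Section 11.3]{Benedetto19}) so that it tracks only Fatou components meeting $\mathbb{P}^1_K$ and is sensitive only to critical points in $J_{\C_p}^K(\phi)$. Assume for contradiction that $U$ is a wandering component of $F_{\mathbb{C}_p}(\phi)$ with $U\cap\mathbb{P}^1_K\ne\emptyset$, so the images $U_n:=\phi^n(U)$ are pairwise distinct components. Fix $x_0\in U\cap\mathbb{P}^1_K$ and set $x_n:=\phi^n(x_0)\in U_n\cap\mathbb{P}^1_K$.

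First I would attach to each $x_n$ a pair of $\mathbb{C}_p$-disks, an outer disk $B_n^{\mathrm{out}}\subset U_n$ chosen maximal around $x_n$ and an inner disk $B_n^{\mathrm{in}}\ni x_n$ on which $\phi$ is amenable to the local scaling analysis, and track the ratio of their diameters under iteration. By Corollary \ref{coro:tame-wild}, away from small neighborhoods of the critical set $\phi$ is locally scaling, so on iterations at which the trajectory is far from every critical point the ratio $\mathrm{diam}(B_n^{\mathrm{in}})/\mathrm{diam}(B_n^{\mathrm{out}})$ is preserved. Corollary \ref{coro:image-ratio} quantifies the degradation when the trajectory enters a neighborhood of a critical point: near a tame critical point one loses a factor controlled by the local degree, while near a wild critical point one additionally loses the factor $p^{-1/(p-1)}$. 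Since the $U_n$ are pairwise disjoint, pigeonholing them into finitely many spherical annuli combined with these ratio estimates runs Benedetto's counting contradiction: a wandering trajectory would have to revisit a neighborhood of some wild critical point infinitely often, forcing that critical point to be recurrent.

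The modification relative to Benedetto's original proof lies in ruling out wild recurrent critical points \emph{outside} $\mathbb{P}^1_K$ as genuine obstructions. Since $\phi\in K(z)$ and each $x_n$ is $K$-rational, any disk centered at $x_n$ is Galois-stable under $\mathrm{Gal}(\overline{K}/K)$. Hence any critical point $c\in\mathrm{Crit}_{\mathbb{C}_p}(\phi)\setminus\mathbb{P}^1_K$ encountered by the trajectory appears together with all of its Galois conjugates at the same $p$-adic distance from $x_n$, and the combined contribution to the local behavior of $\phi$ near $x_n$ is $K$-rational. I would show that the scaling/distortion estimates from Corollary \ref{coro:image-ratio} can be packaged so as to treat each such Galois packet as a single ``effective'' critical event whose recurrence is inherited from that of the underlying critical orbit, and further that, along a $\mathbb{P}^1_K$-trajectory, only critical points in $J_{\C_p}^K(\phi)$ can drive recurrent wild obstructions. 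The hypothesis then rules out exactly the bad case. The hardest step will be implementing this Galois-packet reduction compatibly with the three scaling regimes of Corollary \ref{coro:image-ratio}; this is the content of the ``minor modification'' the authors attribute to Benedetto's proof.

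For the finiteness of periodic components meeting $\mathbb{P}^1_K$, I would use Corollary \ref{Classification-theorem}: each periodic component is either attracting or indifferent. Each attracting cycle of $\mathbb{C}_p$-components absorbs at least one critical point, so there are at most $2\deg\phi-2$ attracting cycles overall, which bounds those meeting $\mathbb{P}^1_K$. For indifferent periodic components meeting $\mathbb{P}^1_K$, I would invoke \cite[Section 4.4]{RL2003Ast}: each such component carries a critical point (or a point of the grand orbit of critical points) in its Berkovich closure, and since there are only finitely many critical grand orbits, only finitely many indifferent cycles can meet $\mathbb{P}^1_K$.
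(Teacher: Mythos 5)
Your plan for the no-wandering part identifies the right modification: the degradation from wild Julia critical points \emph{outside} $\mathbb{P}^1_K$ must be ruled out because the $K$-rational trajectory cannot approach them. But the Galois-packet machinery is more than is needed. The relevant bound is exactly the quantity $r_5 := \min\{\rho(c,\mathbb{P}^1_K): c\in\mathrm{Crit}_{\mathbb{C}_p}(\phi)\setminus\mathrm{Crit}_K(\phi)\}$ that the paper itself introduces in Section \ref{sec:distance}, and its positivity is immediate from local compactness of $\mathbb{P}^1_K$ and finiteness of $\mathrm{Crit}_{\mathbb{C}_p}(\phi)$; your Galois-invariance argument gives the same bound (if $c\notin\mathbb{P}^1_K$ and $x\in\mathbb{P}^1_K$ then $|x-c|\ge\min_{\sigma}|c-\sigma c|$), but ``treating each Galois packet as a single effective critical event whose recurrence is inherited'' is extra structure you do not need. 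Once you note that a disk $B_n\subset F_{\mathbb{C}_p}(\phi)$ cannot contain a Julia critical point, the ultrametric forces $\mathrm{diam}(B_n)\le\rho(x_n,c)$ for any Julia critical $c$, so with $\rho(x_n,c)\ge r_5$ for non-$K$ critical $c$, the distortion contributed by such $c$ is uniformly bounded; only $K$-rational Julia critical points can drive unbounded degradation, and the hypothesis then handles those. This is the clean form of the ``minor modification.'' The phrase ``pigeonholing them into finitely many spherical annuli'' is also a loose paraphrase of Benedetto's actual counting scheme, but since the body of the adaptation is correct in spirit that is a smaller concern.

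The finiteness part has a genuine gap. The attracting count is fine. But the claim that each indifferent periodic component ``carries a critical point (or a point of the grand orbit of critical points) in its Berkovich closure,'' attributed to \cite[Section 4.4]{RL2003Ast}, is not a statement there; moreover it would not give finiteness even if true, because the grand orbit of a Julia critical point is generically dense in $J_{\mathbb{C}_p}(\phi)$ and therefore meets the closure of \emph{every} Fatou component. The finiteness of periodic components meeting $\mathbb{P}^1_K$ is not a corollary of the component classification. It is a theorem in its own right, and it is precisely Benedetto's Theorem 1.3 in \cite{Benedetto00} --- note the paper explicitly cites \emph{both} Theorems 1.2 and 1.3 of \cite{Benedetto00} as what is being modified. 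The same no-wild-recurrent hypothesis is needed there, and the same $r_5$-style reduction should be run through Benedetto's proof of Theorem 1.3; substituting a critical-point count for it does not work.
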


\subsection{Brief background of symbolic dynamics}\label{sec:symbols}
We now state some preliminaries of symbolic dynamics for later use. Standard references are \cite{Kitchens98} and \cite{Lind95}.

Let $\mathcal{A}$ be a countable symbol set. Associated with the discrete topology, the set $\mathcal{A}$ is noncompact.  Then the product space $\Sigma:=\mathcal{A}^{\mathbb{N}}$ is also noncompact under the product topology.
 For $b_0,b_1,\cdots,b_j\in\mathcal{A}$, the \textit{cylinder set} starting $b_0,b_1,\cdots,b_j$ is
 $$[b_0b_1\cdots b_j]:=\{a=(a_i)_{i\in\mathbb{N}}\in\Sigma: a_i=b_i\ \text{for}\ 0\le i\le j\}.$$
 All the cylinder sets form a countable basis of open-closed sets. Moreover, we can define a distance between two distinct points $a=\{a_i\}_{i\in\mathbb{N}}$ and $a'=\{a'_i\}_{i\in\mathbb{N}}$ in $\Sigma$ by $\tilde\rho(a,a')=2^{-n}$, where $n$ is the smallest integer with $a_n\not=a'_n$. Then the metric $\tilde\rho$ induces the same topology as the product topology.

Let $\sigma:\Sigma\to\Sigma$ be the (left-)shift. A countable $|\mathcal{A}|\times|\mathcal{A}|$, zero-one matrix $A=(A_{b_ib_j})$ induces a $\sigma$-invariant subset $\Sigma_A$ of $\Sigma$:
$$\Sigma_A=\{(b'_i)\in\Sigma: A_{b'_ib'_{i+1}}=1\},$$
which is noncompact under the subspace topology. We say the system $(\Sigma_A,\sigma)$ is a \textit{countable state Markov shift} defined by $A$, and call $(\Sigma_A,\sigma)$ \textit{irreducible} if the matrix $A$ is irreducible, i.e., for any $b_i, b_j\in \mathcal{A}$, there exists a positive integer $\ell$ such that $(A^\ell)_{b_ib_j}>0$.

Since the space $\Sigma_A$ is noncompact, we consider the Gurevich entropy $h_G(\Sigma_A,\sigma)$ for the system $(\Sigma_A,\sigma)$, see \cite{Gurevivc69} and \cite{Gurevivc70}, which is the supremum of the topological entropies $ h_{top}(\Sigma'_A, \sigma)$ over all subsystems $(\Sigma'_A,\sigma)$ formed by restricting to a finite subset of symbols in $\mathcal{A}$. If the matrix $A$ is irreducible, there is a combinatorial way to compute $h_G(\Sigma_{A}, \sigma)$: For any $a_{i_0}\in\mathcal{A}$, a {\it first-return loop} of length $n\ge 1$ at $a_{i_0}$ is a path $\{a_{i_0},a_{i_1}, \cdots, a_{i_n}\}$ such that $a_{i_0}=a_{i_n}$, $a_{i_k}\not=a_{i_0}$ for $1\le k\le n-1$ and $A_{a_{i_k},a_{i_{k+1}}}=1$ for $0\le k\le n-1$. Let $\Xi_{a_{i_0}}(n)$ be the number of the first-return loops at $a_{i_0}$ of length $n$, and set
$$G_{a_{i_0}}(z)=\sum_{n\ge 1}\Xi_{a_{i_0}}(n) z^n.$$
If $1-G_{a_{i_0}}(z)$ has a real root $R>0$ such that $G_{a_{i_0}}(z)$ converges and is not $1$ on $|z|<R$, then $h_G(\Sigma_{A}, \sigma)=-\log R$. Remark that if such an $R$ exists, then it is independent of $a_{i_0}$, see \cite{Ruette03, Vere-Jones62}.

%\medskip
\section{Restriction of the Julia set}\label{restriction}
In this section, the goal is to prove Theorem \ref{Thm:main}.
Let $\phi\in K(z)$ be a rational map of degree at least $2$. As mentioned in Section \ref{sec:intro}, by definitions, $J_K(\phi)\subset J_{\mathbb{C}_p}^{K}(\phi)$. We will show $J_{\mathbb{C}_p}^{K}(\phi)\subset J_K(\phi)$. Before that, we mention here the assumption (2) of Theorem \ref{Thm:main} is not required until Section \ref{sec:recurrent}. But we repeatedly apply the assumption (1), that is, in this section, we always make the following assumption.

\smallskip

 \textbf{Assumption:} In this section, the map $\phi$ has no wild recurrent critical points in $J^K_{\mathbb{C}_p}(\phi)$.

 \smallskip

Let $\partial J_{\mathbb{C}_p}^{K}(\phi)$ be the boundary of $J_{\mathbb{C}_p}^{K}(\phi)$ in $\mathbb{P}^{1}_{K}$, and denote by
$$\mathrm{Int}(J_{\mathbb{C}_p}^{K}(\phi)):=J_{\mathbb{C}_p}^{K}(\phi)\setminus \partial J_{\mathbb{C}_p}^{K}(\phi)$$
the interior of $J_{\mathbb{C}_p}^{K}(\phi)$ in $\mathbb{P}^{1}_{K}$.

\subsection{Critical distances}\label{sec:distance}
We state the following quantities on the distances concerning critical orbits for later use.  For each $c\in\mathrm{Crit}_{\mathbb{C}_p}(\phi)$, fix an $r_{c}>0$ satisfying Corollary \ref{coro:tame-wild} (1). We set
\begin{align*}
r_1&:=\min\left\{r_{c}:c\in\mathrm{Crit}_{\mathbb{C}_p}(\phi)\right\},\\
r_2&:=\min\left\{\rho\left(c,\ \mathrm{Crit}_{\C_p}(\phi)\setminus\{c\}\right):c\in\mathrm{Crit}_{\C_p}(\phi)\right\},\\
r_3&:=\min\left\{\rho\left(\overline{\mathcal{O}_\phi(c)},\ \mathrm{Crit}_{\C_p}(\phi)\setminus\overline{\mathcal{O}_\phi(c)}\right): c\in\mathrm{Crit}_{\C_p}(\phi)\right\},\\
r_4&:=\min\left\{\rho\left(c,J_{\mathbb{C}_p}(\phi)\right): c\in \mathrm{Crit}_{\mathbb{C}_p}(\phi)\cap F_{\mathbb{C}_p}(\phi)\right\},\ \  \text{and}\ \\
r_5&:=\min\left\{\rho\left(c,\mathbb{P}^1_K\right): c\in\mathrm{Crit}_{\mathbb{C}_p}(\phi)\setminus\mathrm{Crit}_K(\phi)\right\}.
\end{align*}
Due to the finiteness of $\mathrm{Crit}_{\C_p}(\phi)$,  the closedness of the closure $\overline{\mathcal{O}_\phi(c)}$ and $J_{\mathbb{C}_p}(\phi)$, and the compactness of $\P^1_K$, each of the above quantities is strictly positive. Thus
$$r_\ast:=\min\{r_1,r_2,r_3,r_4,r_5\}>0.$$

\subsection{Points in $\partial J_{\mathbb{C}_p}^{K}(\phi)$} \label{sec:boundary}

The following result, whose proof combines the classification result (Corollary \ref{Classification-theorem}) and the no wandering domains result (Theorem \ref{no-Wandering-thm}), asserts that $\partial J_{\mathbb{C}_p}^{K}(\phi)\subset J_K(\phi)$.

\begin{lemma}\label{boundary-julia-lem}
If $x\in \partial J_{\mathbb{C}_p}^{K}(\phi)$, then  $x\in J_K(\phi)$.
\end{lemma}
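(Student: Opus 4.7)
The plan is to argue by contradiction: I would assume $x\in F_K(\phi)$ and derive a contradiction with the boundary hypothesis. Under this assumption I obtain an open $K$-disk $V\subset\mathbb{P}^1_K$ around $x$ such that, for any prescribed $\epsilon>0$, shrinking $V$ yields $\rho(\phi^n(y),\phi^n(x))<\epsilon$ for every $y\in V$ and every $n\ge 0$. Because $x\in\partial J_{\mathbb{C}_p}^K(\phi)$, the boundary hypothesis supplies a point $y\in V\cap F_{\mathbb{C}_p}^K(\phi)$; let $U$ be the Fatou component of $F_{\mathbb{C}_p}(\phi)$ containing $y$.

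Next I would invoke Theorem \ref{no-Wandering-thm}, which applies because the running assumption of the section excludes wild recurrent critical points of $\phi$ in $J_{\mathbb{C}_p}^K(\phi)$. Since $U$ meets $\mathbb{P}^1_K$, the theorem rules out wandering, so there exist $m\ge 0$ and a periodic Fatou component $W$ of period $N$ with $\phi^m(U)\subset W$. Corollary \ref{Classification-theorem} then splits the argument into two subcases: $W$ is either attracting or indifferent.

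In the attracting subcase, $W$ contains an attracting periodic point $p_0$ with $\phi^N(p_0)=p_0$, and every $\phi^N$-orbit in $W$ converges to $p_0$. Since $p_0\in F_{\mathbb{C}_p}(\phi)$ and this set is open in $\mathbb{P}^1_{\mathbb{C}_p}$, I would fix $\eta>0$ with $D_{\mathbb{P}^1_{\mathbb{C}_p}}(p_0,\eta)\subset F_{\mathbb{C}_p}(\phi)$ and calibrate $\epsilon<\eta/2$. For $k$ large enough, $\rho(\phi^{m+kN}(y),p_0)<\epsilon$, so equicontinuity at $x$ forces $\rho(\phi^{m+kN}(x),p_0)<2\epsilon<\eta$, placing $\phi^{m+kN}(x)\in F_{\mathbb{C}_p}(\phi)$. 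This contradicts total invariance of $J_{\mathbb{C}_p}(\phi)$ together with $x\in J_{\mathbb{C}_p}(\phi)$.

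In the indifferent subcase $\phi^N|_W$ is an automorphism of $W$, and via the non-Archimedean Schwarz lemma it behaves as an isometry on $W$ in its intrinsic (hyperbolic) metric. Accordingly, the forward orbit $\{\phi^{m+kN}(y)\}_k$ remains in a compact subset $C\subset W$ lying at positive spherical distance $\eta'>0$ from $J_{\mathbb{C}_p}(\phi)$; recalibrating $\epsilon<\eta'$ and running the same triangle-inequality argument again forces $\phi^{m+kN}(x)\notin J_{\mathbb{C}_p}(\phi)$, a contradiction. The hard part will be exactly this indifferent subcase: without an attractor to drive orbits into a prescribed small target, one must extract a uniform positive spherical separation between the recurrent orbit of $y$ in $W$ and $J_{\mathbb{C}_p}(\phi)$ from the isometric structure of $\phi^N|_W$, and this is the main obstacle.
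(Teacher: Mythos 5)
Your approach mirrors the paper's: boundary hypothesis supplies $y\in V\cap F^K_{\mathbb{C}_p}(\phi)$, Theorem~\ref{no-Wandering-thm} sends its orbit into a periodic component, and Corollary~\ref{Classification-theorem} splits into attracting versus indifferent. The paper frames the argument directly rather than by contradiction, but the logic is the same, and both hinge on extracting a lower bound $\epsilon>0$ for $\sup_i \rho(\phi^i(x),\phi^i(y))$.

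The genuine gap, which you half-acknowledge, is the circularity in the indifferent subcase. You first fix $\epsilon$, then take $V=V_\epsilon$ from equicontinuity at $x$, then pick $y\in V\cap F^K_{\mathbb{C}_p}(\phi)$, and only then discover the distance $\eta'=\eta'(y)$ from the orbit closure to $J_{\mathbb{C}_p}(\phi)$. Writing ``recalibrating $\epsilon<\eta'$'' does not close the argument: once you shrink $\epsilon$ you must shrink $V$, which changes the available $y$, which changes $\eta'$. What you actually need is a lower bound on $\eta'$ that is independent of $y$ (equivalently, of how deep into the indifferent component the orbit of $y$ lands). The paper's proof asserts precisely such a uniform $\epsilon$, deduced from the fact that there are only finitely many periodic components meeting $\mathbb{P}^1_K$, and proves no more than you do on this point; but your version commits to a $y$-dependent choice and then tries to reorder the quantifiers, which is not a valid repair. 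To fix it, establish the $y$-independent bound first (for the attracting components the minimum of the finitely many $\delta(p_0)$ works; for the indifferent components you must show that orbits in $W\cap\mathbb{P}^1_K$ cannot accumulate on $J_{\mathbb{C}_p}(\phi)$ at a rate depending on $y$), and only then choose $\epsilon$ once and for all before invoking equicontinuity. A secondary caveat: your claim that the orbit lies in ``a compact subset $C\subset W$ at positive spherical distance from $J_{\mathbb{C}_p}(\phi)$'' needs an argument, since $W\cap\mathbb{P}^1_K$ need not be closed in $\mathbb{P}^1_K$ when $\partial W$ meets $\mathbb{P}^1_K$; appealing to the Schwarz-lemma isometry of $\phi^N|_W$ alone does not yet give spherical-metric separation from the boundary.
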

\begin{proof}
Since $x\in \partial J_{\mathbb{C}_p}^{K}(\phi)$, for any small neighborhood  $V\subset\mathbb{P}^{1}_{K}$ of $x$, the set  $V\cap F_{\mathbb{C}_p}^{K}(\phi)$ is non-empty.  By Theorem \ref{no-Wandering-thm},  for each  $y\in V\cap F_{\mathbb{C}_p}^{K}(\phi)$, there exists an integer $m\ge 0$ such that $\phi^{m}(y)\in\mathbb{P}^1_K$ lies in a periodic component of $F_{\C_p}(\phi)$.  Again by Theorem \ref{no-Wandering-thm}, the map $\phi$ has finitely many periodic components of $F_{\C_p}(\phi)$ intersecting $\mathbb{P}^1_K$, and by Corollary \ref{Classification-theorem}, the periodic components of $F_{\C_p}(\phi)$ are either attracting or indifferent. Then there exists a constant $\epsilon>0$ such that  for all $y \in  V\cap F_{\mathbb{C}_p}^{K}(\phi)$,
\[\sup_{i\geq 0} \rho(\phi^i(x),\phi^{i}(y))>\epsilon.\]
This means that $x\in J_K(\phi)$.
\end{proof}

\subsection{Reduction to critical points}\label{sec:reduce}

In this subsection, we show that in order to establish $J_{\mathbb{C}_p}^{K}(\phi)\subset J_K(\phi)$, it only requires to treat the critical points in $J^K_{\mathbb{C}_p}(\phi)$. We first state the following characterization of the orbit of a disk intersecting $J^K_{\mathbb{C}_p}(\phi)$.
\begin{lemma}\label{lem:uniform-scaling}
Let $D\subset \mathbb{P}^1_K$ be a disk such that $D\cap J^K_{\mathbb{C}_p}(\phi)\not=\emptyset$. Then at least one of the following holds:
\begin{enumerate}
\item There exists $n_0\ge 0$ such that
$$\phi^{n_0}(D)\cap\mathrm{Crit}_K(\phi)\not=\emptyset.$$
\item There exists $\epsilon_\ast>0$, independent of $D$, such that
$$\sup_{n\ge 1}\mathrm{diam}(\phi^n(D))\ge\epsilon_\ast.$$
\end{enumerate}
\end{lemma}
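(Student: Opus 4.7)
My plan is to embed the $K$-disk $D$ in an appropriate Galois-invariant $\mathbb{C}_p$-disk $\tilde D$, track the $\mathbb{C}_p$-orbit $\tilde D_n := \phi^n(\tilde D)$, and identify a specific iterate at which either a $K$-critical point is captured or the diameter is forced to be large. Concretely, I would fix $x_0 \in D$ and choose $\tilde D = D_{\mathbb{P}^1_{\mathbb{C}_p}}(x_0, r)$ with $r > 0$ so that $\tilde D \cap \mathbb{P}^1_K = D$. Because the center $x_0$ lies in $K$, the disk $\tilde D$ is Galois-invariant, and hence so is every iterate $\tilde D_n$, since $\phi \in K(z)$.

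Let $n_0$ be the smallest non-negative integer such that $\tilde D_{n_0}$ contains some $\mathbb{C}_p$-critical point $c$ of $\phi$ (or $n_0 = \infty$ if none does). The first key observation is that for every $n \leq n_0$ the restriction $\phi^n|_{\tilde D}: \tilde D \to \tilde D_n$ is a bijection, since by minimality of $n_0$ each intermediate disk $\tilde D_0, \ldots, \tilde D_{n_0 - 1}$ avoids $\mathrm{Crit}_{\mathbb{C}_p}(\phi)$, so $\phi$ is injective on each of them. Combining this with Galois invariance yields the identity $\phi^{n}(D) = \tilde D_{n} \cap \mathbb{P}^1_K$ for all $n \leq n_0$: indeed, for $y \in \tilde D_n \cap \mathbb{P}^1_K$ and $x \in \tilde D$ the unique preimage with $\phi^n(x)=y$, one has $\sigma(x) \in \tilde D$ and $\phi^n(\sigma(x)) = \sigma(y) = y$ for every $\sigma \in \mathrm{Gal}(\overline{K}/K)$, so injectivity forces $\sigma(x) = x$ and hence $x \in \tilde D \cap \mathbb{P}^1_K = D$.

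Now I would split by the location of $c$. If $c \in K$, the identity above puts $c \in \phi^{n_0}(D) \cap \mathrm{Crit}_K(\phi)$, yielding alternative (1). If $c \notin K$, then the critical distance $r_5$ of Section \ref{sec:distance} forces $\rho(c, \mathbb{P}^1_K) \geq r_5$, while $\tilde D_{n_0}$ already meets $\mathbb{P}^1_K$ (since it contains $\phi^{n_0}(D) \neq \emptyset$); hence $\mathrm{diam}(\tilde D_{n_0}) \geq r_5$. The identity then gives $\mathrm{diam}(\phi^{n_0}(D)) = \mathrm{diam}(\tilde D_{n_0} \cap \mathbb{P}^1_K)$, which is bounded below by a multiple of $r_5$ using the discreteness of $|K^\times|$ (a $\mathbb{C}_p$-disk with a $K$-center of spherical radius $r$ has its $K$-trace of spherical diameter $\geq |\pi| \cdot r$). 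In the remaining case $n_0 = \infty$, $\phi^n|_{\tilde D}$ is injective for every $n$, and Lemma \ref{lem:dimater-large} produces an $N$ with $\mathrm{diam}(\tilde D_N) = 1$, so the same comparison gives $\mathrm{diam}(\phi^N(D)) \geq |\pi|$. Setting $\epsilon_\ast := |\pi| \cdot \min\{1, r_5\}$, which depends only on $K$ and $\phi$, handles all cases.

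The main obstacle is the careful comparison between $\mathbb{C}_p$- and $K$-diameters, which is sensitive to the discreteness of the value group of $K$ and the open/closed nature of each disk. A secondary subtlety is the edge case $n_0 = 0$ with $c \notin K$: the above analysis produces the diameter bound at $n=0$, whereas alternative (2) requires a witness at $n \geq 1$. I would sidestep this by applying the same analysis to $\tilde D_1 = \phi(\tilde D)$, which again meets $J_{\mathbb{C}_p}(\phi)$ by forward invariance, thereby shifting the witness by one step. Finally, one must verify that each intermediate image $\tilde D_n$ for $n \leq n_0$ remains a proper disk (not all of $\mathbb{P}^1_{\mathbb{C}_p}$), which follows immediately from the injectivity of $\phi^n$ on $\tilde D$.
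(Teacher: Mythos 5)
Your central step---that $\phi^n|_{\tilde D}$ is a bijection for all $n\le n_0$ because the intermediate disks avoid $\mathrm{Crit}_{\mathbb{C}_p}(\phi)$---is false in the non-archimedean setting. Over $\mathbb{C}_p$, a disk can contain no critical point and yet $\phi$ can fail to be injective on it, because of wild ramification. A clean example: take $\phi(z)=z^p$ and $D(1,r)$ with $p^{-1/(p-1)}<r<1$; this disk avoids the critical points $0,\infty$, but it contains the $p$-th root of unity $\zeta_p$ (since $|\zeta_p-1|=p^{-1/(p-1)}$), and $\phi(\zeta_p)=\phi(1)=1$, so $\phi$ is $p$-to-$1$ on $D(1,r)$. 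More generally, the derivative's Newton polygon is not the shift of the map's Newton polygon when $p$ divides the relevant index, so ``no critical point in the disk'' does not yield Weierstrass degree~$1$. This is precisely the distinction the paper builds its definitions around: what you need is the \emph{scaling} property (Definition~\ref{def:scaling}, Proposition~\ref{prop:tame-wild}), and near a wild critical point $c$ the maximal scaling disk at $x$ has radius only $p^{-1/(p-1)}|x-c|$, strictly smaller than the distance to $c$. Once injectivity fails, your Galois argument deriving $\phi^n(D)=\tilde D_n\cap\mathbb{P}^1_K$ collapses, as does the diameter comparison built on it.

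If you repair the bookkeeping by instead taking $n_0$ to be the first iterate at which $\phi$ is \emph{not scaling} on $\tilde D_{n_0}$ (which is what the paper does with its $\ell$), you hit the genuinely difficult case your proof never sees: $\tilde D_{n_0}$ lies in a small punctured neighborhood of a wild Julia critical point $c'\in K$ without containing $c'$. There you cannot simply read off a lower bound like $r_5$, and the ``diameter'' passing through $c'$ is only controlled up to the factor $p^{-1/(p-1)}|\deg_{c'}\phi|_p$ (Corollary~\ref{coro:image-ratio}). Controlling the cumulative loss through a whole chain of wild critical points in $\overline{\mathcal{O}_\phi(c')}$ is exactly what the hypothesis of no wild recurrent Julia critical points is for---it guarantees that chain is finite---and this is the content of Claim~\ref{c:2} in the paper's proof. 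Your argument makes no use of that hypothesis, which is a strong signal that something is missing, since the lemma is false without it.
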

\begin{proof}
Pick $0<r<r_\ast$ and let $\epsilon_r>0$ be as in Corollary \ref{coro:tame-wild} (2). Shrinking $\epsilon_r$ if necessary, we can assume that $0<\epsilon_r<r$.
Consider the disk $D'\subset\mathbb{P}^1_{\mathbb{C}_p}$ such that $D\subset D'$ and $\mathrm{diam}(D')=\mathrm{diam}(D)$.
Then the assumption $D\cap J^K_{\mathbb{C}_p}(\phi)\not=\emptyset$ implies $D'\cap J_{\mathbb{C}_p}(\phi)\not=\emptyset$.
Thus $\mathbb{P}^1_{\mathbb{C}_p}\setminus\bigcup_{n\ge 1}\phi^n(D')$ contains at most one point (see \cite[Theorem 5.19]{Benedetto19}). Since $\phi$ has at least two distinct critical points in $\mathrm{Crit}_{\mathbb{C}_p}(\phi)$, the set $\bigcup_{n\ge 1}\phi^n(D')$ intersects $\mathrm{Crit}_{\mathbb{C}_p}(\phi)$. Hence there exists a smallest $\ell\ge 0$ such that $\phi$ is not scaling on $\phi^{\ell}(D')$. It follows that
\begin{equation}\label{equ:same-11}
\mathrm{diam}(\phi^{\ell}(D))=\mathrm{diam}(\phi^{\ell}(D')).
\end{equation}

Now suppose that the statement (1) does not hold. In the following, we show that the statement (2) holds. By the choice of $\ell$, we have
$$\phi^{\ell}(D')\cap\mathrm{Crit}_K(\phi)=\emptyset.$$
If $\phi^{\ell}(D')$ contains a critical point $c\in\mathrm{Crit}_{\mathbb{C}_p}(\phi)$, then $c\in\mathrm{Crit}_{\mathbb{C}_p}(\phi)\setminus\mathrm{Crit}_K(\phi)$. Since $\phi^{\ell}(D')\cap\mathbb{P}^1_K\not=\emptyset$, by the choice of $r_\ast$ and \eqref{equ:same-11}, we conclude
\begin{equation}\label{equ:same-12}
\mathrm{diam}(\phi^{\ell}(D))=\mathrm{diam}(\phi^{\ell}(D'))>r_\ast.
\end{equation}
If $\phi^{\ell}(D')\cap D_{\mathbb{P}^1_{\mathbb{C}_p}}(c,r)=\emptyset$ for all $c\in \mathrm{Crit}_{\mathbb{C}_p}(\phi)$, since $\phi$ is not scaling on $\phi^{\ell}(D')$, we obtain
\begin{equation}\label{equ:same-13}
\mathrm{diam}(\phi^{\ell}(D))=\mathrm{diam}(\phi^{\ell}(D'))\ge\epsilon_r.
\end{equation}

To proceed the argument, assume $$\phi^{\ell}(D')\cap \mathrm{Crit}_{\mathbb{C}_p}(\phi)=\emptyset \quad \text{but} \quad \phi^{\ell}(D')\cap D_{\mathbb{P}^1_{\mathbb{C}_p}}(c',r)\not=\emptyset$$ for some $c'\in\mathrm{Crit}_{\mathbb{C}_p}(\phi)$. It follows that
\begin{equation}\label{equ:contain-11}
\phi^{\ell}(D')\subsetneq D_{\mathbb{P}^1_{\mathbb{C}_p}}(c',r).
\end{equation}
Moreover, we have the following claim on the critical point $c'$.
\begin{claim}\label{c:1}
The critical point $c'\in J^K_{\mathbb{C}_p}(\phi)$ is wild.
\end{claim}
We first show $c'\in J^K_{\mathbb{C}_p}(\phi)$. Suppose otherwise. Then
$$c'\in\left(\mathrm{Crit}_{\mathbb{C}_p}(\phi)\setminus\mathrm{Crit}_K(\phi)\right)\bigcup F^K_{\mathbb{C}_p}(\phi).$$
Since  $D\cap J^K_{\mathbb{C}_p}(\phi)\not=\emptyset$  and $\phi^\ell$ is scaling on $D'$, we obtain $\phi^{\ell}(D')\cap J^K_{\mathbb{C}_p}(\phi)\not=\emptyset$. Hence by the choice of $r_\ast$, $D_{\mathbb{P}^1_{\mathbb{C}_p}}(c',r)\cap J^K_{\mathbb{C}_p}(\phi)=\emptyset$, which contradicts \eqref{equ:contain-11}.
Now we show $c'$ is wild. Suppose to the contrary that $c'$ is tame. Then by Corollary \ref{coro:tame-wild} (1) and the choice of $\ell$, we obtain $c'\in\phi^{\ell}(D')$, which contradicts the assumption $\phi^{\ell}(D')\cap \mathrm{Crit}_{\mathbb{C}_p}(\phi)=\emptyset$. Thus the claim holds.

\smallskip
Since $\phi$ is not scaling on $\phi^\ell(D')$, by Corollary \ref{coro:image-ratio}, there exists $\beta\in|K^\times|$ with $0<\beta\le p^{-\frac{1}{p-1}}$ such that
$$\frac{\mathrm{diam}(\phi^{\ell+1}(D))}{\mathrm{diam}(\phi^{\ell+1}(D'))}\ge\beta|\deg_{c'}\phi|_p.$$
By Claim \ref{c:1}, let $\mathcal{W}_\phi(c')=\{c_0=c',c_1,\cdots,c_{k_0}\}$ be the set of wild critical points in $\overline{\mathcal{O}_\phi(c')}\subset J^K_{\mathbb{C}_p}(\phi)$. The no wild recurrent critical points assumption implies that each point in $\mathcal{W}_\phi(c')$ is non-recurrent. Then we have the following claim on the lower bound of the diameter of certain iterated images of $D'$ and $D$.
\begin{claim}\label{c:2}
There exists $\ell'\ge0$ such that $\mathrm{diam}(\phi^{\ell'}(D'))\ge\epsilon_r$ and
$$\mathrm{diam}(\phi^{\ell'}(D))\ge\beta\left(p^{-\frac{1}{p-1}}\right)^{k_0}\prod_{i=0}^{k_0}|\deg_{c_i}\phi|_p\epsilon_r.$$
\end{claim}
Let us prove Claim \ref{c:2}. Note that $D'\cap J_{\mathbb{C}_p}(\phi)\not=\emptyset$. By Lemma \ref{lem:dimater-large}, there exists a smallest $\ell'\ge 0$ such that
\begin{equation}\label{equ:bigger}
\mathrm{diam}(\phi^{\ell'}(D'))\ge\epsilon_r.
\end{equation}
 If $0\le\ell'\le\ell$, since $\phi^\ell$ is scaling on $D'$, by \eqref{equ:same-11}, we conclude
 $$\mathrm{diam}(\phi^{\ell}(D))=\mathrm{diam}(\phi^{\ell}(D'))\ge\epsilon_r>\beta\left(p^{-\frac{1}{p-1}}\right)^{k_0}\prod_{i=0}^{k_0}|\deg_{c_i}\phi|_p\epsilon_r.$$
 Thus in this case, the claim holds immediately. Now we work on the case that $\ell'>\ell$. Relabeling the points in $\mathcal{W}_\phi(c')$ with $c_0=c'$ and applying Corollary \ref{coro:tame-wild} (1), there exist $0\le j_0\le k_0$ and $\ell=\ell_0\le\ell_{j-1}<\ell_j\le\ell'$ for $1\le j\le j_0$ such that $\phi^{\ell_j}(D')\subsetneq D_{\mathbb{P}^1_{\mathbb{C}_p}}(c_j,r)$ and $\phi^{\ell_j-\ell_{j-1}-1}$ is scaling on $\phi^{\ell_{j-1}+1}(D')$. It follows that if $\ell_j+1\le i\le\ell_{j+1}$ for some $1\le j\le j_0-1$, or if $\ell_{j_0}+1\le i\le\ell'$, then
\begin{align}\label{equ:ratio-11}
\frac{\mathrm{diam}(\phi^{i}(D))}{\mathrm{diam}(\phi^i(D'))}=\frac{\mathrm{diam}(\phi^{\ell_j+1}(D))}{\mathrm{diam}(\phi^{\ell_j+1}(D'))}.
\end{align}
Moreover, letting $D'_j\subset\mathbb{P}^1_{\mathbb{C}_p}$ be the disk such that $\phi^{\ell_j}(D)\subset D'_j$ and $\mathrm{diam}(D'_j)=\mathrm{diam}(\phi^{\ell_j}(D))$, if $\phi$ is scaling on $D'_j $ but not on $\phi^{\ell_j}(D')$, by Corollary \ref{coro:image-ratio}, we have
\begin{align}\label{equ:ratio-111}
\frac{\mathrm{diam}(\phi^{\ell_j+1}(D))}{\mathrm{diam}(\phi^{\ell_j+1}(D'))}\ge\frac{\mathrm{diam}(\phi^{\ell_{j}}(D))}{\rho(c_j,\phi^{\ell_{j}}(D'))}|\deg_{c_j}\phi|_p.
\end{align}

Let $0\le j_1\le j_0$ be the largest integer such that $\phi$ is not scaling on $D'_{j_1}$. Then  by Corollary \ref{coro:image-ratio},
\begin{equation}\label{equ:ratio-12}
\frac{\mathrm{diam}(\phi^{\ell_{j_1}+1}(D))}{\mathrm{diam}(\phi^{\ell_{j_1}+1}(D'))}\ge\beta|\deg_{c_{j_1}}\phi|_p.
\end{equation}
If $j_1=j_0$, it follows from \eqref{equ:bigger} \eqref{equ:ratio-11}  and \eqref{equ:ratio-12} that
\begin{equation}\label{equ:ratio-13}
\mathrm{diam}(\phi^{\ell'}(D))\ge\beta|\deg_{c_{j_1}}\phi|_p\cdot \mathrm{diam}(\phi^{\ell'}(D'))\ge\beta|\deg_{c_{j_1}}\phi|_p\epsilon_r.
\end{equation}
If $0\le j_1< j_0$, since $\phi$ is not scaling on $\phi^{\ell_{j_1+1}}(D')$, by \eqref{equ:ratio-12} and Corollary \ref{coro:tame-wild} (1),
\begin{align}\label{equ:ratio-14}
\mathrm{diam}(\phi^{\ell_{j_1+1}}(D))&\ge\beta|\deg_{c_{j+1}}\phi|_p\cdot \mathrm{diam}(\phi^{\ell_{j_1+1}}(D'))\nonumber\\
&\ge\beta|\deg_{c_{j_1}}\phi|_p\cdot p^{-\frac{1}{p-1}}\rho(c_{j_1+1},\ \phi^{\ell_{j_1+1}}(D')).
\end{align}
Then the combination of \eqref{equ:ratio-111} and \eqref{equ:ratio-14} implies
$$\frac{\mathrm{diam}(\phi^{\ell_{j_1+1}+1}(D))}{\mathrm{diam}(\phi^{\ell_{j_1+1}+1}(D'))}\ge\beta|\deg_{c_{j_1}}\phi|_p\cdot p^{-\frac{1}{p-1}}|\deg_{c_{j_1+1}}\phi|_p.$$
Inductively, we have
\begin{equation}\label{equ:ratio-15}
\frac{\mathrm{diam}(\phi^{\ell'}(D))}{\mathrm{diam}(\phi^{\ell'}(D'))}\ge\beta\left(p^{-\frac{1}{p-1}}\right)^{j_0-j_1}\prod_{i=j_1}^{j_0}|\deg_{c_i}\phi|_p\ge\beta\left(p^{-\frac{1}{p-1}}\right)^{k_0}\prod_{i=0}^{k_0}|\deg_{c_i}\phi|_p.
\end{equation}
Thus combining \eqref{equ:bigger} and \eqref{equ:ratio-15}, we obtain Claim \ref{c:2}.

\smallskip
Finally, we set
\begin{equation}\label{equ:constant-fix}
\epsilon_\ast:=\beta\left(p^{-\frac{1}{p-1}}\right)^{2\deg\phi-2}\prod_{c\in\mathrm{Crit}_{\mathrm{C}_p\phi)}}|\deg_{c_i}\phi|_p\epsilon_r<\epsilon_r.
\end{equation}
Combining \eqref{equ:same-12}, \eqref{equ:same-12} and Claim \ref{c:2}, we conclude that the statement (2) holds. This completes the proof.
\end{proof}

\begin{remark}\label{rmk:scaling}
Consider the disk $D$ in Lemma \ref{lem:uniform-scaling} and  $\epsilon_\ast>0$ in \eqref{equ:constant-fix}.  Let $D'\subset\mathbb{P}^1_{\mathbb{C}_p}$ be the disk containing $D$ with $\mathrm{diam}(D')=\mathrm{diam}(D)$.  If $D'\cap\mathrm{Crit}_{\mathbb{C}_p}(\phi)=\emptyset$ and $\sup_{n\ge 1}\mathrm{diam}(\phi^n(D))<\epsilon_\ast$, then by  Lemma \ref{lem:uniform-scaling},  there exists a smallest integer $n_0\ge 1$ such that $\phi^{n_0}(D)\cap\mathrm{Crit}_K(\phi)\not=\emptyset$. Pick $c\in \phi^{n_0}(D)\cap\mathrm{Crit}_K(\phi)$. Then by Corollary \ref{coro:tame-wild} (1), there exists a disk $D_1\subset D$ such that $c\in\phi^{n_0}(D_1)$ and $\phi^{n_0}$ is scaling on $D'_1$, where $D'_1\subset D'$ is the disk containing $D_1$ with $\mathrm{diam}(D'_1)=\mathrm{diam}(D_1)$.
\end{remark}

%\smallskip
For a point $x\in\mathbb{P}^{1}_{\mathbb{C}_p}$, denote by
$$\mathcal{C}_\phi(x):=\overline{\mathcal{O}_\phi(x)}\cap\mathrm{Crit}_{\mathbb{C}_p}(\phi)$$
the set of critical points in the closure of the forward orbit of $x$.
The following result allows us to only work on the critical points.
\begin{proposition}\label{lem:critical-Julia}
For $x\in J^K_{\C_p}(\phi)$, if $\mathcal{C}_\phi(x)\subset J_K(\phi)$, then $x\in J_K(\phi)$. In particular, if
\begin{equation}\label{equ:inclusion-1}
\mathrm{Crit}^\ast_K(\phi)\subset J_K(\phi),
\end{equation}
then $J^K_{\mathbb{C}_p}(\phi)\subset J_K(\phi)$.
\end{proposition}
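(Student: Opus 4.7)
The ``in particular'' statement is a formal consequence of the main assertion: since $\mathbb{P}^1_K$ and $J_{\C_p}(\phi)$ are both forward invariant under $\phi$, for any $x\in J^K_{\C_p}(\phi)$ we have $\mathcal{O}_\phi(x)\subset J^K_{\C_p}(\phi)$, and taking closure gives $\mathcal{C}_\phi(x)\subset\mathrm{Crit}_{\C_p}(\phi)\cap\mathbb{P}^1_K\cap J_{\C_p}(\phi)=\mathrm{Crit}^\ast_K(\phi)\subset J_K(\phi)$ under the hypothesis \eqref{equ:inclusion-1}; hence I focus on the first statement.

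I argue by contradiction, assuming $x\in J^K_{\C_p}(\phi)\cap F_K(\phi)$ with $\mathcal{C}_\phi(x)\subset J_K(\phi)$. If $x\in\mathrm{Crit}_K(\phi)$, then $x\in\mathcal{C}_\phi(x)\subset J_K(\phi)$ already yields a contradiction; since $\mathrm{Crit}_{\C_p}(\phi)\cap\mathbb{P}^1_K=\mathrm{Crit}_K(\phi)$, I may assume $x\notin\mathrm{Crit}_{\C_p}(\phi)$. Lemma \ref{boundary-julia-lem} places $x$ in $\mathrm{Int}(J^K_{\C_p}(\phi))$, and the openness of $F_K(\phi)$ together with the isolation of $\mathrm{Crit}_{\C_p}(\phi)$ let me choose a $K$-disk $D^\ast\ni x$ with $D^\ast\subset\mathrm{Int}(J^K_{\C_p}(\phi))\cap F_K(\phi)$ and such that the $\mathbb{P}^1_{\C_p}$-hull $D^{\ast\prime}$ of equal diameter is disjoint from $\mathrm{Crit}_{\C_p}(\phi)$.

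Next I extract a critical accumulation point of the orbit of $x$. For each $\eta>0$, equicontinuity at $x$ supplies a $K$-subdisk $D_\eta\subset D^\ast$ with $\mathrm{diam}(\phi^n(D_\eta))<\eta$ for all $n\ge 0$. Taking $\eta<\min\{\epsilon_\ast,r_\ast\}$ (with $\epsilon_\ast$ from Lemma \ref{lem:uniform-scaling} and $r_\ast$ from Section \ref{sec:distance}) rules out alternative (2) of Lemma \ref{lem:uniform-scaling}, so alternative (1) yields a smallest $m_\eta\ge 1$ (nonzero because $D_\eta$ meets no critical points) with $\phi^{m_\eta}(D_\eta)\cap\mathrm{Crit}_K(\phi)\ne\emptyset$; pick $c_\eta$ in this intersection. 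The finiteness of $\mathrm{Crit}_K(\phi)$ now provides a sequence $\eta_k\downarrow 0$ along which $c_{\eta_k}=c$ is constant, and $\rho(c,\phi^{m_{\eta_k}}(x))<\eta_k$ forces $c\in\overline{\mathcal{O}_\phi(x)}$, so $c\in\mathcal{C}_\phi(x)\subset J_K(\phi)$ by hypothesis.

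To close the argument, I fix one such $k$ and set $D=D_{\eta_k}$, $m_0=m_{\eta_k}$. The hypotheses of Remark \ref{rmk:scaling} hold (the $\C_p$-hull of $D$ lies inside $D^{\ast\prime}$ and so misses $\mathrm{Crit}_{\C_p}(\phi)$, while $\sup_n\mathrm{diam}(\phi^n(D))<\epsilon_\ast$), producing a subdisk $D_1\subset D$ with $c\in\phi^{m_0}(D_1)$ and $\phi^{m_0}$ scaling on the $\C_p$-hull $D'_1$ of $D_1$. Since $c\in\phi^{m_0}(D_1\cap\mathbb{P}^1_K)\cap J_K(\phi)$, Corollary \ref{coro:scaling-julia} gives $D'_1\cap\mathbb{P}^1_K\cap J_K(\phi)\ne\emptyset$, which contradicts $D_1\subset D\subset D^\ast\subset F_K(\phi)$. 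The main delicate step is the diagonal extraction that produces a single $c\in\overline{\mathcal{O}_\phi(x)}\cap\mathrm{Crit}_K(\phi)$ from the family $\{c_\eta\}_{\eta>0}$; once this is in hand, everything reduces to a direct invocation of Remark \ref{rmk:scaling} and Corollary \ref{coro:scaling-julia}.
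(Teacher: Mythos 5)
Your proof is correct and follows essentially the same strategy as the paper's: argue by contradiction assuming $x\in F_K(\phi)$, place $x$ in the interior of $J^K_{\C_p}(\phi)$ via Lemma~\ref{boundary-julia-lem}, apply Lemma~\ref{lem:uniform-scaling} to small disks near $x$ to locate a Julia critical point in the orbit closure, and then invoke Remark~\ref{rmk:scaling} together with Corollary~\ref{coro:scaling-julia} to produce a Julia point inside $F_K(\phi)$. The one refinement worth noting is that you make explicit, via the pigeonhole principle on the finite set $\mathrm{Crit}_K(\phi)$ and disks $D_\eta\ni x$ supplied by equicontinuity, why the extracted critical point belongs to $\mathcal{C}_\phi(x)$; the paper instead works with the ``sphere'' disks $D_y=D_{\mathbb{P}^1_K}(y,\rho(x,y))$ (which exclude $x$) and states the inclusion $\phi^{n_y}(D_y)\cap\mathrm{Crit}_K(\phi)\subset\mathcal{C}_\phi(x)$ without elaboration, so your version supplies a justification that the paper leaves implicit.
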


\begin{proof}
By Lemma \ref{boundary-julia-lem}, it suffices to consider the case that $x\in\mathrm{Int}(J_{\mathbb{C}_p}^K(\phi))$. Suppose to the contrary that $x\in F_K(\phi)$. Since $F_K(\phi)$ and $\mathrm{Int}(J_{\mathbb{C}_p}^K(\phi))$ are open, there exists $r>0$ such that
$$D_{\mathbb{P}^1_K}(x,r)\subset F_K(\phi)\cap \mathrm{Int}(J_{\mathbb{C}_p}^K(\phi)).$$
Now for any $y\in D_{\mathbb{P}^1_K}(x,r)$ with $y\not=x$, set $s_y=\rho(y,x)$ and consider the disk $D_y:=D_{\mathbb{P}^1_K}(y,s_y)$.
By Lemma \ref{lem:uniform-scaling}, we have two cases:
\begin{enumerate}
\item There exists $n_y\ge 0$ such that
$$\phi^{n_y}(D_y)\cap\mathrm{Crit}_K(\phi)\not=\emptyset.$$
\item  There exists $\epsilon_\ast> 0$, independent of $y$, such that
$$\sup_{n\ge 1}\mathrm{diam}(\phi^n(D_y))\ge\epsilon_\ast.$$
\end{enumerate}

If there exists $y$ arbitrarily close to $x$ such that the case (2) occurs, then $\{\phi^n\}_{n\ge 1}$ is not equicontinuous in any neighborhood in $\mathbb{P}^1_K$ of $x$, which contradicts the assumption  $x\in F_K(\phi)$.

In the remaining case, that is, for any $y$ arbitrarily close to $x$, the case (1) occurs but the case (2) does not occur, we take $n_y\ge 0$ to be the smallest such integer. Note that in this case we have that
$$\emptyset\not=\phi^{n_y}(D_y)\cap\mathrm{Crit}_K(\phi)\subset\mathcal{C}_\phi(x)\subset J_K(\phi).$$
Then by Corollary \ref{coro:scaling-julia} and Remark \ref{rmk:scaling}, the disk $D_y$ contains a point in $J_K(\phi)$, which contradicts the assumption $D_y\subset F_K(\phi)$.

Thus $x\in J_K(\phi)$. Note that for any point $y\in J^K_{\C_p}(\phi)$,
$$\mathcal{C}_\phi(y)\subset \mathrm{Crit}^\ast_K(\phi).$$
The conclusion then follows immediately.
\end{proof}

In the following subsections, we will explore the inclusion \eqref{equ:inclusion-1}. {In particular, we will} show that \eqref{equ:inclusion-1} holds under our assumptions in Theorem \ref{Thm:main}.

\subsection{Equivalence classes for Julia critical points}\label{sec:equivalence}

 In this subsection, we define an equivalence relation on $\mathrm{Crit}^\ast_{\C_p}(\phi)$ and then show under a natural partial order, certain restriction on the minimal elements in the resulting equivalence classes validates the inclusion \eqref{equ:inclusion-1}.

For $c_1,c_2\in \mathrm{Crit}^\ast_{\C_p}(\phi)$, define $c_1\sim c_2$ if $c_1\in\overline{\mathcal{O}_\phi(c_2)}$ and $c_2\in\overline{\mathcal{O}_\phi(c_1)}$.  It follows that $\sim$ is an equivalence relation.
Set
$$\mathcal{M}(\phi):=\mathrm{Crit}^\ast_{\C_p}(\phi)/\sim,$$
and let $\mathcal{Q}:\mathrm{Crit}^\ast_{\C_p}(\phi)\to\mathcal{M}(\phi)$ be the quotient map.
For two classes $[c],[c']\in\mathcal{M}(\phi)$, we define a partial order $\preceq$ as follows: $[c]\preceq[c']$ if $\overline{\mathcal{O}_\phi(c)}\subset\overline{\mathcal{O}_\phi(c')}$, and $[c]\prec[c']$ if $\overline{\mathcal{O}_\phi(c)}\subsetneq\overline{\mathcal{O}_\phi(c')}$. A class $[c]\in\mathcal{M}(\phi)$ is {\it minimal} if $(\mathcal{M}(\phi),\preceq)$ contains no other element less than $[c]$.

\begin{example}
Suppose that $\mathrm{Crit}^\ast_{\C_p}(\phi)=\{c_0,c_1,c_2,c_3\}$ satisfy the following: $\{c_0,c_1,c_2,c_3\}\subset\overline{\mathcal{O}_\phi(c_0)}$, $\{c_1,c_2\}=\overline{\mathcal{O}_\phi(c_1)}\cap\mathrm{Crit}^\ast_{\C_p}(\phi)$, $\{c_1,c_2\}=\overline{\mathcal{O}_\phi(c_2)}\cap\mathrm{Crit}^\ast_{\C_p}(\phi)$, and $\{c_3\}=\overline{\mathcal{O}_\phi(c_3)}\cap\mathrm{Crit}^\ast_{\C_p}(\phi)$. Then $[c_1]=[c_2]\prec [c_0]$ and $[c_3]\prec [c_0]$; and the classes $[c_1]=[c_2]$ and $[c_3]$ are minimal. Moreover, $\mathcal{Q}^{-1}([c_0])=\{c_0\}$.
\end{example}

Note that if $c\in\mathrm{Crit}^\ast_K(\phi)$, then $\overline{\mathcal{O}_\phi(c)}\subset J^K_{\C_p}(\phi)$. It follows that if $c'\in \mathrm{Crit}^\ast_{\C_p}(\phi)$ with $c'\sim c$, then $c'\in \mathrm{Crit}^\ast_K(\phi)$, and hence all the representatives of $[c]$ are in $\mathrm{Crit}^\ast_K(\phi)$. The following result concerning the minimal classes provides a sufficient condition for the inclusion \eqref{equ:inclusion-1}.

\begin{proposition}\label{prop:minimal}
Suppose that for any minimal class $[c]\in\mathcal{M}(\phi)$ with $c\in\mathrm{Crit}^\ast_K(\phi)$, we have $c\in J_K(\phi)$. Then $\mathrm{Crit}^\ast_K(\phi)\subset J_K(\phi)$.
\end{proposition}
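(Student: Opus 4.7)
My plan is to prove the proposition by Noetherian induction on the finite partial order $(\mathcal{M}(\phi),\preceq)$, establishing at each class $[c]\in\mathcal{M}(\phi)$ that every representative in $\mathrm{Crit}^\ast_K(\phi)$ lies in $J_K(\phi)$. The base case, in which $[c]$ is minimal, is exactly the hypothesis of the proposition.

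For the inductive step, fix a non-minimal $[c]\in\mathcal{M}(\phi)$ with representative $c\in\mathrm{Crit}^\ast_K(\phi)$ and assume the claim for every $[c']\prec[c]$. Argue by contradiction, supposing $c\in F_K(\phi)$. Lemma \ref{boundary-julia-lem} places $c$ in $\mathrm{Int}(J_{\mathbb{C}_p}^{K}(\phi))$, and the uniform equicontinuity of $\{\phi^n\}$ on a neighborhood of $c$ in $\mathbb{P}^1_K$ furnishes $r>0$ so small that $D:=D_{\mathbb{P}^1_K}(c,r)\subset\mathrm{Int}(J_{\mathbb{C}_p}^{K}(\phi))$ and $\sup_n\rho(\phi^n(x_1),\phi^n(x_2))<\epsilon_\ast/2$ for all $x_1,x_2\in D$. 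For any $y\in D\setminus\{c\}$, form $D_y:=D_{\mathbb{P}^1_K}(y,\rho(y,c))\subset D$ and apply Lemma \ref{lem:uniform-scaling}. The diameter-growth alternative $\sup_n\mathrm{diam}(\phi^n(D_y))\geq\epsilon_\ast$ is ruled out by the equicontinuity estimate, so there is a smallest $n_y\geq 0$ with $\phi^{n_y}(D_y)\cap\mathrm{Crit}_K(\phi)\ni c_y$. The argument in the proof of Lemma \ref{lem:critical-Julia} forces $c_y\in\mathcal{C}_\phi(c)\subset\mathrm{Crit}^\ast_K(\phi)$, hence $[c_y]\preceq[c]$. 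When $[c_y]\prec[c]$, the inductive hypothesis gives $c_y\in J_K(\phi)$, and combining Remark \ref{rmk:scaling} with Corollary \ref{coro:scaling-julia} produces a point of $J_K(\phi)$ in $D_y$, contradicting $D_y\subset F_K(\phi)$.

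The remaining subcase $[c_y]=[c]$ is the main obstacle, since the inductive hypothesis is silent there. The plan is to continue tracking the orbit of $D_y$ past $n_y$: because $c_y\sim c$ and $[c]$ is non-minimal, $\overline{\mathcal{O}_\phi(c_y)}=\overline{\mathcal{O}_\phi(c)}$ must contain some $c^\ast\in\mathcal{C}_\phi(c)$ with $[c^\ast]\prec[c]$, so by induction $c^\ast\in J_K(\phi)$. The forward iterates of $D_y$ shadow those of $c_y$, and either some $\phi^{n_y+m}(D_y)$ attains diameter $\geq\epsilon_\ast$ (again excluded by equicontinuity) or some $\phi^{n_y+m}(D_y)$ comes to contain $c^\ast$; in the latter situation, a sub-disk of $D_y$ on which $\phi^{n_y+m}$ factors through a scaling branch away from the critical preimages of $c_y$ and its iterates enables a final application of Remark \ref{rmk:scaling} and Corollary \ref{coro:scaling-julia} to place a $J_K(\phi)$-point inside $D_y$. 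The technical crux will be quantitatively balancing the shrinkage of $\mathrm{diam}(\phi^{n_y+m}(D_y))$ through successive critical passages against the rate at which $\phi^{n_y+m}(c_y)$ approaches $c^\ast$; the standing no-wild-recurrent-critical-points assumption enters here through Lemma \ref{lem:uniform-scaling} and Corollary \ref{coro:image-ratio}. Once this is resolved, every case yields a contradiction, so $c\in J_K(\phi)$ and the induction closes.
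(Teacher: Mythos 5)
You reorganize the argument as a Noetherian induction on the finite poset $(\mathcal{M}(\phi),\preceq)$, whereas the paper proceeds directly: from an arbitrary $c_0\in\mathrm{Crit}^\ast_K(\phi)$ it builds a finite chain $c_0,c_1,\dots,c_k$ with $c_{j+1}\in\mathcal{C}_\phi(c_j)$ and a nested sequence of disks $D_{j+1}\subset\phi^{n_j}(D_j)$ until $[c_k]$ is minimal, invokes the hypothesis once at $c_k\in J_K(\phi)$, and pulls a Julia point back through all the scaling pieces to $D_0$ by Corollary \ref{coro:scaling-julia} and Remark \ref{rmk:scaling}. Your strictly-descending case $[c_y]\prec[c]$ coincides with this pull-back step and is fine as far as it goes.

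The genuine gap is the subcase $[c_y]=[c]$, which you flag but do not close: the sentence beginning ``The technical crux will be quantitatively balancing\dots'' together with ``Once this is resolved'' is a promissory note, not a proof. That subcase is not a side technicality; it is exactly where the inductive hypothesis gives nothing, and it arises precisely when some $c'\sim c$ is recurrent, which is the difficult regime the whole section is built around. After passing $c_y$ the disk $\phi^{n_y}(D_y)$ is pinched by $\deg_{c_y}\phi$, and nothing in your sketch prevents the pinched disk's orbit from returning to the class $[c]$ arbitrarily many more times before it ever captures a $c^\ast$ with $[c^\ast]\prec[c]$. The estimates you cite control a single passage: the constant $\epsilon_\ast$ of Lemma \ref{lem:uniform-scaling} and the ratio bounds of Corollary \ref{coro:image-ratio} are engineered to survive one pass through the finite wild set $\mathcal{W}_\phi(\cdot)$, not an a priori unbounded number of returns to a tame critical point in the same class. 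The paper sidesteps this loop entirely by steering the chain to a minimal representative before appealing to the hypothesis; some concrete version of that descent has to appear inside your inductive step, and it is exactly what is missing.
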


\begin{proof}
Pick $c_0\in\mathrm{Crit}^\ast_K(\phi)$. By Lemma \ref{boundary-julia-lem}, it suffices to consider the case that $c_0\in\mathrm{Int}(J^K_{\C_p}(\phi))$. Pick $r>0$ such that $D_{\mathbb{P}^1_K}(c_0,r)\subset \mathrm{Int}(J^K_{\C_p}(\phi))$. Now for any $x\in D_{\mathbb{P}^1_K}(c_0,r)$ with $x\not=c_0$, set $s_x=\rho(x,c_0)$ and consider the disks $D_{\mathbb{P}^1_K}(x,s_x)\subset D_{\mathbb{P}^1_K}(c_0,r)$. By Lemma \ref{lem:uniform-scaling}, we have two cases:
\begin{enumerate}
\item There exists $n_0\ge 0$ such that
$$\phi^{n_0}(D_{\mathbb{P}^1_K}(x,s_x))\cap\mathrm{Crit}_K(\phi)\not=\emptyset.$$
\item  There exists $\epsilon_\ast> 0$, independent of $x$, such that
$$\sup_{n\ge 0}\mathrm{diam}\left(\phi^n(D_{\mathbb{P}^1_K}(x,s_x))\right)\ge\epsilon_\ast.$$
\end{enumerate}

If the case (2) occurs for infinitely many distinct disks $D_{\mathbb{P}^1_K}(x,s_x)$, by the definition of Julia set, we conclude $c_0\in J_K(\phi)$.

Now shrinking $r$ if necessary, we assume that for any disk $D_{\mathbb{P}^1_K}(x,s_x)$, the case (1) occurs but the case (2) does not occur. Pick such a disk $D_{\mathbb{P}^1_K}(x_0,s_{x_0})$ and take $n_0$ to be the smallest integer in the case (1) for $D_{\mathbb{P}^1_K}(x_0,s_{x_0})$. Then there in fact exists $c_1\in\mathcal{C}_\phi(c_0)$ such that
$$\phi^{n_0}\left(D_{\mathbb{P}^1_K}(x_0,s_{x_0})\right)\cap\mathrm{Crit}_K(\phi)=\{c_1\}.$$
Considering the small disks near $c_1$ and applying the above argument repeatedly, we eventually reach a set $\{c_0,c_1,\cdots,c_k\}\subset \mathrm{Crit}^\ast_K(\phi)$ satisfying the following:
\begin{enumerate}
\item[(i)] for $0\le j\le k-1$, the point $c_{j+1}\in\mathcal{C}_\phi(c_j)$ and $[c_k]$ is minimal in $\mathcal{M}(\phi)$; and
\item[(ii)] for $0\le j\le k-1$, there exist integer $n_j\ge 1$ and disk $D_j:=D_{\mathbb{P}^1_K}(x_j, s_{x_j})$ with $x_j\not=c_j$ sufficiently close to $c_j$ and $s_{x_j}=\rho(x_j,c_j)$ such that
\begin{enumerate}
\item $c_{j+1}\in\phi^{n_j}(D_j)$,
\item $\phi^{n_j-1}$ is scaling on $\phi(D_j)$ if $n_j>1$, and
\item $D_{j+1}\subset\phi^{n_j}(D_j)$.
\end{enumerate}
\end{enumerate}
By the assumption in the statement, $c_k\in J_K(\phi)$. Then Corollary \ref{coro:scaling-julia} and Remark \ref{rmk:scaling} imply that $D_{k-1}\cap J_K(\phi)\not=\emptyset$.
Inductively, we conclude
$$D_{\mathbb{P}^1_K}(x_0,  s_{x_0})\cap J_K(\phi)\not=\emptyset.$$
Since we can take $x_0$ arbitrarily close to $c_0$, by the closedness of $J_K(\phi)$, we obtain $c_0\in J_K(\phi)$. Hence the conclusion follows.
\end{proof}

\subsection{Critical points in minimal conjugacy class}\label{sec:minimal}

Now pick $c\in\mathrm{Crit}^\ast_K(\phi)$ such that $[c]\in\mathcal{M}(\phi)$ is a minimal class.  Under the assumptions in Theorem \ref{Thm:main}, this subsection is devoted to proving that $c\in J_K(\phi)$, which verifies the assumption in Proposition \ref{prop:minimal}. If $c\in\partial J^K_{\C_p}(\phi)$, then by Lemma \ref{boundary-julia-lem}, we have $c\in J_K(\phi)$. In what follows, we assume that $c\in\mathrm{Int}(J^K_{\C_p}(\phi))$. Recall the quantity $r_\ast$ from Section \ref{sec:distance}.
Fix a small $0<r<r_\ast$ and pick an $\epsilon_r>0$ satisfying Corollary \ref{coro:tame-wild} (2). Set $\epsilon'_r:=\min\{r,\epsilon_r\}$ and define $\epsilon_0:=\min\{\epsilon'_r/p,\epsilon_\ast\}$, where $\epsilon_\ast$ is a fixed constant satisfying Lemma \ref{lem:uniform-scaling}. Shrinking  $\epsilon_0$ if necessary, we can assume
\begin{equation}\label{equ:empty}
\phi\left(D_{\mathbb{P}^1_{\mathbb{C}_p}}(c,\epsilon_0)\right)\not=\mathbb{P}^1_{\C_p}.
\end{equation}

\subsubsection{Non-recurrent case}
Fix the notations as above. We consider the case that the critical point $c$ is non-recurrent.
\begin{proposition}\label{prop:non-recurrent}
If the critical point $c$ is non-recurrent, then $c\in J_K(\phi)$.
\end{proposition}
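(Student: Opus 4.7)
I argue by contradiction: assume $c \in F_K(\phi)$, so there exist $\delta > 0$ and $\eta > 0$ (to be shrunk below) such that $V := D_{\mathbb{P}^1_K}(c, \delta) \subset F_K(\phi) \cap \mathrm{Int}(J_{\mathbb{C}_p}^K(\phi))$ and
$$\rho(\phi^n(y), \phi^n(c)) < \eta \quad \text{for all } y \in V \text{ and } n \geq 0.$$

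The first key step is to show that the non-recurrent minimal class is a singleton: $[c] = \{c\}$, and hence $\overline{\mathcal{O}_\phi(c)} \cap \mathrm{Crit}^\ast_{\mathbb{C}_p}(\phi) = \{c\}$. Assuming $c' \sim c$ with $c' \neq c$, I run a case analysis on how the two relations $c \in \overline{\mathcal{O}_\phi(c')}$ and $c' \in \overline{\mathcal{O}_\phi(c)}$ are realized (orbit-membership versus proper accumulation); each of the four combinations forces either $c$ to be periodic (impossible, since a critical point has multiplier $0$ and so cannot be a repelling periodic point, whereas every periodic point of $J_{\mathbb{C}_p}(\phi)$ is repelling by \cite[Proposition 1.1]{Benedetto01}) or $c$ to be accumulated by its own forward orbit, contradicting non-recurrence. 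Combining the resulting singleton with the distance constants $r_3, r_4, r_5$ of Section \ref{sec:distance}---which keep $\{\phi^n(c)\}_{n\geq1} \subset J_{\mathbb{C}_p}(\phi) \cap \mathbb{P}^1_K$ away from Fatou critical points and from $\mathbb{C}_p$-critical points outside $\mathbb{P}^1_K$---together with non-recurrence of $c$ itself, I obtain the uniform separation
$$d := \inf\{\rho(\phi^n(c), c'') : n \geq 1,\ c'' \in \mathrm{Crit}_{\mathbb{C}_p}(\phi)\} > 0.$$

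Now I shrink $\eta$ to satisfy $\eta < \min\{\epsilon_\ast/4,\ d/2\}$, where $\epsilon_\ast$ is the constant from Lemma \ref{lem:uniform-scaling}. For any $x \in V \cap \mathbb{P}^1_K$ with $x \neq c$ and $\rho(x,c)$ small enough that $D_x := D_{\mathbb{P}^1_K}(x, \rho(x,c))$ contains no critical point, I have $D_x \subset V \subset J_{\mathbb{C}_p}^K(\phi)$ and may apply Lemma \ref{lem:uniform-scaling}. In its case (2), $\sup_{n \geq 1} \mathrm{diam}(\phi^n(D_x)) \geq \epsilon_\ast$ yields $y_1, y_2 \in D_x$ and $n \geq 1$ with $\rho(\phi^n(y_1), \phi^n(y_2)) > \epsilon_\ast/2$; the ultrametric triangle inequality combined with the equicontinuity bound then gives
$$\rho(\phi^n(y_1), \phi^n(y_2)) \leq \max_{i=1,2} \rho(\phi^n(y_i), \phi^n(c)) < \eta < \epsilon_\ast/4,$$
a contradiction. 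In its case (1), pick the smallest $n_0 \geq 1$ with $\phi^{n_0}(D_x) \cap \mathrm{Crit}_K(\phi) \neq \emptyset$, choose $c_1$ in this intersection, and take $y \in D_x$ with $\phi^{n_0}(y) = c_1$; the uniform separation forces $\rho(c_1, \phi^{n_0}(c)) \geq d$, whereas equicontinuity gives $\rho(\phi^{n_0}(y), \phi^{n_0}(c)) < \eta < d/2$, contradicting $\phi^{n_0}(y) = c_1$.

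The main obstacle is the first step---establishing $[c] = \{c\}$---which requires a careful treatment of all four sub-cases; in particular, the ``accumulation--accumulation'' sub-case demands invoking forward-invariance of the closure $\overline{\mathcal{O}_\phi(c)}$ together with a diagonal extraction to reduce any hypothetical accumulation of $c$ by $\mathcal{O}_\phi(c')$ to accumulation of $c$ by $\mathcal{O}_\phi(c)$ itself. Once this singleton structure and the uniform separation $d$ are in hand, the remainder of the argument is a clean two-case application of Lemma \ref{lem:uniform-scaling} where the ultrametric and equicontinuity do all the work.
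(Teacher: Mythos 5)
Your proposal is correct and follows essentially the same route as the paper's proof: contradiction via equicontinuity at $c$ together with Lemma \ref{lem:uniform-scaling}. The paper is terser---it asserts ``by the choice of $\epsilon_0$, the orbit of $D(x)$ disjoints the critical points'' and then needs only case (2) of Lemma \ref{lem:uniform-scaling} for the contradiction---whereas you spell out why that disjointness holds. Specifically, you first prove $\mathcal{Q}^{-1}([c])=\{c\}$ via a four-way case analysis on how $c\sim c'$ could be realized (each branch forcing $c$ to be periodic, impossible for a Julia critical point since a periodic critical point is superattracting and hence Fatou, or forcing $c$ to be recurrent), and you then combine the distance constants $r_3,r_4,r_5$ with the non-recurrence gap $\inf_{n\ge1}\rho(\phi^n(c),c)>0$ into one uniform separation $d>0$. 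This singleton claim really is the content behind ``the choice of $\epsilon_0$'': a Julia critical point $c''\ne c$ lying in $\overline{\mathcal{O}_\phi(c)}$ would be accumulated by $\{\phi^n(c)\}_{n\ge1}$ and so would eventually land inside $\phi^n(D(x))$ for some $n$, no matter how small $\epsilon_0$ is chosen; likewise the non-recurrence gap is what keeps $c$ itself out of the images. Your separate handling of case (1) of Lemma \ref{lem:uniform-scaling} (contradiction via $d$) is logically equivalent to the paper's assertion that case (1) cannot occur. In short, same strategy, with an implicit step in the paper's argument made explicit.
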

\begin{proof}
Suppose to the contrary that $c\in F_K(\phi)$. Since $c\in\mathrm{Int}(J^K_{\C_p}(\phi))$, consider a disk $D_{\mathbb{P}^1_K}(c,\delta)\subset\mathrm{Int}(J^K_{\C_p}(\phi))\cap F_K(\phi)$.
By the equicontiunity, shrinking $\delta>0$ if necessary,  we can assume that for all $n\ge 0$,
\begin{equation}\label{equ:small}
\mathrm{diam}\left(\phi^n(D_{\mathbb{P}^1_K}(c,\delta))\right)<\epsilon_0.
\end{equation}
Pick an arbitrary $x\in D_{\mathbb{P}^1_K}(c,\delta)$ with $x\not=c$, and set $s_x=\rho(x,c)$. For the disk $D(x):=D_{\mathbb{P}^1_K}(x,s_x)\subset D_{\mathbb{P}^1_K}(c,\delta)$, by the choice of $\epsilon_0$, the orbit of $D(x)$ disjoints the critical points. Then it follows from Lemma \ref{lem:uniform-scaling} that
$$\sup_{n\ge 1}\mathrm{diam}(\phi^n(D(x)))\ge\epsilon_\ast,$$
which contradicts \eqref{equ:small} since $\epsilon_0\le\epsilon_\ast$.
Hence the conclusion follows.
\end{proof}

\subsubsection{Recurrent case}\label{sec:recurrent}
Now we work on the case that $c$ is recurrent. By the no wild recurrent critical points assumption, the critical point $c$ is tame. Let $c'\in\mathrm{Crit}^\ast_K(\phi)$ be any critical point such that $[c']=[c]$. If $c'\in\partial J^K_{\C_p}(\phi)$, by Lemma \ref{boundary-julia-lem} and the forward invariance and closedness of $J_K(\phi)$, we have $c\in J_K(\phi)$.
Thus for the remainder of this section, we assume that $c'\in\mathrm{Int}(J^K_{\C_p}(\phi))$ for all such $c'$. Consider $0<\delta<\epsilon_0$ such that for all aforementioned $c'$, the disk $D_{\mathbb{P}^1_K}(c',\delta)\subset\mathrm{Int}(J^K_{\C_p}(\phi))$.  Moreover, for $x\in D_{\mathbb{P}^1_K}(c',\delta)$ with $x\not=c'$, set $s_x=\rho(x,c')$, let $D(x):=D_{\mathbb{P}^1_K}(x,s_x)\subset D_{\mathbb{P}^1_K}(c',\delta)$ and $\overline{D}(x):=\overline{D}_{\mathbb{P}^1_K}(x,s_x)$, denote $D_{\C_p}(x):=D_{\mathbb{P}^1_{\C_p}}(x,s_x)$ and $\overline{D}_{\C_p}(x):=\overline{D}_{\mathbb{P}^1_{\C_p}}(x,s_x)$ the corresponding disks in $\mathbb{P}^1_{\C_p}$, and write
$$S_K(x):=\{y\in D_{\mathbb{P}^1_K}(c',\delta): s_y=s_x\}$$
the sphere near $c'$ containing $x$.

We first show a result on the existence of the repelling periodic points, which will imply that $c\in J_K(\phi)$ in the case that $\mathcal{Q}^{-1}([c])=\{c\}$.

\begin{proposition}\label{prop:repelling}
Pick $x\in D_{\mathbb{P}^1_K}(c,\delta)$ with $x\not=c$. Assume that there exists a smallest $n:=n_x\ge 0$ such that $\phi^n(D(x))\cap\mathrm{Crit}_K(\phi)\not=\emptyset$. If
\begin{equation}\label{equ:c}
\phi^n(D(x))\cap\mathrm{Crit}_K(\phi)=\{c\}
\end{equation}
and for all $1\le m\le n$,
\begin{equation}\label{equ:less}
\mathrm{diam}\left(\phi^m(\overline{D}(x))\right)<\epsilon_0,
\end{equation}
then $D(x)$ contains a repelling periodic point of $\phi$ with period $n$.
\end{proposition}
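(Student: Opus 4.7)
The plan is to produce the repelling periodic point as a fixed point of $\phi^n$ in $D(x)$ via a scaling/Newton-type argument, after first showing that $\phi^n$ is scaling on $D_{\C_p}(x)$ with ratio $\Lambda_n>1$. Set $s_x:=\rho(x,c)$. Since $c$ is tame (by the no-wild-recurrent-critical-points assumption) and $x\neq c$ with $\rho(x,c)=s_x$, Corollary~\ref{coro:tame-wild}(1) gives that $\phi$ is scaling on $D_{\C_p}(x)$ in local coordinates. I propagate this inductively on $0\le m<n$: the minimality of $n$ ensures that $\phi^m(D(x))$ avoids $\mathrm{Crit}_K(\phi)$, while the bound $\mathrm{diam}(\phi^m(\overline{D}(x)))<\epsilon_0\le r_5$ prevents $\phi^m(D_{\C_p}(x))$ from reaching any critical point in $\mathrm{Crit}_{\C_p}(\phi)\setminus\mathrm{Crit}_K(\phi)$ (whose spherical distance to $\mathbb{P}^1_K$ is at least $r_5$). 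Thus $\phi^m(D_{\C_p}(x))$ contains no critical point of $\phi$, and (using tameness again) it sits inside the maximal scaling disk of $\phi$ at each of its points. Composing, $\phi^n$ is scaling on $D_{\C_p}(x)$ with ratio $\Lambda_n:=|(\phi^n)'(x)|_p$; since $c\in\phi^n(D(x))$, the image equals $D_{\C_p}(c,\Lambda_n s_x)$.

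The key expansion estimate $\Lambda_n>1$ I argue by contradiction. Suppose $\Lambda_n\le 1$, so $\phi^n(D_{\C_p}(x))\subset\overline{D}_{\C_p}(c,s_x)$. Using Lemma~\ref{cor:local critical} at $c$ (where $\phi$ behaves like $z^d$ with $d=\deg_c\phi\ge 2$) and extending the scaling of $\phi^{n-1}$ established in Step~1 to the iterated disks around $\phi(c),\phi^n(c),\dots$ (via the same diameter-versus-critical-distance bookkeeping), one checks that $\mathrm{diam}(\phi^{jn}(D_{\C_p}(x)))$ stays uniformly bounded by $s_x<1$ for all $j\ge 0$---in fact contracting super-exponentially when $\Lambda_n<1$ due to the factor $d$. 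Aligning these bounded images by isometries of $\mathbb{P}^1_{\C_p}$ as in the proof of Lemma~\ref{lem:dimater-large}, \cite[Corollary 5.18]{Benedetto19} and \cite[Lemma 5.6]{Benedetto19} give equicontinuity of $\{\phi^{jn}\}_{j\ge 0}$, and hence of $\{\phi^k\}_{k\ge 0}$, on $D_{\C_p}(x)$. This forces $D_{\C_p}(x)\subset F_{\C_p}(\phi)$, contradicting $x\in J_{\C_p}^K(\phi)\subset J_{\C_p}(\phi)$.

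With $\Lambda_n>1$ in hand, set $g(z):=\phi^n(z)-z$ on $D_{\C_p}(x)$. Since $\Lambda_n>1=|1|_p$, the ultrametric gives $|g'(z)|_p=\Lambda_n$, and the Taylor-coefficient bounds inherited from the scaling of $\phi^n$ show that $g$ is itself scaling on $D_{\C_p}(x)$ with ratio $\Lambda_n$. From $c\in\phi^n(D_{\C_p}(x))=D_{\C_p}(c,\Lambda_n s_x)$ one gets $|\phi^n(x)-c|_p<\Lambda_n s_x$; combined with $|c-x|_p=s_x<\Lambda_n s_x$, the ultrametric yields $|g(x)|_p=|\phi^n(x)-x|_p<\Lambda_n s_x$. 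Thus $0\in g(D_{\C_p}(x))=D_{\C_p}(g(x),\Lambda_n s_x)$ and $g$ admits a unique zero $z_0\in D_{\C_p}(x)$. Galois-invariance of $D_{\C_p}(x)$ (since $x\in K$) together with $g\in K(z)$ and the uniqueness of $z_0$ force $z_0\in K$, so $z_0\in D(x)$; and $|(\phi^n)'(z_0)|_p=\Lambda_n>1$ makes $z_0$ a repelling periodic point of period $n$. The main obstacle is the expansion estimate of Step~2: one must carefully iterate $\phi^n$ across the critical behavior at $c$ and extend the scaling of the tail $\phi^{n-1}$ to disks \emph{adjacent} to (rather than contained in) the original scaling disk $\phi(D_{\C_p}(x))$, in order to secure the uniform diameter bound that drives the equicontinuity contradiction.
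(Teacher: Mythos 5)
Your three-step outline (show $\phi^n$ scales on $D_{\C_p}(x)$, then prove the scaling ratio $\Lambda_n>1$, then locate a fixed point of $\phi^n$) follows the same broad skeleton as the paper, and your Step~3 (via the ultrametric estimate on $g(z)=\phi^n(z)-z$ and Galois invariance) is a valid variant of the paper's conclusion. But Step~2, which you yourself flag as the sticking point, contains a genuine gap: you try to show that if $\Lambda_n\le 1$ then $\mathrm{diam}\bigl(\phi^{jn}(D_{\C_p}(x))\bigr)$ stays small for all $j$, by ``extending the scaling of $\phi^{n-1}$'' to the disks near $\phi(c),\phi^2(c),\dots$. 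This does not go through. The disk $\phi^n(D_{\C_p}(x))=D_{\C_p}(c,\Lambda_n s_x)$ contains the critical point $c$, so applying $\phi$ to it is not a scaling map, and after that its orbit is the orbit of a small disk around $c$, which need not brush against $\mathrm{Crit}_K(\phi)$ at the same moments, or with the same separations, as the orbit of $D_{\C_p}(x)$. Nothing in the hypotheses gives you uniform diameter control along that new trajectory, and the proposition's conclusion is sensitive to exactly this kind of recurrence behavior near $c$. So the equicontinuity contradiction you are aiming for is not secured.

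The paper avoids this entirely by working with the \emph{closed} disk $\overline{D}_{\C_p}=\overline{D}_{\C_p}(c,s_x)$. Claim~\ref{claim:repelling} (the $n_j$-bookkeeping that your Step~1 compresses) produces two additional facts you do not establish: $\phi^n(\overline{D}_{\C_p})\neq\mathbb{P}^1_{\C_p}$, and $\mathrm{diam}\bigl(\phi^n(\overline{D}_{\C_p})\bigr)=\mathrm{diam}\bigl(\phi^n(D_{\C_p})\bigr)$. With these in hand, $\Lambda_n\le 1$ would force $\phi^n(\overline{D}_{\C_p})\subset\overline{D}_{\C_p}$ (both are disks containing $c$, hence nested). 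That single containment makes $\overline{D}_{\C_p}$ forward $\phi^n$-invariant, so \emph{all} iterates $\phi^{jn}(\overline{D}_{\C_p})$ automatically stay inside $\overline{D}_{\C_p}$ and have diameter $\le s_x<1$; then Lemma~\ref{lem:dimater-large} (equivalently \cite[Corollary 5.18]{Benedetto19}) already gives $\overline{D}_{\C_p}\subset F_{\C_p}(\phi)$, contradicting $c\in J_{\C_p}(\phi)$. No tracking of the orbit of $c$, no extension of scaling to adjacent disks, is needed. In short: the missing idea is to establish that $\phi^n(\overline{D}_{\C_p})$ is a disk (via the induction on the times $n_j$ at which $c$ enters the closed disk's orbit) and then exploit disk-nesting directly; the iteration-through-the-critical-point route you sketch does not close.
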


\begin{proof}
To ease notations, write $D:=D(x)$ and $\overline{D}:=\overline{D}(x)$. Consider the disks $D_{\C_p}:=D_{\C_p}(x)$ and $\overline{D}_{\C_p}:=\overline{D}_{\C_p}(x)$ in $\mathbb{P}^1_{\C_p}$. Let $n_0=0$ and inductively define $n_{j-1}<n_j\le n$ to be the smallest integer such that $c\in\phi^{n_j}(\overline{D}_{\C_p})$. By \eqref{equ:c}, there exists $j_0\ge 1$ such that $n_{j_0}=n$. The existence of repelling periodic points in $D$ is a consequence of the following claim.
\begin{claim}\label{claim:repelling}
For $1\le j\le j_0$, the following hold:
 \begin{enumerate}
 \item[(i)] $c\not\in\phi^{n_{j-1}}(D_{\C_p})$.
 \item[(ii)] $\phi^{n_{j-1}+1}(\overline{D}_{\C_p})\not=\mathbb{P}^1_{\C_p}$.
  \item[(iii)] $\phi^{n_j-n_{j-1}-1}$ is scaling on $\phi^{n_{j-1}+1}(\overline{D}_{\C_p})$.
 \item[(iv)] $\phi^{n_j}$ is scaling on $D_{\C_p}$.
 \item[(v)] $\mathrm{diam}(\phi^{n_j}(\overline{D}_{\C_p}))=\mathrm{diam}(\phi^{n_j}(\overline{D}))<\epsilon_0$ if $j_0>1$.
 \end{enumerate}
\end{claim}

We proceed the proof of Claim \ref{claim:repelling} by induction.
For $j=1$, the statement (i) follows immediately and the statement (ii) follows from \eqref{equ:empty}. We begin to show the statements (iii)--(v) for $j=1$. Noting that there is no open disk in $\phi(\overline{D}_{\C_p})$ containing $\phi(\overline{D})$, we obtain
\begin{equation}\label{equ:diam-same-1}
\mathrm{diam}(\phi(\overline{D}_{\C_p}))=\mathrm{diam}(\phi(\overline{D})).
\end{equation}
Then by \eqref{equ:c} and \eqref{equ:less}, the map $\phi$ is scaling on $\phi(\overline{D}_{\C_p})$. Let $1\le i_1\le n_1-1$ be the largest integer such that $\phi^{i_1}$ is scaling on $\phi(\overline{D}_{\C_p})$. If $i_1\not=n_1-1$, then by \eqref{equ:diam-same-1}, we have
\begin{equation}\label{equ:diam-same-i}
\mathrm{diam}(\phi^{i_1+1}(\overline{D}_{\C_p}))=\mathrm{diam}(\phi^{i_1+1}(\overline{D})),
\end{equation}
 but
 \begin{equation}\label{equ:diam-same-i+1}
 \mathrm{diam}(\phi^{i_1+2}(\overline{D}_{\C_p}))>\mathrm{diam}(\phi^{i_1+2}(\overline{D})).
 \end{equation}
 However, by \eqref{equ:c}, \eqref{equ:less} and \eqref{equ:diam-same-i}, the map $\phi$ is scaling on $\phi^{i_1+1}(\overline{D}_{\C_p})$, which implies that
 $$\mathrm{diam}(\phi^{i_1+2}(\overline{D}_{\C_p}))=\mathrm{diam}(\phi^{i_1+2}(\overline{D})).$$
 This contradicts \eqref{equ:diam-same-i+1}. Hence $i_1=n_1-1$ and $\phi^{n_1-1}$ is scaling on $\phi(\overline{D}_{\C_p})$. Thus the statement (iii) holds for $j=1$.
 Since by Corollary \ref{coro:tame-wild} (1), the map $\phi$ is scaling on $D_{\C_p}$, we in fact have that $\phi^{n_1}$ is scaling on $D_{\C_p}$, and hence the statement (iv) holds for $j=1$. Applying \eqref{equ:c}, \eqref{equ:less}, \eqref{equ:diam-same-1} and the statement (iii) for $j=1$, we obtain the statement (v) for $j=1$.

If $j_0=1$, then we are done. If $j_0>1$, we do the induction step.
We assume that the statements (i)--(v) hold for $j=k<j_0$ and we show that the statements (i)--(v) hold for $j=k+1$.
For the statement (i), if $c\in\phi^{n_k}(D_{\C_p})$, then $c\in\phi^{n_k}(D)$ since $\phi^{n_k}$ is scaling on $D_{\C_p}$, which contradicts the choice of $n$. Thus the statement (i) holds for $j=k+1$. For the statement (ii), since $\phi^{n_k-n_{k-1}-1}$ is scaling on $\phi^{n_{k-1}+1}(\overline{D}_{\C_p})\not=\mathbb{P}^1_{\C_p}$, it follows that $\phi^{n_k}(\overline{D}_{\C_p})\not=\mathbb{P}^1_{\C_p}$. Moreover, noting that $\mathrm{diam}(\phi^{n_k}(\overline{D}_{\C_p}))<\epsilon_0$ and $c\in\phi^{n_k}(\overline{D}_{\C_p})$, by \eqref{equ:empty}, we have $\phi^{n_k+1}(\overline{D}_{\C_p})\not=\mathbb{P}^1_{\C_p}$. Hence the statement (ii) holds for $j=k+1$.

Since $\mathrm{diam}(\phi^{n_k}(\overline{D}_{\C_p}))=\mathrm{diam}(\phi^{n_k}(\overline{D}))<\epsilon_0$ and $c\in\phi^{n_k}(\overline{D})$, as in \eqref{equ:diam-same-1}, we have
 \begin{equation}\label{equ:diam-same-1-k}
\mathrm{diam}(\phi^{n_k+1}(\overline{D}_{\C_p}))=\mathrm{diam}(\phi^{n_k+1}(\overline{D})).
\end{equation}
Moreover, by \eqref{equ:c} and \eqref{equ:less}, the map $\phi$ is scaling on $\phi^{n_k+1}(\overline{D}_{\C_p})$. Let $1\le i_{k+1}\le n_{k+1}-n_k-1$ be the largest integer such that $\phi^{i_{k+1}}$ is scaling on $\phi^{n_k+1}(\overline{D}_{\C_p})$. If $i_1\not=n_{k+1}-n_k-1$, then by \eqref{equ:diam-same-1-k}, it follows that
\begin{equation}\label{equ:diam-same-i-k}
\mathrm{diam}(\phi^{n_k+i_{k+1}+1}(\overline{D}_{\C_p}))=\mathrm{diam}(\phi^{n_k+i_{k+1}+1}(\overline{D})),
\end{equation}
 but
 \begin{equation}\label{equ:diam-same-k+1}
 \mathrm{diam}(\phi^{n_k+i_{k+1}+2}(\overline{D}_{\C_p}))>\mathrm{diam}(\phi^{n_k+i_{k+1}+2}(\overline{D})).
 \end{equation}
 However, by \eqref{equ:c}, \eqref{equ:less} and \eqref{equ:diam-same-i-k}, the map $\phi$ is scaling on $\phi^{n_k+i_{k+1}+1}(\overline{D}_{\C_p})$, which implies
 $$\mathrm{diam}(\phi^{n_k+i_{k+1}+2}(\overline{D}_{\C_p}))=\mathrm{diam}(\phi^{n_k+i_{k+1}+2}(\overline{D})).$$
 This contradicts \eqref{equ:diam-same-k+1}. Hence $i_{k+1}=n_{k+1}-n_k-1$ and $\phi^{n_{k+1}-n_k-1}$ is scaling on $\phi^{n_k+1}(\overline{D}_{\C_p})$. Thus the statement (iii) holds for $j=k+1$.

By Corollary \ref{coro:tame-wild} (1),  since $\mathrm{diam}(\phi^{n_k}(\overline{D}_{\C_p}))<\epsilon_0$, the statement (i) for $j=k+1$ implies that $\phi$ is scaling on $\phi^{n_k}(D_{\C_p})$. Moreover, the statement (iii) for $j=k+1$ implies that $\phi^{n_{k+1}-n_k-1}$ is scaling on $\phi^{n_k+1}(D_{\C_p})$. It follows that $\phi^{n_{k+1}-n_k}$ is scaling on $\phi^{n_k}(D_{\C_p})$. Since  $\phi^{n_k}$ is scaling on $D_{\C_p}$, the statement (iv) holds for $j=k+1$.
Combining the statement (iii) for $j=k+1$ and \eqref{equ:diam-same-1-k}, we obtain
\begin{equation}\label{equ:diam-same-n-k+1}
\mathrm{diam}(\phi^{n_{k+1}}(\overline{D}_{\C_p}))=\mathrm{diam}(\phi^{n_{k+1}}(\overline{D})).
\end{equation}
Then the statement (v) for $j=k+1$ follows immediately from \eqref{equ:less} and \eqref{equ:diam-same-n-k+1}.
Therefore, Claim \ref{claim:repelling} holds.

\smallskip
We continue to show that $D$ contains a repelling periodic point of $\phi$.
From the statements (ii) and (iii) for $j=j_0$ in Claim \ref{claim:repelling}, we conclude that $\phi^{n}(\overline{D}_{\C_p})\not=\mathbb{P}^1_{\C_p}$.
 It follows that
\begin{equation}\label{equ:diam-same}
\mathrm{diam}(\phi^{n}(\overline{D}_{\C_p}))=\mathrm{diam}(\phi^{n}(D_{\C_p})).
\end{equation}
Note that
\begin{equation}\label{equ:ratio}
\frac{\mathrm{diam}(D)}{\mathrm{diam}(D_{\C_p})}=|\pi|.
\end{equation}
Then the statement (iv) for $j=j_0$ in Claim \ref{claim:repelling} implies that
\begin{equation}\label{equ:ratio-n}
\frac{\mathrm{diam}(\phi^n(D))}{\mathrm{diam}(\phi^n(D_{\C_p}))}=|\pi|.
\end{equation}
Since $c\in J_{\C_p}(\phi)$ and $c\in\overline{D}\cap\phi^n(D)\subset \overline{D}_{\C_p}\cap\phi^n(\overline{D}_{\C_p})$, it follows that $\overline{D}_{\C_p}\subsetneq\phi^n(\overline{D}_{\C_p})$. Hence by \eqref{equ:diam-same},
\begin{equation}\label{equ:ratio-less}
\mathrm{diam}(D_{\C_p})<\mathrm{diam}(\phi^n(D_{\C_p})).
\end{equation}
Combining \eqref{equ:ratio}, \eqref{equ:ratio-n} and \eqref{equ:ratio-less}, we obtain
\begin{equation*}
\mathrm{diam}(D)=|\pi|\mathrm{diam}(D_{\C_p})<|\pi|\mathrm{diam}(\phi^n(D_{\C_p}))=\mathrm{diam}(\phi^n(D)).
\end{equation*}
Hence
\begin{equation}\label{equ:ratio-less-K}
\mathrm{diam}(\overline{D})\le\mathrm{diam}(\phi^n(D)).
\end{equation}
Again since $c\in\overline{D}\cap\phi^n(D)$, it follows from \eqref{equ:ratio-less-K} that $\overline{D}\subset\phi^n(D)$, and hence $D\subsetneq\phi^n(D)$. Then by the statement (iv) for $j=j_0$ in Claim \ref{claim:repelling}, the map $\phi$ has a repelling periodic point $x\in D$. By the choice of $n$, the point $x$ is of period $n$.
\end{proof}

\begin{corollary}\label{coro:1}
If $\mathcal{Q}^{-1}([c])=\{c\}$ and $c$ is recurrent, then $c\in J_K(\phi)$.
\end{corollary}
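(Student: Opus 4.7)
The plan is to argue by contradiction: assuming $c\in F_K(\phi)$, I will verify the hypotheses of Proposition \ref{prop:repelling} on a small $K$-disk contained in $F_K(\phi)$, and the resulting repelling periodic point will yield the desired contradiction since all repelling periodic points of $\phi$ in $\mathbb{P}^1_K$ lie in $J_K(\phi)$. Because $c\in\mathrm{Int}(J^K_{\mathbb{C}_p}(\phi))\cap F_K(\phi)$, pick $0<\delta<\epsilon_0$ so that $D_{\mathbb{P}^1_K}(c,\delta)\subset \mathrm{Int}(J^K_{\mathbb{C}_p}(\phi))\cap F_K(\phi)$; by equicontinuity of $\{\phi^n\}$ at $c$, after possibly shrinking $\delta$, we may assume $\rho(\phi^n(y),\phi^n(c))<\eta$ for every $n\ge 0$ and $y\in D_{\mathbb{P}^1_K}(c,\delta)$, where $\eta$ is fixed with $\eta<\min\{\epsilon_0,\epsilon_\ast\}$. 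Shrinking $\epsilon_0$ once more if necessary, we may also take $\epsilon_0<r_3$, where $r_3$ is the quantity from Section \ref{sec:distance}.

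The first key step uses the hypothesis $\mathcal{Q}^{-1}([c])=\{c\}$ together with the minimality of $[c]$ to pin down the critical points that the orbit of a small disk near $c$ can visit. If $c'\in\mathrm{Crit}_{\mathbb{C}_p}(\phi)\cap\overline{\mathcal{O}_\phi(c)}$, then $c'\in J_{\mathbb{C}_p}(\phi)$ by the closedness and forward invariance of $J_{\mathbb{C}_p}(\phi)$, so $c'\in\mathrm{Crit}^\ast_{\mathbb{C}_p}(\phi)$; and since $\overline{\mathcal{O}_\phi(c')}\subset\overline{\mathcal{O}_\phi(c)}$, the minimality of $[c]$ in $(\mathcal{M}(\phi),\preceq)$ forces $[c']=[c]$, whence $c'=c$ by the hypothesis. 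Combined with the choice $\epsilon_0<r_3$, this shows that every critical point of $\phi$ lying at spherical distance less than $\epsilon_0$ from $\overline{\mathcal{O}_\phi(c)}$ must equal $c$.

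Next, choose any $x\in D_{\mathbb{P}^1_K}(c,\delta)$ with $x\ne c$, set $s_x=\rho(x,c)$, and consider $D(x)=D_{\mathbb{P}^1_K}(x,s_x)\subset D_{\mathbb{P}^1_K}(c,\delta)$. By Lemma \ref{lem:uniform-scaling}, either the orbit of $D(x)$ eventually meets $\mathrm{Crit}_K(\phi)$, or $\sup_{n\ge 1}\mathrm{diam}(\phi^n(D(x)))\ge\epsilon_\ast$; the latter is excluded because the equicontinuity estimate and the ultrametric inequality give $\mathrm{diam}(\phi^n(D(x)))\le\eta<\epsilon_\ast$ for all $n$. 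Thus there exists a smallest integer $n:=n_x\ge 0$ with $\phi^n(D(x))\cap\mathrm{Crit}_K(\phi)\ne\emptyset$. For any $c'$ in this intersection, the ultrametric inequality yields
$$\rho(c',\phi^n(c))\le\max\{\rho(c',\phi^n(x)),\rho(\phi^n(x),\phi^n(c))\}<\epsilon_0<r_3,$$
and since $\phi^n(c)\in\overline{\mathcal{O}_\phi(c)}$, the preceding paragraph forces $c'=c$. Hence $\phi^n(D(x))\cap\mathrm{Crit}_K(\phi)=\{c\}$.

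The remaining hypothesis of Proposition \ref{prop:repelling}, namely $\mathrm{diam}(\phi^m(\overline{D}(x)))<\epsilon_0$ for $1\le m\le n$, follows from $\overline{D}(x)\subset D_{\mathbb{P}^1_K}(c,\delta)$ together with the equicontinuity bound $\mathrm{diam}(\phi^m(\overline{D}(x)))\le\eta<\epsilon_0$. Proposition \ref{prop:repelling} then produces a repelling periodic point of $\phi$ inside $D(x)\subset F_K(\phi)$, which is the desired contradiction. The main technical point is the identification of the first critical visit as exactly $c$; this is where the hypothesis $\mathcal{Q}^{-1}([c])=\{c\}$ is indispensable, since it combines with minimality of $[c]$ and the bound $\epsilon_0<r_3$ to rule out interference from any other critical point.
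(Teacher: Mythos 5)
Your proof is correct and follows essentially the same contradiction argument as the paper's: assume $c\in F_K(\phi)$, shrink $\delta$ using equicontinuity, apply Lemma \ref{lem:uniform-scaling} to rule out the unbounded-diameter alternative, identify the first critical visit as $c$, and invoke Proposition \ref{prop:repelling} to produce a repelling periodic point in $D(x)\subset F_K(\phi)$. The main difference is one of exposition: the paper simply asserts ``by the choice of $\epsilon_0$'' that $c$ is the only possible critical visit, whereas you spell out the two-step justification (the $\epsilon_0<r_3$ bound places any such critical point in $\overline{\mathcal{O}_\phi(c)}$, and then minimality of $[c]$ plus $\mathcal{Q}^{-1}([c])=\{c\}$ forces it to be $c$), which is exactly what the paper's phrase is alluding to.
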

\begin{proof}
Suppose to the contrary that $c\in F_K(\phi)$. Shrinking $\delta$ if necessary, we assume $D_{\mathbb{P}^1_K}(c,\delta)\subset F_K(\phi)$. Moreover, by the equicontiunity, for all $m\ge 0$, we can assume
$$\mathrm{diam}\left(\phi^m(D_{\mathbb{P}^1_K}(c,\delta))\right)<\epsilon_0.$$
Pick an arbitrary $x\in D_{\mathbb{P}^1_K}(c,\delta)$ with $x\not=c$ and consider $D(x)$. Then $D(x)\subset\mathrm{Int}(J_K(\phi))$. If the orbit of $D(x)$ intersects $\mathrm{Crit}_K(\phi)$, by the choice of $\epsilon_0$, we have that $c$ is the unique point in the intersection. Proposition \ref{prop:repelling} implies that $D(x)$ contains a repelling periodic point of $\phi$. Since $D(x)\subset F_K(\phi)$, the orbit of $D(x)$ must disjoint $\mathrm{Crit}_K(\phi)$. However, noting that $D(x)\subset\mathrm{Int}(J_K(\phi))$, by Lemma \ref{lem:uniform-scaling}, we deduce that there exists $\epsilon_\ast>0$, independent of $x$ such that
$$\sup_{n\ge 1}\mathrm{diam}\phi^n(D(x))\ge\epsilon_\ast.$$
This implies $c\in J_K(\phi)$ which hence contradicts the assumption $c\in F_K(\phi)$.
\end{proof}

Now we consider the case that $\mathcal{Q}^{-1}([c])=\{c,c_1\}$ with $c\not=c_1$. Again, we first show a result on the existence of repelling periodic points.

\begin{proposition}\label{prop:repelling-2}
 Pick $x\in D_{\mathbb{P}^1_K}(c,\delta)$ with $x\not=c$. Suppose that for any $y\in S_K(x)$, there exists a smallest $n_y\ge 1$ such that  $\phi^{n_y}(D(y))\cap\mathrm{Crit}_K(\phi)\not=\emptyset$, and further assume that 
 \begin{enumerate}
% \item for any $y\in S_K(x)$, there exists a smallest $n_y\ge 1$ such that  $\phi^{n_y}(D(y))\cap\mathrm{Crit}_K(\phi)\not=\emptyset;$
\item $\phi^{n_y}(D(y))\cap\mathrm{Crit}_K(\phi)=\{c_1\},$
and
\item for all $1\le m\le n_y$, $\mathrm{diam}\left(\phi^m(\overline{D}(y))\right)<\epsilon_0.$
\end{enumerate}
Then the following hold.
\begin{enumerate}
\item[(i)] If there exists $y_0\in S_K(x)$ such that $n_{y_0}\not=n_{x}$, then $D(x)\cup D(y_0)$ contains a repelling preperiodic point of $\phi$.
\item[(ii)] If $n_{y}=n_{x}$ for all $y\in S_K(x)$, picking $z\in D_{\mathbb{P}^1_K}(c_1,\epsilon_0)$ with $s_z=|\pi|\mathrm{diam}(\phi^{n_x}(D_{\C_p}(x)))$, we suppose that for any $w\in S_K(z)$, there exists a smallest $n_w\ge 1$ such that $\phi^{n_w}(D(w))\cap\mathrm{Crit}_K(\phi)\not=\emptyset$, and  further assume
\begin{enumerate}
%\item for any $w\in S_K(z)$, there exists a smallest $n_w\ge 1$ such that $\phi^{n_w}(D(w))\cap\mathrm{Crit}_K(\phi)\not=\emptyset;$
\item  $\phi^{n_w}(D(w))\cap\mathrm{Crit}_K(\phi)=\{c\},$
and
\item for all $1\le m\le n_w$, $\mathrm{diam}\left(\phi^m(\overline{D}(w))\right)<\epsilon_0.$
\end{enumerate}
Then
 if $\phi^{n_x}(D_{\C_p}(x))=\phi^{n_x}(D_{\mathbb{P}^1_{\C_p}}(c,s_x))$ and  $n_w=n_z$ for all $w\in S_K(z)$, we have that for any $y\in S_K(x)$, the disk $D(y)$ contains a repelling periodic point.
\end{enumerate}
\end{proposition}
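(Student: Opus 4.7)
The overall plan is to extend the Claim \ref{claim:repelling}-induction from Proposition \ref{prop:repelling} to every disk in the sphere $S_K(x)$ (and in the setting of (ii) also in $S_K(z)$), and then combine the resulting scalings with the uniformity hypotheses to produce the desired (pre)periodic points. For each $y\in S_K(x)$, the minimality of $n_y$ together with hypotheses (2)--(3) lets me run the induction of Claim \ref{claim:repelling} verbatim with $c_1$ in the role of $c$, yielding that $\phi^{n_y}$ is scaling on $D_{\C_p}(y)$ with image $E_y:=\phi^{n_y}(D_{\C_p}(y))$ a proper $\C_p$-disk containing $c_1$ of diameter $\alpha_y\,s_x$, where $\alpha_y$ is the scaling ratio. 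Under the further hypotheses of (ii), the same induction applied on the $c_1$-side gives, for each $w\in S_K(z)$, that $\phi^{n_z}$ is scaling on $D_{\C_p}(w)$ with image $F_w$ a proper $\C_p$-disk containing $c$.

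For part (ii), a $\C_p$-disk of a fixed diameter containing a given point is unique, so the uniform hypotheses $n_y\equiv n_x$ and $n_w\equiv n_z$ force all the $E_y$ to coincide with a single disk $E$ and all the $F_w$ to coincide with a single disk $F$. Because $s_z=|\pi|\,\mathrm{diam}(E)<\mathrm{diam}(E)$, every $D_{\C_p}(w)$ with $w\in S_K(z)$ sits inside $E$. Fixing $y\in S_K(x)$ and a $K$-rational $w\in S_K(z)$, I pull $D_{\C_p}(w)$ back through the scaling map $\phi^{n_x}|_{D_{\C_p}(y)}$ to obtain a disk $\widehat D_{\C_p}\subset D_{\C_p}(y)$ with $\phi^{n_x}(\widehat D_{\C_p})=D_{\C_p}(w)$; matching $|\pi|$-scales ensures $\widehat D:=\widehat D_{\C_p}\cap\P^1_K\subset D(y)$ is nonempty. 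The composition $\phi^{n_x+n_z}$ is then scaling on $\widehat D_{\C_p}$ and sends it onto $F$. Using the hypothesis $\phi^{n_x}(D_{\C_p}(x))=\phi^{n_x}(D_{\P^1_{\C_p}}(c,s_x))$ to pin $\mathrm{diam}(F)$ against $s_x$, the diameter argument from the end of the proof of Proposition \ref{prop:repelling} gives $\overline{D}(y)\subsetneq F$, whence $\widehat D_{\C_p}\subsetneq\phi^{n_x+n_z}(\widehat D_{\C_p})$, and the strict self-covering produces a repelling periodic point of $\phi$ inside $\widehat D\subset D(y)$.

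For part (i), assume without loss of generality that $n_x<n_{y_0}$. My plan is to construct a repelling periodic point of $\phi$ inside $\overline{D}(y_0)$ by continuing to iterate past time $n_{y_0}$ along a return excursion that carries the orbit from a $c_1$-neighborhood back into a $c$-neighborhood (such an excursion exists because $c\in\overline{\mathcal{O}_\phi(c_1)}$ by minimality of the class $[c]=[c_1]$), running a further Claim \ref{claim:repelling}-style induction along the connecting iterates and then invoking Proposition \ref{prop:repelling}. The strict inequality $n_x<n_{y_0}$ is used to produce the mismatched ``return times'' at $c_1$ that make the self-covering strict. Since periodic points are in particular preperiodic, this point serves as the desired repelling preperiodic point lying in $D(x)\cup D(y_0)$.

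The principal obstacle will be the strict diameter inequality $\overline{D}(y)\subsetneq F$ in (ii), which requires converting the two hypotheses $\phi^{n_x}(D_{\C_p}(x))=\phi^{n_x}(D_{\P^1_{\C_p}}(c,s_x))$ and $s_z=|\pi|\,\mathrm{diam}(E)$ into a sharp inequality on the composite scaling ratio $\alpha_y\cdot\beta_w$ that dominates $|\pi|^{-1}$. The analogous hurdle in (i) is making the return excursion from a $c_1$-neighborhood back to a $c$-neighborhood explicit and controlled enough to cleanly apply Proposition \ref{prop:repelling} along the connecting iterates.
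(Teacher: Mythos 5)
Your proposal diverges from the paper's proof on both halves, and in both places the divergence opens a real gap.

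\textbf{Part (i).} The paper does not need any ``return excursion from $c_1$ back to $c$.'' Taking WLOG $n_{y_0}>n_x$, it observes that $\phi^{n_x}(D(y_0))$ is a disk inside $D_{\mathbb{P}^1_K}(c_1,\epsilon_0)$ which does \emph{not} contain $c_1$ (by minimality of $n_{y_0}$), while $\phi^{n_{y_0}}(D(y_0))$ \emph{does}; since $\phi^{n_{y_0}-n_x}$ is scaling on the ambient $\mathbb{C}_p$-disk, this strict self-covering already produces a periodic point of period $n_{y_0}-n_x$ inside $\phi^{n_x}(D(y_0))$, which is then pulled back along the scaling map $\phi^{n_x}$ (Corollary \ref{coro:scaling-julia}) to a preperiodic point in $D(y_0)$. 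Your plan instead iterates \emph{past} $n_{y_0}$ and tries to feed Proposition \ref{prop:repelling}, but that proposition requires the first critical encounter of the disk to be $c$ itself, whereas here it is $c_1$; moreover, the excursion you describe would have to pass through the critical point $c_1$, where $\phi$ is not scaling, so the clean pullback structure that Proposition \ref{prop:repelling} relies on is broken and the ``Claim-style induction'' would not carry over.

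\textbf{Part (ii).} The decisive step you invoke---``the diameter argument from the end of the proof of Proposition \ref{prop:repelling} gives $\overline{D}(y)\subsetneq F$''---does not transfer. In Proposition \ref{prop:repelling} the domain and the image of $\phi^n$ are the \emph{same} closed disk $\overline{D}_{\C_p}$ containing $c$, so nestedness plus $c\in J_{\C_p}(\phi)$ forces strict expansion. Here the domain is the tiny pullback $\widehat{D}_{\C_p}\subset D_{\C_p}(y)$, not $\overline{D}_{\C_p}(y)$; the containment $F\subset\overline{D}_{\C_p}(y)$ would not give $\phi^{n_x+n_z}(\overline{D}_{\C_p}(y))\subset\overline{D}_{\C_p}(y)$, hence no Fatou contradiction, hence no way to conclude $\overline{D}(y)\subsetneq F$. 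In fact, the paper works with the shrunken closed disks $\overline{D}^{(1)}_{\C_p}(y)=\overline{D}_{\P^1_{\C_p}}(y,|\pi|s_y)$ (which coincide with $D(y)$ after intersecting with $\P^1_K$, since $K$ is discretely valued) and the disk $\overline{D}^{(1)}_{\C_p}(c)$ centered at $c$; it compares the scaling ratios $|\pi|$ (off-center) vs. $|\pi|^{\deg_c\phi}$ (at the critical center), establishes \eqref{equ:order}, and then shows $\overline{D}_{\C_p}(y)\subset\phi^{n_x+n_z}(\overline{D}^{(1)}_{\C_p}(y))$ by the Julia property of $c$ applied to $\overline{D}^{(1)}_{\C_p}(c)$. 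It must then handle a genuine dichotomy between equality and strict inclusion, with a further contradiction argument in the equality case. Your proposal does not anticipate the equality case at all, and does not supply the mechanism that actually produces the strict self-covering. (A smaller issue: the claim that all $E_y$ coincide with a single disk $E$ requires noting that the scaling ratio of $\phi^{n_x}$ on $D_{\C_p}(y)$ is independent of $y\in S_K(x)$; this is true because $c$ is tame, but needs to be said.)
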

\begin{proof}
If there exists $y_0\in S_K(x)$ such that $n_{y_0}\not=n_{x}$, without loss of generality, we may assume that $n_{y_0}>n_x$. Observe $c_1\in\phi^{n_x}(D(x))$. Then, by  the assumption (2), $\phi^{n_x}(D(x))\subsetneq D_{\mathbb{P}^1_K}(c_1,\epsilon_0)$, and we have that $c_1\not\in\phi^{n_x}(D(y_0))\subset D_{\mathbb{P}^1_K}(c_1,\epsilon_0)$ but $c_1\in\phi^{n_{y_0}-n_x}(\phi^{n_x}(D(x))$. Note that by the choice of $\epsilon_0$ and the assumption (2), the map $\phi^{n_{y_0}-n_x}$ is scaling on the smallest disk in $\mathbb{P}^1_{\C_p}$ containing $\phi^{n_x}(D(y_0))$. Then $\phi^{n_x}(D(y_0))$ contains a repelling periodic point of $\phi$ with period $n_{y_0}-n_x$. Since $\phi^{n_x}$ is scaling on $D_{\C_p}(y_0)$, Corollary \ref{coro:scaling-julia} implies that $D(y_0)$ contains a repelling preperiodic point of $\phi$, and hence the statement (i) holds.

Now we show the statement (ii). Note that by Corollary \ref{coro:tame-wild} (1) and  \eqref{equ:empty},
\begin{equation}\label{equ:equ}
\mathrm{diam}(\phi(D(x)))=\mathrm{diam}(\phi(D_{\C_p}(x)))=\mathrm{diam}(\phi(\overline{D}_{\C_p}(x))).
\end{equation}
By  \eqref{equ:equ} and the assumption (2), we have $\phi(\overline{D}_{\C_p}(x))\not=\mathbb{P}^1_{\C_p}$. The choice of $\epsilon_0$ and the assumption (2) imply that $\phi^{n_x-1}$ is scaling on $\phi(\overline{D}_{\C_p}(x))$ and hence $\phi^{n_x}(\overline{D}_{\C_p}(x))\not=\mathbb{P}^1_{\C_p}$.

For $y\in S_K(x)$, consider the disks $\overline{D}^{(1)}(y):=\overline{D}_{\mathbb{P}^1_K}(y,|\pi|s_y)$ and $\overline{D}^{(1)}_{\C_p}(y):=\overline{D}_{\mathbb{P}^1_{\C_p}}(y,|\pi|s_y)$, and let $\overline{D}^{(1)}_{\C_p}(c):=\overline{D}_{\mathbb{P}^1_{\C_p}}(c,|\pi|s_x)$. Then we conclude that
\begin{equation}\label{equ:ration-y}
\frac{\mathrm{diam}\left(\phi^{n_x}(\overline{D}^{(1)}_{\C_p}(y))\right)}{\mathrm{diam}\left(\phi^{n_x}(\overline{D}_{\C_p}(y))\right)}=|\pi|,
\end{equation}
and
\begin{equation}\label{equ:ration-c}
\frac{\mathrm{diam}\left(\phi^{n_x}(\overline{D}^{(1)}_{\C_p}(c))\right)}{\mathrm{diam}\left(\phi^{n_x}(\overline{D}_{\C_p}(y))\right)}=|\pi|^{\deg_c\phi}.
\end{equation}

 If $\phi^{n_x}(D_{\C_p}(x))=\phi^{n_x}(D_{\mathbb{P}^1_{\C_p}}(c,s_x))$,  then $c_1\in\phi^{n_x}(D_{\mathbb{P}^1_{\C_p}}(c,s_x))$.
 From the assumption (2) and the assumption $n_y=n_x$, we deduce that $c_1\in\phi^{n_x}(\overline{D}^{(1)}_{\C_p}(y))$. By \eqref{equ:ration-y} and \eqref{equ:ration-c},
\begin{equation}\label{equ:order}
\phi^{n_x}(\overline{D}^{(1)}_{\C_p}(c))\subsetneq \phi^{n_x}(\overline{D}^{(1)}_{\C_p}(y)).
\end{equation}
Furthermore, by the choice of $z$ and by the combination of \eqref{equ:ration-y} and \eqref{equ:order},  we have
\begin{equation*}
\mathrm{diam}\left(\phi^{n_x}(\overline{D}^{(1)}_{\C_p}(y))\right)=s_z.
\end{equation*}

If further $n_w=n_z$ for all $w\in S_K(z)$, then $c\in\phi^{n_x+n_z}(\overline{D}^{(1)}_{\C_p}(y))$. Moreover,  \begin{equation}\label{equ:above}
\overline{D}_{\C_p}(y)\subset\phi^{n_x+n_z}\left(\overline{D}^{(1)}_{\C_p}(y)\right).
\end{equation}
Indeed, for otherwise, if $\phi^{n_x+n_z}(\overline{D}^{(1)}_{\C_p}(y))\subsetneq \overline{D}_{\C_p}(y)$, then $\phi^{n_x+n_z}(\overline{D}^{(1)}_{\C_p}(y))\subset \overline{D}^{(1)}_{\C_p}(c)$, and hence by \eqref{equ:order}, we obtain $\phi^{n_x}(\overline{D}^{(1)}_{\C_p}(c))\subsetneq\overline{D}^{(1)}_{\C_p}(c)$, which implies that $\overline{D}^{(1)}_{\C_p}(c)\subset F_{\C_p}(\phi)$ contradicting the assumption $c\in J_{\C_p}(\phi)$.

By \eqref{equ:above}, we first consider the case that $\overline{D}_{\C_p}(y)=\phi^{n_x+n_z}(\overline{D}^{(1)}_{\C_p}(y))$. If $c\in\phi^{n_x+n_z}(\overline{D}^{(1)}_{\C_p}(c))$, then by \eqref{equ:order},  we have $\phi^{n_x+n_z}(\overline{D}^{(1)}_{\C_p}(c))\subset\overline{D}_{\C_p}(y)$, and hence $\phi^{n_x+n_z}(\overline{D}^{(1)}_{\C_p}(c))\subset\overline{D}^{(1)}_{\C_p}(c)$, which also contradicts the assumption $c\in J_{\C_p}(\phi)$. If $c\not\in\phi^{n_x+n_z}(\overline{D}^{(1)}_{\C_p}(c))$, then by the assumptions (a) and (b), we have
\begin{equation}\label{equ:ration-c-1}
\frac{\mathrm{diam}\left(\phi^{n_x+n_z}(\overline{D}^{(1)}_{\C_p}(c))\right)}{\mathrm{diam}\left(\phi^{n_x+n_z}(\overline{D}_{\C_p}(y))\right)}=|\pi|^{\deg_{c_1}\phi}.
\end{equation}
 It follows that there exists $y_0\in S_K(x)$ such that
 \begin{equation}\label{equ:bound}
 \phi^{n_x+n_z}\left(\overline{D}^{(1)}_{\C_p}(c)\right)\subsetneq\overline{D}^{(1)}_{\C_p}(y_0)\subset\overline{D}_{\C_p}(x).
 \end{equation}
 Inductively, by \eqref{equ:bound}, for any $k\ge 1$, we conclude that $$\phi^{k(n_x+n_z)}\left(\overline{D}^{(1)}_{\C_p}(c)\right)\subsetneq\overline{D}_{\C_p}(x),$$
 which implies that
  \begin{equation}\label{equ:F}
  \phi^{n_x+n_z}(\overline{D}^{(1)}_{\C_p}(c))\subset F_{\C_p}(\phi).
  \end{equation}
  However, noting $\overline{D}(x)\subset D_{\mathbb{P}^1_K}(c,\delta)\subset J_{\C_p}^K(\phi)$, observing $ \phi^{n_x+n_z}(\overline{D}^{(1)}_{\C_p}(c))\cap\mathbb{P}^1_K\not=\emptyset$ since $c\in\mathbb{P}^1_K$, and applying  \eqref{equ:bound}, we obtain
 $$\phi^{n_x+n_z}\left(\overline{D}^{(1)}_{\C_p}(c)\right)\cap J_{\C_p}(\phi)\not=\emptyset,$$
 which contradicts \eqref{equ:F}.

 We now consider the case that $\overline{D}_{\C_p}(y)\subsetneq\phi^{n_x+n_z}(\overline{D}^{(1)}_{\C_p}(y))$. Then by the assumptions (a), (b) and Corollary \ref{coro:tame-wild} (1), for any $w\in S_K(z)$,
  \begin{equation}\label{equ:contain}
  \overline{D}_{\C_p}(y)\subsetneq\phi^{n_z}(D_{\C_p}(w)).
  \end{equation}
 Since by the assumptions (1), (2) and Corollary \ref{coro:tame-wild} (1), the map $\phi^{n_x}$ is scaling on $\overline{D}^{(1)}_{\C_p}(y)$, considering the disks $D^{(1)}_{\C_p}(y'):=D_{\mathbb{P}^1_{\C_p}}(y',|\pi|s_y)$ for $y'\in\overline{D}^{(1)}_{\C_p}(y)$ with $\rho(y,y')=|\pi|s_y$, we have that $\phi^{n_x}$ maps all except one such disks to maximal disks in $S_K(z)$. Pick such a disk $D^{(1)}_{\C_p}(y')$ that is mapped to $S_K(z)$ by $\phi^{n_x}$. It follows that $\phi^{n_x+n_z}$ is scaling on $D^{(1)}_{\C_p}(y')$. By \eqref{equ:contain},
$$D^{(1)}_{\C_p}(y')\subsetneq \overline{D}_{\C_p}(y)\subsetneq\phi^{n_x+n_z}(D^{(1)}_{\C_p}(y')).$$
and hence setting $D^{(1)}(y'):=D^{(1)}_{\C_p}(y')\cap\mathbb{P}^1_K$, we conclude
$$D^{(1)}(y')\subsetneq \phi^{n_x+n_z}(D^{(1)}(y')).$$
It follows that $D^{(1)}(y')$ contains a periodic point of $\phi$.

Thus the statement (ii) holds and this completes the proof.
\end{proof}

\begin{corollary}\label{coro:2}
If $\mathcal{Q}^{-1}([c])=\{c, c_1\}$ and $c\not=c_1$, then $c\in J_K(\phi)$.
\end{corollary}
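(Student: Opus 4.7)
The plan is to argue by contradiction, mirroring the proof of Corollary \ref{coro:1} but invoking the two-critical-point machinery of Proposition \ref{prop:repelling-2}. Assume $c \in F_K(\phi)$. Since $c_1 \sim c$, the orbit $\mathcal{O}_\phi(c_1) \subset \mathbb{P}^1_K$ accumulates at $c$, and the closedness and forward invariance of $J_K(\phi)$ then force $c_1 \in F_K(\phi)$ as well; moreover $c_1 \in \overline{\mathcal{O}_\phi(c)} = \overline{\mathcal{O}_\phi(c_1)}$ is recurrent, so by the no wild recurrent critical points assumption $c_1$ is tame. Shrink $\delta > 0$ so that $D_{\mathbb{P}^1_K}(c,\delta)$ and $D_{\mathbb{P}^1_K}(c_1,\delta)$ both lie in $F_K(\phi) \cap \mathrm{Int}(J^K_{\mathbb{C}_p}(\phi))$ and, by equicontinuity, every iterate of each disk has diameter less than $\epsilon_0$.

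For any $x \in D_{\mathbb{P}^1_K}(c,\delta) \setminus \{c\}$, the disk $D(x) \subset F_K(\phi)$ has, by Lemma \ref{lem:uniform-scaling}, an orbit meeting $\mathrm{Crit}_K(\phi)$ (otherwise $\sup_n \mathrm{diam}(\phi^n(D(x))) \geq \epsilon_\ast$ would contradict $D(x) \subset F_K(\phi)$). Since $D(x) \subset J_{\mathbb{C}_p}(\phi)$ and iterates stay within $\epsilon_0 < r_3$ of $\overline{\mathcal{O}_\phi(c)}$, any critical point met lies in $\overline{\mathcal{O}_\phi(c)} \cap \mathrm{Crit}^\ast_{\mathbb{C}_p}(\phi)$; by minimality of $[c]$ this set equals $\mathcal{Q}^{-1}([c]) = \{c, c_1\}$, and $\epsilon_0 < r_2$ forces a single intersection point. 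If this first hit is $c$, Proposition \ref{prop:repelling} gives a repelling periodic point inside $D(x) \subset F_K(\phi)$, impossible. Hence $\phi^{n_x}(D(x)) \cap \mathrm{Crit}_K(\phi) = \{c_1\}$ for every such $x$, and by the symmetric argument around $c_1$, $\phi^{n_z}(D(z)) \cap \mathrm{Crit}_K(\phi) = \{c\}$ for every $z \in D_{\mathbb{P}^1_K}(c_1,\delta) \setminus \{c_1\}$. Hypotheses (1)--(3) of Proposition \ref{prop:repelling-2} are thus satisfied on every sphere $S_K(x)$ in $D_{\mathbb{P}^1_K}(c,\delta) \setminus \{c\}$ and on every sphere in $D_{\mathbb{P}^1_K}(c_1,\delta) \setminus \{c_1\}$.

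If $n_y$ fails to be constant on some sphere $S_K(x)$, Proposition \ref{prop:repelling-2}(i) produces a repelling preperiodic point in $D(x) \cup D(y_0) \subset F_K(\phi)$, a contradiction. Otherwise $n_y = n_x$ on each such sphere. Choose $z \in D_{\mathbb{P}^1_K}(c_1,\epsilon_0)$ with $s_z = |\pi| \cdot \mathrm{diam}(\phi^{n_x}(D_{\C_p}(x)))$ and apply the same dichotomy near $c_1$: either $n_w$ is non-constant on $S_K(z)$ and Proposition \ref{prop:repelling-2}(i), with the roles of $c$ and $c_1$ swapped, yields a repelling preperiodic point in $F_K(\phi)$, again a contradiction; or $n_w = n_z$ uniformly on $S_K(z)$. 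In the latter case, once the geometric hypothesis $\phi^{n_x}(D_{\C_p}(x)) = \phi^{n_x}(D_{\mathbb{P}^1_{\C_p}}(c, s_x))$ of Proposition \ref{prop:repelling-2}(ii) is verified, that proposition implants a repelling periodic point in every $D(y) \subset F_K(\phi)$, the final contradiction, and we conclude $c \in J_K(\phi)$.

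The principal obstacle is the verification of this set-equality. Because $c$ is tame with local degree $d = \deg_c \phi$, the one-step images of the ``central'' disk $D_{\mathbb{P}^1_{\C_p}}(c, s_x)$ and the ``outer'' disk $D_{\C_p}(x)$ are distinct open subdisks of $\overline{D}_{\mathbb{P}^1_{\C_p}}(\phi(c), |a| s_x^d)$, so the equality is not a pure scaling consequence but must be forced by tracking how the intermediate iterates $\phi, \phi^2, \dots, \phi^{n_x-1}$ wrap these disks together. The uniform constancy $n_y = n_x$ on $S_K(x)$, the symmetric constancy $n_w = n_z$ on $S_K(z)$, the ban on further critical encounters before the first-hit time, and the restriction $\mathcal{Q}^{-1}([c]) = \{c, c_1\}$ are the combinatorial ingredients that make this wrapping available, via a Claim-\ref{claim:repelling}-style induction on successive scaling ratios. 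Carrying out this bookkeeping rigorously is the essential technical work of the proof.
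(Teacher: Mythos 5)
Your proposal tracks the paper's first several moves closely: the contradiction hypothesis $c\in F_K(\phi)$, the deduction $c_1\in F_K(\phi)$, the shrinking of $\delta$, the identification of the first critical hit via Lemma \ref{lem:uniform-scaling} and Proposition \ref{prop:repelling}, and the invocation of Proposition \ref{prop:repelling-2}(i) and (ii) on each sphere. Up to that point the argument matches the paper.

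The gap is in your final step. You write that ``once the geometric hypothesis $\phi^{n_x}(D_{\C_p}(x)) = \phi^{n_x}(D_{\mathbb{P}^1_{\C_p}}(c, s_x))$ of Proposition \ref{prop:repelling-2}(ii) is verified'' the proof will close, and you frame verifying this set-equality as the ``essential technical work.'' But this equality need not hold at all, and the paper makes no attempt to prove it. Note that since $c$ is tame, $\phi^{n_x}(D_{\C_p}(x))$ and $\phi^{n_x}(D_{\mathbb{P}^1_{\C_p}}(c,s_x))$ are two open disks of the same diameter sitting inside the same closed disk, but nothing forces them to coincide; $D_{\C_p}(x)$ and $D_{\mathbb{P}^1_{\C_p}}(c,s_x)$ are disjoint maximal open subdisks of $\overline{D}_{\mathbb{P}^1_{\C_p}}(c,s_x)$ and their images under the degree-$\deg_c\phi$ covering map near $c$ are generically distinct. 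So you cannot hope to verify the hypothesis; you need to handle its failure.

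What the paper actually does is treat the negation. After exhausting the cases covered by Propositions \ref{prop:repelling} and \ref{prop:repelling-2}, the residual case is precisely the one where $\phi^{n_x}(D_{\C_p}(x)) \neq \phi^{n_x}(D_{\mathbb{P}^1_{\C_p}}(c,s_x))$, together with a return of $\phi^{n_x+\ell}(D_{\mathbb{P}^1_{\C_p}}(c,s_x))$ to $c$ while $\phi^{n_x+\ell}(D_{\C_p}(x))$ misses $c$. In that configuration one reads off the strict expansion $\rho(\phi^{n_x+\ell}(c),\phi^{n_x+\ell}(x)) > \rho(c,x)$, and iterating this drift forces $\sup_n\rho(\phi^n(c),\phi^n(x)) > \delta$, breaking equicontinuity directly. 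Your proposal is missing this entire expansion argument; without it, the contradiction is not reached in the case where the geometric hypothesis fails, which is exactly the case the propositions do not cover.
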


\begin{proof}
Suppose to the contrary that $c\in F_K(\phi)$. It follows that $c_1\in F_K(\phi)$; indeed, for otherwise, $\overline{\mathcal{O}_\phi(c_1)}\subset J_K(\phi)$ and hence $c\in J_K(\phi)$, which is a contradiction. Shrinking $\delta$ if necessary, we can assume $D_{\mathbb{P}^1_K}(c,\delta)\cup D_{\mathbb{P}^1_K}(c_1,\delta)\subset F_K(\phi)$. Moreover, by the equicontiunity, for all $m\ge 0$, we can also assume that
$$\mathrm{diam}\left(\phi^m(D_{\mathbb{P}^1_K}(c,\delta))\right)+\mathrm{diam}\left(\phi^m(D_{\mathbb{P}^1_K}(c_1,\delta))\right)<\epsilon_0.$$
Applying Lemma \ref{lem:uniform-scaling}, Propositions \ref{prop:repelling}, and \ref{prop:repelling-2}, and switching $c$ and $c_1$ if necessary, we only need to dispose of the case that for all $x\in D_{\mathbb{P}^1_K}(c,\delta)$ with $x\not=c$, one has
\begin{enumerate}
\item[(i)]  the assumptions (1)--(3) in Proposition \ref{prop:repelling-2} hold,
\item[(ii)]  $n_{y}=n_{x}$ for all $y\in S_K(x)$,
\item[(iii)]  $\phi^{n_x}(D_{\C_p}(x))\not=\phi^{n_x}(D_{\mathbb{P}^1_{\C_p}}(c,s_x))$,
\item [(iv)] $\phi^{n_x}(D_{\C_p}(x))\subset D_{\mathbb{P}^1_{\C_p}}(c_1,\delta)$, and
\item[(v)]  there exists a smallest integer $\ell\ge 1$ such that $\phi^{n_x+\ell}(D_{\mathbb{P}^1_{\C_p}}(c,s_x))\cap\mathrm{Crit}_{\C_p}(\phi)\not=\emptyset$, and furthermore,
\begin{enumerate}
%\item $\phi^{n_x+\ell}(D_{\mathbb{P}^1_{\C_p}}(c,s_x))\cap\mathrm{Crit}_{\C_p}(\phi)\not=\emptyset$;
\item $\phi^{n_x+\ell}(D_{\mathbb{P}^1_{\C_p}}(c,s_x))\cap\mathrm{Crit}_{\C_p}(\phi)=\{c\}$, and
\item $c\not\in\phi^{n_x+\ell}(D_{\C_p}(x))$.
 \end{enumerate}
\end{enumerate}
Indeed, in all other cases, we have that either $D_{\mathbb{P}^1_K}(c,\delta)$ contains a repelling (pre)periodic point or $\sup_{n\ge 0}\mathrm\{\phi^n(D(x))\}\ge\min\{\epsilon_\ast, \delta\}$, which contradicts the assumption $D_{\mathbb{P}^1_K}(c,\delta)\subset F_K(\phi)$.

It follows from (i)-(v) that
$$\rho\left(\phi^{n_x+\ell}(c),\ \phi^{n_x+\ell}(x)\right)>\rho(c,x).$$
Considering the forward orbit of $D(x)$ in  $D(c,\delta)$ and repeatedly applying the above argument, we eventually obtain
$$\sup_{n\ge 1}\rho\left(\phi^{n}(c),\phi^{n}(x)\right)>\delta.$$
Hence $\{\phi^n\}_{n\ge 1}$ is not equicontinuous at $c$ in $\P^1_K$, which contradicts the assumption $c\in F_K(\phi)$.
\end{proof}

\begin{proof}[Proof of Theorem \ref{Thm:main}]
Theorem \ref{Thm:main} immediately follows from Lemma \ref{boundary-julia-lem}, Propositions \ref{lem:critical-Julia}, \ref{prop:minimal}, \ref{prop:non-recurrent} and Corollaries \ref{coro:1}, \ref{coro:2}. \end{proof}

%\medskip
\section{A first example} \label{sec:example}
In this section, consider $f:\mathbb{P}^1_{\mathbb{Q}_2}\to\mathbb{P}^1_{\mathbb{Q}_2}$ given by
$$f(x)=\frac{9}{4}x(x-1)^2.$$
As we will see in Section \ref{sec:dynamics} that the polynomial $f$ is postcritically finite, that is, each critical point of $f$ is eventually periodic. We prove that Theorem \ref{Thm:GF} holds for $f$.
The argument sheds lights on the proof of Theorem \ref{Thm:GF} in the next section.

\begin{remark}
In \cite[Section 4]{Ingram12}, Ingram classified the cubic postcritically finite complex polynomials with rational coefficients into finitely many conjugacy classes. Since these polynomials are of rational coefficients, we can regard them as polynomials from $\mathbb{P}^1_{\mathbb{Q}_p}$ to itself. It turns out that for $p\geq 3$,  these polynomials possess no Julia critical points, and up to conjugacy, the above $f$ is the unique such a polynomial whose corresponding $2$-adic Julia set contains a critical point.
\end{remark}

Our main goal in this section is to prove the following result. Recall the notations from Section \ref{sec:intro} and write $I(f):=I_{\Q_2}(f)=J_{\mathbb{Q}_2}(f) \setminus \mathrm{GO}_{\Q_2}(1)$ for simplicity.
\begin{theorem}\label{Thm:example}
Let $f$ be as above. Then there exist a countable state Markov shift $(\Sigma_A,\sigma)$ and a bijection $h: J_{\mathbb{Q}_2}(f)\to\Sigma_A$ such that $(I(f),f)$ is topologically conjugate to $(h(I(f)), \sigma)$ via $h$.
\end{theorem}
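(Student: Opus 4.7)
The plan is to follow the general blueprint of Theorem \ref{Thm:GF} while carrying out all the combinatorics explicitly for this $f$. First I would verify the hypotheses and record the critical data. Differentiating gives $f'(x)=\tfrac{9}{4}(x-1)(3x-1)$, so the finite critical points are $1$ and $1/3$, and the only critical point at infinity is the superattracting fixed point coming from the polynomial degree. A direct computation shows $f(1/3)=1/3$ (so $1/3$ lies in $F_{\mathbb{C}_2}(f)$), while $f(1)=0$ and $f'(0)=9/4$ with $|9/4|_2=4>1$, so $0$ is a repelling fixed point, $1,0\in J_{\mathbb{C}_2}(f)\cap\mathbb{P}^1_{\mathbb{Q}_2}$, and $\mathrm{Crit}^\ast_{\mathbb{C}_2}(f)=\{1\}$. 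Thus $f$ is geometrically finite, the unique critical point in the Julia set is tame (local degree $2$) and non-recurrent, and $\mathcal{M}(f)$ has a single minimal class represented by $1$. So Theorem \ref{Thm:main} applies and gives the key identification $J_{\mathbb{C}_2}(f)\cap\mathbb{P}^1_{\mathbb{Q}_2}=J_{\mathbb{Q}_2}(f)$, which will be indispensable below.

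Next I would construct an explicit countable partition $\mathcal{P}=\{P_\alpha\}_{\alpha\in\mathcal{A}}$ of $J_{\mathbb{Q}_2}(f)$. Using Proposition \ref{prop:tame-wild} and Corollary \ref{coro:tame-wild}, I would begin with a finite open cover $\mathcal{P}_0$ of $J_{\mathbb{Q}_2}(f)$ by $\mathbb{Q}_2$-disks on each of which $f$ is bijective, together with the singleton $\{1\}$ at the lone Julia critical point. The no-bijectivity at $1$ forces a refinement: the preimages of a small disk $D_{\mathbb{P}^1_{\mathbb{Q}_2}}(0,r)$ near the wild branch point $1$ form an infinite nested family of annuli $\{x\in\mathbb{Q}_2 : |x-1|_2=2^{-n}\}$, each of which I promote to a separate state. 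Propagating this refinement backwards along $\mathrm{GO}_{\mathbb{Q}_2}(1)$ (finite at every step because $f$ has degree $3$) yields a countable clopen partition $\mathcal{P}$ whose typical element is a disk on which $f$ is bijective onto a union of elements of $\mathcal{P}$, together with the singletons making up $\mathrm{GO}_{\mathbb{Q}_2}(1)\cup\{0\}$. I would then define the $\mathcal{A}\times\mathcal{A}$ transition matrix by $A_{\alpha\beta}=1$ iff $P_\beta\subset f(P_\alpha)$, verify irreducibility using the topological transitivity of $f$ on $J_{\mathbb{Q}_2}(f)$ (a consequence of $0$ being a repelling fixed point whose orbit-closure under $f^{-1}$ is dense in $J_{\mathbb{C}_2}(f)$), and introduce the coding $h(x)=(\alpha_n)_{n\geq 0}$ where $f^n(x)\in P_{\alpha_n}$. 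The semi-conjugacy relation $h\circ f=\sigma\circ h$ is tautological from the definition of $A$.

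The bijectivity of $h$ is where the work lies, and I would split it. For surjectivity, given an admissible word $(\alpha_n)$ I would form the nested intersection $\bigcap_n f^{-n}(\overline{P_{\alpha_n}})$; each set in the chain is a non-empty closed $\mathbb{Q}_2$-disk because $f$ is bijective on each non-singleton element of $\mathcal{P}$, and completeness of $\mathbb{Q}_2$ together with Lemma \ref{lem:uniform-scaling} forces the diameters to shrink. For injectivity on $I(f)=J_{\mathbb{Q}_2}(f)\setminus\mathrm{GO}_{\mathbb{Q}_2}(1)$, I would argue that if two points share an itinerary then they lie in a common small $\mathbb{Q}_2$-disk $D$ whose iterates all avoid $\{1\}$; by Lemma \ref{lem:uniform-scaling} either an iterate of $D$ meets $\mathrm{Crit}_{\mathbb{Q}_2}(f)=\{1,1/3\}$ (impossible by choice of itinerary and because $1/3\in F_{\mathbb{C}_2}(f)$) or $\sup_n\mathrm{diam}(f^n(D))\geq\epsilon_\ast$, contradicting the nesting. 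The points of $\mathrm{GO}_{\mathbb{Q}_2}(1)$ are handled separately: each has a unique finite prefix ending at the state $\{1\}$ followed by the unique itinerary of $0$, so $h$ is a bijection on the whole Julia set and a conjugacy on $I(f)$.

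The hard part will be the injectivity argument on $I(f)$ together with the verification that my refinement near $1$ really does partition $J_{\mathbb{Q}_2}(f)$ (and not some strictly larger or smaller set). The annular states near $1$ are naturally indexed by integers measuring $v_2(x-1)$, and I must show every $\mathbb{Q}_2$-point in a small neighborhood of $1$ that lies in $J_{\mathbb{C}_2}(f)$ is recorded by exactly one state; this is precisely where I invoke Theorem \ref{Thm:main} to guarantee $J_{\mathbb{C}_2}(f)\cap\mathbb{P}^1_{\mathbb{Q}_2}=J_{\mathbb{Q}_2}(f)$, removing any discrepancy between the two Julia sets near the wild branching. Once this is set, the remaining computations (including the Gurevich entropy in Section \ref{sec:Gurevich}) reduce to counting first-return loops in the explicit matrix $A$.
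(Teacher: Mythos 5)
Your overall blueprint — build a countable compatible cover of $J_{\mathbb{Q}_2}(f)$ by scaling disks plus the singletons on $\mathrm{GO}_{\mathbb{Q}_2}(1)$, code by itineraries, invoke Theorem~\ref{Thm:main} to identify $J_{\mathbb{C}_2}(f)\cap\mathbb{P}^1_{\mathbb{Q}_2}$ with $J_{\mathbb{Q}_2}(f)$, and then prove that the coding $h$ is a bijection conjugating $f$ to the shift on $I(f)$ — is exactly the paper's approach. However, there is a concrete gap in your choice of states near the critical point $1$. You propose promoting each sphere $S(1,2^{-n})=\{x\in\mathbb{Q}_2:|x-1|_2=2^{-n}\}$ to a single state. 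But $1$ is a \emph{wild} critical point (not tame, since $p=2\mid\deg_1 f=2$), and the computation in Proposition~\ref{near 1} shows that each sphere $S(1,2^{-(n+1)})$ splits as a disjoint union $B_n\sqcup B'_n$ of two $\mathbb{Q}_2$-disks, with $f$ scaling on each and mapping each of $B_n$ and $B'_n$ \emph{onto the same disk} $A_n=2^{2n}+2^{2n+3}\mathbb{Z}_2$. So $f$ restricted to the sphere $S(1,2^{-(n+1)})$ is two-to-one, not a bijection. With spheres as states, the coding $h$ would not be injective (two distinct points, one in $B_n\cap J(f)$ and one in $B'_n\cap J(f)$, could share an itinerary), and the inverse-branch argument for surjectivity would have no well-defined branch $T_{\gamma_j\gamma_{j+1}}$. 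Your own stated requirement that "the typical element [of $\mathcal{P}$] is a disk on which $f$ is bijective onto a union of elements of $\mathcal{P}$" cannot be met by spheres; the paper's partition instead uses the disks $B_n, B'_n$ near $1$, the disks $A_n,A'_n$ near $0$, and the singletons $\{0\},\{1\}$.

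Two smaller points. First, the phrase "the preimages of a small disk $D_{\mathbb{P}^1_{\mathbb{Q}_2}}(0,r)$ near $1$ form an infinite nested family of annuli" is wrong: $f$ has local degree $2$ at $1$, so the component of $f^{-1}(D(0,r))$ at $1$ is a single disk. The infinite family of spheres near $1$ arises because the wild branching shrinks the maximal scaling disk to radius $2^{-\frac{1}{p-1}}|x-1|_2$ (Proposition~\ref{prop:tame-wild}), not because of the preimage structure of $D(0,r)$. Second, the transition matrix $A$ for this example is reducible (the state $\{0\}$ is absorbing and $\{1\}$ feeds into it), as the paper notes in Section~\ref{sec:Gurevich}; the irreducibility claim in your proposal is false, though it is also not needed for Theorem~\ref{Thm:example}.
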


In Section \ref{sec:dynamics},
we describe the orbit of disks in $\mathbb{Q}_2$ under $f$ and depict the Julia and Fatou sets of $f$. In Section \ref{sec:tree}, we illustrate the orbit of disks on $\mathbb{Q}_2$ under $f$ in terms of tree of closed disks. Then we prove Theorem \ref{Thm:example} in Section \ref{sec:example-proof}.  The proof contains exact scaling ratios of the iterations of $f$. {However, we remark that} when we deal with the general case in Section \ref{sec:GF-maps}, we do not require those ratios precisely. To help readers catch the idea of the proof of Theorem \ref{Thm:GF}, all details for $f$ will be given. Finally, we compute the Gurevich entropy of $f$ in Section \ref{sec:Gurevich}. It turns out that this entropy is the logarithm of an algebraic number.

To ease notations, in this section, we write $|\cdot|$ for the absolute value on $\mathbb{Q}_2$ and write $F(f)$, $J(f)$ and $\mathrm{GO}(1)$ for the Fatou set $F_{\mathbb{Q}_2}(f)$, the Julia set $J_{\mathbb{Q}_2}(f)$ and the grand orbit $\mathrm{GO}_{\Q_2}(1)$, respectively. We begin with the determination of the Julia and Fatou sets of $f$.

\subsection{Julia and Fatou sets of $f$}\label{sec:dynamics}
First, note that the fixed point $\infty$ is in $F(f)$. Further, $\{\infty\}$ is totally $f$-invariant,  we thus need only focus on the dynamics in $\mathbb{Q}_2$. Note also that the fixed points of $f$ are $0$, $1/3$ and $5/3$, and the critical points of $f$ are $1$ and $1/3$. Moreover, both of the critical points are wild. Observe that $1/3$ and $5/3$ are contained in $F_{\C_2}(f)$, and hence in $F(f)$. The point $0$ is a repelling fixed point with multiplier $9/4$, and hence $0\in J(f)$. The critical point $1$ is mapped to $0$ by $f$. It follows that $1\in J_{\C_2}(f)\cap\mathbb{Q}_2$, and  $J_{\C_2}(f)\cap\mathbb{Q}_2$ contains no recurrent critical points. By Theorem \ref{Thm:main1}, we have $1\in J(f)$. Moreover, we can define a component of $F(f)$ to be the restriction of a component of $F_{\C_2}(f)$ to $\mathbb{Q}_2$.

To characterize the Julia dynamics, we first consider the Fatou set. Recall that the ring of integers of $\mathbb{Q}_2$ is $\mathbb{Z}_2:=\{x\in\mathbb{Q}_2: |x|\le 1\}$. For a point $a\in F(f)$, denote by $\Omega_a$ the component of $F(f)$ containing $a$. We also denote by $\Omega_\infty$ the basin of $\infty$ for $f$, that is, the set of points in $\Q_2$ whose orbits diverge to $\infty$.
\begin{lemma}\label{basin-infty}
The Fatou component $\Omega_\infty$ contains $\mathbb{Q}_2\setminus\mathbb{Z}_2$.
\end{lemma}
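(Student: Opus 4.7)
The plan is to show directly that every $x \in \mathbb{Q}_2$ with $|x|_2 > 1$ has an orbit under $f$ whose $2$-adic absolute value tends to infinity, which by the very definition of $\Omega_\infty$ places $x$ in $\Omega_\infty$.

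First I would exploit the ultrametric inequality: if $|x|_2 > 1 = |1|_2$, then $|x-1|_2 = |x|_2$. Combined with $|9/4|_2 = 4$, this gives the clean formula
\[
|f(x)|_2 \;=\; |9/4|_2 \cdot |x|_2 \cdot |x-1|_2^2 \;=\; 4\,|x|_2^{\,3}.
\]
In particular $|f(x)|_2 \geq 4|x|_2^3 > |x|_2 > 1$, so the condition $|\,\cdot\,|_2 > 1$ is preserved under $f$.

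Next I would iterate the identity above. Writing $a_n := |f^n(x)|_2$, the recurrence $a_{n+1} = 4 a_n^3$ with $a_0 > 1$ solves (for instance, by induction on $n$) to
\[
a_n \;=\; 4^{(3^n - 1)/2}\, |x|_2^{\,3^n},
\]
which diverges to $\infty$ as $n \to \infty$. Equivalently, $\rho(f^n(x),\infty) \to 0$ in the spherical metric. By the definition recalled just before the lemma, this means precisely that $x \in \Omega_\infty$.

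There is no real obstacle here; the computation is routine and the only point requiring care is the ultrametric step $|x-1|_2 = |x|_2$, which holds whenever $|x|_2 > 1$ and is what makes the growth estimate $|f(x)|_2 = 4|x|_2^3$ exact rather than merely an inequality. Since $x \in \mathbb{Q}_2 \setminus \mathbb{Z}_2$ was arbitrary, we conclude $\mathbb{Q}_2 \setminus \mathbb{Z}_2 \subset \Omega_\infty$.
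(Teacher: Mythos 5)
Your proof is correct and takes essentially the same approach as the paper: both use the ultrametric fact that $|x-1|_2 = |x|_2$ for $|x|_2 > 1$ to get $|f(x)|_2 = 4|x|_2^3 > |x|_2$, then conclude the orbit escapes to $\infty$. Your version is slightly more explicit in iterating the recurrence, but this is only a cosmetic difference.
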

\begin{proof}
For $x\in\mathbb{Q}_2$ with $|x|>1$, we have
$$|f(x)|=4|x||(x-1)^2|=4|x^3|>|x|.$$
Then  $\mathbb{Q}_2\setminus\mathbb{Z}_2\subset\Omega_\infty$.
\end{proof}

Now we concentrate our attention on $\mathbb{Z}_2$.
Note that
$$\mathbb{Z}_2=2\mathbb{Z}_2\cup (1+2\mathbb{Z}_2)=4\mathbb{Z}_2\cup (2+4\mathbb{Z}_2)\cup (1+4\mathbb{Z}_2)\cup (1+2+4\mathbb{Z}_2).$$
The dynamics on the disks $2+4\mathbb{Z}_2$ and $1+2+4\mathbb{Z}_2$ are quite simple:

\begin{lemma}\label{basin-1/3}
The disks $2+4\mathbb{Z}_2$ and $1+2+4\mathbb{Z}_2$ are in $F(f)$. More precisely,
\begin{enumerate}
\item $2+4\mathbb{Z}_2\subset\Omega_\infty$; 
\item $1+2+8\mathbb{Z}_2\subset \Omega_{1/3}$; and 
\item $1+2+4+8\mathbb{Z}_2\subset \Omega_{5/3}$.
\end{enumerate}
\end{lemma}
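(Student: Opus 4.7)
The plan is to treat each claim by an explicit absolute-value computation, then invoke Lemma \ref{basin-infty} for (1) and Lemma \ref{lem:dimater-large} (together with the connectedness of a disk) for (2).

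For (1), any $x\in 2+4\mathbb{Z}_2$ may be written as $x=2+4t$ with $t\in\mathbb{Z}_2$. Then $|x|=1/2$ and $|x-1|=|1+4t|=1$, so
$$|f(x)|=|9/4|\cdot|x|\cdot|x-1|^2=4\cdot\tfrac{1}{2}\cdot 1=2>1.$$
Hence $f(x)\in\mathbb{Q}_2\setminus\mathbb{Z}_2\subset\Omega_\infty$ by Lemma \ref{basin-infty}. Since $\Omega_\infty$ is the basin of the attracting fixed point $\infty$, it is totally $f$-invariant, so $x\in\Omega_\infty$.

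For (2), set $D:=1+2+4\mathbb{Z}_2=3+4\mathbb{Z}_2$; note that the superattracting fixed point $1/3$ lies in $D$. For $x=3+4t$ with $t\in\mathbb{Z}_2$, the identity $(x-1)^2=4(1+2t)^2$ gives
$$f(x)=9(3+4t)(1+2t)^2\equiv 3\pmod 4,$$
so $f(D)\subset D$. Viewing $D$ as the $\mathbb{Q}_2$-points of the closed $\mathbb{C}_2$-disk $\widetilde D:=\{z\in\mathbb{C}_2:|z-3|\le 1/4\}$, density of $D$ in $\widetilde D$ together with continuity of $f$ upgrades this to $f(\widetilde D)\subset\widetilde D$. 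Therefore $\mathrm{diam}(f^n(\widetilde D))\le 1/4<1$ for every $n\ge 0$, and Lemma \ref{lem:dimater-large} forces $\widetilde D\cap J_{\mathbb{C}_2}(f)=\emptyset$. Hence $\widetilde D\subset F_{\mathbb{C}_2}(f)$, and restricting to $\mathbb{Q}_2$ gives $D\subset F(f)$. Because $\widetilde D$ is a single $\mathbb{C}_2$-disk inside $F_{\mathbb{C}_2}(f)$ containing the fixed point $1/3$, the Berkovich formulation of components in Section \ref{sec:Fatou-components} places the entire disk $\widetilde D$ in the unique Fatou component containing $1/3$, and so $D\subset\Omega_{1/3}$.

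The main obstacle is the final step of (2): concluding that $\widetilde D$ lies in a \emph{single} Fatou component rather than in several components that merely intersect it. This will be handled by the ``single disk $\mapsto$ single (Berkovich) component'' principle, which rests on the connectedness of a $\mathbb{C}_2$-disk when viewed inside the Berkovich projective line, as described in Section \ref{sec:Fatou-components}.
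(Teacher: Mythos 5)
Your treatment of part (1) is correct and matches the paper's argument essentially word for word: $|f(x)|=4|x|\,|x-1|^2 = 4\cdot\tfrac12\cdot 1 = 2>1$, so $f(2+4\mathbb{Z}_2)\subset\mathbb{Q}_2\setminus\mathbb{Z}_2\subset\Omega_\infty$.

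Part (2) contains a genuine error. You assert that $D=3+4\mathbb{Z}_2$ is dense in $\widetilde D=\{z\in\mathbb{C}_2:|z-3|\le 1/4\}$ and use this density to transfer $f(D)\subset D$ to $f(\widetilde D)\subset\widetilde D$. This fails: $\mathbb{Q}_2$ is a complete, hence closed, proper subfield of $\mathbb{C}_2$, so $\mathbb{Z}_2$ is closed (not dense) in $\mathcal{O}_{\mathbb{C}_2}$ and $D$ is closed (not dense) in $\widetilde D$. The conclusion $f(\widetilde D)\subset\widetilde D$ is nonetheless true, but you must prove it directly: writing $z=3+4w$ with $w\in\mathcal{O}_{\mathbb{C}_2}$, the same algebra you used for $t\in\mathbb{Z}_2$ gives $f(z)=9(3+4w)(1+2w)^2\equiv 3\pmod{4\mathcal{O}_{\mathbb{C}_2}}$, since the computation only uses that $9,3,4\in\mathbb{Z}$ and $|w|\le 1$. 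With this fix, the rest of the argument (Lemma \ref{lem:dimater-large} applied to $\widetilde D$, then connectedness of a single Berkovich disk to land in the component $\Omega_{1/3}$) goes through.

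Even once repaired, note that your route is more indirect than the paper's. The paper avoids $\mathbb{C}_2$ entirely: writing $\tfrac32(1-x)=1+y$ with $y=-\tfrac32(x-\tfrac13)\in 2\mathbb{Z}_2$, it expands $f(x)=x(1+y)^2$ and reads off $|f(x)-\tfrac13|\le|x-\tfrac13|$ by the ultrametric inequality, which directly places every orbit in a shrinking neighborhood of the superattracting fixed point $1/3$. Your $\pmod 4$ computation and the paper's estimate encode the same invariance $f(D)\subset D$, but the paper's version makes the attraction to $1/3$ explicit at the level of $\mathbb{Q}_2$ and sidesteps the excursion through $\widetilde D$ and the component machinery.
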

\begin{proof}
For the statement $(1)$, pick $x\in 2+4\mathbb{Z}_2$. We have
$|f(x)|=4|x|>1.$
Hence by Lemma \ref{basin-infty}, we conclude that $f(2+4\mathbb{Z}_2)\subset \Omega_\infty$.  Therefore, $2+4\mathbb{Z}_2\subset\Omega_\infty$.

For the statement $(2)$, note that
$$\frac{1}{3}=1+2+2^3+2^5+\cdots\in1+2+4\mathbb{Z}_2.$$
Pick an arbitrary point $x\in 1+2+8\mathbb{Z}_2$. Then $3(1-x)/2\in 1+2\mathbb{Z}_2$. Writing $3(1-x)/2=1+y$ with $y\in 2+4\mathbb{Z}_2$, we have
$$\left|f(x)-\frac{1}{3}\right|=\left|\frac{9}{4}x(1-x)^2-\frac{1}{3}\right|=\left|x(\frac{3}{2}(1-x))^2-\frac{1}{3}\right|=\left|x+2xy+xy^2-\frac{1}{3}\right|\le\left|x-\frac{1}{3}\right|.$$
Hence $1+2+8\mathbb{Z}_2\subset \Omega_{1/3}$.

Applying similar argument as above, we conclude that statement $(3)$ holds.
\end{proof}

Now we analyze the orbit of the disks in $4\mathbb{Z}_2$ and $1+4\mathbb{Z}_2$, which allows us to construct a subshift on an alphabet of countably many symbols in Section \ref{sec:example-proof}. We begin with the disk $4\mathbb{Z}_2$.

\begin{proposition}\label{near 0}
The map $f$ is scaling on $4\mathbb{Z}_2$ with scaling ratio $4$. Moreover, for $n\ge 1$ and $a,b\in\{0,1\}$, the following hold:
\begin{enumerate}
\item $f(2^{n+1}\mathbb{Z}_2)=2^{n-1}\mathbb{Z}_2$;
\item $f(2^{n+1}+2^{n+2}\mathbb{Z}_2)=2^{n-1}+2^{n}\mathbb{Z}_2$;
\item $f(2^{n+1}+a2^{n+2}+2^{n+3}\mathbb{Z}_2)=2^{n-1}+a2^{n}+2^{n+1}\mathbb{Z}_2$; and
\item $f(2^{n+1}+a2^{n+2}+b2^{n+3}+2^{n+4}\mathbb{Z}_2)=2^{n-1}+a2^{n}+b2^{n+1}+2^{n+2}\mathbb{Z}_2$.
\end{enumerate}
\end{proposition}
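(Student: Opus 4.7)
The plan is to establish the scaling of $f$ on $4\mathbb{Z}_2$ first, and then reduce all four statements to a one-line evaluation of $f$ at a single well-chosen ``center'' of each source disk.

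For the scaling claim, I would work directly from the algebraic identity
\[
f(x)-f(y) = \tfrac{9}{4}(x-y)\bigl[(x-1)^{2} + y(x+y-2)\bigr].
\]
For $x,y \in 4\mathbb{Z}_2$ one has $|x-1| = |y-1| = 1$, so $|(x-1)^{2}| = 1$, while $|y(x+y-2)| \leq \tfrac{1}{4}\cdot\tfrac{1}{2} = \tfrac{1}{8}$. Hence the bracketed factor has absolute value $1$, and $|f(x)-f(y)| = |9/4|\cdot |x-y| = 4|x-y|$. Alternatively, since $f'(x) = \tfrac{9}{4}(x-1)(3x-1)$ satisfies $|f'(x)| = 4$ on $4\mathbb{Z}_2$ and the higher-order Taylor coefficients are easily bounded, Lemma~\ref{lem:local scaling poly} applies; but the direct identity is cleaner because it immediately yields the exact ratio.

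Once scaling with ratio $4$ is in hand, each of the source disks in (1)--(4) sits inside $4\mathbb{Z}_2$ (using $n\geq 1$), so its image is a disk whose diameter is four times that of the source. The only remaining task is to identify that image disk, and since the target has the correct diameter, it suffices to compute $f$ at one representative point and verify that it lies in the asserted target. I would take the representatives $0$, $2^{n+1}$, $2^{n+1}+a\cdot 2^{n+2}$, and $2^{n+1}+a\cdot 2^{n+2}+b\cdot 2^{n+3}$, respectively, and expand
\[
f(z) = \tfrac{9}{4}z - \tfrac{9}{2}z^{2} + \tfrac{9}{4}z^{3}.
\]
The leading term $\tfrac{9}{4}z = 9\cdot 2^{n-1}(1+2a+\cdots)$ contributes the desired binary digits: since $9 = 8+1$, we get $2^{n-1}(1+2a+4b) + 2^{n+2}(1+2a+4b)$, and the first summand is exactly the claimed center while the second has valuation high enough to be absorbed. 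The quadratic and cubic terms have valuations $\geq 2n+1$ and $\geq 3n+1$, both of which exceed the relevant modulus in each of (1)--(4) when $n\geq 1$, so they contribute nothing.

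There is no real obstacle here; the argument is a bookkeeping exercise on $2$-adic valuations. The only thing worth being slightly careful about is matching the image diameter with the target diameter so that the containment $f(\text{source}) \subset \text{target}$ automatically upgrades to equality, which is where I would explicitly invoke the scaling ratio $4$ to conclude $\mathrm{diam}(f(\text{source})) = \mathrm{diam}(\text{target})$ in each of the four cases.
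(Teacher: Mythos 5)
Your proposal is correct and follows essentially the same route as the paper: first establish the exact scaling ratio $4$ on $4\mathbb{Z}_2$ by a factorization of $f(x)-f(y)$ and an ultrametric bound, then pin down each image disk by evaluating $f$ at a representative and comparing valuations. The only differences are cosmetic (a slightly different but equivalent factorization, and you spell out the term-by-term valuation bookkeeping that the paper summarizes as "direct computation").
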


\begin{proof}
Observe that
$$|f(x)-f(y)|=4|(x^3-2x^2+x)-(y^3-2y^2+y)|=4|x-y||x^2+y^2+xy-2(x-y)+1|.$$
Then for $x,y\in 4\mathbb{Z}_2$,
$$|f(x)-f(y)|=4|x-y|.$$
Thus any disk contained in $4\mathbb{Z}_2$ is mapped to a disk with radius multiplied by $4$.
Then the statement $(1)$ follows immediately. Direct computation shows
$$|f(2^{n+1}+a2^{n+2}+b2^{n+3})-(2^{n-1}+a2^{n}+b2^{n+1})|=2^{n+2}.$$
Hence, the statements $(2)-(4)$ hold.
\end{proof}

The following corollary gives more details of the orbits of the disks in $4\mathbb{Z}_2$.

\begin{corollary}\label{even}
For $n\ge 1$, the following hold:
\begin{enumerate}
\item $f(2^{2n}+2^{2n+3}\mathbb{Z}_2)=2^{2n-2}+2^{2n+1}\mathbb{Z}_2,$
in particular,
 $$f(2^2+2^5\mathbb{Z}_2)=\{1\}\bigcup\bigcup_{m\ge 2}((1+2^{m+1}+2^{m+3}\mathbb{Z}_2)\cup (1+2^{m+1}+2^{m+2}+2^{m+3}\mathbb{Z}_2));$$
\item $f(2^{2n}+2^{2n+2}+2^{2n+3}\mathbb{Z}_2)=2^{2n-2}+2^{2n}+2^{2n+1}\mathbb{Z}_2$;
\item $f(2^{2n+1}+2^{2n+2}\mathbb{Z}_2)=2^{2n-1}+2^{2n}\mathbb{Z}_2,$
in particular,
$$f^{n+1}(2^{2n-1}+2^{2n}\mathbb{Z}_2)\subset\mathbb{Q}_2\setminus\mathbb{Z}_2\subset\Omega_\infty;$$
 \item $f(2^{2n}+2^{2n+1}+2^{2n+2}\mathbb{Z}_2)=2^{2n-2}+2^{2n-1}+2^{2n}\mathbb{Z}_2,$
 in particular, $$f^n(2^{2n}+2^{2n+1}+2^{2n+2}\mathbb{Z}_2)=1+2+2^2\mathbb{Z}_2\subset\Omega_{1/3}\cup\Omega_{5/3}.$$
\end{enumerate}
\end{corollary}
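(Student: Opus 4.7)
My plan is to derive the corollary as a direct bookkeeping exercise on top of Proposition~\ref{near 0}, treating the four statements in parallel. Each of the four one-step identities in Corollary~\ref{even} is the instance of one of Proposition~\ref{near 0}(1)--(4) in which the input index is constrained to be even (so that the output index remains a nonnegative power of $2$). Concretely, for (1) I would invoke Proposition~\ref{near 0}(4) with $(a,b)=(0,0)$ and $n$ there replaced by $2n-1$; for (2) the same part with $(a,b)=(1,0)$ and $n\mapsto 2n-1$; for (3) I would use Proposition~\ref{near 0}(2) with $n\mapsto 2n$; and for (4) I would use Proposition~\ref{near 0}(3) with $a=1$ and $n\mapsto 2n-1$. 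Each of these specialisations requires the auxiliary index to be $\ge 1$, which holds for all $n\ge 1$.

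For the particular case in (1), I need to rewrite $2^{2n-2}+2^{2n+1}\Z_2$ at $n=1$, i.e.\ $1+8\Z_2$, as the displayed disjoint union. My plan is a routine $2$-adic decomposition: a point $y\in 1+8\Z_2$ either equals $1$, or has $y-1\ne 0$ with $v_2(y-1)=m+1\ge 3$. In the latter case write $y-1=2^{m+1}(1+2v)$ with $v\in\Z_2$, and then split according to $v\in 2\Z_2$ or $v\in 1+2\Z_2$; the two cases contribute $1+2^{m+1}+2^{m+3}\Z_2$ and $1+2^{m+1}+2^{m+2}+2^{m+3}\Z_2$ respectively, giving exactly the claimed union as $m$ ranges over $\{2,3,\dots\}$.

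For the iterated consequences in (3) and (4), I would apply Proposition~\ref{near 0}(2) (resp.\ (3)) repeatedly, noting that in both situations each application decreases the valuation of the centre by $2$, while preserving the algebraic ``shape'' of the disk. For (3), after applying the one-step identity, iterating Proposition~\ref{near 0}(2) gives $f^{k}(2^{2n-1}+2^{2n}\Z_2)=2^{2n-1-2k}+2^{2n-2k}\Z_2$ as long as $2n-2k\ge 1$; the iteration is valid down to $k=n-1$, landing in $2+4\Z_2\subset\Omega_\infty$ by Lemma~\ref{basin-1/3}(1), so one further application of $f$ pushes everything outside $\Z_2$, giving $f^{n+1}(2^{2n-1}+2^{2n}\Z_2)\subset\Q_2\setminus\Z_2\subset\Omega_\infty$ via Lemma~\ref{basin-infty}. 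For (4), iterating Proposition~\ref{near 0}(3) with $a=1$ gives $f^{k}(2^{2n}+2^{2n+1}+2^{2n+2}\Z_2)=2^{2n-2k}+2^{2n-2k+1}+2^{2n-2k+2}\Z_2$, which at $k=n$ is precisely $1+2+4\Z_2\subset\Omega_{1/3}$ by Lemma~\ref{basin-1/3}(2). No part of the argument looks genuinely hard: the only mild bookkeeping is tracking that the ranges of the indices in Proposition~\ref{near 0} remain legal throughout the iteration, which is immediate from $n\ge 1$.
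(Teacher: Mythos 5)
Your proposal follows the paper's own route exactly (reducing each one-step identity to the appropriate even specialisation of Proposition~\ref{near 0} and iterating for the ``in particular'' claims, then closing with Lemmas~\ref{basin-infty} and~\ref{basin-1/3}), and it additionally spells out the $2$-adic decomposition of $1+8\mathbb{Z}_2$ which the paper leaves implicit. Two small bookkeeping errors worth correcting: for part (2), the relevant instance of Proposition~\ref{near 0}(4) has $(a,b)=(0,1)$, not $(1,0)$, since with $n\mapsto 2n-1$ the second summand $2^{2n+2}$ sits in the $b$-slot rather than the $a$-slot; and in part (3), landing in $2+4\mathbb{Z}_2$ at $k=n-1$ means the first iterate escaping $\mathbb{Z}_2$ is $f^n$ rather than $f^{n+1}$, though as you note the stated conclusion for $f^{n+1}$ still follows by one more application of Lemma~\ref{basin-infty}.
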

\begin{proof}
The statements $(1), (2)$ and the first assertions of the statements $(3), (4)$ follow immediately from Proposition \ref{near 0}.  For the remaining assertions, by induction, we have
$$f^{n+1}(2^{2n+1}+2^{2n+2}\mathbb{Z}_2)=(2^{-1}+\mathbb{Z}_2)\subset\mathbb{Q}_2\setminus\mathbb{Z}_2,$$
and
$$f^n(2^{2n}+2^{2n+1}+2^{2n+2}\mathbb{Z}_2)=1+2+2^2\mathbb{Z}_2.$$
Then by Lemmas \ref{basin-infty} and \ref{basin-1/3}, the second assertions of the statements $(3)$ and $(4)$ hold.
\end{proof}

On the disk $1+4\mathbb{Z}_2$ the dynamics is more subtle, since the image of a disk is not necessary a disk. However, by Lemma \ref{lem:local scaling poly}, for any point $x_0\in 1+4\mathbb{Z}_2$ with $x_0\neq1$ and $x_0\neq1/3$,  there is a maximal scaling disk $\mathcal{S}_{x_0}(f)$ of $x_0$. Then for any disk $D\subset\mathcal{S}_{x_0}(f)$, the image $f(D)$ is also a disk, since the map $f$ is scaling on $D$ with scaling ratio $|f'(x_0)|$.

\begin{proposition}\label{near 1}
For $n\ge 1$, the following hold:
\begin{enumerate}
\item $f(1+2^n\mathbb{Z}_2)\subset 2^{2n-2}\mathbb{Z}_2;$
\item $f(1+2^{n+1}+2^{n+3}\mathbb{Z}_2)=2^{2n}+2^{2n+3}\mathbb{Z}_2$ for $n\ge 2$, and
$f(1+2^{2}+2^{4}\mathbb{Z}_2)=2^{2}+z^4+2^{5}\mathbb{Z}_2$,
in particular, $f$ is scaling on $1+2^{n+1}+2^{n+3}\mathbb{Z}_2$ with scaling ratio $2^{-n}$; and
\item  $f(1+2^{n+1}+2^{n+2}+2^{n+3}\mathbb{Z}_2)=2^{2}+2^4+2^{5}\mathbb{Z}_2$  for $n\ge 2$, and
$f(1+2^{2}+2^{4}\mathbb{Z}_2)=2^{2}+2^4+2^{5}\mathbb{Z}_2$,
in particular, $f$ is scaling on $1+2^{n+1}+2^{n+2}+2^{n+3}\mathbb{Z}_2$ with scaling ratio $2^{-n}$.
\end{enumerate}
\end{proposition}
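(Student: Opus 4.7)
The plan is to expand $f$ in a Taylor series around the wild critical point $c=1$: writing $x=1+y$, one has
\[
f(x) \;=\; \tfrac{9}{4}y^{2}(1+y) \;=\; \tfrac{9}{4}y^{2}+\tfrac{9}{4}y^{3},\qquad f'(x) \;=\; \tfrac{9}{4}(x-1)(3x-1).
\]
Part (1) is then essentially immediate: if $y\in 2^{n}\mathbb{Z}_{2}$ with $n\ge 1$, then $|(9/4)y^{2}|_{2}\le 4\cdot 2^{-2n}=2^{-(2n-2)}$ and $|(9/4)y^{3}|_{2}\le 2^{-(3n-2)}\le 2^{-(2n-2)}$, so both summands lie in $2^{2n-2}\mathbb{Z}_{2}$, giving the claimed inclusion.

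For (2) and (3), the first step is to pin down the scaling behaviour of $f$ on the relevant disks. Since $\deg_{1}f=2$ and $p=2$, the critical point $1$ is wild, and Proposition~\ref{prop:tame-wild} applies with $\delta_{f}=p^{-1/(p-1)}=1/2$. Thus, for any $x=1+y$ with $v_{2}(y)=n+1$, the maximal scaling disk of $f$ at $x$ is
\[
\mathcal{S}_{x}(f) \;=\; D\bigl(x,\tfrac{1}{2}|y|_{2}\bigr) \;=\; x+2^{n+3}\mathbb{Z}_{2},
\]
and the scaling ratio there equals $|f'(x)|_{2}=|2y|_{2}=2^{-n}$. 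Both of the disks in (2) and (3) are closed disks of radius $2^{-(n+3)}$ centred at a point with $v_{2}(y)=n+1$; by Lemma~\ref{cor:same domain} each therefore coincides with a maximal scaling disk, so $f$ is scaling on it with ratio $2^{-n}$ and sends it bijectively onto a disk of radius $2^{-n}\cdot 2^{-(n+3)}=2^{-(2n+3)}$.

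It remains to identify the target disk by evaluating $f$ at one convenient representative of each source. For (2), at $x_{0}=1+2^{n+1}$ one has
\[
f(x_{0}) \;=\; 9\cdot 2^{2n}(1+2^{n+1}) \;=\; 9\cdot 2^{2n}+9\cdot 2^{3n+1},
\]
and using $9=1+2^{3}$ together with $3n+1\ge 2n+3$, every term other than $2^{2n}$ has $2$-adic valuation $\ge 2n+3$, so $f(x_{0})\in 2^{2n}+2^{2n+3}\mathbb{Z}_{2}$. Since the image is a disk of radius $2^{-(2n+3)}$ containing $f(x_{0})$, it must coincide with $2^{2n}+2^{2n+3}\mathbb{Z}_{2}$. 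For (3), set $y=2^{n+1}+2^{n+2}=3\cdot 2^{n+1}$, so $y^{2}=9\cdot 2^{2n+2}$ and $(9/4)y^{2}=81\cdot 2^{2n}$; writing $81=1+2^{4}+2^{6}$ and repeating the same valuation comparison against $2n+3$ yields again $f(1+y)\in 2^{2n}+2^{2n+3}\mathbb{Z}_{2}$, pinning down the image.

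The main obstacle is the bookkeeping of $2$-adic valuations in the last step: one must simultaneously control the cubic tail $(9/4)y^{3}$ and the higher-order parts of the binary expansions of $9$ and $81$, and check that none of them falls outside $2^{2n+3}\mathbb{Z}_{2}$ at the relevant range of $n$; once this is checked, the scaling disk identification does all the real work and the rest is routine.
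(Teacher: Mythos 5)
Your proof follows the same route as the paper: Taylor-expand $f$ around the wild critical point $c=1$, identify the maximal scaling disk (of radius $2^{-(n+3)}$ about any point at distance $2^{-(n+1)}$ from $c$), compute $f$ at one representative, and finish by a valuation count. The one stylistic difference is that you extract the scaling disk from Proposition~\ref{prop:tame-wild} with $\delta_f = p^{-1/(p-1)}=1/2$, whereas the paper verifies the hypotheses of Lemma~\ref{lem:local scaling poly} directly by computing $|f'(x_0)|$, $|f''(x_0)|$, $|f'''(x_0)|$; both give $\mathcal{S}_{x_0}(f)=D(x_0,2^{-(n+2)})=x_0+2^{n+3}\mathbb{Z}_2$, and your version is arguably cleaner. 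A small slip: you write the scaling ratio as $|2y|_2=2^{-n}$, but $|2y|_2=2^{-(n+2)}$; the correct ratio $2^{-n}$ is $|f'(x)|_2$, obtained from $f'(1+y)=\tfrac{9}{4}y(2+3y)$ with $|9/4|_2=4$ and $|2+3y|_2=1/2$.

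There is, however, a real gap in the last step, which you expose explicitly while the paper leaves it implicit. You invoke ``$3n+1\ge 2n+3$,'' which holds only for $n\ge 2$, not $n\ge 1$. For $n=1$ the cubic tail $\tfrac{9}{4}y^3$ has valuation $3n+1=4<2n+3=5$ and dominates: one finds $f(5)=180$ and $f(13)=4212$, both $\equiv 20 \pmod{32}$, so $f(1+2^2+2^4\mathbb{Z}_2)=f(1+2^2+2^3+2^4\mathbb{Z}_2)=2^2+2^4+2^5\mathbb{Z}_2$ (the disk $A'_1$ of Section~\ref{sec:tree}), not $2^2+2^5\mathbb{Z}_2$ as asserted in (2)--(3). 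The paper's own proof has the same issue (it records $f(1+2^{n+1})=2^{2n}+2^{2n+3}+2^{3n+1}+2^{3n+4}$ and then declares the result without the valuation check), so your write-up is as complete as the original; but both arguments are valid only for $n\ge 2$, and the $n=1$ case needs either a corrected target disk or a separate treatment.
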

\begin{proof}
For the statement $(1)$, pick $x\in 1+2^n\mathbb{Z}_2$. Then $|x|=1$ and $|x-1|\le 2^{-n}$. Hence
$$|f(x)|=4|x||x-1|^2\le 2^{-2n+2},$$
and the conclusion follows.

For the statement $(2)$, pick $x_0\in 1+2^{n+1}+2^{n+3}\mathbb{Z}_2$. We calculate the scaling disk $\mathcal{S}_{x_0}(f)$ of $x_0$. Consider the Taylor series of $f$ near $x_0$:
$$f(x)-f(x_0)=f'(x_0)(x-x_0)+\frac{f''(x_0)}{2}(x-x_0)^2+\frac{f''(x_0)}{6}(x-x_0)^3.$$
By Lemma \ref{lem:local scaling poly}, it suffices to pick $r_{x_0}>0$ such that
$$\left|\frac{f''(x_0)}{2}\right| r_{x_0}<|f'(x_0)|\ \ \text{and}\ \ \left|\frac{f'''(x_0)}{6}\right| r^2_{x_0}<|f'(x_0)|.$$
Direct computations show that $|f'(x_0)|=2^{-n}$, $|f''(x_0)|=2$, and $|f'''(x_0)|=2$. Hence $r_{x_0}<2^{-(n+2)}$. Therefore, $\mathcal{S}_{x_0}(f)=D(x_0, 2^{-(n+3)})$. It follows that for any $x\in\mathcal{S}_{x_0}(f)$,
$$|f(x)-f(x_0)|=|f'(x_0)||x-x_0|=2^{-n}|x-x_0|.$$
Note that
$$f(1+2^{n+1})= 2^{2n}+2^{2n+3}+2^{3n+1}+2^{3n+4}.$$
Then the statement $(2)$ holds.

For the statement $(3)$, pick $y_0\in1+2^{n+1}+2^{n+2}+2^{n+3}\mathbb{Z}_2$. Again by Lemma \ref{lem:local scaling poly}, we have $\mathcal{S}_{y_0}(f)=D(y_0, 2^{-(n+3)})$. Direct computations give us
$$|f(1+2^{n+1}+2^{n+2})-2^{2n}|=2^{-(2n+4)}.$$
Then the statement $(3)$ holds.
\end{proof}

\begin{corollary}
The Fatou components $\Omega_{1/3}=1+2+8\mathbb{Z}_2$ and $\Omega_{5/3}=1+2+4+8\mathbb{Z}_2$.
\end{corollary}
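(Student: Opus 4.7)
The inclusion $1+2+4\mathbb{Z}_2 \subset \Omega_{1/3}$ is established in Lemma \ref{basin-1/3}(2). For the reverse inclusion, my plan rests on two observations: first, no $\mathbb{Q}_2$-disk strictly containing $1+2+4\mathbb{Z}_2$ lies in $F(f)$, since the next larger $\mathbb{Q}_2$-disk $1+2\mathbb{Z}_2$ contains the critical Julia point $1$ and any still-larger $\mathbb{Q}_2$-disk containing $1/3$ contains the repelling fixed point $0\in J(f)$; and second, $\Omega_{1/3}$ is itself a single $\mathbb{Q}_2$-disk, arising as the $\mathbb{Q}_2$-restriction of an attracting Berkovich Fatou component $U$ centered at the super-attracting critical fixed point $1/3$. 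Combining the two will give $\Omega_{1/3} = 1+2+4\mathbb{Z}_2$.

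To verify the second observation, the Taylor expansion at $1/3$ yields
\[f(x) - \tfrac{1}{3} = -\tfrac{9}{4}(x-\tfrac{1}{3})^2 + \tfrac{9}{4}(x-\tfrac{1}{3})^3,\]
so $|f(x)-\tfrac{1}{3}| \le 4|x-\tfrac{1}{3}|^2 \le 1/4$ whenever $|x-\tfrac{1}{3}|\le 1/4$, which places the open Berkovich disk corresponding to $1+2+4\mathbb{Z}_2$ inside $U$. I would then analyze the Berkovich dynamics at the boundary type II point $\xi := \xi_{3,1/4}$ via its reduction: parametrizing $x = 3+4t$ with $|t|\le 1$ gives
\[\frac{f(3+4t)-3}{4} = 6 + 36t + 63t^2 + 36t^3,\]
whose reduction modulo $2$ is $\tilde f(\bar t) = \bar t^2$ on $\overline{\mathbb{F}_2}$. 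Applying Rivera-Letelier's classification of Berkovich Fatou components for attracting fixed points, together with the no-larger-disk observation, forces $U$ to be precisely the open Berkovich disk whose $\mathbb{Q}_2$-points form $1+2+4\mathbb{Z}_2$, as desired.

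The main obstacle is the Berkovich analysis at $\xi_{3,1/4}$ and the invocation of Rivera-Letelier's classification, which lies slightly beyond the preliminaries explicitly displayed in the excerpt but is the standard machinery for pinpointing attracting Fatou components in non-archimedean dynamics. An appealing feature of this plan is that once $U$ is identified with the open Berkovich disk at $1/3$, its $\mathbb{Q}_2$-restriction is automatically the whole closed $\mathbb{Q}_2$-disk $1+2+4\mathbb{Z}_2$, so there is no need to track orbit-by-orbit behavior on the sphere $\{x:|x-1/3|=1/4\}\subset 1+2+4\mathbb{Z}_2$ where $f$ acts without contraction.
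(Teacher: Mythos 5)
Your computation of the reduction $\widetilde{f}(\bar t)=\bar t^2$ at $\xi:=\xi_{3,1/4}$ is correct, but the inference you draw from it goes in exactly the wrong direction. A fixed type~II point $\xi$ lies in $J_{Ber}(\phi)$ precisely when $\deg_\xi(\phi)\ge 2$, and for a fixed type~II point that local degree equals the degree of the reduction (see, e.g., \cite[Chapter~8]{Benedetto19} or Rivera-Letelier \cite{RL2003Ast}); this is insensitive to separability, just as the Gauss point is the entire Berkovich Julia set of $z\mapsto z^p$ despite the reduction being the Frobenius. So the fact that $\widetilde f=\bar t^2$ has degree $2$ forces $\xi_{3,1/4}$ to be a \emph{repelling} fixed type~II point, hence a Julia point. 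Consequently the Berkovich Fatou component $U$ of $1/3$ cannot contain $\xi_{3,1/4}$, so it cannot be the open Berkovich disk whose $\mathbb{Q}_2$-points form $1+2+4\mathbb{Z}_2=\overline{D}_{\mathbb{Q}_2}(3,1/4)$; instead $U$ is confined to the single residue direction at $\xi_{3,1/4}$ containing $1/3$ (the direction $\bar 0$, since $(1/3-3)/4=-2/3$ has positive valuation), namely $D^{an}(3,1/4)^-$, whose $\mathbb{Q}_2$-points form $3+8\mathbb{Z}_2\subsetneq 1+2+4\mathbb{Z}_2$. The ``no larger disk'' observation bounds $\Omega_{1/3}$ from above by $1+2+4\mathbb{Z}_2$, but it cannot bound it from below; the Berkovich analysis you set up actually shows the inclusion is strict.

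You can confirm the strictness without Berkovich machinery. The fixed points of $f$ in $\mathbb{A}^1$ are the roots of $x\bigl(\tfrac{9}{4}(x-1)^2-1\bigr)$, namely $0$, $1/3$, and $5/3$, and $f'(5/3)=\tfrac{9}{4}(5/3-1)(3\cdot 5/3-1)=6$ with $|6|_2=1/2<1$, so $5/3$ is an attracting fixed point lying in $\mathbb{Q}_2$. Since $5/3\equiv 7\pmod 8$ one has $5/3\in 1+2+4\mathbb{Z}_2$ but $5/3\notin 3+8\mathbb{Z}_2$, and $5/3$ sits in the residue direction $\bar 1$ at $\xi_{3,1/4}$ while $1/3$ sits in direction $\bar 0$. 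A Berkovich Fatou component contains at most one attracting cycle, so the immediate basins of $1/3$ and $5/3$ are distinct components, and therefore $1+2+4\mathbb{Z}_2$ is not a single component of $F(f)$ under the paper's own definition (restriction of a component of $F_{Ber}(f)$ to $\mathbb{Q}_2$). This means the gap is not merely in your write-up: the target equality $\Omega_{1/3}=1+2+4\mathbb{Z}_2$, and the inclusion $1+2+4\mathbb{Z}_2\subset\Omega_{1/3}$ asserted in Lemma \ref{basin-1/3}(2) (whose proof only shows $|f(x)-1/3|\le|x-1/3|$, i.e.\ Fatou-ness, not connectedness across the sphere $|x-1/3|=1/4$), both appear to fail as stated, and any repair would have to either shrink $\Omega_{1/3}$ to $3+8\mathbb{Z}_2$ or replace ``component'' with a dynamically defined basin that ignores the type~II Julia boundary point.
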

\begin{proof}
It follows immediately from Lemma \ref{basin-1/3} and Proposition \ref{near 1}.
\end{proof}

%\medskip
Now we can characterize the Julia set $J(f)$. Lemmas \ref{basin-infty} and \ref{basin-1/3} imply that $J(f)$ is contained in $4\mathbb{Z}_2\cup (1+4\mathbb{Z}_2)$. In fact, we have the following proposition.
\begin{proposition}\label{Julia}
The Julia set of $f$ is
$$J(f)=\bigcap_{n\ge 1}f^{-n}(4\mathbb{Z}_2\cup (1+4\mathbb{Z}_2)).$$
\end{proposition}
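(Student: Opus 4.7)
For the inclusion $J(f) \subseteq \bigcap_{n \ge 1} f^{-n}(4\mathbb{Z}_2 \cup (1+4\mathbb{Z}_2))$: combining Lemmas \ref{basin-infty} and \ref{basin-1/3} together with the corollary identifying $\Omega_{1/3} = 1 + 2 + 4\mathbb{Z}_2$, every point outside $4\mathbb{Z}_2 \cup (1+4\mathbb{Z}_2)$ lies in $F(f)$, so $J(f) \subseteq 4\mathbb{Z}_2 \cup (1+4\mathbb{Z}_2)$. Forward invariance of $J(f)$ then extends this to $J(f) \subseteq f^{-n}(4\mathbb{Z}_2 \cup (1+4\mathbb{Z}_2))$ for every $n \ge 1$.

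For the reverse inclusion, set $X := 4\mathbb{Z}_2 \cup (1+4\mathbb{Z}_2)$ and let $x \in \bigcap_{n} f^{-n}(X)$; suppose for contradiction that $x \in F(f)$. Since $1$ is the only critical point of $f$ in $J_{\mathbb{C}_2}(f) \cap \mathbb{Q}_2$ and its orbit $\{1, 0\}$ is non-recurrent, Theorem \ref{no-Wandering-thm} applies and the Fatou component of $x$ is eventually periodic. The orbit of $x$ must avoid $\{0, 1\}$, for otherwise $x \in \mathrm{GO}(1)$, and Theorem \ref{Thm:main} combined with Lemma \ref{regular-point-lemma} would place $x \in J(f)$, a contradiction.

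The heart of the argument is a diameter estimate. Choose a small disk $D \ni x$ with $D \subset F(f) \cap X$; equicontinuity yields $\sup_n \mathrm{diam}(f^n(D)) \le \epsilon$ for some $\epsilon > 0$. I would track $\mathrm{diam}(f^n(D))$ using Propositions \ref{near 0} and \ref{near 1}: when $f^n(x) \in 4\mathbb{Z}_2$ the diameter is multiplied by $4$; when $f^n(x) \in 1 + 4\mathbb{Z}_2$ with $|f^n(x) - 1|_2 = 2^{-k_n}$ (where $k_n \ge 2$), the scaling ratio is $2^{1-k_n}$, and the identity $|f^{n+1}(x)|_2 = 2^{2-2k_n}$ forces the next $k_n - 1$ iterations to remain in $4\mathbb{Z}_2$ before the $2$-adic norm returns to $1$. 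The combined multiplier over each length-$k_n$ cycle is $2^{1-k_n} \cdot 4^{k_n - 1} = 2^{k_n - 1} \ge 2$. If instead the orbit eventually stays in $4\mathbb{Z}_2$, every step multiplies the diameter by $4$. Either way, after finitely many steps $\mathrm{diam}(f^n(D))$ exceeds $\epsilon$, the desired contradiction.

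The main delicacy is the scaling-disk bookkeeping: to guarantee that each intermediate image lies inside the local scaling disk of the next iterate, $D$ must be chosen small enough using Corollary \ref{coro:tame-wild}(1), which for the wild critical point $1$ (with $p = 2$) gives scaling disk of radius $|y - 1|_2 / 2$ at points $y$ near $1$. Showing this constraint is compatible with the required diameter growth is the main technical obstacle, and the argument closely parallels the universal scaling estimate of Lemma \ref{lem:uniform-scaling}.
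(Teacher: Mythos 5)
Your inclusion $J(f)\subseteq\bigcap_n f^{-n}(X)$ is exactly the paper's and is fine, and your reverse-inclusion strategy — track $\mathrm{diam}(f^n(D))$ via the scaling data of Propositions \ref{near 0} and \ref{near 1} and show each ``cycle'' through a neighbourhood of $1$ multiplies the diameter by $2^{k_n-1}\ge 2$ — is the same computation the paper carries out with $W_0,W_1$. You have also correctly isolated the delicate point (the scaling-disk bookkeeping), but the way you propose to conclude does not survive it.

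The concrete issue: once $\mathrm{diam}(f^n(D))$ first exceeds the maximal scaling disk size at $f^n(x)$ near $1$, the next disk you can encounter is $f^n(D)=B_m\cup B'_m=1+2^{m+1}+2^{m+2}\mathbb{Z}_2$. This disk does not contain $1$, but $f$ is no longer injective on it; it folds two-to-one onto $A_m$ and the diameter \emph{drops} from $2^{-(m+2)}$ to $2^{-(2m+3)}$. Over the subsequent $m$ steps in $4\mathbb{Z}_2$ the diameter recovers only to $2^{-3}$, so for $m=1$ the cycle multiplier is $1$, not $\ge 2$. Thus the ``diameter eventually exceeds $\epsilon$'' contradiction genuinely fails: a fixed $\epsilon$ need never be surpassed. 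The paper's proof avoids this by a different concluding step: it shows that the times $k_i$ at which $f^{k_i}(D)$ fits inside some $B_n$ or $B'_n$ are finite in number (because $\mathrm{diam}(f^{k_i}(D))$ is strictly increasing yet bounded above by $\mathrm{diam}(B_1)=2^{-4}$), and then at the first later time $\ell$ that $f^\ell(D)$ meets $1+4\mathbb{Z}_2$, the disk $f^\ell(D)$ must contain $1$. Hence $D$ contains an $\ell$-th preimage of $1$, and since $1\in J(f)$ (via Theorem \ref{Thm:main}) and Lemma \ref{regular-point-lemma} propagates Julia membership along non-critical preimages, $D\cap J(f)\ne\emptyset$; since $D$ was an arbitrary disk about $y$ and $J(f)$ is closed, $y\in J(f)$.

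So to repair your argument, replace the equicontinuity endgame with this preimage-of-$1$ endgame: once $f^n(D)$ outgrows the scaling disks near $1$, a few more iterates force $1\in f^\ell(D)$ (indeed $B_m\cup B'_m\mapsto A_m\mapsto A_{m-1}\mapsto\cdots\mapsto A_1\mapsto 1+8\mathbb{Z}_2\ni 1$), and you are done. You also need the case ``$k_0$ never occurs,'' which, as in the paper, forces $y\in\mathrm{GO}(1)$ and hence $y\in J(f)$ by Theorem \ref{Thm:main} — you noted this case for the \emph{orbit of $x$} hitting $\{0,1\}$ but should state it for the orbit of the \emph{disk} never fitting into any $B_n,B'_n$.
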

\begin{proof}
If $x\not\in\bigcap_{n\ge 1}f^{-n}(4\mathbb{Z}_2\cup (1+4\mathbb{Z}_2))$, then there exists $n_0\ge 0$ such that
$$f^{n_0}(x)\in(\mathbb{Q}_2\setminus\mathbb{Z}_2)\cup (2+4\mathbb{Z}_2)\cup (1+2+4\mathbb{Z}_2).$$
By Lemmas \ref{basin-infty} and \ref{basin-1/3}, we conclude that $x\in F(f)$. Hence % it follows that
$J(f)\subset\cap_{n\ge 1}f^{-n}(4\mathbb{Z}_2\cup 1+4\mathbb{Z}_2).$

Conversely, pick $y\in\bigcap_{n\ge 1}f^{-n}(4\mathbb{Z}_2\cup (1+4\mathbb{Z}_2))$. For any disk $D:=D_{\mathbb{Q}_2}(y, r)$, we claim that $\{f^k\}_{k\ge 1}$ is not equicontinuous on $D$. To ease notations, set
$$W_0:=\bigcup_{n\ge 1}(1+2^{n+1}+2^{n+3}\mathbb{Z}_2),\ \ \text{and}\ \ W_1:=\bigcup_{n\ge 1}(1+2^{n+1}+2^{n+2}+2^{n+3}\mathbb{Z}_2).$$
 Shrinking $D$ if necessary, let $k_0\ge 0$ be the smallest integer such that $f^{k_0}(D)\subset W_0$ or $f^{k_0}(D)\subset W_1$. If there is no such $k_0$, then by Proposition \ref{near 0} and Corollary \ref{even}, we have $y\in\mathrm{GO}_{\Q_2}(1)$, which implies $y\in J_{\mathbb{C}_2}(f)$, and hence by Theorem \ref{Thm:main1}, $y\in J(f)$. Now, we inductively define $k_{i}>k_{i-1}$ to be the smallest positive integer such that $f^{k_i}(D)\subset W_0$ or $f^{k_i}(D)\subset W_1$. By Propositions \ref{near 0} and \ref{near 1}, there exists $m_i\ge 1$ such that the scaling ratio of $f$ on $f^{k_i}(D)$ is $2^{-m_i}$, and hence the scaling ratio of $f^{k_{i+1}-k_i-1}$ on $f^{k_i+1}(D)$ is $4^{m_i}$. In fact, the relation between $k_j$ and $m_i$ is $m_i=k_{i+1}-k_i-1$. Hence
 $\mathrm{diam}(f^{k_i}(D))=2^{m_1+m_2\cdots+m_i}\cdot 4^{n_0}r.$
 Since $m_i\ge 1$, there are only finitely many such $k_i$. Again by Propositions \ref{near 0} and \ref{near 1}, there exists $\ell>\max\{k_i: i\ge 0\}$ such that $f^{\ell}(D)\cap (1+4\mathbb{Z}_2)\not=\emptyset$. Then $f^{\ell}(D)$ contains $1$. Hence $D$ contains an $\ell$-th pre-image of $1$. Since $1\in J(f)$, by Lemma \ref{regular-point-lemma},
 $D\cap J(f)\not=\emptyset.$
 Thus the claim holds, and hence $y\in J(f)$.
\end{proof}

\begin{corollary}\label{Fatou}
The Fatou set of $f$ is
$$F(f)=\bigcup_{n\ge 0}f^{-n}(\Omega_\infty\cup\Omega_{1/3}\cup\Omega_{5/3}).$$
In particular, any component of $F(f)$ is eventually mapped to $\Omega_\infty$, $\Omega_{1/3}$ or $\Omega_{5/3}$.
\end{corollary}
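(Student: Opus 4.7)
My plan is to derive Corollary~\ref{Fatou} directly from Proposition~\ref{Julia} by passing to complements and then matching the resulting set against the two basin lemmas of Section~\ref{sec:dynamics}.

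First, I would take complements in the identity $J(f)=\bigcap_{n\ge 1}f^{-n}(4\mathbb{Z}_2\cup(1+4\mathbb{Z}_2))$ supplied by Proposition~\ref{Julia}, which gives
$$F(f)=\bigcup_{n\ge 1} f^{-n}\bigl(\mathbb{P}^1_{\mathbb{Q}_2}\setminus(4\mathbb{Z}_2\cup(1+4\mathbb{Z}_2))\bigr).$$
Using the partition $\mathbb{Z}_2=4\mathbb{Z}_2\cup(2+4\mathbb{Z}_2)\cup(1+4\mathbb{Z}_2)\cup(1+2+4\mathbb{Z}_2)$ recorded in Section~\ref{sec:dynamics}, I would rewrite the complement in $\mathbb{P}^1_{\mathbb{Q}_2}$ as the disjoint union
$$\{\infty\}\cup(\mathbb{Q}_2\setminus\mathbb{Z}_2)\cup(2+4\mathbb{Z}_2)\cup(1+2+4\mathbb{Z}_2).$$
Lemma~\ref{basin-infty} places the first two pieces in $\Omega_\infty$, Lemma~\ref{basin-1/3}(1) places the third in $\Omega_\infty$ as well, and Lemma~\ref{basin-1/3}(2) places the fourth in $\Omega_{1/3}$. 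This yields $F(f)\subset\bigcup_{n\ge 0}f^{-n}(\Omega_\infty\cup\Omega_{1/3})$.

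For the reverse containment, I would use that $\Omega_\infty\cup\Omega_{1/3}\subset F(f)$ together with backward invariance of $F(f)$. At every non-critical preimage this is exactly Lemma~\ref{regular-point-lemma}; the two critical points $1$ and $1/3$ have already been classified in Section~\ref{sec:dynamics} (with $1\in J(f)$ and $1/3\in F(f)$), so no additional argument is needed there. Combining the two inclusions gives the displayed equality.

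For the ``in particular'' assertion, I would pick any component $\Omega$ of $F(f)$ and a point $x\in\Omega$; by the first part there exists $n\ge 0$ with $f^n(x)\in\Omega_\infty\cup\Omega_{1/3}$. Invoking the general fact (Proposition of Section~\ref{sec:Fatou-components}, applied to the restriction to $\mathbb{P}^1_{\mathbb{Q}_2}$) that $f$ sends a Fatou component into a single Fatou component, the image $f^n(\Omega)$ must lie entirely in $\Omega_\infty$ or entirely in $\Omega_{1/3}$. I do not foresee any substantive obstacle: the whole corollary is a set-theoretic consequence of Proposition~\ref{Julia}. The only mildly delicate point is ensuring that the components of $F(f)$ behave well under $f$, but this is immediate from the paper's convention that such components are restrictions of Berkovich Fatou components.
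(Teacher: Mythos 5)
Your proposal is correct and takes essentially the same approach as the paper. The forward inclusion is Proposition~\ref{Julia} plus the basin lemmas (you phrase it as explicitly complementing the displayed identity, whereas the paper argues by contradiction from a point $x\in F(f)$, but these are logically the same), and the reverse inclusion is Lemma~\ref{regular-point-lemma} combined with the critical-orbit portrait ($1\mapsto 0$ fixed in $J(f)$, $1/3$ fixed in $F(f)$), exactly as in the paper's second paragraph.
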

\begin{proof}
Pick $x\in F(f)$ and set $\Omega=\Omega_\infty\cup\Omega_{1/3}\cup\Omega_{5/3}$. Suppose that $x\not\in\bigcup_{n\ge 0}f^{-n}(\Omega)$. Then by Lemmas \ref{basin-infty} and \ref{basin-1/3}, for any $n\ge 0$, we have $f^n(x)\in 4\mathbb{Z}_2\cup (1+4\mathbb{Z}_2)$. Proposition \ref{Julia} implies that $x\in J(f)$, which is a contradiction. Hence $x\in\bigcup_{n\ge 0}f^{-n}(\Omega)$.

For the other direction, if $x\in\bigcup_{n\ge 0}f^{-n}(\Omega)$, then there exists $n_0\ge 0$ such that $f^{n_0}(x)\in\Omega$. We claim that $x\in F(f)$. If $x\in J(f)$, then from the portrait of the critical points of $f$ we know that $f^{i}(x)$ is not a critical point for $0\le i\le n_0-1$. Hence, by Lemma \ref{regular-point-lemma}, $f^{n_0}(x)\in J(f)$, which is impossible since $f^{n_0}(x)\in\Omega\subset F(f)$.
\end{proof}

\subsection{The tree of closed disks}\label{sec:tree}
To illustrate Corollary \ref{even} and Proposition \ref{near 1}, we use the tree of closed disks. Let $\mathcal{T}$ be the space of closed disks in $\mathbb{Q}_2$ with radii in the value group $|\mathbb{Q}^\times_2|$. Since for any two closed disks in $\mathbb{Q}_2$, either they are disjoint or one contains the other, the inclusion gives an order relation on $\mathcal{T}$. We assign an undirected edge $[A,B]$ for the two elements $A$ and $B$, if $B$ is the smallest element in $\mathcal{T}$ that is larger than $A$. Then the space $\mathcal{T}$ admits a tree structure. The ends of the tree $\mathcal{T}$ are points in $\mathbb{P}^1_{\mathbb{Q}_2}$. Denote by $\partial\mathcal{T}$ the set of the ends and define $\overline{\mathcal{T}}:=\mathcal{T}\cup\partial{\mathcal{T}}$. For such trees in a more general setting, we refer to \cite[Section 1.2]{Favre18}.

Since $\mathbb{Q}_2$ is not algebraically closed, the polynomial $f$ does not induce a well-defined map on $\overline{\mathcal{T}}$. However, after removing countably many elements in $\overline{\mathcal{T}}$, there is a well-defined map on the corresponding complement. By Lemma \ref{basin-infty}, we now focus on the closed disks in $\mathbb{Z}_2$. Let $\mathcal{T}_0\subset\mathcal{T}$ be the set of closed disks in $2\mathbb{Z}_2$ and let $\mathcal{T}_1\subset\mathcal{T}$ be the set of closed disks in $1+2\mathbb{Z}_2$ removing the ones containing the critical point $1$ or $1/3$, that is, we remove the disks of forms $1+2^n\mathbb{Z}_2$, $1+\sum_{k=1}^n2^{2k-1}+2^{2n}\mathbb{Z}_2$ and $1+\sum_{k=1}^n2^{2k-1}+2^{2n+1}\mathbb{Z}_2$. Denote by $\partial\mathcal{T}_0$ and $\partial\mathcal{T}_1$ the sets of ends for $\mathcal{T}_0$ and $\mathcal{T}_1$, respectively. Let $\overline{\mathcal{T}}_0:=\mathcal{T}_0\cup\partial{\mathcal{T}}_0$ and $\overline{\mathcal{T}}_1:=\mathcal{T}_1\cup\partial{\mathcal{T}}_1$. Then the polynomial $f$ induces a well-defined map
$$F:\overline{\mathcal{T}}_0\cup\overline{\mathcal{T}}_1\to\overline{\mathcal{T}}.$$

To view Corollary \ref{even} and Proposition \ref{near 1} from the map $F$, for $n\ge 1$, we define
\begin{equation*}
\begin{split}
& A_n:=2^{2n}+2^{2n+3}\mathbb{Z}_2, \\
 &A'_n:=2^{2n}+2^{2n+2}+2^{2n+3}\mathbb{Z}_2,\\
 &B_n:=1+2^{n+1}+2^{n+3}\mathbb{Z}_2,\\
 &B'_n:=1+2^{n+1}+2^{n+2}+2^{n+3}\mathbb{Z}_2.
 \end{split}
\end{equation*}
Then under $F$, we have the following portrait (see Figure \ref{figure:1}):

\begin{figure}[h!]
\centering
\includegraphics[width=0.9\textwidth]{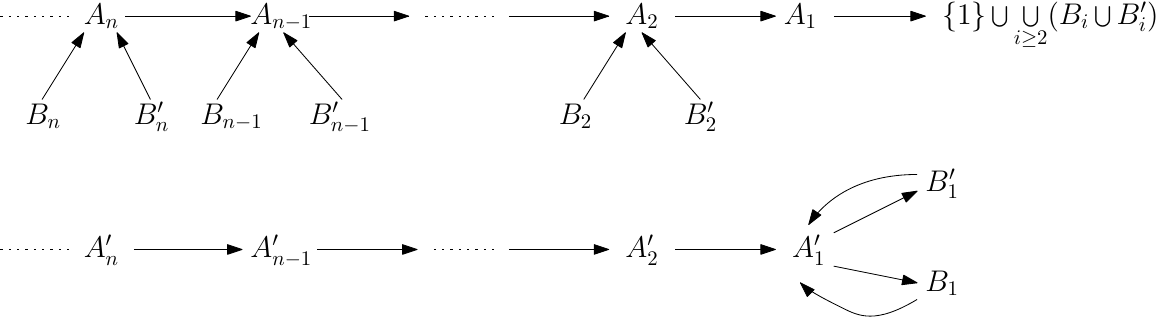}
\caption{The portrait of $A_n$, $A'_n$, $B_n$ and $B'_n$ under $F$.}
\label{figure:1}
\end{figure}

\subsection{Proof of Theorem \ref{Thm:example}}\label{sec:example-proof}
We first establish several lemmas to relate the Julia dynamics of $f$ to the symbolic dynamics. Recall the notations from Section \ref{sec:tree}. Set
$$\begin{cases}
\alpha_n:=A_n\cap J(f),\\
\alpha'_n:=A'_n\cap J(f),\\
\beta_n:=B_n\cap J(f),\\
\beta'_n:=B'_n\cap J(f),\\
\alpha_\infty:=\{0\},\\
\beta_\infty:=\{1\}.
\end{cases}$$
Let
$$\mathcal{A}:=\{\alpha_\infty, \beta_\infty, \alpha_1,\alpha'_1, \beta_1, \beta'_1, \cdots, \alpha_n,\alpha'_n, \beta_n, \beta'_n,\cdots\}.$$
Define the matrix $A=(A_{\gamma_i,\gamma_j})_{\gamma_i,\gamma_j\in\mathcal{A}}$ by
$$A_{\gamma_i,\gamma_j}=
\begin{cases}
1\ \ \text{if}\  \ \gamma_j\subset f(\gamma_i),\\
0\ \ \text{otherwise}.
\end{cases}$$
 From Section \ref{sec:tree}, we know that
 \begin{enumerate}
 \item if $\gamma_i\not=\alpha_1$, $\alpha'_1$, then there is a unique $\gamma_j\in\mathcal{A}$ such that $A_{\gamma_i,\gamma_j}=1$;
 \item if $\gamma_i=\alpha_1$, we have $A_{\alpha_1,\gamma_j}=1$ if and only if $\gamma_j=\beta_\infty, \beta_n$ or $\beta'_n$ for some $n\ge 2$; and
 \item if $\gamma_i=\alpha'_1$, then $A_{\gamma_i,\gamma_j}=1$ if and only if $\gamma_j\in\{\beta_1, \beta'_1\}$.
 \end{enumerate}

Let $(\Sigma_A,\sigma)$ be the corresponding subshift on the alphabet $\mathcal{A}$ defined by $A$. From Section \ref{sec:dynamics}, we find that the set $\mathcal{A}$ is a partition of $J(f)$. Thus we immediately obtain an element in $\Sigma_A$ for each point $x\in J(f)$:
\begin{lemma}\label{J-to-sequence}
For each $x\in J(f)$, there is a unique sequence $(\gamma_j)_{j\ge 0}\in\Sigma_A$ such that $f^j(x)\in\gamma_j$.
\end{lemma}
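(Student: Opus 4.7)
My plan is to reduce the statement to three bookkeeping tasks: that $\mathcal{A}$ is a partition of $J(f)$, that forward invariance of $J(f)$ produces a well-defined labelling $j \mapsto \gamma_j$, and that each pair $(\gamma_j, \gamma_{j+1})$ is admissible under the matrix $A$.

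For the partition, I would use Proposition \ref{Julia} to restrict attention to $J(f) \subset 4\mathbb{Z}_2 \cup (1 + 4\mathbb{Z}_2)$. Inside $4\mathbb{Z}_2 \setminus \{0\}$, any point lying in a disk of the form $2^{2n+1} + 2^{2n+2}\mathbb{Z}_2$ or $2^{2n} + 2^{2n+1} + 2^{2n+2}\mathbb{Z}_2$ is eventually carried into $\Omega_\infty$ or $\Omega_{1/3}$ by Corollary \ref{even}(3)--(4), hence lies in $F(f)$; the remaining part of $4\mathbb{Z}_2$ decomposes as $\{0\} \sqcup \bigsqcup_{n \geq 1}(A_n \sqcup A'_n)$, and intersecting with $J(f)$ yields the pieces $\alpha_\infty, \alpha_n, \alpha'_n$. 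A parallel dyadic argument inside $1 + 4\mathbb{Z}_2$, together with Proposition \ref{near 1} and Lemma \ref{basin-1/3}, singles out $\{1\}$ and the $B_n \sqcup B'_n$, yielding $\beta_\infty, \beta_n, \beta'_n$. Disjointness is automatic from the nested ultrametric structure of these balls, so each $x \in J(f)$ lies in a unique element of $\mathcal{A}$, which furnishes $\gamma_0$.

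Given the partition, forward invariance of $J(f)$ (Section \ref{sec:Julia-Fatou}) guarantees $f^j(x) \in J(f)$ for all $j \geq 0$, so $\gamma_j$ is defined and unique. To verify $A_{\gamma_j, \gamma_{j+1}} = 1$, I would read off the image of each disk directly from Corollary \ref{even}(1)--(2) and Proposition \ref{near 1}(2)--(3): for $n \geq 2$ the map $f$ sends $A_n, A'_n$ bijectively onto $A_{n-1}, A'_{n-1}$, while $f(B_n) = f(B'_n) = A_n$, forcing $\gamma_{j+1}$ uniquely; the only branchings occur at $\gamma_j = \alpha_1$, where $f(A_1) = 1 + 8\mathbb{Z}_2$ contains precisely $\{1\}$ and the $B_m, B'_m$ for $m \geq 2$, and at $\gamma_j = \alpha'_1$, where $f(A'_1) = 1 + 4 + 8\mathbb{Z}_2 = B_1 \sqcup B'_1$. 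These branchings coincide exactly with the multi-valued rows in the definition of $A$. Since $f^{j+1}(x) \in f(\gamma_j)$ and $\gamma_{j+1}$ is the unique element of $\mathcal{A}$ containing $f^{j+1}(x)$, the containment $\gamma_{j+1} \subset f(\gamma_j)$ follows in every case, and the resulting sequence therefore lies in $\Sigma_A$.

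The main technical care needed is matching the dyadic refinements inside $4\mathbb{Z}_2$ and $1 + 4\mathbb{Z}_2$ exactly with the disks $A_n, A'_n, B_n, B'_n$ and confirming that the exceptional transitions from $\alpha_1$ and $\alpha'_1$ enumerate all nonzero off-diagonal entries of the corresponding rows of $A$. Since the critical work has already been carried out in Corollary \ref{even}, Proposition \ref{near 1}, and the portrait of Figure \ref{figure:1}, this is a careful case analysis rather than a conceptual obstacle.
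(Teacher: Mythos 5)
Your proposal is correct and follows the same route the paper takes (the paper states the lemma as immediate after noting in the lead-in that $\mathcal{A}$ partitions $J(f)$ and after summarizing the transition structure of $A$ from Section~\ref{sec:tree}). You are simply making explicit the three implicit steps — the dyadic partition of $J(f)\subset 4\mathbb{Z}_2\cup(1+4\mathbb{Z}_2)$ into the $\alpha$'s and $\beta$'s, forward invariance of $J(f)$, and the containment $\gamma_{j+1}\subset f(\gamma_j)$ read off from Corollary~\ref{even} and Proposition~\ref{near 1} — and all of these check out against the paper's computations.
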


We say that the sequence in Lemma \ref{J-to-sequence} is the \textit{code sequence} of $x$. Consider the coding map
$$h: J(f)\to\Sigma_A,$$
sending $x$ to its code sequence. The next result claims that the map $h$ is a bijection.

\begin{lemma}\label{bijection}
For any $(\gamma_j)_{j\ge 0}\in\Sigma_A$, there is a unique point $x\in J(f)$ such that the code sequence of $x$ is $\{\gamma_j\}_{j\ge 0}$.
\end{lemma}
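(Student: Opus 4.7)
The plan is to establish existence of $x$ via a nested compact set argument and uniqueness via the scaling structure of $f$ on the partition elements of $\mathcal{A}$. For existence, given an admissible sequence $(\gamma_j)_{j\ge 0}\in\Sigma_A$, set
$$C_n := \gamma_0\cap f^{-1}(\gamma_1)\cap\cdots\cap f^{-n}(\gamma_n).$$
The admissibility condition $\gamma_{j+1}\subset f(\gamma_j)$ lets one recursively pick $y_n\in\gamma_n$ and preimages $y_j\in\gamma_j$ with $f(y_j)=y_{j+1}$, yielding $y_0\in C_n$; hence $C_n\neq\emptyset$. Each $\gamma_j$ is compact (either $\{0\}$, $\{1\}$, or the intersection of a clopen $\mathbb{Q}_2$-disk with the closed set $J(f)$), so $C_n$ is a closed subset of the compact set $\gamma_0$. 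The decreasing nested sequence $C_0\supset C_1\supset\cdots$ has non-empty intersection, and any $x\in\bigcap_n C_n$ lies in $\gamma_0\subset J(f)$ and satisfies $f^j(x)\in\gamma_j$ for all $j\ge 0$.

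For uniqueness, assume $x,y\in J(f)$ share the same code $(\gamma_j)$. The transition rules recorded above force the following dichotomy. Either the code contains only finitely many symbols of $\beta$-type (i.e., in $\{\beta_n,\beta'_n:n\ge 1\}$), in which case it must eventually land in $\alpha_\infty$ and remain there, since the $\alpha$- and $\alpha'$-chains $\alpha_n\to\alpha_{n-1}\to\cdots\to\alpha_1$ and $\alpha'_n\to\cdots\to\alpha'_1$ can only terminate by exiting to a $\beta$-type symbol (and $\beta_\infty$ then funnels to $\alpha_\infty$). Or the code contains infinitely many $\beta$-type symbols. In the first case $f^N(x)=f^N(y)=0$ for some $N$, and for each $j<N$ the symbol $\gamma_j$ is either a singleton or a disk-type element on which $f$ is scaling (hence injective) by Propositions \ref{near 0} and \ref{near 1}; backward induction then yields $x=y$. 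In the second case, each occurrence $\gamma_j\in\{\beta_n,\beta'_n\}$ is forced to be followed by $\gamma_{j+1}=\alpha_n, \gamma_{j+2}=\alpha_{n-1}, \ldots, \gamma_{j+n}=\alpha_1$, and Propositions \ref{near 1} and \ref{near 0} give scaling ratios $2^{-n}$ and $4$ respectively on these steps. The cumulative scaling over this $(n+1)$-step cycle is $2^{-n}\cdot 4^n=2^n\ge 2$, so iterating over the infinitely many cycles drives $|f^j(x)-f^j(y)|$ to infinity, contradicting the bounded diameter of $\gamma_j$. Hence $x=y$.

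The main obstacle is the genuine contraction of $f$ on the $B$-type partition elements, which blocks a uniform forward-expansion argument. It is overcome by grouping iterates into the deterministic cycles from one $\beta$-symbol to the next: on each such cycle of length $n+1$ the composed scaling factor is $2^n\ge 2$ and hence always expanding, and this cycle-by-cycle expansion suffices to separate distinct points sharing a common code.
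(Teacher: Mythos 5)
Your proof is correct, and it differs from the paper's mainly in the existence step. The paper constructs $x$ explicitly as the limit of a Cauchy sequence $x_j = T_{\gamma_0\gamma_1}\circ\cdots\circ T_{\gamma_{j-1}\gamma_j}(y_j)$, using the inverse branches $T_{\gamma_i\gamma_{i+1}}$ and the observation that the composed backward scaling across each $\beta\to\alpha_n\to\cdots\to\alpha_1$ cycle is $\le 1/2$; it also separately verifies that the limit lies in $\mathbb{Q}_2$ via the value group. Your nested-compactness argument with $C_n = \bigcap_{j\le n} f^{-j}(\gamma_j)$ is cleaner and avoids the inverse-branch machinery entirely: it uses only the admissibility of the code, the compactness of each $\gamma_j$ (a closed subset of $J(f)\subset\mathbb{Z}_2$), and the finite intersection property, and it gives $x\in J(f)\subset\mathbb{Q}_2$ for free. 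What it gives up is the explicit description of $x$ as a limit, which the paper reuses verbatim in the proof of Corollary 4.13 ($h\circ f = \sigma\circ h$); you would instead observe directly that $f^j(\phi(x))\in\gamma_{j+1}$. For uniqueness your argument and the paper's coincide in substance: the case where the code eventually hits $\alpha_\infty$ is settled by backward injectivity of $f$ on each symbol (the paper phrases this as ``$f$ is a bijection on each $\gamma\in\mathcal{A}$''), and otherwise one exhibits unbounded forward scaling. Your bookkeeping by complete $\beta$-to-$\beta$ cycles (net scaling $2^{-n}\cdot 4^n = 2^n\ge 2$) makes explicit what the paper states as the existence of a subsequence $\{n_i\}$ with $f^{n_i}$ scaling by at least $2^i$. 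One small point worth spelling out: in the expanding case, after discarding the finite initial segment of $\alpha'$- and $\alpha$-symbols before the first $\beta$-type symbol (on which the scaling factors are all $4\ge 1$), every subsequent index lies in some $\beta\to\alpha_n\to\cdots\to\alpha_1$ block, so the cycle accounting is exhaustive; your write-up implicitly uses this, and it is correct.
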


To prove Lemma \ref{bijection}, we first define the sequence $\{T_{\gamma_j,\gamma_{j+1}}\}_{j\ge 0}$ of inverse maps along $\{\gamma_j\}_{j\ge 0}$:
$$T_{\gamma_j\gamma_{j+1}}: \gamma_{j+1}\to \gamma_{j}$$
satisfying $f\circ T_{\gamma_j\gamma_{j+1}}=id_{\gamma_{j+1}}$. By Propositions \ref{near 0} and \ref{near 1}, if $\gamma_{j}\in\{\alpha_1,\alpha'_1,\cdots\}$, the inverse map $T_{\gamma_j,\gamma_{j+1}}$ is scaling with scaling ratio $1/4$; and if $\gamma_{j}\in\{\beta_n,\beta'_n\}$, the map $T_{\gamma_j,\gamma_{j+1}}$ is scaling with scaling ratio $2^n$.

Now we can prove Lemma \ref{bijection}.

\begin{proof}[Proof of Lemma \ref{bijection}]
Pick a sequence $\{y_j\}_{j\ge 0}\subset J(f)$ such that $y_j\in\gamma_j$.  Define
$$x_j:=T_{\gamma_0\gamma_1}\circ\cdots\circ T_{\gamma_{j-1}\gamma_j}(y_j).$$
If there exists $j_0\ge 0$ such that $\gamma_{j_0}=\alpha_\infty$, then $\gamma_j=\alpha_\infty$ for all $j\ge j_0$. Hence in this case, $x_j$ converges. Now we assume that $\gamma_{j}\not=\alpha_\infty$ for all $j\ge 0$.
Let $0\le i_0<i_1<\cdots <i_{k(j)}\le j-1$
be all integers such that $\gamma_{i_\ell}\subset 1+4\mathbb{Z}_2$ for $1\le\ell\le k(j)$. Note that $k(j)\to\infty$, as $j\to\infty$. To ease notations, set $k=k(j)$. Rewrite
$$x_j=T_{\gamma_0\gamma_1}\circ\cdots\circ T_{\gamma_{i_0}\gamma_{i_0+1}}\circ\cdots\circ T_{\gamma_{i_k}\gamma_{i_k+1}}\circ\cdots\circ T_{\gamma_{j-1}\gamma_j}(y_j).$$
By Propositions \ref{near 0} and \ref{near 1}, for any $0\le\ell\le k-1$, the map
$T_{\gamma_{i_\ell}\gamma_{i_\ell+1}}\circ\cdots\circ T_{\gamma_{i_{\ell+1}-1}\gamma_{i_{\ell+1}}}$
is  scaling with scaling ratio no more than $1/2$. Moreover, for $0\le i\le i_0-1$, the map $T_{\gamma_i,\gamma_{i+1}}$ is scaling with scaling ratio $1/4$. Hence
$$\left|x_{j+m}-x_j\right|\le \frac{1}{2^k}\left|T_{\gamma_k\gamma_{k+1}}\circ\cdots\circ T_{\gamma_{j+m-1}\gamma_{j+m}}(y_{j+m})-T_{\gamma_k\gamma_{k+1}}\circ\cdots\circ T_{\gamma_{j-1}\gamma_j}(y_j)\right|.$$
By Proposition \ref{Julia}, we have $\gamma_k\subset J(f)\subset\mathbb{Z}_2$. It follows that
$$\left|x_{j+m}-x_j\right|\le \frac{1}{2^k}.$$
Hence $\{x_j\}_{j\ge 0}$ is a Cauchy sequence.  Set
$$x:=\lim\limits_{j\to\infty}x_j.$$
Since the scaling ratio of $T_{\gamma_i,\gamma_{i+1}}$ is in the value group $|\mathbb{Q}^\times_2|$, all $x_j$ are in $\mathbb{Q}_2$. Therefore, the point $x$ is in $\mathbb{Q}_2$.

Observe that $x$ is independent of the choice of $\{y_j\}_{j\ge 0}$. Indeed, for any other sequence $\{y'_j\}_{j\ge 0}\subset J(f)$ with $y'_j\in\gamma_j$, let
$$x'_j:=T_{\gamma_0\gamma_1}\circ\cdots\circ T_{\gamma_{j-1}\gamma_j}(y'_j).$$
Then
$$\left|x'_j-x_j\right|\le \frac{1}{2^k}\left|T_{\gamma_k\gamma_{k+1}}\circ\cdots\circ T_{\gamma_{j-1}\gamma_j}(y_j)-T_{\gamma_k\gamma_{k+1}}\circ\cdots\circ T_{\gamma_{j-1}\gamma_j}(y'_j)\right|$$
Again, noting that $\gamma_k\subset J(f)\subset\mathbb{Z}_2$, we have $|x'_j-x_j|\to 0$, as $j\to\infty$.

Obviously, $x\in J(f)$ and the code sequence of $x$ is $\{\gamma_j\}_{j\ge 0}$. Now we show that such $x$ is unique. We may assume that $\gamma_{j}\not=\alpha_\infty$ for all $j\ge 0$. Otherwise, the uniqueness of $x$ follows immediately from the fact that $f$ is a bijection on each $\gamma\in\mathcal{A}$. Suppose that $\tilde{x}\in J(f)$ also has the code sequence $\{\gamma_j\}_{j\ge 0}$. Then $f^n(x)$ and $f^n(\tilde{x})$ are in $\gamma_n$. Hence $|f^n(x)-f^n(\tilde{x})|$ is bounded for all $n\ge 0$. However, there exists a sequence $\{n_i\}$ such that $f^{n_i}$ is scaling on $\gamma_0$ with scaling ratio no less than $2^i$. Hence
$$|f^{n_i}(x)-f^{n_i}(\tilde{x})|\ge 2^{i}|x-\tilde{x}|.$$
Thus $\tilde{x}=x$.
\end{proof}

Recall that $\sigma:\Sigma_A\to\Sigma_A$ is the (left-)shift.
Considering the code sequences in Lemma \ref{bijection}, we obtain a conjugacy between $f$ and $\sigma$ via $h$.
\begin{proposition}\label{comjugacy}
On $J(f)$,
$$h\circ f=\sigma\circ h.$$
\end{proposition}
\begin{proof}
For $x\in J(f)$, let $\{\gamma_j\}_{j\ge 0}$ be its code sequence. Then for any $\{y_j\}_{j\ge 0}\subset J(f)$ with $y_j\in\gamma_j$, we have
$$h\circ f(x)=h\circ f\left(\lim\limits_{j\to\infty}T_{\gamma_0\gamma_1}\circ\cdots\circ T_{\gamma_{j-1}\gamma_j}(y_j)\right).$$
Note that $f\circ T_{\gamma_0\gamma_1}=id_{\gamma_1}$. It follows that
$$h\circ f(x)=h \left(\lim\limits_{j\to\infty}T_{\gamma_1\gamma_2}\circ\cdots\circ T_{\gamma_{j-1}\gamma_j}(y_j)\right)=(\gamma_1,\gamma_2,\cdots)=\sigma\circ h(x).$$
The conclusion follows.
\end{proof}

Now we are ready to prove Theorem \ref{Thm:example}.
\begin{proof}
By Propositions \ref{bijection} and  \ref{comjugacy}, it suffices to prove that the map $h$ is continuous and open from $I(f)$ to $\Sigma_A\setminus h(\mathrm{GO}(1))$.  We first show the continuity of $h$. Let $U$ be an open set in $h(I(f))$ and pick $x\in I(f)$ with $h(x)\in U$. Write $h(x)=(\gamma_i)_{i\ge 0}$. Then there exists $m\ge 1$ such that the cylinder set $C:=[\gamma_0\cdots\gamma_m]\subset\Sigma_A$
is contained in $U$. For the set $h^{-1}(C)$, we have $x\in h^{-1}(C)$ and $h^{-1}(C)=\bigcap_{j=0}^{m}f^{-j}(\gamma_j)$.
Since  $x\not\in\mathrm{GO}(1)$, we have $\gamma_j\not\in\{\alpha_\infty$, $\beta_\infty\}$ and hence $\gamma_j$ is an open set in $J(f)$. Since $f$ is continuous, then $f^{-j}(\gamma_j)$ is open and hence $h^{-1}(C)$ is open. Therefore, $h$ is continuous.

Now we show the openness of $h$. Let $W$ be an open subset in $I(f)$. By Propositions \ref{near 0} and \ref{near 1}, there exists $\ell\ge 0$ such that
$f^\ell(W)\supset (1+4\mathbb{Z}_2)\cap I(f).$
Then
$$f^{\ell+1}(W)\supset 4\mathbb{Z}_2\cap I(f),\ \text {and hence}\ f^{\ell+2}(W)\supset \mathbb{Z}_2\cap I(f)=I(f).$$
Pick $(\gamma_i)_{i\ge 0}\in h(W)$. Since $f$ is a bijection from $\gamma$ to $f(\gamma)$ for any $\gamma\in\mathcal{A}$, the set $h(W)$ contains the cylinder $[\gamma_0\cdots\gamma_{\ell+1}]\subset\Sigma_A$.
Therefore, $h$ is open.
\end{proof}

\subsection{Gurevich entropy}\label{sec:Gurevich}

Consider the natural directed graph $\Gamma_A$ induced by the matrix $A$ in Section \ref{sec:example-proof}, that is, $\Gamma_A$ has vertices in $\mathcal{A}$, and possesses an edge from $\gamma_i\in\mathcal{A}$ to $\gamma_j\in\mathcal{A}$ if and only if $A_{\gamma_i\gamma_j}=1$.  From Section \ref{sec:tree}, we have that the graph $\Gamma_A$ is not strongly connected. Hence the matrix $A$ is reducible. Now we consider its irreducible component that corresponds to a maximal strongly connected subgraph of $\Gamma_A$. In fact, the matrix $A$ has an irreducible component $A'$ corresponding to the symbol set
$$\mathcal{A}':=\{\alpha_1,\alpha_2,\beta_2,\beta'_2,\cdots,\alpha_n,\beta_n,\beta'_n,\cdots\}.$$
Denote by $\Sigma'_{A'}$ the corresponding  subspace and let $(\Sigma'_{A'},\sigma')$ be the corresponding subsystem of $(\Sigma_A,\sigma)$, where $\sigma'=\sigma|_{\Sigma'_{A'}}$. We now compute the Gurevich entropy $h_G(\Sigma'_{A'}, \sigma')$ of the system $(\Sigma'_{A'},\sigma')$.

Note that the matrix $A'$ has the form
\[
\begin{blockarray}{&ccccccccccccccc}
 & \alpha_1 & \alpha_2 & \beta_2&\beta'_2&\alpha_3 & \beta_3&\beta'_3 & \alpha_4 & \beta_4&\beta'_4&\alpha_5& \beta_5&\beta'_5&\dots\\
\begin{block}{c(ccccccccccccccc)}
 \alpha_1& & &1&1&&1&1&&1&1&&1&1&\dots\\
 \alpha_2&1&&&&&&&&&&&&&\dots\\
 \beta_2&&1&&&&&&&&&&&&\dots\\\
 \beta'_2&&1&&&&&&&&&&&&\dots\\
 \alpha_3&&1&&&&&&&&&&&&\dots\\
  \beta_3&&&&&1&&&&&&&&&\dots\\
 \beta'_3&&&&&1&&&&&&&&&\dots\\
  \alpha_4& &&&&1&&&&&&&&&\dots\\
  \beta_4 &  &&&&&&&1&&&&&&\dots\\
 \beta'_4 &  &&&&&&&1&&&&&&\dots\\
\alpha_5&  &&&&&&&1&&&&&&\dots\\
 \beta_5 &   &&&&&&&&&&1&&&\dots\\
\beta'_5 &  &&&&&&&&&&1&&&\dots\\
\vdots & \vdots &\vdots&\vdots&\vdots&\vdots&\vdots&\vdots&\vdots&\vdots&\vdots&\vdots&\vdots&\vdots&\ddots\\
\end{block}
\end{blockarray}\ .
 \]
Then the number of first return loops at $\alpha_1$ of length $n\ge 1$ is
$$\Xi_{\alpha_1}(n)=
\begin{cases}
0\ \ \text{if}\ \ n\le 2,\\
2 \ \ \text{if}\ \ n\ge 3.
\end{cases}$$
Recall Section \ref{sec:symbols} and consider
$$G_{\alpha_1}(z)=\sum_{n\ge 1}\Xi_{\alpha_1}(n)z^n=\sum_{n\ge 3}2z^n.$$
If $|z|<1$, then
$$G_{\alpha_1}(z)=\frac{2z^3}{1-z}.$$
In this case, $1-G_{\alpha_1}(z)$ has a root $R\approx 0.58975$ that is a root of $2z^3+z-1$ and hence is algebraic. Note that $0<R<1$. Then $G_{\alpha_1}(z)$ converges on $|z|<R$. Thus,
$$h_G(\Sigma'_{A'}, \sigma')=-\log R\approx 0.52806.$$

%\medskip
\section{Dynamics of sub-hyperbolic maps}\label{sec:GF-maps}

The goal of this section is to prove Theorems \ref{Thm:GF} and \ref{globaldynamics}. For Theorems \ref{Thm:GF}, we first investigate the case that the Julia critical points are iteratedly prefixed in Section \ref{sec:reduced}, and then the general case in Section  \ref{sec:general}. After that, we show Theorem \ref{globaldynamics} in Section \ref{sec:pf-global}.

\subsection{Reduced version of Theorem \ref{Thm:GF}}\label{sec:reduced}

A point $x\in\mathbb{P}^1_K$ is {\it iteratedly prefixed} under a rational map $\phi\in K(z)$ if there exists $n\ge 0$ such that $\phi^n(x)$ is a fixed point of $\phi$.
\begin{theorem}\label{Thm:prefixed}
Let $\phi\in  K(z)$ be a rational map of degree at least $2$. Suppose all the critical points in $\mathrm{Crit}_K^\ast(\phi)$ are iteratedly prefixed under $\phi$. Then the conclusion of Theorem \ref{Thm:GF} holds.
\end{theorem}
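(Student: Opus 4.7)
The plan is to mirror the construction carried out for the concrete polynomial $f$ in Section \ref{sec:example}: build a countable Markov cover $\mathcal{P}$ of $J_K(\phi)$, extract a transition matrix $A$, and encode points by their $\mathcal{P}$-itineraries. By Theorem \ref{Thm:main} I identify $J_K(\phi)$ with $J_{\mathbb{C}_p}^K(\phi)$. First I would produce an initial cover $\mathcal{P}_0$: near each non-critical point of $J_K(\phi)$, Corollary \ref{coro:tame-wild}(2) provides a $K$-disk on which $\phi$ is scaling and hence bijective onto its image; near each critical $c \in \mathrm{Crit}_K^\ast(\phi)$, Corollary \ref{coro:tame-wild}(1) shows that $\phi$ acts on the family of $K$-spheres around $c$ by a power-law rule on the radius, each sphere being the disjoint union of finitely many maximal $K$-disks on which $\phi$ is scaling. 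Compactness of $J_K(\phi)$ away from a neighborhood of $\mathrm{Crit}_K^\ast(\phi)$ reduces the ``large'' pieces to finitely many, so $\mathcal{P}_0$ consists of finitely many open disks together with, for each $c \in \mathrm{Crit}_K^\ast(\phi)$, countably many sphere pieces accumulating on $c$ and the singleton $\{c\}$ itself.

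Next I would refine $\mathcal{P}_0$ along the forward orbits of $\mathrm{Crit}_K^\ast(\phi)$ to obtain a cover $\mathcal{P}$ with the Markov property that $\phi(\alpha)$ is a union of elements of $\mathcal{P}$ for every $\alpha \in \mathcal{P}$. The iteratedly prefixed hypothesis is essential here: the forward orbit of $\mathrm{Crit}_K^\ast(\phi)$ is a finite set terminating at fixed points, so only finitely many singletons need to be inserted and only finitely many large disks subdivided. Indexing the resulting $\mathcal{P}$ by a countable set $\mathcal{A}$, I define $A_{\alpha\beta}=1$ iff $\beta \subset \phi(\alpha)$, let $(\Sigma_A,\sigma)$ be the induced countable state Markov shift, and set $h(x)=(\alpha_j)_{j\ge 0}$ with $\phi^j(x) \in \alpha_j$. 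By construction $h \circ \phi = \sigma \circ h$.

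The main obstacle will be the bijectivity of $h$ together with the fact that $h|_{I_K(\phi)}$ is a homeomorphism onto its image. To invert $h$, given an admissible itinerary $(\alpha_j)$ one sets $x_j = T_{\alpha_0\alpha_1}\circ \cdots \circ T_{\alpha_{j-1}\alpha_j}(y_j)$, where $T_{\alpha\beta}\colon\beta\to\alpha$ is the unique inverse branch of $\phi|_\alpha$ and $y_j \in \alpha_j$ is chosen arbitrarily; convergence follows because an inverse branch from a sphere piece at scale $|\pi|^n$ around a critical point $c$ contracts by a definite geometric factor of the order $|\pi|^{n/(\deg_c\phi - 1)}$, so itineraries visiting critical neighborhoods infinitely often are strongly Cauchy, while itineraries that eventually avoid $\mathrm{Crit}_K^\ast(\phi)$ are controlled by Lemma \ref{lem:uniform-scaling} and the bijectivity of $\phi$ on the finitely many large disks. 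Injectivity is dual: two distinct points sharing an itinerary would have to lie in a common sphere piece at every scale, which forces them to coincide. The grand orbit $\mathrm{GO}_K(\mathrm{Crit}_K^\ast(\phi))$ corresponds precisely to itineraries that eventually contain a singleton symbol; removing it yields $I_K(\phi)$, whose points have itineraries living forever in the open elements of $\mathcal{P}$. Continuity and openness of $h|_{I_K(\phi)}$ then follow because on each open element of $\mathcal{P}$ the map $\phi$ is a bijection onto a disk, so cylinder sets in $\Sigma_A$ pull back to intersections of open preimages of open sets, and conversely every open subset of $I_K(\phi)$ contains a preimage of some cylinder determined by $\mathcal{P}$.
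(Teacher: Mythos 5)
Your plan matches the paper's proof almost step for step: build a scaling cover $\mathcal{P}_0$ of $J_K(\phi)$, refine it along the finite forward critical orbit to a compatible Markov cover $\mathcal{P}$, define the countable transition matrix and coding map, and verify bijectivity, the conjugacy, and the restricted homeomorphism onto $h(I_K(\phi))$. Two small points worth tightening before this becomes a full proof: the paper first passes to a finite extension $\widetilde K$ of $K$ on which $\phi$ has a nonempty Fatou set and conjugates so that $J_{\widetilde K}(\phi)\subset\mathcal O_{\widetilde K}$ — this normalization handles the possibility $F_K(\phi)=\emptyset$ and lets one work in an affine chart throughout; and your per-branch estimate is stated backwards (a single inverse branch landing in a sphere piece near a Julia critical point \emph{expands}, since $|\phi'|$ is small there), although the cumulative contraction over a full return cycle that you implicitly rely on is correct. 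For the general case, the paper in fact avoids explicit scaling estimates in the bijectivity step and instead uses Lemma \ref{lem:P-cover-two} (every nonsingleton element of $\mathcal P$ eventually splits) together with the discreteness of the value group to show the nested preimages $\widetilde\gamma_{m_i}\subsetneq\gamma_0$ have diameters tending to zero; your inverse-branch version reaches the same limit but would need the uniform cumulative contraction to be established in general, which that lemma makes unnecessary.
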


\begin{remark}\label{map-to-repelling}
Since all the fixed points in Julia set are repelling, any point in $\mathrm{Crit}_K^\ast(\phi)$ in Theorem \ref{Thm:prefixed} is mapped to some repelling fixed point.
\end{remark}

The remainder of this section is devoted to proving Theorem \ref{Thm:prefixed}. To unify the proof, we consider a suitable finite extension $\widetilde{K}$ of $K$ such that $\phi$, regarded as a map acting on $\mathbb{P}^1_{\widetilde{K}}$, has a nonempty Fatou set. The existence of such extension follows form the fact that $\phi$ has a nonrepelling fixed point in $F_{\mathbb{C}_p}(\phi)$, see \cite[Proposition 4.2]{Benedetto19}. Since the conjugacy via an automorphism of $\mathbb{P}^1_{\widetilde{K}}$ preserves the corresponding dynamics, we may assume that $J_{\widetilde{K}}(\phi)$ is contained in the ring $\mathcal{O}_{\widetilde{K}}$ of integers of $\widetilde{K}$. It follows that $J_K(\phi)\subset J_{\widetilde{K}}(\phi)\subset\mathcal{O}_{\widetilde{K}.}$

By Corollary \ref{coro:tame-wild}, we can construct a {\it scaling covering} $\mathcal{P}_0$ of $J_K(\phi)$ in $\mathbb{P}^1_{\widetilde{K}}$, that is, $\mathcal{P}_0$ is a set of disks in $\mathbb{P}^1_{\widetilde{K}}$ such that
\begin{enumerate}
\item each $U\in \mathcal{P}_0$ is either the restriction of a scaling disk of $\phi$ to $\mathbb{P}^1_{\widetilde{K}}$ or a singleton,
\item $U\cap J_K(\phi)\not=\emptyset$, and
\item $J_K(\phi)\subset \bigcup\limits_{U\in\mathcal{P}_0}U$.
\end{enumerate}
Indeed, there are only finitely many maximal scaling disks intersecting $J_K(\phi)$ outside a small neighborhood $W$ of $\mathrm{Crit}^\ast_K(\phi)$, and in $W$ there are countably many maximal scaling disks intersecting $J_K(\phi)$ together with the singletons of points in $\mathrm{Crit}^\ast_K(\phi)$.

By Remark \ref{map-to-repelling}, for any $c\in\mathrm{Crit}^\ast_K(\phi)$, considering the forward orbit $\mathcal{O}_\phi(c)$ and the corresponding forward orbit of the elements in $\mathcal{P}_0$ near $c$, we can decompose a small neighborhood of each point $x\in\mathcal{O}_\phi(c)$ into  countably many scaling disks together with the singleton $\{x\}$. Then we obtain a new scaling covering from $\mathcal{P}_0$, and to abuse notation, we also denote it by $\mathcal{P}_0$.  Note that $\mathcal{P}_0$ contains countably many elements, in particular, it contains all the singletons with one element in $\bigcup_{n\ge0}\phi^n(\mathrm{Crit}^\ast_K(\phi))$. Moreover, since by Theorem \ref{no-Wandering-thm} and the fact that $F_{\C_p}(\phi)\cap \mathbb{P}^1_{K}\subset F_K(\phi)$, there are finitely many nonrepelling cycles in $\mathbb{P}^1_{K}$, we can assume that each element in $\mathcal{P}_0$ is uniformly away from the nonrepelling cycles. Furthermore, we can also assume that each element in $\mathcal{P}_0$ is uniformly away from $\mathrm{Crit}_{\C_p}(\phi)\setminus\mathrm{Crit}^\ast_K(\phi)$.

The next result claims that any nonsingleton element in $\mathcal{P}_0$ has an iterated image containing at least two distinct elements in $\mathcal{P}_0$.

\begin{lemma}\label{C-cover-two}
For any $U\in\mathcal{P}_0$, suppose $U$ is not a singleton. Then there exists $m\ge 1$ such that $\phi^m(U)$ contains at least two distinct elements in $\mathcal{P}_0$.
\end{lemma}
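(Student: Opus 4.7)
The plan is to apply Lemma \ref{lem:dimater-large} to the $\C_p$-ambient disk of $U$ and then track its forward orbit together with the shadow in $\mathbb{P}^1_{\widetilde{K}}$. Let $\widetilde{U} \subset \mathbb{P}^1_{\C_p}$ be the $\C_p$-disk of diameter $\mathrm{diam}(U)$ containing $U$. Since $U \cap J_K(\phi) \neq \emptyset$, we have $\widetilde{U} \cap J_{\C_p}(\phi) \neq \emptyset$, so Lemma \ref{lem:dimater-large} supplies a least $n_0 \geq 1$ with $\mathrm{diam}(\phi^{n_0}(\widetilde{U})) = 1$. I will argue that for some $m \le n_0$, the image $\phi^m(U)$ is already large enough to contain two distinct members of $\mathcal{P}_0$.

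I then split into two cases. \textbf{Case A:} there is a smallest $1 \le m_0 \le n_0$ with $\phi^{m_0-1}(\widetilde{U}) \cap \mathrm{Crit}^\ast_K(\phi) \neq \emptyset$; pick $c$ in this intersection. By minimality, Corollary \ref{coro:tame-wild}(1), and the assumption that elements of $\mathcal{P}_0$ are uniformly away from $\mathrm{Crit}_{\C_p}(\phi) \setminus \mathrm{Crit}^\ast_K(\phi)$, the iterate $\phi^{m_0-1}$ is scaling on $\widetilde{U}$, so $\phi^{m_0-1}(U)$ is a genuine non-singleton $\widetilde{K}$-disk containing $c$. Because $\mathcal{P}_0$ was constructed by decomposing a small neighborhood of each point of $\mathcal{O}_\phi(c)$ into countably many scaling $\widetilde{K}$-disks plus the singleton $\{c\}$, any such scaling $\widetilde{K}$-disk of sufficiently small diameter fits inside $\phi^{m_0-1}(U)$, which together with $\{c\}$ yields the two required elements. \textbf{Case B:} no such $m_0$ exists, so $\phi$ is scaling on every $\phi^m(\widetilde{U})$ for $0 \le m < n_0$, whence $\phi^{n_0}(U) = \phi^{n_0}(\widetilde{U}) \cap \mathbb{P}^1_{\widetilde{K}}$ is a $\widetilde{K}$-disk of spherical diameter $1$. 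Since $\phi^{n_0}(U)$ meets $J_K(\phi) \subset \mathcal{O}_{\widetilde{K}}$, the ultrametric dichotomy for $\widetilde{K}$-disks forces $\phi^{n_0}(U) \supset \mathcal{O}_{\widetilde{K}} \supset J_K(\phi)$, so $\phi^{n_0}(U)$ contains every element of $\mathcal{P}_0$, of which there are at least two by the construction.

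The main obstacle I anticipate is handling intermediate iterates $\phi^m(\widetilde{U})$ that meet wild critical points outside $\mathrm{Crit}^\ast_K(\phi)$: such encounters can destroy the scaling of $\phi$ on $\phi^m(\widetilde{U})$ even though $\phi^m(U)$ itself stays far from them, so the shadow identity $\phi^m(U) = \phi^m(\widetilde{U}) \cap \mathbb{P}^1_{\widetilde{K}}$ need not persist. The ``uniformly away'' hypothesis on $\mathcal{P}_0$ is precisely what lets me sidestep this by replacing $n_0$ with the first index at which either a point of $\mathrm{Crit}^\ast_K(\phi)$ actually enters the orbit of $\widetilde{U}$, or the spherical diameter of $\phi^m(U)$ itself reaches $1$. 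Making this reduction rigorous, so that the clean case split above genuinely applies, is the delicate step; once it is in place the rest is essentially bookkeeping through Corollary \ref{coro:tame-wild} and the construction of $\mathcal{P}_0$ near $\mathcal{O}_\phi(\mathrm{Crit}^\ast_K(\phi))$.
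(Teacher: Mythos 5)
Your Case~A is essentially the paper's first case (there, with $n_0$ the smallest time that $\phi^{n_0}(U)$ meets $\mathrm{Crit}^\ast_K(\phi)$; $\phi^{n_0}(U)$ then contains the singleton $\{c_0\}$ and, since $U$ is not a singleton, at least one other element $U_x$ of $\mathcal{P}_0$). Your Case~B, however, diverges from the paper and contains the gap that you yourself flag. Concretely: you split on whether the orbit of $\widetilde{U}$ ever meets $\mathrm{Crit}^\ast_K(\phi)$, and in the negative case you assert that $\phi$ is scaling on every $\phi^m(\widetilde{U})$, $0\le m<n_0$. That assertion does not follow from your case hypothesis. The iterates of $\widetilde{U}$ can contain or approach critical points of $\phi$ that lie in $\mathbb{C}_p\setminus\mathbb{P}^1_{\widetilde K}$ or in the Fatou set, and the ``uniformly away'' clause in the construction of $\mathcal{P}_0$ only constrains the elements $V\in\mathcal{P}_0$ themselves, not their forward images. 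Once scaling fails at some intermediate stage, the identification $\phi^{n_0}(U)=\phi^{n_0}(\widetilde{U})\cap\mathbb{P}^1_{\widetilde K}$ breaks and the diameter-$1$ conclusion from Lemma~\ref{lem:dimater-large} (which is a statement about the $\mathbb{C}_p$-disk $\widetilde{U}$, not about $U$) does not transfer to $\phi^{n_0}(U)$.

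The paper gets around precisely this by running its second case as a proof by contradiction with a stronger hypothesis: it assumes simultaneously that $\phi^n(U)\cap\mathrm{Crit}^\ast_K(\phi)=\emptyset$ for all $n\ge 1$ \emph{and} that no iterate $\phi^m(U)$ contains two distinct elements of $\mathcal{P}_0$. It is the second (negated-conclusion) hypothesis that forces each $\phi^n(U)$ to stay within a single scaling disk of the cover $\mathcal{P}_0$, and hence forces $\phi^n$ to be scaling on $U_{\mathbb{C}_p}$ for all $n$. From there, $\bigcup_{n\ge1}\phi^n(U_{\mathbb{C}_p})$ avoids all of $\mathrm{Crit}_{\mathbb{C}_p}(\phi)$ (at least two points, since $\deg\phi\ge2$), so the non-Archimedean Montel theorem \cite[Theorem 5.19]{Benedetto19} puts $U_{\mathbb{C}_p}$ in $F_{\mathbb{C}_p}(\phi)$, contradicting $U\cap J_K(\phi)\ne\emptyset$. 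In other words, the paper never tries to \emph{prove} scaling from the geometry, and never needs the diameter to reach $1$; scaling is a consequence of the negation of the goal, and Montel supplies the contradiction. To repair your argument you should condition on the negation of the conclusion of the lemma rather than merely on the complement of Case~A, and then replace the appeal to Lemma~\ref{lem:dimater-large} with the Montel-type argument: with your unaided case split there is no way to rule out the intermediate-critical-point scenario you correctly identify as the delicate step.
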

\begin{proof}
Let $U_{\C_p}\subset\mathbb{P}^1_{\C_p}$ be the set such that $U_{\C_p}\cap\mathbb{P}^1_{\widetilde{K}}=U$.
If there exists $n_0\ge 1$ such that $\phi^{n_0}(U)\cap\mathrm{Crit}^\ast_K(\phi)\not=\emptyset$, consider the smallest such $n_0$ and pick a point $c_0$ in this intersection. It follows that $\{c_0\}$ is a singleton in $\mathcal{P}_0$. Moreover, there exists $x\in\phi^{n_0}(U)\setminus\{c_0\}$, since $U$ is not a singleton and $\phi^{n_0}$ is scaling on $U_{\C_p}$. Then letting $U_x$ be the element in $\mathcal{P}_0$ such that $x\in U_x$, we have $U_x\subset\phi^{n_0}(U)$. Hence $U_x\cup\{c_0\}\subset\phi^{n_0}(U)$, which, in this case, implies the conclusion  by taking $m=n_0$.

Now we assume that $\phi^n(U)\cap\mathrm{Crit}^\ast_K(\phi)=\emptyset$ for any $n\ge 1$.
Suppose to the contrary that there is no such $m$. Then from the construction of $\mathcal{P}_0$, the map $\phi^n$ is scaling on $U_{\C_p}$ for all $n\ge 1$. If $\mathrm{Crit}^\ast_K(\phi)=\emptyset$, then by Corollary \ref{coro:tame-wild}, {$\mathcal{P}_0$} contains finitely many elements and hence the union $\cup_{n\geq 1}\phi^n(U_{\C_p})$ disjoints $\mathrm{Crit}_{\C_p}(\phi)$. If $\mathrm{Crit}^\ast_K(\phi)\not=\emptyset$, picking $c\in\mathrm{Crit}^\ast_K(\phi)$, we have that the union $\cup_{n\geq 1}\phi^n(U_{\C_p})$ does not contain $c$. Noting that $\mathrm{Crit}_{\C_p}(\phi)$ contains at least two distinct points and $c\in J_K(\phi)\subset J_{\C_p}(\phi)$, by \cite[Theorem 5.19]{Benedetto19}, in both cases, we conclude that $U_{\C_p}\subset F_{\C_p}(\phi)$. This is impossible. Indeed, since $U\cap J_K(\phi)\not=\emptyset$, we have $U_{\C_p}$ intersects $J_{\C_p}(\phi)$.
\end{proof}

We say that a scaling covering $\mathcal{P}$ of $J_K(\phi)$ is \textit{compatible} if for each  $V\in\mathcal{P}$, the image $\phi(V)$ is a union of elements in $\mathcal{P}$.
In the following result, decomposing the elements in $\mathcal{P}_0$, we obtain a compatible scaling covering $\mathcal{P}$ of $J_K(\phi)$ in $\mathbb{P}^1_{\widetilde{K}}$, which will turn out that the intersections of elements in $\mathcal{P}$ and $J_K(\phi)$ give us the states of the desired Markov shift.
\begin{lemma}\label{lem:compatible-cover}
There is a compatible scaling covering $\mathcal{P}$ of $J_K(\phi)$ in $\mathbb{P}^1_{\widetilde K}$ such that $\mathcal{P}$ has countably many elements and each singleton in $\mathcal{P}$ contains only one element in the union $\bigcup_{n\ge0}\phi^n(\mathrm{Crit}^\ast_K(\phi))$.
\end{lemma}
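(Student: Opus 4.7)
The plan is to construct $\mathcal{P}$ by refining the given scaling cover $\mathcal{P}_0$ until the compatibility condition holds. Under the hypothesis of Theorem~\ref{Thm:prefixed}, every critical point in $\mathrm{Crit}^\ast_K(\phi)$ is iteratedly prefixed, so the set $\mathcal{S} := \bigcup_{n \ge 0} \phi^n(\mathrm{Crit}^\ast_K(\phi))$ is a finite union of finite orbits, each landing at a repelling fixed point. By construction of $\mathcal{P}_0$, every singleton in $\mathcal{P}_0$ is of the form $\{y\}$ with $y \in \mathcal{S}$, and I keep exactly these as the singletons of $\mathcal{P}$.

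The core of the plan is a cascading refinement. For each scaling disk $V \in \mathcal{P}_0$, since $\phi$ is scaling on $V$, the image $\phi(V)$ is a disk in $\mathbb{P}^1_{\widetilde K}$. By the ultrametric property, any other element $U \in \mathcal{P}_0$ satisfies exactly one of $U \cap \phi(V) = \emptyset$, $U \subseteq \phi(V)$, or $\phi(V) \subsetneq U$. In the second case, the preimages $\phi|_V^{-1}(U_i)$ of those $U_i \subseteq \phi(V)$ are sub-disks of $V$ covering $V \cap J_K(\phi)$, so I replace $V$ by these preimages. In the third case, I split $U$ by declaring $\phi(V)$ a new element and complementing it with the maximal sub-disks of $U \setminus \phi(V)$; because the value group of $\widetilde K$ has rank one and the residue field is finite, this complement decomposes into finitely many scaling sub-disks at each level, countably many in total. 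Iterating this procedure yields refinements $\mathcal{P}_0 \prec \mathcal{P}_1 \prec \cdots$, and I take $\mathcal{P}$ to be their common refinement.

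For countability and termination, I would invoke Lemma~\ref{C-cover-two}: every non-singleton element eventually maps onto at least two distinct elements, so only finitely many refinement cascades occur away from $\mathcal{S}$, where the scaling disks in $\mathcal{P}_0$ are uniformly large. Near each $y \in \mathcal{S}$, the orbit lands on some repelling fixed point $p = \phi^N(y)$; the scaling factor $|\phi'(p)|_p > 1$ forces the local refinements into a self-similar geometric tower of shrinking scaling disks around $y$, each level mapping bijectively to the next level near $\phi(y)$. This tower is indexed by $\mathbb{N}$, in direct analogy with the families $\{A_n, A'_n, B_n, B'_n\}$ built in Section~\ref{sec:dynamics}.

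The main obstacle will be verifying that the cascading refinement stabilizes on a genuinely compatible covering rather than demanding transfinite iteration, and that every resulting element is still a scaling disk or a permitted singleton. To handle this, I would package the refinements near each $y \in \mathcal{S}$ into an explicit countable family determined by the scaling factor at its associated repelling fixed point, and glue these local towers to the finitely many elements that survive refinement away from $\mathcal{S}$. The resulting $\mathcal{P}$ is then, by construction, a countable compatible scaling covering of $J_K(\phi)$ whose singletons are precisely $\{\{y\} : y \in \mathcal{S}\}$.
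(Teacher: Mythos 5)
Your plan captures the right intuition — refine $\mathcal{P}_0$, expect self-similar towers near the critical orbit, expect finite refinement away from it — but you have not actually closed the termination gap, and you acknowledge as much in your last paragraph. Saying ``take the common refinement'' of an infinite cascade $\mathcal{P}_0 \prec \mathcal{P}_1 \prec \cdots$ is not a construction: a priori the diameters could collapse to zero, and nothing in your argument guarantees that the limit is still a \emph{scaling} covering or even a cover at all. Invoking Lemma~\ref{C-cover-two} to say ``only finitely many refinement cascades occur away from $\mathcal{S}$'' conflates the statement that each non-singleton eventually covers two elements with the much stronger claim that the backward refinement chain is finite; you need an argument that a disk $U$ never triggers a refinement of \emph{itself}, and you never address this.

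The paper's proof is structured precisely to defeat these two problems. First, it observes (and this is essential) that if $\phi^i(U)\subsetneq U_i$ for some $U_i\in\mathcal{P}_0$ then $U_i\neq U$: otherwise $U$ would contain an iterated image of a Julia critical point and hence be a singleton of $\mathcal{P}_0$. This kills self-referential cascades. Second, and more importantly, the paper does \emph{not} iterate a refinement procedure blindly. It builds two auxiliary objects: a neighborhood $Y$ of the union of critical orbits (iterated preimages of a small disk around each repelling fixed point $z_c$), and a finite ``canonical'' family $\mathcal{V}\subset\mathcal{P}_0$ obtained by propagating a finite set $\mathcal{D}_{z_c}$ of representatives backward along the critical orbit. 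Only the finitely many $U$ that are in $\mathcal{V}$ or outside $Y$ are decomposed directly (and this terminates because there are finitely many of them and no self-refinement occurs); all other $U\subset Y$ are in the self-similar tower and simply \emph{inherit} their decomposition from the unique $\phi^\ell(U)\in\mathcal{V}$. Your proposal does not introduce $\mathcal{V}$ or its analogue, which is exactly what packages the ``geometric tower'' you gesture at into a finitely generated, well-defined countable family. Without something playing the role of $\mathcal{V}$, the ``glue these local towers to the finitely many elements that survive refinement away from $\mathcal{S}$'' step remains unproved.
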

\begin{proof}

Let $c\in\mathrm{Crit}^\ast_K(\phi)$.
By Remark \ref{map-to-repelling}, in the forward orbit of $c$, there is a repelling fixed point $z_c$. Consider a small neighborhood $V_{z_c}$ of $z_c$ and let $\mathcal{X}_c$ be the set of corresponding iterated preimages of $V_{z_c}$ along the orbit $\mathcal{O}_\phi(c)=\{c, \phi(c),\cdots, z_c\}$. Then define $X_c$ to be the union of all elements in $\mathcal{X}_c$, and denote  $Y:=\cup_{c\in\mathrm{Crit}^\ast_K(\phi)}X_c$. By Proposition \ref{prop:tame-wild}, the map $\phi$ has a constant scaling ratio on $V_{z_c}$.
Moreover, there is a finite subset $\mathcal{D}_{z_c}\subset \mathcal{P}_0$, each of whose elements is a subset of $V_{z_c}$, such that for any element $V\in\mathcal{P}_0$ with $V\subset V_{z_c}$, there exists a smallest $m\ge 0$ such that $\phi^m(V)\in\mathcal{D}_{z_c}$. Then for any points in $\mathcal{O}_\phi(c)$, we denote by $\mathcal{V}_c$ the set of the finitely many elements in $\mathcal{P}_0$ that are the corresponding iterated preimages of elements in $\mathcal{D}_{z_c}$ along $\mathcal{O}_\phi(c)$, and write $\mathcal{V}=\cup_{c\in\mathrm{Crit}^\ast_K(\phi)}\mathcal{V}_c$.

For a nonsingleton element $U\in\mathcal{P}_0$, let us consider the orbit of $U$ and decompose the elements in $\mathcal{P}_0$ intersecting this orbit as follows. In the case that $U$ is either in $\mathcal{V}$ or not a subset of $Y$,
if there exist $i\ge 1$ and $U_i\in\mathcal{P}_0$ such that $\phi^i(U)\subsetneq U_i$, then let $r_i:=\mathrm{diam}(\phi^i(U))$ and decompose $U_i$ into small disks in $U_i$ with diameter $r_i$. We conclude that $U_i\not=U$. Indeed, if $U_i=U$, on one hand, since $U_i\cap J_K(\phi)\not=\emptyset$, there exists $1\le j\le i-1$ such that $\phi^j(U)$ contains a critical point  $c_0\in\mathrm{Crit}^\ast_K(\phi)$ and, in turn $\phi^{i-j}(\phi^j(U))=\phi^i(U)\subsetneq U_i=U$ contains an iterated image $x$ of $c_0$, which implies $U=\{x\}$ since $U$ and $\{x\}$ are contained in $\mathcal{P}_0$; on the other hand, by the assumption, $U$ is not a singleton, which leads to a contradiction. By Lemma \ref{C-cover-two}  and noting the finiteness of such $U$ in this case, we have that the decomposition procedure stops in finitely many steps.
Now for each $U\in\mathcal{P}_0$ that is not in $\mathcal{V}$ but a subset of $Y$, there exists $\ell\ge 1$ such that $\phi^\ell(U)$ is in $\mathcal{V}$, and then we can decompose $U$ according to the decomposition on $\phi^\ell(U)$.

After decomposing, we consider the disks intersecting $J_K(\phi)$ and obtain a scaling covering $\mathcal{P}$ of $J_K(\phi)$, each of whose countably many elements is either a disk or a singleton having element in $\bigcup_{n\ge0}\phi^n(\mathrm{Crit}^\ast_K(\phi))$.

Let us check that $\mathcal{P}$ is compatible. It suffices to show that if $\phi(U')\subset U''$ for some $U', U''\in\mathcal{P}$, then $\phi(U')=U''$.  Considering iterated image if necessary, we can assume that $U'$ is either in $\mathcal{V}$ or not a subset of $Y$. Let $\widetilde U'$ and $\widetilde U''$ be the elements in $\mathcal{P}_0$ such that $U'\subset\widetilde U'$ and $U''\subset\widetilde U''$, respectively. If $U'=\widetilde U'$, from the construction of {$\mathcal{P}$}, we immediately have $\phi(U')=U''$. If $U'\subsetneq\widetilde U'$,  again from the construction of $\mathcal{P}$, there exists $U'_1\subset\widetilde U'$ such that $\mathrm{diam}(U'_1)=\mathrm{diam}(U')$ and $\mathrm{diam}(\phi(U_1'))=\mathrm{diam}(U'')$. Letting $\widetilde U'_{\C_p}\subset\mathbb{P}^1_{\C_p}$ be the set such that $\widetilde U'_{\C_p}\cap\mathbb{P}^1_{\widetilde{K}}=U'$, since $\phi$ is scaling on $\widetilde U'_{\C_p}$, we conclude that $\mathrm{diam}(\phi(U'_1))=\mathrm{diam}(\phi(U'))$ and hence $\mathrm{diam}(\phi(U'))=\mathrm{diam}(U'')$, which implies $\phi(U')=U''$.

Thus $\mathcal{P}$ is a  desired  scaling covering of $J_K(\phi)$.
\end{proof}

Let $\mathcal{P}$ be the compatible scaling covering in Lemma \ref{lem:compatible-cover}. Similar argument as in the proof of Lemma \ref{C-cover-two} implies the following result. We omit the proof here.
\begin{lemma}\label{lem:P-cover-two}
If $U\in\mathcal{P}$ is not a singleton, then there exists $m\ge 1$ such that $\phi^m(U)$ is the union of at least two elements in $\mathcal{P}$.
\end{lemma}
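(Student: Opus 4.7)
The plan is to argue by contradiction, essentially following the scheme of Lemma \ref{C-cover-two} but using the additional fact that $\mathcal{P}$ is compatible and that singletons in $\mathcal{P}$ come only from $\bigcup_{n\ge 0}\phi^n(\mathrm{Crit}^\ast_K(\phi))$. Suppose $U\in\mathcal{P}$ is not a singleton and, contrary to the claim, $\phi^m(U)$ equals a single element $V_m\in\mathcal{P}$ for every $m\ge 1$. The first observation is that no $V_m$ can be a singleton: since $U$ lies in a scaling disk of $\phi$, the image $\phi(U)$ is a nondegenerate disk, and inductively $\phi^m$ is scaling on $U_{\C_p}$ (the disk in $\mathbb{P}^1_{\C_p}$ with $U_{\C_p}\cap\mathbb{P}^1_{\widetilde{K}}=U$), so each $\phi^m(U)$ has positive diameter.

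Next, I would show that the forward orbit of $U$ avoids $\mathrm{Crit}^\ast_K(\phi)$. Indeed, each $c\in\mathrm{Crit}^\ast_K(\phi)$ belongs to $\bigcup_{n\ge 0}\phi^n(\mathrm{Crit}^\ast_K(\phi))$, so $\{c\}$ is one of the singleton elements of $\mathcal{P}$. If $c\in\phi^m(U)$, then by compatibility the singleton $\{c\}$ would appear in the decomposition of $\phi^m(U)$ into elements of $\mathcal{P}$; since $\phi^m(U)$ has positive diameter, this decomposition would have at least two members, contradicting our assumption. Passing to $\mathbb{P}^1_{\C_p}$, the same conclusion rules out any $c\in\mathrm{Crit}^\ast_K(\phi)$ lying in $\phi^m(U_{\C_p})$, since such a $c\in\mathbb{P}^1_K$ would also lie in $\phi^m(U)$.

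Combined with the construction of $\mathcal{P}_0$, which places every one of its elements uniformly away from $\mathrm{Crit}_{\C_p}(\phi)\setminus\mathrm{Crit}^\ast_K(\phi)$, this shows that $\bigcup_{n\ge 1}\phi^n(U_{\C_p})$ is disjoint from the whole critical set $\mathrm{Crit}_{\C_p}(\phi)$ (or, when $\mathrm{Crit}^\ast_K(\phi)=\emptyset$, $\mathcal{P}_0$ is finite and the conclusion is the same). Since $\deg\phi\ge 2$ guarantees $\mathrm{Crit}_{\C_p}(\phi)$ contains at least two distinct points, we can then invoke \cite[Theorem 5.19]{Benedetto19} exactly as in the proof of Lemma \ref{C-cover-two} to conclude $U_{\C_p}\subset F_{\C_p}(\phi)$. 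This contradicts $U\cap J_K(\phi)\neq\emptyset$, which forces $U_{\C_p}\cap J_{\C_p}(\phi)\neq\emptyset$.

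The only subtle point, and the place where the proof genuinely differs in flavor from Lemma \ref{C-cover-two}, is the bookkeeping in Step 2: one must use compatibility of $\mathcal{P}$ to convert the hypothesis ``$\phi^m(U)$ is a single element of $\mathcal{P}$'' into the statement that the orbit of $U$ avoids the singleton members of $\mathcal{P}$. Once that is observed, everything else is a direct transcription of the earlier argument and no further obstacle arises.
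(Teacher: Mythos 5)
Your proof is correct and follows essentially the same route the paper intends: the paper explicitly says that the argument mirrors Lemma~\ref{C-cover-two}, and your argument does exactly that, with the mild repackaging that compatibility of $\mathcal{P}$ (plus the fact that $\mathcal{P}$ is a partition, since disks in a non-archimedean field overlap only by nesting and the construction separates singletons from disks) folds the two cases of the earlier proof into one contradiction. The one point worth making explicit, which you invoke silently, is that $\phi^m(U_{\C_p})\cap\mathbb{P}^1_{\widetilde K}=\phi^m(U)$ — i.e.\ a $\widetilde K$-rational point of the image disk has its (unique, by scaling) preimage in $\widetilde K$; this follows from a short Galois-descent observation and is needed to pass from ``orbit of $U$ avoids $\mathrm{Crit}^\ast_K(\phi)$'' to ``orbit of $U_{\C_p}$ avoids $\mathrm{Crit}^\ast_K(\phi)$.''
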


Now define
$$\mathcal{A}:=\{U\cap J_K(\phi): U\in\mathcal{P}\}$$
and let $A=(A_{\gamma_i, \gamma_j})_{\gamma_i, \gamma_j\in\mathcal{A}}$ be the matrix with
$$A_{\gamma_i, \gamma_j}=
\begin{cases}
1\ \ \text{if}\ \  \gamma_j\subset\phi( \gamma_i),\\
0 \ \ \text{otherwise}.
\end{cases}$$
Denote by $(\Sigma_A,\sigma)$ the countable state Markov shift defined by $A$. We have the following code sequence for each point in $J_K(\phi)$:
\begin{lemma}\label{lem:code}
For each $x\in J_K(\phi)$, there is a unique sequence $(\gamma_j)_{j\ge 0}\in\Sigma_A$ such that $\phi^j(x)\in\gamma_j$.
\end{lemma}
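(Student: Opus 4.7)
The plan is to verify, for each $x \in J_K(\phi)$, the three requirements built into the statement: existence of some $\gamma_j \in \mathcal{A}$ with $\phi^j(x) \in \gamma_j$, uniqueness of that choice, and the Markov transition $\gamma_{j+1} \subseteq \phi(\gamma_j)$ that is needed for $(\gamma_j)_{j\ge 0}$ to lie in $\Sigma_A$.

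\emph{Existence} follows from two ingredients noted already in Section \ref{sec:Julia-Fatou}: the forward invariance of $J_K(\phi)$ under $\phi$ guarantees $\phi^j(x) \in J_K(\phi)$ for every $j \ge 0$, and $\mathcal{P}$ is a covering of $J_K(\phi)$ by Lemma \ref{lem:compatible-cover}. Hence some $U_j \in \mathcal{P}$ contains $\phi^j(x)$, and its trace $\gamma_j := U_j \cap J_K(\phi) \in \mathcal{A}$ provides the required symbol.

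\emph{Uniqueness} reduces to showing that distinct elements of $\mathcal{P}$ are pairwise disjoint. Two maximal scaling disks either coincide or are disjoint by Lemma \ref{cor:same domain}; the finer disks produced from a fixed $U \in \mathcal{P}_0$ during the refinement in Lemma \ref{lem:compatible-cover} are pieces of a single ultrametric disk and so are pairwise disjoint or equal by the standard ultrametric dichotomy; and the singletons $\{x\}$ for $x \in \bigcup_{n\ge 0} \phi^n(\mathrm{Crit}^\ast_K(\phi))$ were, by the explicit separation of forward critical orbits carried out before refinement, excluded from every scaling disk of the covering. Thus any two distinct members of $\mathcal{A}$ are disjoint, which forces $\gamma_j$ to be unique.

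\emph{Markov transition} is immediate from the compatibility of $\mathcal{P}$: for the chosen $U_j$, the image $\phi(U_j)$ is a union of elements of $\mathcal{P}$. Intersecting with $J_K(\phi)$ and using forward invariance,
\[
\phi(\gamma_j) \;=\; \phi(U_j \cap J_K(\phi)) \;=\; \phi(U_j) \cap J_K(\phi)
\]
is a union of elements of $\mathcal{A}$. Since $\phi^{j+1}(x) = \phi(\phi^j(x)) \in \phi(\gamma_j)$ lies in the unique $\gamma_{j+1}$ produced by the previous step, that $\gamma_{j+1}$ must be one of the pieces of $\phi(\gamma_j)$, i.e.\ $A_{\gamma_j \gamma_{j+1}} = 1$. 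The only delicate bookkeeping is making sure that a point on the grand orbit of a critical point is assigned to its singleton symbol rather than being absorbed into a neighboring disk, and this is the very reason the construction of $\mathcal{P}_0$ and $\mathcal{P}$ was arranged to isolate those orbits as singletons to begin with, so no additional argument is required.
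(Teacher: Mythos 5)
The paper does not actually provide a proof of Lemma~\ref{lem:code} --- it is stated without argument, evidently viewed as immediate once $\mathcal{A}$ is a partition of $J_K(\phi)$ and $\mathcal{P}$ is compatible. Your proof supplies the missing details and is organized sensibly around the three requirements (existence, uniqueness, Markov transition), and the overall approach is exactly what the paper's construction is designed to support.

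There is, however, one step that is justified by the wrong reason. In the Markov transition part you write
\[
\phi(\gamma_j) = \phi(U_j \cap J_K(\phi)) = \phi(U_j) \cap J_K(\phi)
\]
``using forward invariance.'' Forward invariance $\phi(J_K(\phi)) \subset J_K(\phi)$ only gives the inclusion $\phi(U_j \cap J_K(\phi)) \subset \phi(U_j) \cap J_K(\phi)$. The reverse inclusion, which is the one you actually need to conclude $\gamma_{j+1} \subset \phi(\gamma_j)$ (i.e.\ $A_{\gamma_j\gamma_{j+1}}=1$), requires showing that if $y \in \phi(U_j)$ and $y \in J_K(\phi)$ then the (unique) preimage $z \in U_j$ of $y$ also lies in $J_K(\phi)$. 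This does hold, but the correct tool is Lemma~\ref{regular-point-lemma}: since $U_j$ is a scaling disk it contains no critical points, so $\phi(z) \in J_K(\phi)$ forces $z \in J_K(\phi)$; and when $U_j$ is a singleton lying in $\mathrm{GO}_K(\mathrm{Crit}^\ast_K(\phi))$, the claim is trivial because one invokes Theorem~\ref{Thm:main} to know the whole grand orbit is already in $J_K(\phi)$. With that correction inserted, the Markov transition step closes cleanly; the existence and uniqueness parts are fine as written.
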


By Lemma \ref{lem:code}, we obtain a coding map
$$h: J_K(\phi)\to\Sigma_A,$$
sending $x\in J_K(\phi)$ to its code sequence. Now we prove that $h$ is in fact a bijection.

\begin{proposition}\label{bijection-prefixed}
The map $h$ is a bijection from $J_K(\phi)$ onto $\Sigma_A$.
\end{proposition}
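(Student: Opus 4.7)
The strategy parallels Lemma \ref{bijection}. For each admissible transition $\gamma_i\to\gamma_{i+1}$ in $\Sigma_A$ with $\gamma_i$ nonsingleton, the element $U_i\in\mathcal{P}$ containing $\gamma_i$ is a scaling (hence injective) disk of $\phi$, and compatibility of $\mathcal{P}$ gives a well-defined inverse branch $T_{\gamma_i\gamma_{i+1}}:\gamma_{i+1}\to\gamma_i$ with $\phi\circ T_{\gamma_i\gamma_{i+1}}=\mathrm{id}_{\gamma_{i+1}}$; this branch is itself scaling, with scaling ratio reciprocal to that of $\phi$ on $U_i$.

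\textbf{Surjectivity.} Given $(\gamma_j)_{j\ge0}\in\Sigma_A$, pick arbitrary $y_j\in\gamma_j$ and set
\[
x_j:=T_{\gamma_0\gamma_1}\circ\cdots\circ T_{\gamma_{j-1}\gamma_j}(y_j)\in\gamma_0.
\]
If some $\gamma_{j_0}$ is a singleton, the sequence is eventually constant. Otherwise, I will show $(x_j)$ is Cauchy by exhibiting uniform contraction of long compositions of the $T$'s: by Lemma \ref{lem:P-cover-two}, every nonsingleton $U\in\mathcal{P}$ has $\phi^m(U)$ covering at least two elements of $\mathcal{P}$ for some $m\ge 1$, so in the ultrametric the composed forward scaling ratio over this block strictly exceeds $1$, and its inverse composition is strictly contracting. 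Using the finiteness of $\mathrm{Crit}_K^\ast(\phi)$, the fact that every critical orbit terminates at a repelling fixed point (Remark \ref{map-to-repelling}), and the structure of $\mathcal{P}$ built from nested preimages of small neighborhoods of those fixed points in Lemma \ref{lem:compatible-cover}, I expect to extract a uniform block length $N$ and a factor $\lambda<1$ with which each $N$-fold composition of $T$'s contracts. Then $(x_j)$ converges to some $x\in\overline{\gamma_0}$. Since each $T$ preserves $\mathbb{P}^1_K$ and $J_K(\phi)$ is closed, $x\in J_K(\phi)$, and by construction $h(x)=(\gamma_j)$.

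\textbf{Injectivity.} Suppose $x\neq y$ both have code $(\gamma_j)_{j\ge0}$. If some $\gamma_{j_0}$ is a singleton $\{z\}$, then $\phi^{j_0}(x)=\phi^{j_0}(y)=z$; pulling back through the maps $\phi|_{U_i}$, each injective on its nonsingleton $U_i$, forces $x=y$, a contradiction. Hence every $\gamma_j$ is nonsingleton, and $|\phi^n(x)-\phi^n(y)|=\alpha_n|x-y|$ where $\alpha_n$ is the product of the forward scaling ratios along the orbit. By the dual of the contraction estimate above, $\alpha_n\to\infty$; but $\phi^n(x),\phi^n(y)\in\gamma_n\subset\mathcal{O}_{\widetilde K}$, so $|\phi^n(x)-\phi^n(y)|$ is bounded. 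This contradiction yields $x=y$.

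\textbf{Main obstacle.} The substantive technical point is establishing the uniform block-wise contraction/expansion rate in the general setting, given that $\mathcal{P}$ contains infinitely many disks of shrinking diameter clustered around the postcritical orbits. The argument should combine: (i) the finiteness of the postcritical set and of the repelling fixed-point targets, (ii) Proposition \ref{prop:tame-wild} giving a constant local scaling ratio near each such fixed point, and (iii) the construction in Lemma \ref{lem:compatible-cover}, where the countably many elements of $\mathcal{P}$ arise as pullbacks of finitely many \emph{base} disks along a finite collection of orbits, so only finitely many distinct scaling patterns occur per block. Patching these together with the combinatorics of the coding is the delicate step.
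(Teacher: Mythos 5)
Your proposal takes a genuinely different route than the paper's proof. You try to import the inverse-branch / Cauchy-sequence mechanism from the worked example (Lemma~\ref{bijection}), whereas the paper's Proposition~\ref{bijection-prefixed} proceeds by nested intersections: starting from Lemma~\ref{lem:P-cover-two}, one builds a strictly decreasing chain $\gamma_0 \supsetneq \widetilde\gamma_{m_1} \supsetneq \widetilde\gamma_{m_2} \supsetneq \cdots$ of preimage components, observes that proper inclusion of disks in $\widetilde K$ forces their diameters to shrink by at least $|\pi_{\widetilde K}|$ each time because the value group $|\widetilde K^\times|$ is discrete, and then takes the intersection to get the unique point $x$, which lies in $J_K(\phi)$ either by Theorem~\ref{Thm:main} (singleton case) or by closedness of $J_K(\phi)$ and Lemma~\ref{regular-point-lemma} (nonsingleton case). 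No contraction-rate estimate is ever made.

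The step in your plan that would fail is the expectation of ``a uniform block length $N$ and a factor $\lambda<1$ with which each $N$-fold composition of $T$'s contracts.'' No such uniform $N$ exists. The cover $\mathcal{P}$ contains disks of unboundedly small diameter clustered along the postcritical orbits, and a code that passes very close to a Julia critical point $c$ first sees the branch $T_{\gamma_j\gamma_{j+1}}$ with $\gamma_j$ near $c$ \emph{expand} (since $\phi$ contracts there, its inverse expands), and it then takes arbitrarily many subsequent steps, depending on how deep the code went, before the block net contraction is recovered. In the example this is visible explicitly: the branch at $\beta_n$ has scaling ratio $2^n$, and only after the additional $n$ pullbacks through $\alpha_n,\ldots,\alpha_1$ does the composed ratio fall to at most $1/2$. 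So a fixed window of $N$ inverse branches can have net ratio $\gg 1$. What is true, and is the correct replacement, is a uniform per-\emph{block} contraction $|\pi_{\widetilde K}|$ for blocks of variable length: once $\phi^m(\gamma)$ covers two elements of $\mathcal{P}$ (Lemma~\ref{lem:P-cover-two} guarantees this happens for every nonsingleton $\gamma$), the pullback of any single element is a proper subdisk, and discreteness of $|\widetilde K^\times|$ gives the ratio $\le |\pi_{\widetilde K}|$ regardless of $m$. Your Cauchy argument (and the dual estimate for injectivity) goes through once you reorganize the composition into these variable-length blocks; but the uniform-$N$ framing as written cannot be made to work.

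A secondary omission: in the singleton case you assert the limit is in $J_K(\phi)$ merely from closedness, but the singleton elements of $\mathcal{A}$ are points of $\bigcup_{n\ge 0}\phi^n(\mathrm{Crit}^\ast_K(\phi))$, and the pullback $x$ lies in $\mathrm{GO}_K(\mathrm{Crit}^\ast_K(\phi))$; a priori one only knows such points sit in $J_{\mathbb{C}_p}(\phi)\cap\mathbb{P}^1_K$. The paper invokes Theorem~\ref{Thm:main} at exactly this spot to upgrade that to membership in $J_K(\phi)$, and your argument should do likewise.
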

\begin{proof}
We will show that for any $(\gamma_j)_{j\ge 0}\in\Sigma_A$, there is a unique point $x\in J_K(\phi)$ such that the code sequence of $x$ is $\{\gamma_j\}_{j\ge 0}$. If there is $j_0\ge 0$ such that $\gamma_{j_0}$ is a singleton of point in $\bigcup_{n\ge0}\phi^n(\mathrm{Crit}_{J_K}(\phi))$, then we can find {such} a unique $x\in \phi^{-j_0}(\gamma_{j_0})\cap \gamma_0\subset\mathrm{GO}_K(\mathrm{Crit}^\ast_K(\phi))$. Since $\mathrm{Crit}^\ast_K(\phi)\subset J_{\C_p}(\phi)$, it follows that $x\in J_{\C_p}(\phi)$. By Theorem \ref{Thm:main1}, we conclude that $x\in J_K(\phi)$.

Now assume that $\gamma_j$ is not a singleton of point in $\bigcup_{n\ge0}\phi^n(\mathrm{Crit}^\ast_K(\phi))$ for all $j\ge 0$. Then by Lemma \ref{lem:P-cover-two}, there is a smallest $n_0\ge 1$ such that $\phi^{n_0}(\gamma_0)$ is a union of at least two elements in $\mathcal{A}$ and $\gamma_{n_0}$ is properly contained in $\phi^{n_0}(\gamma_0)$. Thus $\phi^{-n_0}(\gamma_{n_0})$ has a component $\widetilde\gamma_{n_0}$ properly contained in $\gamma_0$. Inductively, for $i\ge 1$, let $n_{i}>0$ be the smallest integer such that $\phi^{n_{i}}(\gamma_{n_{i-1}})$ is a union of at least two elements in $\mathcal{A}$. Then $\phi^{-n_{i}}(\gamma_{n_i})$ has a component $\widetilde\gamma_{n_{i}}$ properly contained in $\gamma_{n_{i-1}}$. Set $m_i:=\sum_{j=0}^in_i.$
We conclude that for $i\ge 1$, $\phi^{-m_{i}}(\gamma_{m_i})$ has a component $\tilde\gamma_{m_{i}}$ such that
$\widetilde\gamma_{m_{i}}\subsetneq \widetilde\gamma_{m_{i-1}}\subsetneq \gamma_0.$
Since the value group $|\widetilde{K}^\times|$ is discrete,
$\mathrm{diam}(\widetilde\gamma_{m_{i}})\to 0,$ as $i\to\infty$.
Then $\bigcap_{i\ge 0}\widetilde\gamma_{m_{i}}$ is a singleton. Denote the unique point in this intersection by $x$. Obviously, the code sequence of $x$ is $\{\gamma_j\}_{j\ge 0}$. We complete the proof by showing $x\in J_K(\phi)$. Indeed, since $\gamma_{m_i}\cap J_K(\phi) \not=\emptyset$, by Lemma \ref{regular-point-lemma}, there exists $x_i\in\widetilde\gamma_{m_{i}}\cap J_K(\phi)$, and observing that $x_i\to x$ as $i\to\infty$, we conclude that $x\in J_K(\phi)$, since $J_K(\phi)$ is closed.
\end{proof}

\begin{proposition}\label{conjugate-prefixed}
On $J_K(\phi)$, we have
$$h\circ\phi=\sigma\circ h.$$
\end{proposition}
\begin{proof}
Pick $x\in J_K(\phi)$ and let $(\gamma_j)_{j\ge 0}\in\Sigma_A$ be its code sequence. Then for $i\ge 0$, we have $\phi^{i}(x)\in\gamma_i$. Hence $\phi^i(\phi(x))\in\gamma_{i+1}$. Therefore the code sequence of $\phi(x)$ is $(\gamma_{j+1})_{j\ge 0}\in\Sigma_A$. The conclusion follows.
\end{proof}

Now we can prove Theorem \ref{Thm:prefixed}.
\begin{proof}[Proof of Theorem \ref{Thm:prefixed}]
By Propositions \ref{bijection-prefixed} and \ref{conjugate-prefixed}, it suffices to prove that the map $h$ is continuous and open on $I_K(\phi)$. Let $U$ be an open set in $h(I_K(\phi))$ and consider $x\in I_K(\phi)$ with $h(x)\in U$. Write $h(x)=(\gamma_i)_{i\ge 0}$. Then there exists $m\ge 1$ such that the cylinder set
$C:=[\gamma_0\cdots\gamma_m]\subset\Sigma_A$
is contained in $U$. We conclude that $x\in h^{-1}(C)$ and $h^{-1}(C)=\bigcap_{j=0}^{m}\phi^{-j}(\gamma_j)$.
Since each $\gamma_j$ is not a subset of $\mathrm{GO}_K(\mathrm{Crit}^\ast_K(\phi))$, it follows that $\gamma_j$ is an open set in $J_K(\phi)$. Note that $\phi$ is continuous. Then $\phi^{-j}(\gamma_j)$ is open in $J_K(\phi)$. Moreover, $\phi^{-j}(\gamma_j)\cap J_K(\phi)\not=\emptyset$. {Then} $h^{-1}(C)\cap J_K(\phi)$ is open. Therefore, $h$ is continuous.

Now let $W$ be an open subset in $I_K(\phi)$. We claim that there exists $\ell\ge 1$ such that $\phi^\ell(W)$ contains an element in $\mathcal{A}$. Indeed, for otherwise, there exists $\gamma_j\in\mathcal{A}$ such that $\phi^j(W)\subset\gamma_j$ for all $j\ge 0$, then $W$ is a subset of $\bigcap_{j=0}^\infty\phi^{-j}(\gamma_j)$, and hence by Lemma \ref{lem:P-cover-two} and the local compactness of $K$, the set $W$ is a singleton, which is a contradiction since $W$ is open. Now pick $(\gamma_i)_{i\ge 0}\in h(W)$. Note that $\phi$ is a bijection from $\gamma$ onto $\phi(\gamma)$ for any $\gamma\in\mathcal{A}$. Then
the cylinder set $[\gamma_0\cdots\gamma_\ell]\subset\Sigma_A$ is contained in $h(W)$.
Therefore, $h$ is open.
\end{proof}

\subsection{Proof of Theorem \ref{Thm:GF}}\label{sec:general}
In this subsection, we prove Theorem \ref{Thm:GF}. We first show the following result concerning the rational maps whose iteration has iteratedly prefixed Julia critical points.

\begin{proposition}\label{prefixed implies conjugate}
Let $\phi\in K(x)$ be a rational map of degree at least $2$. Suppose there exists $n\ge 1$ such that the critical points of $\phi^n$ in $J_{\C_p}(\phi)\cap\mathbb{P}^1_K$ are iteratedly prefixed. Then the conclusion of Theorem \ref{Thm:GF} holds.
\end{proposition}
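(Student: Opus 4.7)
The plan is to reduce to Theorem \ref{Thm:prefixed} applied to the iterate $\psi := \phi^n$ and then transfer the resulting coding back to $\phi$ by refining the compatible scaling covering. First I would note that $J_L(\phi) = J_L(\psi)$ for $L \in \{K, \C_p\}$: equicontinuity of $\{\psi^k\}_{k\ge 0}$ on an open set $U$ gives equicontinuity of each subsequence $\{\phi^{nk+i}\}_{k \ge 0} = \{\phi^i \circ \psi^k\}_{k\ge 0}$ for $0 \le i \le n-1$ by continuity of $\phi^i$, and a finite union of equicontinuous families is equicontinuous. Combined with the hypothesis, this lets me invoke Theorem \ref{Thm:prefixed} for $\psi$, producing a compatible scaling covering $\mathcal{P}'$ of $J_K(\psi) = J_K(\phi)$ for $\psi$, a countable state Markov shift $(\Sigma_{A'},\sigma')$, and a bijection $h' : J_K(\phi) \to \Sigma_{A'}$ conjugating $\psi$ on $I_K(\psi)$ to $\sigma'$.

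Next I would refine $\mathcal{P}'$ to a compatible scaling covering $\mathcal{P}$ for $\phi$. Define $\mathcal{P}$ to consist of the nonempty sets of the form
\[
V_0 \cap \phi^{-1}(V_1) \cap \cdots \cap \phi^{-(n-1)}(V_{n-1}), \qquad V_i \in \mathcal{P}'.
\]
By multiplicativity of the Weierstrass degree on $p$-adic disks, scaling of $\psi$ on $V_0 \in \mathcal{P}'$ forces scaling of each $\phi^i$ on $V_0$ for $0 \le i \le n$; hence every element of $\mathcal{P}$ is either a scaling disk for $\phi$ or a singleton lying in $\mathrm{GO}_K(\mathrm{Crit}^*_K(\phi))$, using the identity $\mathrm{GO}_K(\mathrm{Crit}^*_K(\psi)) = \mathrm{GO}_K(\mathrm{Crit}^*_K(\phi))$, which holds because $\mathrm{Crit}(\phi^n)$ is a union of $\phi$-preimages of $\mathrm{Crit}(\phi)$. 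For compatibility, decompose $\psi(V_0) = \bigcup_j Z_j$ into elements of $\mathcal{P}'$; direct computation using the injectivity of $\phi$ on $V_0$ shows that $\phi(V)$ is the disjoint union, over those $Z_j$ actually reached, of the sets $V_1 \cap \phi^{-1}(V_2) \cap \cdots \cap \phi^{-(n-2)}(V_{n-1}) \cap \phi^{-(n-1)}(Z_j)$, each of which is an element of $\mathcal{P}$.

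With $\mathcal{P}$ in hand, I would set $\mathcal{A} := \{V \cap J_K(\phi) : V \in \mathcal{P}\}$, define the transition matrix $A$ by $A_{V,W} = 1$ iff $W \subset \phi(V)$, and introduce the coding map $h: J_K(\phi) \to \Sigma_A$ sending $x$ to the unique sequence $(\gamma_j)_{j\ge 0}$ with $\phi^j(x) \in \gamma_j$. The analogue of Lemma \ref{lem:P-cover-two} for $(\mathcal{P},\phi)$ follows by pulling back the corresponding property of $(\mathcal{P}',\psi)$: if $V \in \mathcal{P}$ is non-singleton and $V \subset V_0 \in \mathcal{P}'$, then $\psi^m(V_0)$ splits into several elements of $\mathcal{P}'$ for some $m$, and tracing this through the refinement shows $\phi^{nm+\ell}(V)$ splits into at least two elements of $\mathcal{P}$ for some $0 \le \ell < n$. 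Bijectivity of $h$, the conjugacy $h\circ\phi = \sigma\circ h$, and the homeomorphism property on $I_K(\phi)$ then follow by the arguments of Proposition \ref{bijection-prefixed}, Corollary \ref{conjugate-prefixed}, and the final step in the proof of Theorem \ref{Thm:prefixed}, with Theorem \ref{Thm:main} again invoked to locate grand-orbit points of critical points inside $J_K(\phi)$.

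The main obstacle will be the careful bookkeeping in the refinement. Two subtleties stand out: first, verifying that $\mathcal{P}$ genuinely separates points through the splitting property, which requires tracking how a $\phi$-orbit of an element of $\mathcal{P}$ may cycle through several elements of $\mathcal{P}'$ before $\psi$ exhibits a split; and second, confirming that the singletons of $\mathcal{P}$ correspond precisely to the $\phi$-grand orbit of $\mathrm{Crit}^*_K(\phi)$ rather than to the a priori larger $\psi$-grand orbit of $\mathrm{Crit}^*_K(\psi)$.
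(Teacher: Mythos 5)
Your proposal is correct in outline but takes a genuinely different route from the paper. The paper keeps the $\psi$-compatible partition $\mathcal{A}$ unchanged and defines $H : J_K(\phi) \to \mathcal{A}^{\mathbb{N}}$ as the $\phi$-coding with respect to that partition; it then transfers all the desired properties from the $\psi$-coding $h$ via the factor map $F$ that selects every $n$-th coordinate (so $h = F\circ H$, $H$ is injective because $h$ is, and the homeomorphism property is deduced from continuity, openness, and local injectivity of $F$ away from the grand orbit). You instead refine $\mathcal{P}'$ to the join $\mathcal{P}=\{V_0\cap\phi^{-1}(V_1)\cap\cdots\cap\phi^{-(n-1)}(V_{n-1})\}$ so that $\mathcal{P}$ is $\phi$-compatible, and then simply rerun the machinery of Lemmas \ref{lem:P-cover-two}--\ref{lem:code}, Proposition \ref{bijection-prefixed}, and Corollary \ref{conjugate-prefixed} directly for $\phi$. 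The trade-off: the factor-map route is shorter, but it is not transparent that the $\psi$-partition is $\phi$-compatible, so that $H(J_K(\phi))$ is literally a Markov shift over $\mathcal{A}$ (a transition $\gamma\to\gamma'$ with $\phi(\gamma)\subsetneq\gamma'$ is a priori possible); your explicit refinement sidesteps this concern and hands over a bona fide Markov partition for $\phi$ together with the transition matrix, at the cost of the extra bookkeeping in constructing $\mathcal{P}$. Your verification that the join is compatible (via injectivity of $\phi^{n-1}$ on $V_1$, which rescues both the case $\phi(V_0)\supset V_1$ and the case $\phi(V_0)\subsetneq V_1$) and your use of multiplicativity of Weierstrass degree to get scaling of each intermediate $\phi^i$ are both sound.

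One step in your sketch should be replaced. You claim the analogue of Lemma \ref{lem:P-cover-two} for $(\mathcal{P},\phi)$ ``follows by pulling back'' the splitting of $\psi^m(V_0)$ and ``tracing through the refinement.'' This does not immediately work: $V$ is a proper subset of $V_0$, so even when $\psi^m(V_0)$ is a union of several elements of $\mathcal{P}'$, the smaller disk $\psi^m(V)$ may land entirely inside a single one of them and never split. The correct fix is direct: since $\mathcal{P}$ is $\phi$-compatible, if $\phi^m(V)$ were a single (hence non-singleton) element of $\mathcal{P}$ for every $m\ge 1$, then $\phi^m$ would be scaling on $V$ for all $m$, and the Montel-type argument of Lemma \ref{C-cover-two} (the orbit of $V_{\C_p}$ omits at least two points, so $V_{\C_p}\subset F_{\C_p}(\phi)$) gives the contradiction. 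Alternatively, the bijectivity you already invoke gives it even faster: if $\phi^m(V)\in\mathcal{P}$ for all $m$ then the coding map is constant on $V$, forcing $V$ to be a singleton. Either way the statement holds, but not via the pullback you describe.

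Finally, your cautious flag about $\mathrm{GO}_K(\mathrm{Crit}^*_K(\psi)) = \mathrm{GO}_K(\mathrm{Crit}^*_K(\phi))$ is well placed: the identity is true (because $\mathrm{Crit}^*_K(\psi)=\bigcup_{i=0}^{n-1}\bigl(\phi^{-i}(\mathrm{Crit}^*_K(\phi))\cap\mathbb{P}^1_K\bigr)$ and a $\phi$-grand orbit equals the $\psi$-grand orbit of the finite preimage set), but it does need the two-line computation rather than a bare appeal to the chain rule.
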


\begin{proof}
From the proof of Theorem \ref{Thm:prefixed}, there exist a partition $\mathcal{A}$ of $J_K(\phi^n)$ and an $|\mathcal{A}|\times |\mathcal{A}|$ matrix $A$ such that the dynamical system $(I_K(\phi^n), \phi^n)$ is conjugate to a countable state Markov shift $(\Sigma_A,\sigma)$, where $\Sigma_A\subset\mathcal{A}^\mathbb{N}$ is the subshift space defined by $A$. Note that by Theorem \ref{Thm:main1}, we have $J_K(\phi)=J_K(\phi^n)$. Then $\mathcal{A}$  is, in fact, a partition of $J_K(\phi)$. It follows that the map
$H: J_K(\phi)\to\mathcal{A}^\mathbb{N},$
sending $x\in J_K(\phi)$ to $(\gamma_0\gamma_1\cdots)\in\mathcal{A}^\mathbb{N}$ with $\phi^{i}(x)\in \gamma_i$, is well-defined.
Now we show that $H$ is injective. Indeed, considering the map
$F:\Sigma_A\to\mathcal{A}^\mathbb{N},$
sending $(\gamma_0\gamma_1\gamma_2\cdots)$ to $(\gamma_0\gamma_n\gamma_{2n}\cdots)$, and letting $h$ be as in Theorem \ref{Thm:prefixed}, we have
$h=F\circ H$; moreover,  if $x_1, x_2\in J_{\mathbb{P}^1_K}(\phi^n)$ satisfy $H(x_1)=H(x_2)$, then  $h(x_1)=h(x_2)$ and hence $x_1=x_2$.

It is obvious that $H\circ\phi=\sigma\circ H$. Now we show that $H$ is a homeomorphism on $I_K(\phi)$. By Theorem \ref{Thm:main1}, we obtain $I_K(\phi)=I_K(\phi^n)$.
Hence $h$ is a homeomorphism on $I_K(\phi)$. Then the openness of $H$ follows from the continuity of $F$ and the fact that away from $H(\mathrm{GO}_K(\mathrm{Crit}^\ast_K(\phi)))$ the map $F$ is locally one-to-one. The continuity of $H$ follows from the openness of $F$ since $F(U)=h(H^{-1}(U))$ for any set $U\subset H(J_K(\phi))$.
\end{proof}

Now we can finally prove Theorem \ref{Thm:GF}.
\begin{proof}[Proof of Theorem \ref{Thm:GF}]
Since $\phi\in K(z)$ is a sub-hyperbolic rational map, the points in $\mathrm{Crit}^\ast_K(\phi)$ are mapped to repelling cycles. Since $\mathrm{Crit}^\ast_K(\phi)$ is a finite set, let $n$ be the least common multiple of the periods of these repelling cycles. Then $\phi^n$ maps each point in  $\mathrm{Crit}^\ast_K(\phi)$ to a repelling fixed point of $\phi^n$. By Theorem \ref{Thm:main1}, in $\mathbb{P}^1_K$, the Julia critical points of $\phi^n$ are the Julia critical points of $\phi$ and their $\ell$-th preimages for $1\le \ell<n$. Hence the Julia critical points of $\phi^n$ are iteratedly prefixed. Then Theorem \ref{Thm:GF} immediately follows from Theorem \ref{Thm:prefixed} and Proposition \ref{prefixed implies conjugate}.
\end{proof}

\subsection{Applications of Theorem \ref{Thm:GF}}\label{sec:applications}

Theorem \ref{Thm:GF} shows that any sub-hyperbolic rational map on its Julia set omitting the grand orbits of critical points is topologically conjugate to a countable state Markov shift with a matrix $A$. As in Section \ref{sec:Gurevich}, we consider the natural directed graph associated with $A$, and find its irreducible components. For one such irreducible component $A'$, the subsystem $(\Sigma_{A'}, \sigma)$ is transitive and we can obtain a series
$G_\alpha(z)=\sum_{n\geq1} \Xi_{\alpha}(n)z^n$ by choosing one state $\alpha$ and by counting the numbers $\Xi_{\alpha}(n)$ of first return loops at $\alpha$.

Let $\log \lambda$ denote the Gurevich entropy of the subshift  $(\Sigma_{A'}, \sigma)$. Recall that the subshift $(\Sigma_{A'}, \sigma)$ is {\it positive recurrent} if 
\[ \sum_{n=1}^\infty {\Xi_\alpha(n) \over \lambda^n}=1 \quad \text{and} \quad \sum_{n=1}^\infty {n\Xi_\alpha(n)\over \lambda^n}<\infty,\]
and is {\it strongly positive recurrent} if 
\[ \limsup_{n\to\infty} \Xi_\alpha(n)^{1/n} < \lambda.\]
 A result of Gurevich \cite{Gurevivc70, Gurevich98} 
%B. M. Gurevich, Shift entropy and Markov measures in the path space of a denumerable graph (Russian), Dokl. Akad. Nauk SSSR 192 (1970), 963–965; English transl.: Sov. Math. Dokl. 11 (1970), 744–747.
%B. M. Gurevich and S. Savchenko, Thermodynamical formalism for symbolic Markov chains with a countable number of states (Russian), Uspekhi Mat. Nauk 53 (1998), 3–106; transl.: Russian Math. Surv. 53 (1998), 245–344.
implies  that $(\Sigma_{A'}, \sigma)$ is positive recurrent if and only if there is a unique measure of maximal entropy on $(\Sigma_{A'}, \sigma)$; and a result of Boyle, Buzzi and G\'omez \cite{Boyle06}
%Boyle, Mike; Buzzi, Jerome; G\'omez, Ricardo Almost isomorphism for countable state Markov shifts. J. Reine Angew. Math. 592 (2006), 23–47. 
implies that $(\Sigma_{A'}, \sigma)$ is strongly positive recurrent if and only if $(\Sigma_{A'}, \sigma)$ has a unique maximal entropy measure and with this measure the subshift is exponentially mixing.

%By Theorem \ref{Thm:GF}, all these results on $(\Sigma_{A'}, \sigma)$ hold for the corresponding transitive subsystem of the geometrically finite rational map.  
Hence, by examining the corresponding matrix $A$, one can not only calculate the Gurevich entropy of a sub-hyperbolic rational map and its transitive subsystems, but also determine if on the transitive subsystems there exists a unique measure of maximal entropy and further if this maximal entropy measure is exponentially mixing.  

%We end this subsection with a brief remark on the Gurevich entropy of a geometrically finite rational map. Though the example in Section  \ref{sec:Gurevich} has algebraic $\lambda$ in the Gurevich entropy and Theorem \ref{Thm:GF} provides matrices for general geometrically finite rational maps, since the series $G_\alpha(z)$ strongly depends on the numbers of first return loops, one could not conclude directly on the algebraicality of $\lambda$ for an arbitrary  geometrically finite rational map. It would be interesting to have a close interpretation on this quantity. 

%However, one can not prove if the Gurevich entropy of a geometrically finite rational map is always a logarithm of an algebraic number, since the series $G_\alpha(z)$ strongly depends on the numbers of first return loops and it may not necessarily a rational function as we have obtained in Section  \ref{sec:Gurevich}.
 
 \subsection{Proof of Theorem \ref{globaldynamics}}\label{sec:pf-global}
 By Theorem \ref{Thm:GF}, we only need to consider the dynamics on the $F_K(\phi)$. Since  $F_{\C_p}(\phi)\cap \P^1_K=F_K(\phi)$, by Theorem \ref{Thm:main1}, it suffices to consider the components in  $F_{\C_p}(\phi)$ intersecting $\mathbb{P}^1_K$. 
 Let $U\subset F_{\mathbb{C}_p}(\phi)$ be such a component. It follows from Theorem \ref{no-Wandering-thm} that $U$ is (eventually) periodic. Considering iterated images if necessary,  we can assume that $U$ is periodic under $\phi$. Then by Corollary \ref{Classification-theorem} and Theorem \ref{no-Wandering-thm}, we conclude that $U$ is either an attracting component or an indifferent component, and there are only finitely many such $U$. If $U$ is an attracting component, then all points in $U$ converge to an attracting periodic orbit under iteration of $\phi$, moreover, the completeness of $K$ implies that this attracting periodic orbit is in $\mathbb{P}^1_K$.
 
 %By Theorem \ref{Thm:main},  $J_{\C_p}(\phi)\cap \P^1_{\Q_p}=J_{\Q_p}(\phi)$. The sub dynamical system on 
 %$J_{\Q_p}(\phi)$  can be  characterized  a countable state Markov shift  as stated in Theorem \ref{Thm:GF}. 
 %Then we study the dynamical behavior on the $F_{\Q_p}(\phi)$.  By Theorem \ref{no-Wandering-thm},  every component of $F_{\mathbb{C}_p}(\phi)$ that intersects with $\mathbb{P}^1_{\Q_p}$ is non-wandering and there are only finitely many periodic components of $F_{\mathbb{C}_p}(\phi)$ intersecting $\mathbb{P}^1_{\Q_p}$.   Assume that $U$ is a periodic component of   $F_{\mathbb{C}_p}(\phi)$ intersecting $\mathbb{P}^1_{\Q_p}$.   Note that  $U$ is either an attracting component containing an attracting periodic point, or an indifferent component mapping bijectively to itself under certain iteration. If $U$ is an attracting component, then the points in $U$  will go to the attracting periodic orbit under the iteration.  The dynamical behavior on $U$ is clear.
 Now  we investigate the case that $U$ is a periodic indifferent component.  Considering iteration of $\phi$ if necessary, we can further assume that $U$ is fixed by $\phi$. By \cite[Theorem 9.7]{Benedetto19}, the component $U$  is a connected affinoid, that is, we can write $U$ as the complement of the union of finitely many closed $\P^{1}_{\C_p}$-disks.
% \[U= \P^1_{\C_p}\setminus \bigcup_{j=1}^{m}D_j\] 
%where $D_j$ are closed $\P^{1}_{\C_p}$-disks. 
Denote by $U_K:=U\cap \P^1_K$. Observing that 
 $U_K$ is compact and open, we can write  $U_K$ as the union of finitely many  $\P^{1}_K$-disks $B_j$ with radius less than $1$. Since $\phi$ has coefficients in $K$ and hence fixes $U_K$, we conclude that  there is $\ell\ge 1$ such that  $\phi^{\ell}$ fixes each $B_j$, and furthermore, $\phi^\ell$ is an isometry on each $B_j$.   Pick  a M\"obius transformation $M\in \mathrm{PGL}(2,K)$ mapping $B_j$ to the ring  $\mathcal{O}_K$ of integers of $K$, and write $M\circ \phi^\ell\circ M^{-1}$ as Taylor expansion 
 \[M\circ \phi^\ell\circ M^{-1}(x)= \sum_{i=0}^{\infty} a_ix^i, \quad  a_i\in \mathcal{O}_K. \] 
 We have that $|a_0|_p\leq 1, |a_1|_p=1$  and $|a_i|_p<1$ for $i\geq 2$. Applying \cite[Theorem 1.1]{Fan15}, we conclude that  $\mathcal{O}_K=M(B_j)$ and hence $B_j$ can be decomposed into two parts: one is the set of indifferent periodic points and the other is the union of all (at most countably
many) clopen invariant sets each of which is a finite union
of $\P^1_K$-disks and can be decomposed to minimal subsystems. Since there are finitely many such $U$ and $B_j$, we complete the proof.

\bibliographystyle{acm}
%\bibliography{references}

\end{document}